\DeclareFontFamily{U}{ntxmia}{\skewchar \font =127}
 \DeclareFontShape{U}{ntxmia}{m}{it}{
                        <-> \ntxmath@scaled ntxmia
                      }{}    
                      \DeclareFontShape{U}{ntxmia}{b}{it}{
                        <-> \ntxmath@scaled ntxbmia
                      }{}
\def\NAT@spacechar{~}% NEW
\def\centerarc[#1](#2)(#3:#4:#5)% Syntax: [draw options] (center) (initial angle:final angle:radius)
\crefname{figure}{figure}{figures}
\crefname{claim}{Claim}{Claims}
\crefname{conjecture}{Conjecture}{Conjectures}
\crefname{figure}{Figure}{Figures}
\crefname{claim}{claim}{claims}
\crefname{conjecture}{conjecture}{conjectures}
\Crefname{figure}{Figure}{Figures}
\Crefname{claim}{Claim}{Claims}
\crefname{conjecture}{Conjecture}{Conjectures}
\newtheorem{definition}{Definition}[section]
\newtheorem{claim}{Claim}
\newtheorem{proposition}[definition]{Proposition}
\newtheorem{theorem}[definition]{Theorem}
\newtheorem{corollary}[definition]{Corollary}
\newtheorem{lemma}[definition]{Lemma}
\newtheorem{fact}[definition]{Fact}
\newtheorem{conjecture}[definition]{Conjecture}
\newtheorem{problem}[definition]{Problem}
\theoremstyle{definition}
\newtheorem{remark}[definition]{Remark}
\newtheorem*{ConsW}{Construction~of~$\boldsymbol{W}$}
\newenvironment{claimproof}{%
\let\origqed=\qedsymbol%
\renewcommand{\qedsymbol}{$\blacktriangleleft$}%
\begin{proof}}{\end{proof}\let\qedsymbol=\origqed}
\numberwithin{equation}{section}
\renewcommand{\binom}[2]{\ensuremath{\mleft(\kern-.1em\genfrac{}{}{0pt}{}{#1}{#2}\kern-.1em\mright)}}    % This makes binomial numbers nicer with stix2 (in displayed equations). Remove if stix2 is not loaded.
\newcommand{\inbinom}[2]{\ensuremath{\bigl(\kern-.1em\genfrac{}{}{0pt}{}{#1}{#2}\kern-.1em\bigr)}} % This is better for inline equations, as it will keep sizes of parentheses consistent and not create extra vertical space.
\newcommand*\nume{\ensuremath{\mathrm{e}}}
\newcommand{\cA}{\mathcal{A}}
\newcommand{\cB}{\mathcal{B}}
\newcommand{\cC}{\mathcal{C}}
\newcommand{\cE}{\mathcal{E}}
\newcommand{\cF}{\mathcal{F}}
\newcommand{\cG}{\mathcal{G}}
\newcommand{\cI}{\mathcal{I}}
\newcommand{\cJ}{\mathcal{J}}
\newcommand{\cP}{\mathcal{P}}
\newcommand{\cQ}{\mathcal{Q}}
\newcommand{\cU}{\mathcal{U}}
\def\moverlay{\mathpalette\mov@rlay}
\def\mov@rlay#1#2{\leavevmode\vtop{%
  \baselineskip\z@skip \lineskiplimit-\maxdimen
  \ialign{\hfil$\m@th#1##$\hfil\cr#2\crcr}}}
\newcommand{\charfusion}[3][\mathord]{
    #1{\ifx#1\mathop\vphantom{#2}\fi
        \mathpalette\mov@rlay{#2\cr#3}
      }
    \ifx#1\mathop\expandafter\displaylimits\fi}
\newcommand{\cupdot}{\charfusion[\mathbin]{\cup}{\cdot}}
\newcommand{\eps}{\epsilon}
\newcommand{\aedc}[1]{{\color{red}{ \bf [~Alberto:\ }\emph{#1}\textbf{~]}}}
\newcommand{\COMMENT}[1]{}
\newcommand{\COMNEW}[1]{}
\title[Local resilience of random geometric graphs]{On the local resilience of random geometric graphs\linebreak{} with respect to connectivity and long cycles}
\author[A.~Espuny D\'iaz]{Alberto Espuny D\'iaz}
\email{espuny-diaz@informatik.uni-heidelberg.de}
\address[Espuny D\'iaz]{Institut f\"ur Informatik, Universit\"at Heidelberg, 69120 Heidelberg, Germany.}
\author[L.~Lichev]{Lyuben Lichev}
\email{lyuben.lichev@ist.ac.at}
\address[Lichev]{Institute of Science and Technology Austria (ISTA), 3400 Klosterneuburg, Austria.}
\author[A.~Wesolek]{Alexandra Wesolek}
\email{alexandrawesolek@gmail.com}
\address[Wesolek]{Institut für Mathematik, Technische Universität Berlin, 10623 Berlin, Germany.}
\thanks{A.~Espuny Díaz was supported by the Deutsche Forschungsgemeinschaft (DFG, German Research Foundation) through projects no.\ 447645533 and 513704762. L.~Lichev was supported by the Austrian Science Fund (FWF) grant no.\ 10.55776/ESP624. A.~Wesolek was supported by the DFG under Germany’s Excellence Strategy – The Berlin Mathematics Research Center MATH+ (EXC-2046/1, project ID: 390685689).}
\date{\today}
\begin{document}
\begin{abstract}
Given an increasing graph property $\mathcal{P}$, a graph $G$ is $\alpha$-resilient with respect to $\mathcal{P}$ if, for every spanning subgraph $H\subseteq G$ where each vertex keeps more than a $(1-\alpha)$-proportion of its neighbours, $H$ has property $\mathcal{P}$.
We study the above notion of local resilience with $G$ being a random geometric graph $G_d(n,r)$ obtained by embedding $n$ vertices independently and uniformly at random in $[0,1]^d$, and connecting two vertices by an edge if the distance between them is at most $r$. 

First, we focus on connectivity. 
We show that, for every $\eps>0$, for $r$ a constant factor above the sharp threshold for connectivity $r_c$ of $G_d(n,r)$, the random geometric graph is $(1/2-\eps)$-resilient for the property of being $k$-connected, with $k$ of the same order as the expected degree. 
However, contrary to binomial random graphs, for sufficiently small $\eps>0$, connectivity is not born $(1/2-\eps)$-resilient in $2$-dimensional random geometric graphs.

Second, we study local resilience with respect to the property of containing long cycles. 
We show that, for $r$ a constant factor above $r_c$, $G_d(n,r)$ is $(1/2-\eps)$-resilient with respect to containing cycles of all lengths between constant and $2n/3$. 
Proving $(1/2-\eps)$-resilience for Hamiltonicity remains elusive with our techniques. 
Nevertheless, we show that $G_d(n,r)$ is $\alpha$-resilient with respect to Hamiltonicity for a fixed constant $\alpha = \alpha(d)<1/2$.
\end{abstract}

% \vspace{1em}
% Keywords: .

\maketitle

\section{Introduction}\label{sec:intro}

For a graph $G$ that satisfies a certain property $\cP$, how much do we have to modify $G$ so that it stops satisfying $\cP$?
This general question motivated the study of graph resilience with respect to different properties.
A special attention was paid to the following localised version of the above question.
Given a graph $G$ and some $\alpha\in[0,1]$, we say that a subgraph $H\subseteq G$ is an \emph{\(\alpha\)-subgraph} of~$G$ if for every $v\in V(G)$ we have $d_H(v)\geq\alpha d_G(v)$.
If $G$ satisfies an increasing property $\mathcal{P}$, we define the \emph{local resilience} of~$G$ with respect to~$\mathcal{P}$ as the supremum of the values $\alpha\in[0,1]$ such that \emph{every} $(1-\alpha)$-subgraph of~$G$ satisfies~$\mathcal{P}$.
Alternatively, one may think that an adversary is allowed to remove edges from $G$ subject to the condition that the proportion of edges incident to any given vertex cannot be decreased by more than a factor of $\alpha$.
Then, the local resilience of $G$ with respect to $\mathcal{P}$ is the minimum $\alpha$ such that the adversary can produce a $(1-\alpha)$-subgraph of $G$ without property~$\mathcal{P}$.

Many natural statements can be expressed in the language of local resilience.
For example, recall the classical theorem of Dirac~\cite{Dirac52} which asserts that every graph $G$ on $n\geq 3$ vertices with minimum degree $\delta(G)\geq n/2$ contains a Hamilton cycle, and that this is the best possible minimum degree condition.
This statement can be rephrased in the language of local resilience of the complete graph. 
Indeed, fix $\alpha^*\coloneqq(\lceil n/2\rceil-1)/(n-1)$.
Then, Dirac's theorem states that, for every $\alpha>\alpha^*$, every $\alpha$-subgraph of $K_n$ contains a Hamilton cycle and that this is best possible, implying that the local resilience of $K_n$ with respect to Hamiltonicity is $1-\alpha^*\sim1/2$.
Likewise, all classical extremal results with minimum degree conditions guaranteeing some property $\mathcal{P}$ can be seen as statements about the local resilience of $K_n$ with respect to $\mathcal{P}$.

A lot of research has been done on the local resilience of random graphs with respect to different properties.
The binomial random graph $G(n,p)$, sampled by including each of the possible $\inbinom{n}{2}$ edges on vertex set $[n]\coloneqq\{1,\ldots,n\}$ independently with probability~$p$, received particular attention.
(We discuss results in this context in the coming sections.)
In this paper, we focus on the local resilience of \emph{random geometric graphs}, which were first introduced by \citet{Gil61}.
Given positive integers~$d$ and~$n$ and a real number $r > 0$, the $d$-dimensional random geometric graph $G_d(n,r)$ is a graph on vertex set $[n]$ whose edges are generated as follows.
Consider $n$ mutually independent uniform random variables $X_1,\ldots,X_n$ on $[0,1]^d$.
For each pair of distinct elements $i,j\in [n]$, the edge $ij$ is added to the graph if and only if $\lVert X_i-X_j\rVert\leq r$, where $\lVert\cdot\rVert$ denotes the Euclidean distance.

\subsection{Local resilience and Hamiltonicity}\label{sec:rel work}

The systematic study of local resilience was introduced by \citet{SV08}.\footnote{We remark that our definition of local resilience is slightly different from that in~\cite{SV08} and closer to more recent versions which have appeared in the literature~\cite{Mont19,NST19}. Nevertheless, all the results we mention in our paper hold with our definition as well.}
In their pioneering paper, among other results, they considered the local resilience of random and pseudorandom graphs with respect to Hamiltonicity.
In particular, they showed that, for $p\geq (\log n)^4/n$ and any fixed constant $\eps>0$, asymptotically almost surely (that is, with probability tending to $1$ as $n$ grows to infinity, which we abbreviate as a.a.s.) the local resilience of~$G(n,p)$ with respect to Hamiltonicity is at least $1/2-\eps$.
Since then, the property of containing a Hamilton cycle has become a benchmark in the study of local resilience for random graphs.

It is worth noting that the constant $1/2$ in the result of \citet{SV08} is best possible in great generality: this is a consequence of a result of \citet{Sti96}, who showed that the vertices of every graph can be partitioned into two sets in such a way that every vertex $v$ has at least $d(v)/2-1$ neighbours in the part to which it belongs.
In particular, for any $n$-vertex graph $G$ whose minimum degree tends to infinity with $n$, we may consider such a partition and delete all edges between the two parts to obtain a $(1/2-o(1))$-subgraph which is not connected.
This implies that the local resilience of~$G$ with respect to any property which necessitates connectivity is at most $1/2+o(1)$.

On the other hand, the result of \citet{SV08} is not optimal with respect to the lower bound on~$p$.
The problem of determining the local resilience of $G(n,p)$ with respect to Hamiltonicity for the full range of $p$ was studied further~\cite{BKS11b,BKS11a,FK08} until it was finally resolved by \citet{LS12}, who proved the following theorem.

\begin{theorem}[{\cite[Theorem~1.1]{LS12}}]\label{thm:LS12}
    For every\/ $\eps\in (0,1/2]$, there exists a constant\/ $C=C(\eps)>0$ such that, for\/ $p\geq C\log n/n$, a.a.s.\ every\/ $(1/2+\eps)$-subgraph of\/ $G(n,p)$ is Hamiltonian.
\end{theorem}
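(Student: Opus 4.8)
The plan is to fix an arbitrary spanning subgraph $H$ of $G(n,p)$ with $d_H(v)\ge(1/2+\eps)d_{G(n,p)}(v)$ for every $v$ and show it is Hamiltonian, using only a short list of pseudorandom properties of $G(n,p)$ that hold a.a.s.\ and survive the adversary's deletions. Write $p=C\log n/n$ with $C=C(\eps)$ a large constant; the full range $p\ge C\log n/n$ then follows by monotonicity together with the denser regimes handled in~\cite{FK08,BKS11a,BKS11b}. A standard estimate gives that a.a.s.\ $(1-\eps/10)np\le\delta(G(n,p))\le\Delta(G(n,p))\le(1+\eps/10)np$, so $\delta(H)\ge(1/2+\eps/2)d$ and $\Delta(H)\le 2d$, where $d:=np$.

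\textbf{Step 1: structure of $H$.} Routine union bounds give that a.a.s.\ $G(n,p)$ has no set $U$ with $|U|\le 4\mu n$ spanning $\ge|U|\,d/16$ edges, for a small constant $\mu=\mu(\eps,C)>0$; together with $\delta(H)\ge(1/2+\eps/2)d$ this yields property~(i): $|N_H(S)\sm S|\ge3|S|$ for every $S$ with $|S|\le\mu n$. Also a.a.s.\ $e_{G(n,p)}(A,B)\ge(1-\eps/10)|A||B|p$ for all disjoint $A,B$ with $\min\{|A|,|B|\}\ge\mu n$. Since the adversary deletes at most $(1/2-\eps)d_{G(n,p)}(v)$ edges at each $v$, a set $S$ with $\mu n\le|S|\le n/2$ and $|N_H(S)|<(1/2+\eps/2)n-|S|$ would have no $H$-edge to the set $W:=V\sm(S\cup N_H(S))$, of size $>(1/2-\eps/2)n$, yet $e_{G(n,p)}(S,W)\ge(1-\eps/10)|S||W|p$ exceeds (for large $n$) the number of edges the adversary may delete at $S$ --- a contradiction; thus we obtain property~(ii): $|N_H(S)|\ge(1/2+\eps/2)n-|S|$ for all $S$ with $|S|\le n/2$. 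Finally, applying the same comparison to a smallest component of $H$ (which has size $>\mu n$ by~(i) and $\le n/2$) shows that $H$ is connected.

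\textbf{Step 2: rotation--extension.} Suppose $H$ is not Hamiltonian and let $P$ be a longest path, with endpoints $a,b$. P\'osa rotations from $a$ produce a set $X$ of vertices, each an endpoint of a longest path with vertex set $V(P)$, for which P\'osa's lemma gives $|N_H(X)\sm X|\le2|X|$; hence~(i) forces $|X|>\mu n$, and then~(ii) gives $(1/2+\eps/2)n-|X|\le|N_H(X)|\le3|X|$, so $|X|\ge n/8$. Rotating from the other end yields, for each $x\in X$, a set $Y_x$ with $|Y_x|\ge n/8$ such that a longest path joins $x$ to every $y\in Y_x$. As $P$ is longest and $H$ is connected, no pair $\{x,y\}$ with $x\in X$ and $y\in Y_x$ is an edge of $H$: such an edge would create a cycle on $V(P)$, which if $|V(P)|<n$ could be extended through a vertex outside $V(P)$ (contradicting maximality), and if $|V(P)|=n$ would already be a Hamilton cycle. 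So $E(H)$ avoids a ``constellation'' of $\ge n^2/64$ booster pairs.

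\textbf{Step 3: the contradiction --- the main obstacle.} It remains to show that a typical $G(n,p)$ has no $(1/2+\eps)$-subgraph whose booster-constellation avoids its edge set. This is the crux, and where the argument must depart from the classical proof that $G(n,p)$ itself is a.a.s.\ Hamiltonian: there one exposes a fresh batch of random edges, almost surely hitting a booster, whereas here the adversary has seen all of $G(n,p)$ and may delete any single prescribed family of $\Theta(n^2)$ pairs within its per-vertex budget $(1/2-\eps)d_{G(n,p)}(v)$. Overcoming this uses more than the size of the constellation: the rotation sets $X$ and $Y_x$ are far from arbitrary, their $H$-neighbourhoods being pinned down by P\'osa's lemma, and one must combine this rigidity with the edge-distribution property of Step~1. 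The most transparent route would be to organise the rotations so that the constellation becomes two-sided with both sides of size larger than $(1/2-\eps)n$; then the edge-distribution property supplies a $G(n,p)$-edge inside it which the adversary cannot afford to delete, since deleting all constellation-edges incident to a vertex $v$ of one side already costs more than $(1/2-\eps)d_{G(n,p)}(v)$. Enlarging the rotation sets to that size, or otherwise forcing the deletion budget to fail, is the technical heart of~\cite{LS12}, and the step I expect to be hardest.
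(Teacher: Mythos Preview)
The paper does not contain a proof of this statement: Theorem~\ref{thm:LS12} is quoted verbatim from \cite{LS12} as background on the binomial random graph, and the present paper's contributions concern random geometric graphs. There is therefore no ``paper's own proof'' to compare your sketch against.

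That said, your outline is a faithful high-level summary of the Lee--Sudakov argument: establish expansion and edge-distribution properties of $G(n,p)$ that survive passage to a $(1/2+\eps)$-subgraph, use P\'osa rotation--extension to build a large set of endpoint pairs (boosters), and argue that the adversary's per-vertex deletion budget cannot avoid all of them. You correctly identify Step~3 as the technical heart and the point at which the proof cannot simply mimic the classical $G(n,p)$ Hamiltonicity argument. One comment: your appeal to monotonicity to cover $p\ge C\log n/n$ is not valid as stated --- local resilience is not a monotone property of $p$ (a point the present paper makes explicitly in discussing \cref{mainconjecture}), so the full range of $p$ must be handled directly rather than by reducing to the threshold value.
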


This result can be regarded as a version of Dirac's theorem for binomial random graphs.
Recently, \citet{Mont19} and \citet{NST19} independently sharpened this result to a ``hitting time'' result. 
Moreover, \cref{thm:LS12} was recently generalised by \citet{CEKKO19}, who considered a version of the problem where the adversary is allowed to delete more edges incident to some, but not all, vertices of the graph, giving an analogue of a classical result of \citet{Posa62} for random graphs.

The local resilience with respect to Hamiltonicity has also been studied in other models of random graphs.
One example is the model of random regular graphs.
Given integers $1\leq d<n$, a random $d$-regular graph $G_{n,d}$ is obtained by sampling an element from the set of all $n$-vertex $d$-regular graphs uniformly at random.
It is well known that the random $d$-regular graph is a.a.s.\ Hamiltonian for all $d\geq3$~\cite{RW92,RW94,CFR02,KSVW01}.
Improving on earlier work of \citet{BKS11b}, \citet{CEGKO21} proved a result analogous to \cref{thm:LS12} for random regular graphs.

\begin{theorem}[{\cite[Theorem~1.2]{CEGKO21}}]\label{thm:CEGKO21}
    For every\/ $\eps\in (0,1/2]$, there exists a constant\/ $D=D(\eps)>0$ such that, for all\/ $d\in [D,n-1]$, a.a.s.\ every\/ $(1/2+\eps)$-subgraph of\/ $G_{n,d}$ is Hamiltonian.
\end{theorem}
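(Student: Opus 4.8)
The plan is to isolate a short list of deterministic ``pseudorandomness'' properties that $G_{n,d}$ satisfies a.a.s., and then prove the \emph{deterministic} statement that any $n$-vertex $d$-regular graph $G$ with these properties has the feature that every $(1/2+\eps)$-subgraph of $G$ is Hamiltonian. The properties I would use are: \textup{(P1)} \emph{small-set expansion}, that every $S\sub V(G)$ with $|S|\le n/\log n$ has $|N_G(S)|\ge\min\{c\eps d|S|,\,n/4\}$; \textup{(P2)} \emph{mixing}, that for all disjoint $A,B\sub V(G)$ with $|A|,|B|\ge\eps n/100$ one has $e_G(A,B)\ge\eps d|A||B|/(2n)$, which follows from the bound $\lambda(G)=O(\sqrt d)$ on the second-largest eigenvalue (in absolute value) via the Expander Mixing Lemma; and \textup{(P3)} \emph{local sparsity}, that every set of $s\le n/\log n$ vertices spans at most $(1+o(1))ds/2$ edges, so that $e_G(S)=o(d|S|)$ for such $S$. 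When $d$ is at most polylogarithmic in $n$ these are cleanest to verify by first-moment and switching computations in the configuration (pairing) model and then transfer to $G_{n,d}$; for larger $d$ one may repeat these computations or quote the known eigenvalue and edge-distribution estimates for random regular graphs. It is convenient to dispatch the dense regime $d=\Omega(n)$ at the outset, since there even a $(1/2+\eps)$-subgraph has minimum degree $\Omega(n)$ and one is in the setting of classical dense Hamiltonicity; so assume $d=o(n)$ henceforth. (One cannot simply reduce to \cref{thm:LS12} by embedding a binomial random graph inside $G_{n,d}$: the adversary sees all of $G_{n,d}$ before deleting and can target any fixed embedded copy.)

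Fix a $(1/2+\eps)$-subgraph $H\sub G$; since $G$ is $d$-regular, $\delta(H)\ge(1/2+\eps)d$. The first step is to check that $H$ inherits robust expansion from $G$. For small $S$ the number of $G$-edges leaving $S$ equals $d|S|-2e_G(S)=(1-o(1))d|S|$ by \textup{(P3)}, and $H$ keeps at least a $(1/2+\eps)$-fraction of the edges at every vertex, so at least $(1/2+\eps-o(1))d|S|$ of these edges still leave $S$ in $H$; together with \textup{(P1)} this yields $|N_H(S)|\ge c'\eps d|S|$ whenever $|S|\le n/\log n$. Similarly, \textup{(P2)} and $\delta(H)\ge(1/2+\eps)d$ together rule out any cut $(A,V(H)\sm A)$ of $H$ with both sides of linear size and few crossing edges. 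Hence $H$ is connected and an expander of the kind used by the rotation--extension technique.

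Given such an $H$, I would build a Hamilton cycle by the standard absorption scheme. \textbf{(i)} Using the expansion of $H$, greedily build a path $P$ of size at most $\beta n$ for a small constant $\beta=\beta(\eps)>0$ that is \emph{absorbing} in the sense that for every $U\sub V(H)$ with $|U|\le\mu n$ (where $0<\mu\ll\beta$) there is a path on $V(P)\cup U$ with the same endpoints as $P$. The building blocks are short ``absorbing gadgets'' for individual vertices $v$ (a short path through two $H$-neighbours of $v$ that can be rerouted to include $v$), and \textup{(P1)} and \textup{(P3)} guarantee that every vertex lies in linearly many gadgets and that a random selection produces, with room to spare, many pairwise disjoint gadgets per vertex, which makes $P$ absorbing for all small $U$ simultaneously. \textbf{(ii)} Keeping $P$ as a fixed segment, apply P\'osa's rotation--extension lemma inside the expander $H$ to obtain a cycle $C\supseteq P$ missing a set of at most $\mu n$ vertices; the argument needs precisely the small-set expansion of \textup{(P1)} and the absence of sparse cuts for linear sets, both of which $H$ has. \textbf{(iii)} Absorb the at most $\mu n$ leftover vertices using the absorbing property of $P\sub C$ to complete a Hamilton cycle of $H$.

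I expect the main obstacle to be the smallest-degree regime $d=\Theta(D(\eps))$. There the expansion in \textup{(P1)} is only by a constant factor, and the edge count between two linear sets fluctuates on the scale $\Theta(\sqrt d\,|A||B|/n)$, i.e.\ by a $\Theta(1/\sqrt d)$ fraction of its mean; for the adversary --- who chooses which near-half of each neighbourhood to delete --- not to be able to use this noise to manufacture a sparse cut or to destroy all absorbing gadgets of some vertex, one needs the mixing error $\lambda=O(\sqrt d)$ to be genuinely below $\eps d$, which is exactly where the hypothesis $d\ge D(\eps)$ enters, with $D(\eps)$ of order $\eps^{-2}$ (up to the constants in the eigenvalue bounds). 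Quantitatively this forces one to carry explicit $\eps$-dependent constants through \textup{(P1)}--\textup{(P3)} rather than mere $o(1)$ error terms, and to prove the configuration-model estimates with a polynomially small failure probability so that a union bound survives the superpolynomially many deletion patterns available to the adversary. The only remaining bookkeeping is to run a single argument over the whole range $d\in[D,n-1]$, handled by the case split into $d=\Omega(n)$ (done at the start), $d$ polynomial in $n$, and $d$ at most polylogarithmic, with the pseudorandomness inputs established in whichever model is convenient in each band.
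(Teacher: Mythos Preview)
This theorem is not proved in the present paper: it is quoted as \cite[Theorem~1.2]{CEGKO21} and used only as background on the local resilience of random regular graphs. The paper's contributions concern random \emph{geometric} graphs, and no argument for \cref{thm:CEGKO21} appears anywhere in the text. There is therefore no ``paper's own proof'' to compare your proposal against.

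That said, your outline is broadly in the spirit of how such results are established (pseudorandomness of $G_{n,d}$ via eigenvalue/mixing bounds, followed by an absorption--rotation--extension scheme in the $(1/2+\eps)$-subgraph), and your identification of the bottleneck at $d=\Theta(D(\eps))$ with $D(\eps)$ polynomial in $1/\eps$ matches what the present paper reports about \cite{CEGKO21}. One caution: your remark about needing ``polynomially small failure probability so that a union bound survives the superpolynomially many deletion patterns'' misidentifies the logical structure. One does \emph{not} union-bound over adversaries; rather, one shows that a.a.s.\ $G_{n,d}$ satisfies deterministic properties \textup{(P1)}--\textup{(P3)}, and then proves the purely deterministic statement that \emph{every} $(1/2+\eps)$-subgraph of any graph with these properties is Hamiltonian. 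The adversary is handled inside the deterministic argument, not probabilistically.
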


Just like for binomial random graphs, the constant $1/2$ is best possible.
Moreover, \citet{CEGKO21} also showed that a polynomial dependency between~$D$ and~$\eps$ in \cref{thm:CEGKO21} cannot be avoided.

In this paper, we are concerned with studying the local resilience of random geometric graphs with respect to Hamiltonicity.
This question has been brought up by \citet[Problem~43]{FriezeBiblio}.
By analogy with \cref{thm:LS12,thm:CEGKO21}, the following conjecture seems appealing.

\begin{conjecture}\label{mainconjecture}
    For every\/ $\eps\in (0,1/2]$ and integer\/ $d\geq1$, there exists a constant\/ $C=C(d,\eps)>0$ such that, for\/ $r\geq C(\log n/n)^{1/d}$, a.a.s.\ every\/ $(1/2+\eps)$-subgraph of\/ $G_d(n,r)$ is Hamiltonian.
\end{conjecture}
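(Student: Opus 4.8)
Since \cref{mainconjecture} is open and the cell-by-cell methods behind our results for cycles of length up to $2n/3$ and for Hamiltonicity at a fixed $\alpha(d)<1/2$ fall short of it, what follows is the route one would naturally attempt and the point at which it stalls. The plan is to fix $\eps>0$, tessellate $[0,1]^d$ into axis-parallel cells of side length $s=\Theta(r)$ chosen small enough that any two points lying in cells at $\ell_\infty$-distance at most $2$ are adjacent in $G_d(n,r)$, and then upgrade a combinatorial skeleton living on the cells to a genuine Hamilton cycle of the $(1/2+\eps)$-subgraph $H$. For $r\ge C(\log n/n)^{1/d}$ with $C=C(d,\eps)$ large, a Chernoff-plus-union-bound estimate gives that a.a.s.\ every cell contains $\Theta_d(C^d\log n)$ points, with matching upper and lower bounds; this uniform control of point counts together with the $(1/2-\eps)$-resilience of $k$-connectivity proven in this paper --- transported to the coarse scale of cells --- is what one would use to produce the skeleton. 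Concretely, the \emph{cell graph}, with vertices the cells and edges joining cells at $\ell_\infty$-distance $1$, is a $d$-dimensional grid and hence has a spanning cycle; the target is a Hamilton cycle of $H$ that follows this spanning cycle, entering each cell through a ``port'' vertex, threading a path through all of the cell's vertices (and possibly through a few already-visited vertices of neighbouring cells), and leaving through a second port into the next cell.

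The core of the argument would be a \emph{local Hamilton-path lemma}: for a constant $L=L(d,\eps)$ and every block $B$ formed by an $L\times\cdots\times L$ array of consecutive cells, and for suitably chosen boundary vertices $x,y$ of $B$, the graph $H[B]$ contains a Hamilton $x$--$y$ path. For a vertex $v$ well inside $B$ every $G_d(n,r)$-neighbour of $v$ already lies in $B$, so $v$ keeps more than a $(1/2+\eps)$-proportion of its neighbours in $H[B]$; exploiting that the cells are blow-ups of cliques, one would hope to run a Dirac/Chv\'atal--Erd\H{o}s-type argument (a deterministic counterpart of the reasoning behind \cref{thm:LS12}) to obtain a Hamilton path covering the ``bulk'' $B'$ of the block, namely $B$ with the layer of cells within distance $r$ of $\partial B$ removed, and then to splice in the remaining $O(1/L)$-fraction of vertices of $B$, each of which still has $\Omega(\log n)$ neighbours in $H$ of which many lie in $B$. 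Even this local statement is not routine: within a block the minimum degree is only a $\Theta_d(L^{-d})$-fraction of $|B|$, so $B$ is really a smaller, denser instance of the same phenomenon we started from, and one already needs either an induction on scale or a bespoke argument using the geometry. Granting the local lemma, one would then stitch the blocks together along the cell-graph cycle, matching up ports across block boundaries and rerouting a bounded amount at each interface.

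The decisive obstruction --- the reason \cref{mainconjecture} resists these ideas --- is that the adversary can concentrate essentially all of its per-vertex budget on a thin, well-chosen set. A vertex $v$ within distance $r$ of a block boundary may have a $(1/2+2\eps)$-proportion of its $G_d(n,r)$-neighbours on the far side of that boundary; deleting exactly those edges leaves $v$ with only a $\Theta(\eps)$-proportion of its neighbours inside its home block. Doing this simultaneously along an entire $(d-1)$-dimensional ``wall'' of such vertices --- or along $\partial[0,1]^d$ itself --- produces a set that behaves like a sparse cut: the bulk of a block need no longer be Hamilton-connected within that block, and the clean ``one path per block'' scheme collapses. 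Passing to larger blocks merely relocates the same picture to the new, coarser walls, so the recursion does not terminate; and naive absorption is of no help, since in a geometric graph any would-be absorber is itself a local gadget that a local adversary can dismantle. Nor can one fall back on a P\'osa rotation--extension argument, because $G_d(n,r)$ has separators of size $o(n)$ and therefore lacks the vertex expansion those arguments exploit.

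We therefore expect that a proof of \cref{mainconjecture} needs a genuinely new ingredient: either a global absorbing or rotation--extension framework tailored to graphs with small separators, in which the flexibility needed to complete a Hamilton cycle is spread over a constant fraction of the vertices rather than localised in gadgets, or a structural dichotomy showing that whenever the adversary manufactures a near-cut along some $(d-1)$-dimensional surface it is forced to leave a Hamilton-connected piece on one side and an easily absorbed remainder on the other. Ruling out --- or routing around --- such adversarial near-cuts along arbitrary surfaces is, in our view, the main difficulty, and it is precisely the step out of reach of the cell-by-cell techniques that yield our $2n/3$ and $\alpha(d)$ results.
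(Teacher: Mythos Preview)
The statement is a \emph{conjecture}, and the paper does not prove it; the authors explicitly write that they ``are currently unable to resolve \cref{mainconjecture}'' and that ``new techniques seem necessary''. Your proposal correctly recognises this and is not a proof but an essay on obstructions and candidate strategies. In that sense there is nothing to verify against the paper's proof, because there is none.

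That said, it is worth comparing your diagnosis of the difficulty with the paper's own. The paper highlights two obstacles: the lack of monotonicity in $r$ (so one cannot reduce to a single threshold value), and the fact that all existing Hamiltonicity proofs for $G_d(n,r)$ exploit large local cliques, which the adversary can destroy entirely (indeed, for small $\eps$ there are $(1/2+\eps)$-subgraphs that are $K_4$-free). Your discussion instead foregrounds a different --- and also real --- obstruction: the adversary can concentrate deletions along a $(d-1)$-dimensional surface so that vertices near a block boundary retain only a $\Theta(\eps)$-fraction of their neighbours inside their block, defeating any block-by-block Hamilton-path scheme. You also correctly note that P\'osa rotation--extension is unavailable because $G_d(n,r)$ has sublinear separators. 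These observations are sound and complementary to the paper's; the ``wall'' picture you describe is essentially why the paper's own lower-bound arguments (\cref{thm:lowerboundgeneral,thm:lowerboundd=1}) stall at constants bounded away from $1/2$.

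One caution: your sketch of the ``local Hamilton-path lemma'' glosses over the same issue you later identify as fatal. Even granting perfect point-count concentration, a block $B$ of side $\Theta(r)$ has minimum degree only a $\Theta_d(L^{-d})$-fraction of $|B|$ after the adversary acts, so no Dirac/Chv\'atal--Erd\H{o}s argument applies inside $B$; you acknowledge this, but the sketch reads as if the bulk $B'$ could be handled routinely and only the boundary layer is problematic. In fact the bulk is already a rescaled instance of the full problem, so the ``induction on scale'' you mention is circular unless accompanied by a genuinely new idea. Your closing paragraph is an honest statement of where the problem stands.
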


It is worth noting that, if true, this conjecture is best possible up to the value of $C$.
Indeed, the results about the threshold for Hamiltonicity \cite{Petit01,DMP07,BBKMW11,MPW11} in random geometric graphs show that the statement fails for $r=o((\log n/n)^{1/d})$, and the constant $1/2$ cannot be improved.
We defer some further discussion about this to \cref{sect:introconnectivity}.

While the local resilience of random geometric graphs has not been studied before, it is worth noting that \citet{Petit01} studied Hamiltonicity of subgraphs of a random geometric graph under random deletion of its edges.
However, this work does not provide any meaningful insights towards \cref{mainconjecture}.

We remark here two main challenges that must be addressed when considering \cref{mainconjecture}.
First, for any increasing property $\mathcal{P}$, when aiming to prove that a.a.s.\ $G_d(n,r)\in\mathcal{P}$ for all $r\geq r^*$, it suffices to assume that $r=r^*$. 
This is due to the monotonicity of the problem.
One difficulty in the study of local resilience is the absence of monotonicity: indeed, graphs with larger vertex degrees may still be less resilient with respect to some increasing property as the adversary has the right to delete more edges from them.
Therefore, when considering this problem, we must really consider all possible values of $r$.
Second, all known results about Hamiltonicity in $G_d(n,r)$ exploit the fact that these graphs are locally dense, meaning that vertices which are (geometrically) close form very large cliques~\cite{BBP-GP17,Es23,EH23,FP-G20,Petit01,DMP07,BBKMW11,Man23, MPW11,FMMS21}.
This is not the case in the local resilience setting, where the adversary may delete all cliques.
To be precise, for any fixed $\eps<1/6$ and $r$ in the range considered in \cref{mainconjecture}, a.a.s.\ there are $(1/2+\eps)$-subgraphs of $G_d(n,r)$ which are $K_4$-free.
Indeed, for sufficiently large values of $r$, such subgraphs can be constructed by splitting the vertices randomly into three sets and removing all edges contained in each of the sets. 
Therefore, in order to solve \cref{mainconjecture}, new techniques seem necessary.

We are currently unable to resolve \cref{mainconjecture}, and even the analogous conjecture for perfect matchings remains interesting and widely open.
However, in this paper, we make progress towards \cref{mainconjecture} and several related problems.
We present our main results in the following sections.

\subsection{Connectivity}\label{sect:introconnectivity}

We begin with an analysis of the local resilience of random geometric graphs with respect to connectivity, which is a necessary condition for Hamiltonicity.
We show that this necessary condition holds in our context by providing the following analogue of \cref{mainconjecture} for connectivity.

\begin{theorem}\label{thm:connectivityintro}
For every\/ $\eps\in (0,1/2]$ and integer\/ $d\geq1$, there exists a constant\/ $C_1=C_1(d,\eps)>0$ such that, for every\/ $r\geq C_1(\log n/n)^{1/d}$, a.a.s.\ every\/ $(1/2+\eps)$-subgraph of\/ $G_d(n,r)$ is connected. 
\end{theorem}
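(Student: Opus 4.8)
The plan is to recast the statement as a purely geometric non-existence claim and then run a continuity argument that formalises the obstruction mentioned in the introduction, namely that a straight hyperplane cut almost works for the adversary but fails in a thin slab around the cut. Write $G:=G_d(n,r)$. First, observe that a $(1/2+\eps)$-subgraph $H\sub G$ is disconnected if and only if its components split into two non-empty classes $A,B$ with no $H$-edge between them, and in that case the degree condition on $H$ forces $|N_G(v)\cap A|\ge(1/2+\eps)d_G(v)$ for every $v\in A$ and $|N_G(v)\cap B|\ge(1/2+\eps)d_G(v)$ for every $v\in B$; conversely any partition $V(G)=A\cupdot B$ with both parts non-empty satisfying these inequalities (call it a \emph{$(1/2+\eps)$-good partition}) yields a disconnected $(1/2+\eps)$-subgraph upon deleting all $A$–$B$ edges. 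So it suffices to prove that, for $C_1=C_1(d,\eps)$ large, a.a.s.\ $G_d(n,r)$ admits no $(1/2+\eps)$-good partition.

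I would first fix the scales. Let $\Omega=[0,1]^d$, choose an integer $K=K(d,\eps)$ large, a small $\eta=\eta(d,\eps)>0$, and then $C_1$ large in terms of $K$ and $\eta$; let $\cQ$ be a partition of $\Omega$ into axis-parallel cubes of side $\ell\asymp r/(K\sqrt d)$, so that each cube has diameter at most $r/K$. Standard Chernoff-plus-union-bound arguments give that a.a.s.: (i) $d_G(v)=(1\pm o(1))\,n\,\mathrm{vol}(B(X_v,r)\cap\Omega)=\Theta(nr^d)$ for every vertex $v$; (ii) every cube of $\cQ$ contains $(1\pm o(1))n\ell^d$ vertices, for which one needs $n\ell^d=\Theta(C_1^d\log n/K^d)\to\infty$; (iii) every point of $\Omega$ lies within distance $\eta r$ of some vertex, for which one needs $\eta^d C_1^d\log n\to\infty$. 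From now on condition on (i)–(iii) and assume a $(1/2+\eps)$-good partition $A\cupdot B$ exists; I will derive a contradiction.

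The heart of the matter is a local density computation. For $Q\in\cQ$ set $\rho_Q:=|A\cap Q|/|Q|$ and let $\rho\colon\Omega\to[0,1]$ be the resulting piecewise-constant function; put $g(x):=\frac{1}{\mathrm{vol}(B(x,r)\cap\Omega)}\int_{B(x,r)\cap\Omega}\rho$. The key estimate is that $|\,|N_G(v)\cap A|/d_G(v)-g(X_v)\,|\le\eps/4$ for every vertex $v$: grouping $N_G(v)$ by the cubes containing its elements, the cubes lying entirely inside $B(X_v,r)$ contribute exactly $\sum|A\cap Q|$, and the only discrepancy comes from cubes meeting the sphere $\partial B(X_v,r)$, all of which lie in a shell of width at most $2r/K$ and hence cover an $O_d(1/K)$-fraction of the ball — here (i) is used to bound $d_G(v)$ below by a constant times $nr^d$ even for $v$ near $\partial\Omega$ — so taking $K$ large (together with the fluctuations from (ii)) makes this at most $\eps/4$. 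Feeding the good-partition inequalities into this estimate yields $g(X_v)\ge 1/2+3\eps/4$ for all $v\in A$ and $g(X_v)\le 1/2-3\eps/4$ for all $v\in B$.

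To finish, I would use that $g$ is $L$-Lipschitz with $L=O_d(1/r)$: shifting the centre of the ball by $t$ changes $\int_{B(\cdot,r)\cap\Omega}\rho$ and $\mathrm{vol}(B(\cdot,r)\cap\Omega)$ by at most $\mathrm{vol}(B(x,r)\triangle B(x',r))=O_d(r^{d-1}t)$, while the denominator is always at least $2^{-d}V_d r^d$ — and, crucially, this $L$ does not depend on $K$. Since $A,B\ne\emptyset$, choose $v_A\in A$ and $v_B\in B$; then $g(X_{v_A})>1/2>g(X_{v_B})$, so by the intermediate value theorem along the segment from $X_{v_A}$ to $X_{v_B}$ there is $x_0\in\Omega$ with $g(x_0)=1/2$. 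By (iii) some vertex $u$ satisfies $\|X_u-x_0\|\le\eta r$, hence $|g(X_u)-1/2|\le L\eta r=O_d(\eta)$, which is $<\eps/4$ once $\eta=\eta(d,\eps)$ is small enough; this contradicts the dichotomy $g(X_u)\ge 1/2+3\eps/4$ (if $u\in A$) or $g(X_u)\le 1/2-3\eps/4$ (if $u\in B$). Hence no $(1/2+\eps)$-good partition exists, which proves \cref{thm:connectivityintro}. I expect the genuine work to be organisational: fixing the parameters in the order $\eps\to K\to\eta\to C_1$, keeping $L$ independent of the grid fineness, surviving the union bounds over the $\Theta(n)$ cubes and net points, and absorbing boundary effects near $\partial\Omega$ (partial balls, corner volumes) into dimensional constants. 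It is reassuring that the argument truly needs $C_1$ large — at the connectivity threshold both (ii) and (iii) break — which is consistent with the fact, noted in the paper, that connectivity fails to be $(1/2-\eps)$-resilient right at the threshold in two dimensions.
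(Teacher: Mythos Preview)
Your argument is correct and takes a genuinely different route from the paper. The paper in fact gives two proofs, both structural. The main one (\cref{sect:conn}) tessellates $[0,1]^d$, classifies cells as good or bad according to whether nearby vertices share almost all their $G$-neighbourhoods, shows bad cells only form bounded clusters, and then feeds a large connected block of good cells into a combinatorial $k$-connectivity criterion (\cref{lem:k-conn}). The appendix proof is the same idea in miniature: vertices within $\delta r$ have many common $H$-neighbours, so $H^2\supseteq G_d(n,\delta r)$, which is highly connected. Both versions immediately deliver the stronger conclusion that every $(1/2+\eps)$-subgraph is $\Theta(r^dn)$-connected (\cref{thm:connectivity}), and the careful version also tracks the optimal $\Theta(\sqrt{d})$ growth of $C_1$.

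Your approach is analytic rather than structural: you reformulate disconnection as the existence of a ``good partition'', encode the partition as a bounded density $\rho$, smooth it to a continuous $g$ by ball-averaging, and then run an intermediate-value argument using the Lipschitz bound $|g(x)-g(x')|=O_d(\lVert x-x'\rVert/r)$ (which, as you note, is independent of the grid parameter $K$) together with a covering net to trap some vertex in the forbidden strip $|g-1/2|<\eps/4$. This is elegant and arguably more conceptual for bare connectivity. The trade-off is that it does not give high connectivity for free: to rule out good partitions of $V\setminus U$ for all $|U|=\Theta(r^dn)$, your comparison $|N_G(v)\cap A|/d_G(v)\approx g(X_v)$ would need to absorb the removal of a constant fraction of each cell, and the constants would have to be re-balanced. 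Nor does your argument track the dependence of $C_1$ on $d$. For \cref{thm:connectivityintro} as stated, however, your proof is complete.
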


\Cref{thm:connectivityintro} provides a local resilience analogue of classical results of \citet{GJ96} and \citet{Penrose97,Penrose99} on the threshold for connectivity of random geometric graphs.\COMMENT{\citet{GJ96} obtained the threshold for $d=1$; later, \citet{Penrose97} obtained the threshold for $d=2$, and subsequently extended it (with a hitting time result) to all other dimensions~\cite{Penrose99}.}
Moreover, we can extend this result to $k$-connectivity for a large range of values of $k$. 
We defer the more technical statement and the proof to \cref{sect:conn} (see \cref{thm:connectivity}).
Together with the aforementioned result of \citet{Sti96}, \cref{thm:connectivityintro} implies that a.a.s.\ the local resilience of random geometric graphs $G_d(n,r)$ with respect to connectivity is $1/2\pm o(1)$ whenever $r=\omega((\log n/n)^{1/d})$.
If we restrict our attention to $(1/2-\eps)$-subgraphs of $G_d(n,r)$, we can say more about the size of a largest component.

\begin{proposition}\label{prop:conn_optimal}
For every\/ $\eps\in (0,1/2]$, there exists a constant\/ $C_2 = C_2(\eps) > 0$ such that, for every integer\/ $d\ge 1$ and\/ $r\ge C_2 (\log n/n)^{1/d}$, a.a.s.\ there exists a\/ $(1/2-\eps)$-subgraph of\/ $G_d(n,r)$ with components of order at most\/ $5rn$.
\end{proposition}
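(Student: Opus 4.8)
The plan is to exhibit the desired subgraph explicitly by partitioning the cube into thin slabs perpendicular to the first coordinate and deleting all edges between different slabs. If $r\ge 1/5$ we may simply take $H=G_d(n,r)$, which has all components of order at most $n\le 5rn$, so assume $r<1/5$ and set $m\coloneqq\lfloor 1/r\rfloor\ge 5$ and $w\coloneqq 1/m$, so that $w\in[r,2r)$. Partition $[0,1]^d$ into the $m$ slabs $S_j\coloneqq[(j-1)w,jw]\times[0,1]^{d-1}$ for $j\in[m]$, write $s(v)$ for the index of the slab containing the point $X_v$, and let $H$ be the spanning subgraph of $G_d(n,r)$ obtained by removing every edge $uv$ with $s(u)\ne s(v)$. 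By construction each connected component of $H$ lies inside a single slab, so it suffices to prove that a.a.s.\ (i) every slab contains at most $5rn$ points of $\{X_1,\dots,X_n\}$, and (ii) $d_H(v)\ge(1/2-\eps)\,d_G(v)$ for every vertex $v$, i.e.\ $H$ is a $(1/2-\eps)$-subgraph.

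Claim (i) is routine: each slab has volume $w<2r$, so the number of points it contains is dominated by $\mathrm{Bin}(n,2r)$, which has mean less than $2rn$; since a Chernoff bound gives $\Pr[\mathrm{Bin}(n,2r)>3rn]\le\mathrm{e}^{-\Omega(rn)}$ and $rn\ge C_2\log n$, a union bound over the at most $1/r\le n$ slabs shows that, provided $C_2=C_2(\eps)$ is large enough, a.a.s.\ every slab contains at most $3rn<5rn$ points.

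Claim (ii) is the heart of the argument, and the key geometric input is the following: for every $x\in[0,1]^d$, if $S(x)$ denotes the slab containing $x$ then $\mathrm{vol}(B(x,r)\cap[0,1]^d\cap S(x))\ge\tfrac12\,\mathrm{vol}(B(x,r)\cap[0,1]^d)$. I would prove this by slicing along the first coordinate: for each fixed value of $(y_2,\dots,y_d)\in[0,1]^{d-1}$ the corresponding cross-section of $B(x,r)\cap[0,1]^d$ is an interval $J$ of half-length $\rho\le r$ centred at $x_1$, the cross-section of the slab-restricted set is $J\cap[(j-1)w,jw]$ where $S(x)=S_j$, and since $w\ge r\ge\rho$ and $x_1\in[(j-1)w,jw]$ a short case distinction (according to whether $J$ sticks out past neither, one, or both walls of the slab) gives $|J\cap[(j-1)w,jw]|\ge|J|/2$ in every case; integrating over $(y_2,\dots,y_d)$ yields the claim, and the cube boundary is harmless since it constrains the admissible cross-sections identically in the numerator and the denominator. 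Granting this, write $\mu_x\coloneqq\mathrm{vol}(B(x,r)\cap[0,1]^d)$ and $\nu_x\coloneqq\mathrm{vol}(B(x,r)\cap[0,1]^d\cap S(x))\ge\mu_x/2$. Conditioned on $X_v=x$ we have $d_G(v)\sim\mathrm{Bin}(n-1,\mu_x)$, and conditioned additionally on $d_G(v)=k$ the variable $d_H(v)$ is distributed as $\mathrm{Bin}(k,\nu_x/\mu_x)$ with $\nu_x/\mu_x\ge\tfrac12$; a Chernoff bound then gives $\Pr[d_H(v)<(1/2-\eps)k\mid d_G(v)=k]\le\mathrm{e}^{-c\eps^2 k}$ for an absolute constant $c>0$, and averaging over $k$ (using $\Exp[\mathrm{e}^{-tB}]\le\mathrm{e}^{-(n-1)\mu_x(1-\mathrm{e}^{-t})}$ for $B\sim\mathrm{Bin}(n-1,\mu_x)$) yields $\Pr[d_H(v)<(1/2-\eps)d_G(v)\mid X_v=x]\le\mathrm{e}^{-c'\eps^2(n-1)\mu_x}$. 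Choosing $C_2=C_2(\eps)$ large enough that $c'\eps^2(n-1)\mu_x\ge 2\log n$ and taking a union bound over the $n$ vertices establishes (ii).

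The step I expect to be most delicate is the last one: bounding $(n-1)\mu_x$ from below uniformly over the vertices. A vertex lying extremely close to an \emph{interior} slab wall has volume ratio only barely above $\tfrac12$, so there is essentially no slack in the Chernoff estimate, and one must check that its $G$-degree is nonetheless of the right order; this uses that interior walls lie at distance at least $w\ge r$ from the faces $\{y_1=0\}$ and $\{y_1=1\}$, so that such a vertex has its ball unclipped in the first coordinate. The genuinely fiddly case is that of vertices simultaneously close to a slab wall and to a low-dimensional face of $[0,1]^d$, where a more careful integration of the tail $\mathrm{e}^{-c'\eps^2(n-1)\mu_x}$ against $\mathrm{vol}(\mathrm{d}x)$ near the boundary is required; here one exploits that a vertex whose first coordinate lies within $r$ of $\{y_1=0\}$ or $\{y_1=1\}$ keeps \emph{all} of its neighbours in $H$, so the condition at such vertices is automatic and only vertices near the $m-1$ interior walls need to be controlled.
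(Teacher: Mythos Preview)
Your approach is essentially the paper's: partition $[0,1]^d$ into slabs orthogonal to the first axis and delete all cross-slab edges. The one substantive difference is your slab width $w\in[r,2r)$ versus the paper's $w\in[2r,4r]$. With width at least $2r$ the ball $B(x,r)$ can protrude past at most one slab wall, so the reflection $y_1\mapsto 2x_1-y_1$ immediately gives $|S(x)\cap B(x,r)\cap[0,1]^d|\ge\tfrac12|B(x,r)\cap[0,1]^d|$ with no case analysis; your narrower slabs force the slicing argument (which is correct, including the two-sided overhang case you mention). The paper then bounds $|V\cap B(v,r)|$ from above and $|V\cap S(v)\cap B(v,r)|$ from below by two separate Chernoff bounds rather than conditioning on $d_G(v)=k$ as you do, but both routes work.

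Your final paragraph overcomplicates matters and contains an error. The claim that a vertex with $x_1<r$ keeps all its neighbours in $H$ is false with your slab width: such a vertex lies in $S_1=[0,w]$, but if $x_1>w-r$ (which is possible since $w-r<r$) it has neighbours in $S_2$. (It \emph{would} be true had you taken $w\ge 2r$.) Fortunately you do not need this claim: the universal bound $\mu_x\ge 2^{-d}\theta_d r^d$ already gives $(n-1)\mu_x\ge c_d\,C_2^d\log n$ for every $x\in[0,1]^d$, and that is all the Chernoff step requires. There is no separate ``fiddly'' corner case to handle.
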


We believe that Proposition~\ref{prop:conn_optimal} holds for all smaller radial values as well.
However, our simple proof requires very good concentration on the number of vertices of $G_d(n,r)$ in certain regions of area $\Theta(r^d)$.
This only holds when $r\ge C(\log n/n)^{1/d}$ for a suitable constant $C > 0$.
A question related to \cref{prop:conn_optimal} is whether a.a.s.\ there exists a $(1/2-\eps)$-subgraph of $G_d(n,r)$ all whose components have order $o(rn)$.
This is clearly not possible when $d=1$, in which case \cref{prop:conn_optimal} is best possible.
Understanding this problem in higher dimensions seems appealing.

Going back to \cref{thm:connectivityintro}, we remark that the expression $C_1 (\log n/n)^{1/d}$ must be larger than the (sharp) connectivity threshold of $G_d(n,r)$.
This implies that, for fixed $\eps$, the constant $C_1(d,\eps)$ needs to grow at speed $\Omega(\sqrt{d})$ as a function of $d$ (see, e.g.,~\cite[Theorem~13.2]{Pen03}).
In fact, our proof shows that it suffices to have a constant $C_1$ which grows with speed $\Theta(\sqrt{d})$ (see Theorem~\ref{thm:connectivity} for more details).
The behaviour of $C_1$ with respect to $\eps$ is more mysterious. 
While we are uncertain if $C_1$ can be chosen universally for all $\eps \in (0, 1/2]$ as a function of $d$ when $d\ge 3$, we can prove that this is not the case when $d\in \{1, 2\}$.

\begin{theorem}\label{thm:C}
The following statements hold.
\begin{enumerate}[label=$(\mathrm{\roman*})$]
    \item\label{thm:Citem1} For every\/ $\eps\in(0,1/2]$, if\/ $r \leq \log n/4\eps n$, then a.a.s.\ there exists a disconnected\/ $(1/2+\eps)$-subgraph of\/ $G_1(n,r)$.
    \item\label{thm:Citem2} For every\/ $C > 0$, there exists\/ $\eps = \eps(C)\in (0,1/2]$ such that, for every\/ $r \le C (\log n/n)^{1/2}$, a.a.s.\ there exists a disconnected\/ $(1/2+\eps)$-subgraph of\/ $G_2(n,r)$.
\end{enumerate}
\end{theorem}

Let us discuss an important consequence of this result.
Rather than an instance of the random graph, let us consider a process which generates random graphs by adding edges one by one.
The corresponding process for binomial random graphs is the \emph{Erd\H{o}s-R\'enyi random graph process} $(G_i)_{i=0}^{n(n-1)/2}$, where $G_0$ is the graph with vertex set $[n]$ and no edges and, for each $i\in [\inbinom{n}{2}]$, the graph $G_i$ is obtained from $G_{i-1}$ by adding an edge chosen uniformly at random among those not contained in $G_{i-1}$. 
For a monotone increasing graph property $\mathcal{P}$, as the number of edges increases throughout this process, there must be a least index $i$ such that $G_i\in\mathcal{P}$.
This index, which is a random variable, is called the \emph{hitting time} for $\mathcal{P}$ in the random graph process.
A classical result of \citet{BT85} asserts that a.a.s.\ the hitting time for connectivity in this random graph process coincides with the hitting time for having minimum degree at least $1$.
\citet{HT19} showed that a local resilience version of this result holds: a.a.s.\ every $G_i$ which has minimum degree at least $1$ in the random graph process satisfies that every $(1/2+\eps)$-subgraph of $G_i$ is connected.

A similar process to the random graph process can be defined for random geometric graphs.
Recall that here one considers $n$ independent uniform random variables on $[0,1]^d$.
Let these variables be $X_1,\ldots,X_n$.
For each $e=ij\in\inbinom{[n]}{2}$, consider the random variable $\lVert e\rVert\coloneqq\lVert X_i-X_j\rVert$.
These define a total order on $\inbinom{[n]}{2}$ (note that all distances are distinct with probability $1$).
We label all pairs as $e_1,\ldots,e_{\binom{n}{2}}$ so that $\lVert e_i\rVert<\lVert e_j\rVert$ whenever $1\leq i<j\leq\inbinom{n}{2}$.
Then, for each $i\in\{0\}\cup[\inbinom{n}{2}]$, we let $G_i\coloneqq([n],\{e_j:j\in[i]\})$.
This again defines a random sequence of nested graphs, to which we refer here as the \emph{$d$-dimensional random geometric graph process}.
A classical result of \citet{Penrose97,Penrose99}\COMMENT{The first paper proves the result only for $d=2$. The second paper proves the hitting time for all dimensions, and also for all values of $k$ in $k$-connectivity.} shows that a phenomenon similar to that of the Erd\H{o}s-R\'enyi random graph process occurs when $d\geq2$: in the random geometric graph process, a.a.s.\ every $G_i$ which has minimum degree at least~$1$ is connected (see also \cite[Theorem~13.17]{Pen03}).
The same does not hold when $d=1$: in this case, there is a positive probability that, as edges are added one by one throughout the process, all isolated vertices disappear while leaving a disconnected graph.
In fact, the sharp threshold for connectivity and the sharp threshold for having minimum degree at least~$1$ in $G_1(n,r)$ differ by a factor of~$2$~\cite{GJ96}.

\cref{thm:C}~\ref{thm:Citem1} provides a range of $r$ above the sharp threshold for connectivity where $G_1(n,r)$ is not $(1/2-\eps)$-resilient with respect to connectivity.
This shows a contrasting behaviour with respect to binomial random graphs, and more in line with the result for random regular graphs, proving that a polynomial dependency between $C$ and $\eps$ cannot be avoided.
More interesting is the consequence of \cref{thm:C}~\ref{thm:Citem2}: it implies that, in contrast with the case of the Erd\H{o}s-R\'enyi random graph process, a local resilience version of the hitting time result of \citet{Penrose97,Penrose99} also fails when $d=2$.
It would be interesting to determine whether this is also the case for larger $d$ or, on the contrary, a hitting time version holds for sufficiently large $d$.

\begin{problem}\label{prob:hitting}
    Determine whether there exists a positive integer $d_0$ such that the following holds:
    for every $d\geq d_0$ and every $\eps\in (0,1/2]$, a.a.s.\ every graph in the $d$-dimensional random geometric graph process with minimum degree at least one satisfies that each of its $(1/2+\eps)$-subgraphs is connected.
\end{problem}

It is worth bringing up the fact that \citet{Mont19} and \citet{NST19} obtained a local resilience version of the hitting time result for Hamiltonicity (\citet{NST19} also proved an analogous result for perfect matchings).
\cref{thm:C} shows that such a result is impossible for random geometric graphs when $d\in\{1,2\}$.
If the answer to \cref{prob:hitting} is affirmative, it would be interesting to determine whether a local resilience hitting time result for Hamiltonicity is possible in the random geometric graph process for sufficiently large $d$.

\subsection{Long cycles}

Next, we turn our attention to the existence of long cycles in $G_d(n,r)$. 
In view of \cref{prop:conn_optimal}, it would be meaningful to show that every $(1/2+\eps)$-subgraph of $G_d(n,r)$ contains cycles of linear length.
It turns out that not only linearly long cycles exist, but for any fixed vertex $v$ in $G_d(n,r)$, at least one such cycle goes through $v$.

\begin{theorem}\label{thm:long cycles intro}
For every $\eps\in (0,1/2]$ and integer $d\geq 1$, there exist positive constants $c_3 = c_3(d, \eps)$ and $C_3 = C_3(d, \eps)$ such that, if $r\in [C_3 (\log n/n)^{1/d}, c_3]$, then a.a.s.\ $G_d(n,r)$ satisfies the following property:
for every integer $L\in [c_3^{-1}, 2n/3]$, every
$(1/2+\eps)$-subgraph $H \subseteq G_d(n,r)$ and every vertex $v\in V(G_d(n,r))$, there is a cycle of length $L$ in $H$ containing $v$.
\end{theorem}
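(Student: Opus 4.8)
The plan is to exploit the grid-like geometry of $G_d(n,r)$ together with the observation that the adversary's deletion budget of at most $(1/2-\eps)d_{G_d(n,r)}(v)$ edges at each vertex $v$ is too small to destroy the \emph{medium-range} connectivity of any vertex, and then to build the required cycle by a rotation--extension argument that sweeps through the cube while keeping exact control of the length. First I would tile $[0,1]^d$ by axis-parallel cells of side $\ell=r/(2\sqrt{d})$ — so that any two points lying in the same cell or in king-adjacent cells are within distance $r$ — and group these cells into super-cells of side $s$, a suitable constant multiple of $r$. Since $r\ge C_3(\log n/n)^{1/d}$, a Chernoff bound and a union bound over the $\Theta(r^{-d})$ cells and the $n$ vertices show that a.a.s.\ every cell contains $(1\pm\delta)\ell^d n=\Omega(\log n)$ points and every vertex $v$ satisfies $d_{G_d(n,r)}(v)=(1\pm\delta)\,\lvert B_r(v)\cap[0,1]^d\rvert\,n=\Theta(r^d n)$, where $\delta>0$ is a small constant that we can make as small as we wish by enlarging $C_3$; I condition on this event henceforth.

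Fix a $(1/2+\eps)$-subgraph $H$. The key structural fact is a budget estimate: for any halfspace $S$ whose bounding hyperplane meets $B_r(v)$ and which contains at least a $(1/2-\eps/2)$-fraction of $B_r(v)\cap[0,1]^d$, there are at least $(1/2-\eps/2-O(\delta))d_{G_d(n,r)}(v)$ edges of $G_d(n,r)$ from $v$ into $S$, while at most $(1/2-\eps)d_{G_d(n,r)}(v)$ edges at $v$ are deleted in all, so $v$ keeps $\Omega(r^d n)$ edges into $S$. Taking $S$ to be either of the two halfspaces bounded by a face of the super-cell grid gives: every vertex of the width-$O(r)$ ``transition zone'' around a common face of two consecutive super-cells $Y,Y'$ retains $\Omega(r^d n)$ neighbours in $Y$ and $\Omega(r^d n)$ in $Y'$. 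Moreover, since $B_r(v)\subseteq\cB$ whenever $v$ lies in a super-cell $Z$ and $\cB$ is the king-block formed by $Z$ and the super-cells king-adjacent to it (here $s\ge r$ is used), every vertex of $Z$ keeps all of its $\ge(1/2+\eps)d_{G_d(n,r)}(v)=\Omega(r^d n)$ retained edges inside $\cB$; thus $H[\cB]$ is locally dense on the vertices of $Z$, and one may run P\'osa rotations there and find long paths through $Z$. (Where connectivity of a block is needed, it follows from a local version of the argument behind \cref{thm:connectivityintro}.)

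To assemble the cycle I would fix a path $Y_1,\dots,Y_k$ in the super-cell grid — a portion of a Hamilton path of that grid — covering roughly a $2/3$-fraction of all super-cells, with $Y_1$ the super-cell containing the prescribed vertex $v$ and with consecutive $Y_i$'s sharing a $(d-1)$-face, and then build a path of $H$ that starts at $v$ and visits $Y_1,\dots,Y_k$ in order: inside $Y_i$, rotation--extension within the locally dense graph $H[\cB_i]$ (with $\cB_i$ the king-block around $Y_i$) is used both to drive the current endpoint into the $Y_i$--$Y_{i+1}$ transition zone and to absorb a prescribed number of vertices of $Y_i$, and the step from $Y_i$ to $Y_{i+1}$ then uses one of the $\Omega(r^d n)$ guaranteed cross-face edges; keeping the transition zones a small fraction of each super-cell ensures vertices are not reused across overlapping blocks. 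Closing the path into a cycle through $v$ by routing the last super-cell back toward $Y_1$, and choosing how many super-cells to traverse and how many vertices to use in each — with one designated ``flexible'' super-cell to tune the total within an interval of length $\Theta(r^d n)$ — realises every length $L\in[c_3^{-1},2n/3]$; when $L$ is below $\Theta(r^d n)$ the same construction is performed inside a constant-size cluster of super-cells around $v$, and $c_3^{-1}$ is taken large enough that even the shortest required cycles live comfortably inside that cluster (so the near-cliques and short odd cycles the adversary may locally destroy do not matter).

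The main obstacle will be the assembling step — above all, making the rotation--extension argument, and in particular the claim that the current endpoint can always be steered into the next transition zone, work uniformly over all dimensions $d$ (the geometry of the transition zones and the identity of the vertices that retain edges across a face are $d$-dependent), while \emph{simultaneously} keeping the number of used vertices under exact control and avoiding reuse of vertices across overlapping blocks. Pushing the covered fraction up to $2n/3$ rather than a smaller constant, and anchoring the whole construction at an arbitrary prescribed $v$ — including when $v$ sits near a super-cell corner — will also require care. By contrast, the concentration estimates and the halfspace budget computation are routine.
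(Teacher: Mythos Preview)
Your proposal takes a genuinely different route from the paper, and the central step---running P\'osa rotation--extension inside a king-block $\cB$ to absorb a prescribed number of vertices and steer the endpoint---has a real gap.

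The difficulty is quantitative. With $s$ a constant multiple of $r$, the king-block $\cB$ around a super-cell $Z$ has $|V(G)\cap\cB|=\Theta(s^d n)$ vertices, while a vertex $v\in Z$ only has $d_H(v)\ge(1/2+\eps)\theta_d r^d n$, and those neighbours lie in $B_r(v)$, not spread across $\cB$. So $H[\cB]$ is \emph{not} locally dense in the sense P\'osa rotation needs: the relative minimum degree is a small constant (decaying like $3^{-d}$ or worse), and the adversary can arrange that small sets fail to expand within $\cB$. Standard rotation--extension will then stall long before covering a large fraction of $\cB$. More to the point, to reach total length $2n/3$ by your scheme you would need to absorb essentially all vertices of each visited super-cell---i.e.\ a local near-Hamiltonicity statement---and that is precisely the content of \cref{mainconjecture}, which the paper leaves open. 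Nothing in your outline explains why the threshold is $2n/3$ rather than some smaller constant fraction; this number has to come from somewhere, and rotation--extension does not naturally produce it.

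The paper sidesteps all of this with a different device. It randomly $3$-colours the vertices---roughly $1/3$ blue, $2/3$ red, a tiny green reserve---tessellates into cells of diameter $\delta r$ (so all blue vertices in a cell $q$ share essentially the same $G$-neighbourhood), and assigns each red vertex to a single nearby cell. The crucial observation is that in the bipartite graph $H[B_q,R_q]$ between the blue vertices $B_q$ in $q$ and the red vertices $R_q$ assigned to $q$, every blue vertex keeps degree at least $(1/3+\Omega(\eps))s^d n\ge |B_q|$ while $|R_q|\le(2/3+o(1))s^d n\le 2|B_q|-2$; Jackson's bipartite theorem then gives a cycle through \emph{all} of $B_q$. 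These per-cell cycles are strung together along a path in the cell-grid using the green vertices as connectors. The cycle alternates blue with red/green, so its length is exactly $2\ell$ where $\ell$ is the number of blue vertices used; since there are $\approx n/3$ blue vertices in total, this yields every even length up to $2n/3$, and odd lengths are obtained by a local parity fix. The $2n/3$ barrier is thus an artefact of the $1{:}2$ bipartition, not of expansion, and the paper never needs anything like local Hamiltonicity.

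Your halfspace budget computation and the cross-face connectivity are fine and would be useful ingredients, but on their own they only give short hops between regions; the missing idea is a mechanism---here, the colouring plus Jackson---that converts the $(1/2+\eps)$-degree condition into long alternating paths of controllable length without ever asking for dense expansion inside a block.
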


We note that, under the assumptions of Theorem~\ref{thm:long cycles intro}, a.a.s.\ all vertices are contained in cycles of any constant even length $L\ge 4$; for more details, see Remark~\ref{rem:long cycles}.

If Conjecture~\ref{mainconjecture} holds, in the light of Theorem~\ref{thm:long cycles intro}, it is tempting to believe that well-connected random geometric graphs are $(1/2\pm o(1))$-resilient with respect to the property of containing a cycle of any sufficiently large length.

\begin{conjecture}\label{conj:pancyclicity}
For a suitable choice of $c_3$ and $C_3$, Theorem~\ref{thm:long cycles intro} holds for all $L\in [c_3^{-1}, n]$.
\end{conjecture}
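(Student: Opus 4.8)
The plan is to establish \cref{conj:pancyclicity} --- which would in particular settle \cref{mainconjecture} --- by an \emph{absorption} argument built on top of the geometric backbone already implicit in \cref{thm:long cycles intro}, the aim being to shrink the ``wasted'' reservoir from a $\Theta(1)$-fraction of the vertices to an $o(1)$-fraction. First I would fix a tessellation of $[0,1]^d$ into axis-parallel cells of side length $\Theta(r/\sqrt{d})$, chosen so that any two points lying in the same cell or in two $\ell_\infty$-adjacent cells are within Euclidean distance $r$; thus each cell together with its neighbours induces a clique of $G_d(n,r)$. For $r\ge C_3(\log n/n)^{1/d}$ with $C_3$ large, a.a.s.\ every cell contains $\Theta(nr^d)=\Omega(\log n)$ vertices with tightly concentrated counts, and the cell-adjacency graph (a subgrid) contains a Hamilton cycle $\cC$ through all nonempty cells; this $\cC$ is the track along which the long cycle will ``snake''.

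The technical heart would be a \emph{robust connecting lemma}: in \emph{every} $(1/2+\eps)$-subgraph $H$, for any two vertices $u,v$ in the same cell or in consecutive cells of $\cC$, and any set $W$ of $o(\log n)$ forbidden vertices, there are many short internally disjoint $u$--$v$ paths in $H-W$ using only the corresponding local clique. Since the ball of radius $r$ about any vertex contains $\Theta(\log n)$ vertices, of which more than half survive in $H$, this is a Dirac-type condition on a $\Theta(\log n)$-vertex set, from which a P\'osa-style rotation--extension or a breadth-first expansion argument should produce such connections robustly. Crucially, the adversary may leave $H$ locally $K_4$-free --- for instance by randomly $3$-colouring the vertices and deleting monochromatic edges --- so clique shortcuts are unavailable, and the argument must genuinely exploit the local minimum-degree condition.

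Granting this, the remaining steps are standard in spirit. For almost every vertex $w$ one exhibits $\Omega(\log n)$ vertex-disjoint candidate \emph{absorbers} --- short paths with endpoints in $w$'s cell that, in any $(1/2+\eps)$-subgraph, can be routed either through $w$ or around it --- and selects one per vertex robustly via a hypergraph matching argument (or the Lov\'asz Local Lemma), stitching them with the connecting lemma into an absorbing path $\cA$ of length $o(n)$. One then grows a path along the cells of $\cC$ (reapplying the connecting lemma cell by cell, and starting at the prescribed vertex $v$) to cover all remaining vertices, closes it up through $\cA$, and absorbs all leftover vertices to obtain a Hamilton cycle; to obtain a cycle of prescribed length $L$ one stops the snake early and uses flexible absorbers (or a small secondary length-adjusting reservoir) to land exactly on $L$. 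Since local resilience is not monotone in $r$, all of this must be carried out uniformly over $r\in[C_3(\log n/n)^{1/d},c_3]$; for $r$ a small constant each local clique has $\Theta(n)$ vertices and Dirac's theorem applies essentially verbatim per cell, so the hard end of the range is $r\approx(\log n/n)^{1/d}$.

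I expect the connecting lemma and the construction of adversary-proof absorbers in this sparse regime to be the main obstacle. There the ``local'' graphs have only $\Theta(\log n)$ vertices, so one cannot afford the polynomial losses typical of expansion and absorption arguments, and one must control an adversary who sees the entire host graph and may make every local neighbourhood $K_4$-free. This is presumably also why the current argument stalls at $2n/3$: that constant-fraction reservoir is exactly what permits routing connections and adjusting cycle lengths \emph{without} absorbers, and replacing it by a vanishing reservoir is precisely the step that \cref{conj:pancyclicity} --- and \cref{mainconjecture} --- require.
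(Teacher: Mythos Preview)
The statement you are addressing is \cref{conj:pancyclicity}, which the paper explicitly leaves as an \emph{open conjecture}; there is no proof in the paper to compare against. Your write-up is not a proof but a research plan, and you are candid about this: you yourself flag the connecting lemma and the construction of adversary-proof absorbers in the sparse regime $r\approx(\log n/n)^{1/d}$ as ``the main obstacle'', and you correctly diagnose that the $2n/3$ barrier in \cref{thm:long cycles intro} comes precisely from using a constant-fraction reservoir in lieu of absorption.

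That diagnosis is accurate, and the outline you give (tessellation, local Dirac-type connecting lemma, absorbers, snake along a Hamilton path of the cell graph) is the natural shape such an argument would take. But the gap you identify is a real one, not a technicality: in the local $\Theta(\log n)$-vertex graphs the adversary can destroy all short cycles through a prescribed vertex (see \cref{rem:long cycles}), so ``short paths with endpoints in $w$'s cell that can be routed either through $w$ or around it'' need not exist in the form you describe, and standard absorber templates --- which typically rely on local triangles or $K_4$'s --- are exactly what the adversary can eliminate. Producing absorbers that survive an adversary who sees the whole graph, in a host where each neighbourhood has only $\Theta(\log n)$ vertices, is the crux of \cref{mainconjecture} itself; your proposal names this difficulty but does not resolve it, so it remains a plan rather than a proof.
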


In fact, it is also conceivable that a version of \cref{mainconjecture} for pancyclicity (that is, the property of containing cycles of all lengths between 3 and $n$) holds as well.

\subsection{Lower bounds on the local resilience with respect to Hamiltonicity}

As a way of making progress towards \cref{mainconjecture}, we also prove lower bounds on the local resilience of random geometric graphs with respect to Hamiltonicity. 
For each $d\ge 1$, let $\theta_d$ denote the volume of a $d$-dimensional ball with radius 1.

\begin{theorem}\label{thm:lowerboundgeneral}
For every constant\/ $\eps\in (0,1/(2d^{d/2}\theta_d)]$ and integer\/ $d\geq1$, there exists a constant\/ $C_4=C_4(d,\eps)>0$ such that, for\/ $C_4(\log n/n)^{1/d}\leq r=o(1)$, a.a.s.\ every\/ $(1-1/(2d^{d/2}\theta_d)+\eps)$-subgraph of\/ $G_d(n,r)$ is Hamiltonian.
\end{theorem}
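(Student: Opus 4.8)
The plan is to reduce Hamiltonicity of an arbitrary $(1-1/(2d^{d/2}\theta_d)+\eps)$-subgraph $H\subseteq G_d(n,r)$ to the existence of a Hamilton cycle in an auxiliary \emph{grid-like} structure, exploiting the fact that the adversary is now only allowed to delete a $(1-\alpha)$-proportion of edges with $\alpha=1-1/(2d^{d/2}\theta_d)+\eps$ close to $1$, so that every vertex retains almost all of its neighbours. First, I would tile $[0,1]^d$ by axis-parallel cubes of side length $\ell\coloneqq r/\sqrt d$, so that any two points lying in the same cube or in \emph{adjacent} cubes (sharing at least a face, say; one must be slightly careful about which adjacencies to use) are within Euclidean distance $r$ and hence joined by an edge in $G_d(n,r)$. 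The number of cubes is $m\coloneqq\ell^{-d}=(\sqrt d/r)^d$, and since $r\ge C_4(\log n/n)^{1/d}$ with $C_4=C_4(d,\eps)$ large, a Chernoff bound plus a union bound over the $m=o(n/\log n)$ cubes shows that a.a.s.\ every cube contains $(1\pm\eps')\,n\ell^d=(1\pm\eps')\,n r^d/d^{d/2}$ vertices for a small $\eps'=\eps'(\eps)$. The key point is that a single cube $Q$, together with any fixed neighbouring cube $Q'$, spans in $G_d(n,r)$ a complete bipartite-plus-cliques graph on roughly $nr^d/d^{d/2}$ vertices per cube; since the \emph{volume} of the ball of radius $r$ is $\theta_d r^d$, the expected degree in $G_d(n,r)$ is about $\theta_d r^d n$, so a vertex $v\in Q$ sees about $\theta_d r^d n$ neighbours in total and about $nr^d/d^{d/2}$ of them inside $Q$. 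The threshold $1/(2d^{d/2}\theta_d)$ is exactly chosen so that, after the adversary's deletions, $v$ retains strictly more than half of its neighbours \emph{inside $Q$} (and likewise more than half inside any chosen adjacent cube): indeed $v$ keeps more than $(1-\alpha)\theta_d r^d n=(1/(2d^{d/2}\theta_d)-\eps)\theta_d r^d n=(1/2-\eps d^{d/2}\theta_d)\,nr^d/d^{d/2}$ neighbours overall, all of which could in principle be the ones inside $Q$, but the \emph{complementary} bound is what we need: $v$ \emph{loses} at most $\alpha\,\theta_dr^dn=(1-1/(2d^{d/2}\theta_d)+\eps)\theta_d r^d n$ edges, and since the number of its neighbours inside $Q$ is at least $(1-\eps')nr^d/d^{d/2}$, as long as $d^{d/2}\theta_d\cdot\alpha<1-O(\eps')$ — which is exactly the content of $\alpha<1-1/(2d^{d/2}\theta_d)$ with room to spare — $v$ keeps more than half of its $Q$-neighbours, so $H[Q]$ has minimum degree greater than $|Q|/2$ and is Hamiltonian by Dirac's theorem; the same argument applied with $Q'$ in place of $Q$ gives that $v$ keeps more than half of its neighbours in $Q'$ too.

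The second step is a \emph{stitching} argument across cubes. Build an auxiliary graph $\Gamma$ on the $m$ cubes, where two cubes are adjacent if they are geometric neighbours in the tiling; $\Gamma$ is (a subgraph of) the $d$-dimensional grid/king-graph on $m$ vertices and in particular contains a spanning closed walk that traverses each cube, and one can extract from it a Hamilton cycle of $\Gamma$ when the side-count is even, or handle the boundary parity issue by a standard snake-like traversal of the grid (this is where a little care with $d$ and parities of $\ell^{-1}$ is needed; rounding $\ell^{-1}$ to an even integer and absorbing the leftover slab costs only a $(1+o(1))$ factor). Fix such a Hamilton cycle $Q_1Q_2\cdots Q_mQ_1$ of $\Gamma$. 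Within each consecutive pair $(Q_i,Q_{i+1})$ we have just shown $v$ keeps more than half of its neighbours on the other side, for every $v$; hence the bipartite graph $H[Q_i,Q_{i+1}]$ has the property that both sides have minimum degree greater than half the opposite side's size, and in particular it contains a perfect matching on $\min(|Q_i|,|Q_{i+1}|)$ edges and more flexibly a large ``near-spanning'' matching missing only the surplus vertices — actually what I really want is a Hamilton path of $H[Q_i]$ whose endpoints can be prescribed (up to a small set), which again follows from a Dirac/Chvátal–Erdős-type statement with the extra connectivity. Then splice: walk through $Q_1$ along a Hamilton path ending at a vertex matched into $Q_2$, cross to $Q_2$, walk through all of $Q_2$ ending at a vertex matched into $Q_3$, and so on around the cycle of cubes, finally closing up at $Q_1$. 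Since the cycle of cubes visits every cube exactly once and we traverse \emph{all} vertices of every cube, the result is a Hamilton cycle of $H$. To make the path-endpoints match up, I would use the standard tool that in a graph of minimum degree $>n/2+k$ one can prescribe the two endpoints of a Hamilton path, or more robustly route through a fixed ``connector'' vertex in each cube; the slack provided by $\eps$ (which gives linear-in-$|Q|$ surplus degree) is more than enough.

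I expect the main obstacle to be \textbf{making the cube-to-cube transitions consistent}: each cube $Q_i$ must supply a Hamilton path whose \emph{first} endpoint is a $G_d(n,r)$-neighbour of the last vertex used in $Q_{i-1}$ and whose \emph{second} endpoint is a neighbour of some vertex of $Q_{i+1}$, and these two constraints must be satisfiable simultaneously inside the dense-but-adversarially-damaged graph $H[Q_i]$. The clean way around this is to not prescribe exact endpoints but to choose, in each $Q_i$, a small ``buffer'' set $B_i$ of $\Theta(|Q_i|)$ vertices that are fully connected to the neighbouring buffers (possible since every vertex of $Q_i$ keeps $>|Q_i|/2$ neighbours in $Q_{i+1}$, so a greedy/alteration argument finds $B_i,B_{i+1}$ with complete bipartite connection of linear size), absorb $B_i$ at the two ends of the $Q_i$-path using an absorption gadget, and connect consecutive buffers arbitrarily; all of this is routine given the linear degree surplus, but writing it so that the parity of the number of cubes and the boundary slabs of the tiling do not break the cycle structure is the fiddly part. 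A secondary, purely bookkeeping obstacle is verifying the constant: one must check $1-1/(2d^{d/2}\theta_d)$ is the \emph{exact} value of $\alpha$ for which ``keeps more than half of its in-cube neighbours'' holds with the side length $r/\sqrt d$ — any other tiling scale would give a worse constant, and the factor $d^{d/2}$ is precisely $(\sqrt d)^d$, the ratio between the volume $\theta_d r^d$ of the radius-$r$ ball and the volume $(r/\sqrt d)^d$ of the inscribed-style cube used in the tiling; I would double-check there is no off-by-a-constant in the adjacency rule (face-adjacent vs.\ corner-adjacent cubes) that forces a smaller cube and hence a smaller admissible $\alpha$.
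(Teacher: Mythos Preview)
Your overall architecture is exactly the paper's: tile $[0,1]^d$ by cubes of side $s\approx r/\sqrt d$, check that $H[q]$ has minimum degree above $|q|/2$ for every cell $q$ (this is precisely where $1/(2d^{d/2}\theta_d)$ enters, and your computation is right once you correct the slip ``$v$ loses at most $\alpha\,\theta_dr^dn$'' to ``at most $(1-\alpha)\,\theta_dr^dn$''), then stitch along a spanning traversal of the cell graph $\Gamma$. The paper's stitching tool is a two-paths lemma (Lemma~\ref{lemma:2paths}, proved in one line from the Bondy--Chv\'atal closure): if $\delta(G)\ge n/2+1$ then for any two disjoint pairs $\{u,v\},\{w,x\}$ one can cover $V(G)$ by two vertex-disjoint $(\{u,v\},\{w,x\})$-paths; this is applied in each cell along a Hamilton \emph{path} of $\Gamma$, traversed out and back. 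Your one-Hamilton-path-per-cell along a Hamilton \emph{cycle} of $\Gamma$ is an equivalent variant.

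The genuine gap is in the cross-cell step, and it is exactly the ``off-by-a-constant in the adjacency rule'' you flagged but did not resolve. Your claim that any two points in face-adjacent cubes lie within distance $r$ is false: two such cubes of side $r/\sqrt d$ have joint diameter $r\sqrt{(d+3)/d}>r$, so a vertex in the far corner of $Q_i$ need not see \emph{any} of $Q_{i+1}$ in $G$, let alone in $H$. Hence ``$H[Q_i,Q_{i+1}]$ has bipartite minimum degree above half the other side'' fails, and so does ``$v$ keeps more than half its neighbours in $Q'$'' for general $v\in Q_i$. Shrinking the cubes to side $r/\sqrt{d+3}$ would repair the adjacency but would change the constant in the theorem. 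The paper instead keeps $s\approx r/\sqrt d$ and introduces a fine refinement $\cQ^*$ of the tiling: it only needs \emph{two} $H$-edges across each interface, and it produces them by selecting the $Q_i$-endpoints inside the tiny $\cQ^*$-subcell at the centre of the shared face. Such a vertex is within distance $r$ of essentially all of $Q_{i+1}$ (all of it for $d\ge 2$; all but one subcell for $d=1$), and the same degree-count you ran inside a single cell then shows it retains more than half of those neighbours in $H$. Two connector edges per interface are exactly what Lemma~\ref{lemma:2paths} consumes, so the paper replaces your buffer/absorption scheme by a one-line pigeonhole.
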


This shows that a.a.s.\ every $(3/4+\eps)$-subgraph of $G_1(n,r)$ is Hamiltonian, while for $G_2(n,r)$ we can only prove the conclusion for $(c+\eps)$-subgraphs, where $c\approx 0.92$. 
The bound on the local resilience that we obtain becomes smaller as $d$ grows.
This approach can be improved by a more careful (albeit technical) analysis.
We provide a stronger result for the case $d=1$, where the analysis is simple and already showcases all the ideas.

\begin{theorem}\label{thm:lowerboundd=1}
For every constant\/ $\eps\in (0,1/3]$, there exists a constant\/ $C_5=C_5(\eps)>0$ such that, for\/ $C_5\log n/n\leq r=o(1)$, a.a.s.\ every\/ $(2/3+\eps)$-subgraph of\/ $G_1(n,r)$ is Hamiltonian.
\end{theorem}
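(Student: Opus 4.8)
The plan is to fix $\eps\in(0,1/3]$, set $G\coloneqq G_1(n,r)$, choose small constants $\beta=\beta(\eps)>0$ and $\delta=\delta(\eps)>0$ at the very end, and partition $[0,1]$ into $m\coloneqq\lceil 1/(\beta r)\rceil$ consecutive intervals $I_1,\dots,I_m$, each of length $\beta r$ apart from possibly the last one. The first step is a \emph{regularity} event on the point locations: by a Chernoff bound and a union bound over the at most $n$ intervals, a.a.s.\ every $I_t$ contains $(1\pm\delta)\beta rn$ points, where one uses $r\ge C_5\log n/n$ with $C_5=C_5(\eps)$ large enough that $\beta rn\gg\delta^{-2}\log n$. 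Condition on this event, and set $k'\coloneqq\lfloor 1/\beta\rfloor-1$. Then every vertex $v\in I_t$ lies within Euclidean distance $r$ of all of $I_{t+1}\cup\dots\cup I_{t+k'}$ and of all of $I_{t-k'}\cup\dots\cup I_{t-1}$ (each a set of at least $(1-3\beta)rn$ vertices for $\delta$ small), and $v$ has no $G$-neighbour outside $I_{t-2k'}\cup\dots\cup I_{t+2k'}$.

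The second step extracts the consequence of the resilience hypothesis. Fix a $(2/3+\eps)$-subgraph $H\subseteq G$. Summing interval counts, a vertex $v$ at position $x\in[r,1-r]$ has $(1\pm 3\delta)rn$ neighbours on each side, so $d_G(v)=(2\pm 6\delta)rn$, and $v$ fails to retain at most $(1/3-\eps)d_G(v)\le(2/3-\eps)rn$ of its neighbours. Hence $v$ retains at least $(1/3+\eps/2)rn$ neighbours on each side, and, combined with the first step, at least $(1-3\beta)rn-(2/3-\eps)rn=(1/3+\eps-3\beta)rn$ edges into $I_{t+1}\cup\dots\cup I_{t+k'}$ and just as many into $I_{t-k'}\cup\dots\cup I_{t-1}$; choosing $\beta<\eps/3$ makes this a positive quantity of order $rn$. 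A vertex within distance $r$ of an endpoint of $[0,1]$ has all of its neighbours on one side and retains at least a $(2/3+\eps)$-fraction of them, so the analogous conclusion holds near the boundary. This is exactly where the threshold $2/3$ enters: the computation leaves a \emph{substantial} retained degree (more than $\tfrac14 rn$, say) into nearby intervals on each side, whereas for $(1/2+\eps)$-subgraphs one can only guarantee an $\eps$-fraction, which is too little for the construction below.

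The third step constructs a Hamilton cycle of $H$ by a single left-to-right sweep over $I_1,\dots,I_m$, maintaining a path $P_j$ of $H$ spanning $I_1\cup\dots\cup I_j$ with both endpoints in the $k'$ most recently processed intervals; since $P_j$ therefore doubles back, it behaves like two parallel strands glued together near the left end of $[0,1]$. The core step is an \emph{absorption lemma}: from $P_j$ and the fresh vertex set of $I_{j+1}$ one builds a path $P_{j+1}$ spanning $I_1\cup\dots\cup I_{j+1}$ whose two endpoints now lie in $I_{j+2-k'}\cup\dots\cup I_{j+1}$, by repeatedly performing rotation--extension steps localised to the window $I_{j+1-2k'}\cup\dots\cup I_{j+1}$, in which every vertex of $I_{j+1}$ and every recently used vertex has $\Theta(rn)$ retained neighbours available. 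When $I_m$ is reached, $P_m$ is a Hamilton path of $H$ with both endpoints within $k'$ intervals of the right end of $[0,1]$; a further rotation--extension argument at the endpoints, again using the $\Theta(rn)$ retained edges there, closes $P_m$ into a Hamilton cycle. Finally one fixes $\beta=\beta(\eps)$ and $\delta=\delta(\eps)$ to satisfy all the inequalities above and takes $C_5=C_5(\eps)$ large enough for the regularity event; since the conclusion holds for every $(2/3+\eps)$-subgraph $H$ simultaneously, the theorem follows.

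I expect the absorption lemma to be the main obstacle: one must thread all $\Theta(\beta rn)$ vertices of a new interval into a single path using only the $\Theta(rn)$-sized sets of retained edges per vertex, keep both endpoints pinned near the moving frontier, and avoid prematurely closing off short cycles. This is precisely where the geometric structure (all nearby pairs being adjacent in $G$) and the two-sided degree split of the second step must be combined, and where the substantial retained fraction guaranteed by the $2/3$ threshold is used in the rotation counting; the remaining ingredients are routine concentration estimates.
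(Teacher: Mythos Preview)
Your outline diverges from the paper's proof, and the step you yourself flag as the obstacle---the ``absorption lemma''---is a genuine gap rather than a routine detail. Rotation--extension works when each rotation produces a new endpoint with many neighbours outside the current path, so that one eventually extends; but in your sweep you must absorb \emph{all} $\Theta(\beta r n)$ vertices of $I_{j+1}$ while keeping both endpoints inside a window of $k'$ intervals. A single rotation can move an endpoint anywhere in that window, including several intervals \emph{behind} the frontier, and nothing in your degree count prevents both endpoints from drifting backwards together; once that happens the invariant you need for the next step is lost. The $(1/3+\Theta(\eps))rn$ retained edges on each side guarantee one extension, not $\Theta(rn)$ controlled extensions with pinned endpoints. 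To make this work you would need a structural statement about $H$ restricted to each window---essentially that it is Hamilton-connected, or close to it---and you have not isolated such a statement. The closing step at the right boundary has the same issue.

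The paper sidesteps rotation--extension altogether by taking \emph{large} intervals of length $s\approx 4r/3$ and proving a deterministic P\'osa-type lemma (\cref{lemma:2pathsv2}): if a graph on $x$ vertices has degree sequence $d_1\le\dots\le d_x$ with $d_i\ge i+3$ for all $i<x/2-1$, then for any two disjoint pairs of vertices there are two vertex-disjoint paths between them spanning the whole graph. The verification that $H[I]$ satisfies this condition is where $2/3$ enters cleanly: vertices in the middle half of $I$ see all of $I$ in $G$ and hence keep $H$-degree above $x/2$, while vertices near an end of $I$ see less of $I$ but are correspondingly fewer in number, and the arithmetic balances precisely at resilience $2/3$. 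The Hamilton cycle is then assembled by walking along the intervals once forwards and once backwards, using the two spanning paths in each interval and two crossing edges between consecutive intervals. This replaces your iterative absorption by a single extremal lemma applied once per interval.
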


We believe the constant $2/3$ cannot be improved with our current approach.\COMMENT{The constants that we get without completely optimising the approach are the following. $d=2$: 0.89; $d=3$: 0.968155; $d=4$: 0.991178. So the improvement becomes essentially negligible very fast.}
The analysis here can yield improvements for all $d\geq1$ in \cref{thm:lowerboundgeneral}, but the resulting constant will still tend to~$0$ as~$d$ grows (see \cref{remark:lowerboundPosa}).
Of course, we do not believe these results to be tight and would be interested in seeing improvements upon them.

\subsection{The one-dimensional torus and powers of cycles}

A well-studied variant of the random geometric graph is the \emph{toroidal random geometric graph}. 
For positive integers $n$ and $d$ and a positive real number $r$, the toroidal random geometric graph $T_d(n,r)$ is defined analogously to $G_d(n,r)$, the only difference being that the $n$ random points are sampled from $\mathbb{R}^d/\mathbb{Z}^d$.
This model removes the effect of the boundary and, therefore, it sometimes results in a more straightforward analysis.
We believe that \cref{mainconjecture} should also hold if we replace $G_d(n,r)$ by $T_d(n,r)$.
Note, moreover, that \cref{thm:connectivityintro,thm:C,thm:long cycles intro,thm:lowerboundgeneral,thm:lowerboundd=1} as well as \cref{prop:conn_optimal} also hold in the toroidal setting.

Even when $d=1$, \cref{mainconjecture} and its analogue for the torus remain open.
A possibility to tackle this problem is to reduce it to a deterministic, structural problem using the following observation.
Given a graph~$G$, its $k$-th power, denoted $G^k$, is obtained by adding an edge between any two vertices of $G$ whose (graph) distance in $G$ is at most $k$.
We denote a cycle on $n$ vertices by $\cC_n$.

\begin{lemma}\label{lemma:sandwich}
    For every constant\/ $\eps\in (0,1]$, there exists a constant\/ $C_6=C_6(\eps)>0$ such that, for\/ $r\in [C_6\log n/n,1/2]$, a.a.s.\ $\cC_n^{(1-\eps)rn}\subseteq T_1(n,r)\subseteq \cC_n^{(1+\eps)rn}$.
\end{lemma}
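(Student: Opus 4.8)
The plan is to show that, on the torus $\mathbb{R}/\mathbb{Z}$, after relabelling the vertices by the cyclic order of their positions, the graph $T_1(n,r)$ lies between two fixed powers of the $n$-cycle. First I would sort the $n$ points $X_1,\dots,X_n$ on the circle and relabel them $1,\dots,n$ in cyclic order, so that the $n$-cycle $\cC_n$ is precisely the cycle joining consecutive points. Since edges of $T_1(n,r)$ join points at torus-distance at most $r$, and the torus-distance between two points equals the sum of the lengths of the arcs between the consecutive points separating them along the shorter side, membership of the edge $ij$ in $T_1(n,r)$ is governed purely by these consecutive gap lengths. Hence the whole statement reduces to a single quantitative claim about the gaps $D_1,\dots,D_n$, where $D_t$ is the length of the arc between the $t$-th and $(t+1)$-th points (indices mod $n$): it suffices to show that a.a.s.\ \emph{every} run of consecutive gaps satisfies a two-sided bound relating its length to the number of gaps it contains.

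Concretely, I would prove that a.a.s., for every $t$ and every $\ell\in[n]$,
\begin{equation*}
\ell < (1-\eps)rn \ \Longrightarrow\ D_t+D_{t+1}+\dots+D_{t+\ell-1} \le r
\qquad\text{and}\qquad
\ell > (1+\eps)rn \ \Longrightarrow\ D_t+\dots+D_{t+\ell-1} > r.
\end{equation*}
The first implication gives $\cC_n^{(1-\eps)rn}\subseteq T_1(n,r)$ (any two vertices at cyclic distance $<(1-\eps)rn$ are within torus-distance $r$), and the second gives $T_1(n,r)\subseteq\cC_n^{(1+\eps)rn}$ (any edge of $T_1(n,r)$ joins vertices at cyclic distance at most $(1+\eps)rn$). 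To establish this, recall that the vector $(D_1,\dots,D_n)$ has the same distribution as $(E_1,\dots,E_n)/(E_1+\dots+E_n)$ for i.i.d.\ $\mathrm{Exp}(1)$ variables $E_i$; equivalently, partial sums of the $D_i$ behave, up to a factor $1+o(1)$ that is uniform by a Chernoff bound on $\sum E_i$, like partial sums of i.i.d.\ exponentials scaled by $1/n$. A sum of $\ell$ consecutive gaps thus has mean $\approx \ell/n$, and I want it concentrated around $\ell/n$ with a multiplicative error well inside the window $[(1-\eps),(1+\eps)]$ when $\ell$ is of order $rn\ge C_6\log n$. A standard Chernoff bound for sums of independent exponentials gives, for each fixed $t,\ell$, that the probability of deviating multiplicatively by more than a constant depending on $\eps$ is at most $n^{-3}$, say, provided $\ell\ge C_6\log n$ with $C_6=C_6(\eps)$ chosen large enough; a union bound over the at most $n^2$ choices of $(t,\ell)$ finishes the argument. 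Small values of $\ell$ (those with $\ell<(1-\eps)rn$ but $\ell$ not necessarily large) are handled by monotonicity, since the sum of a sub-run is bounded by the sum of any run containing it, so it is enough to control runs of length exactly $\lceil(1-\eps)rn\rceil$ and $\lceil(1+\eps)rn\rceil$.

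The main obstacle, and the only place requiring care, is the passage from i.i.d.\ exponentials to the genuinely dependent normalised spacings $D_i$, together with making the Chernoff estimate uniform over all $n^2$ windows. The dependence is mild — it comes entirely through the common denominator $\sum_i E_i$, which concentrates around $n$ with exponentially small error — so conditioning on $\{(1-\delta)n\le\sum_i E_i\le(1+\delta)n\}$ for a tiny $\delta=\delta(\eps)$ and then using the independence of the numerators is the cleanest route; one just has to check that shrinking the window by the factor $1\pm\delta$ still leaves room inside $[(1-\eps),(1+\eps)]$, which it does by taking $\delta$ small relative to $\eps$. Everything else is a routine union bound, and the constant $C_6$ is extracted from the Chernoff exponent for exponential sums in the usual way. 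I would also remark that the same argument, with the circle replaced by the interval $[0,1]$ and the boundary spacings treated separately, yields the analogous sandwich for $G_1(n,r)$, which is why \cref{thm:connectivityintro,thm:C,thm:long cycles intro} transfer between the two models.
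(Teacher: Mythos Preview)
Your approach is correct but takes a more circuitous route than the paper. The paper simply observes that, for each vertex $v$ at position $x_v$, the number $X_L$ of other vertices in the arc $[x_v-r,x_v)$ (and similarly $X_R$ for $(x_v,x_v+r]$) is distributed as $\mathrm{Bin}(n-1,r)$; a single Chernoff bound gives $X_L,X_R=(1\pm\eps)rn$ with probability $1-o(1/n)$ once $rn\ge C_6\log n$, and a union bound over the $n$ vertices finishes. This is exactly the information you extract, since ``$X_L\ge(1-\eps)rn$'' is equivalent to ``the $(1-\eps)rn$ gaps immediately to the left of $v$ sum to at most $r$'', but the paper sidesteps the spacing representation entirely: there is no need for the exponential representation of the Dirichlet vector, no conditioning on $\sum_i E_i$, and no separate concentration bound for sums of exponentials. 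Your monotonicity reduction to the two extreme values of $\ell$ is precisely what makes the two viewpoints coincide, and once you have made that reduction, counting points in a fixed-length arc (a binomial) is cleaner than measuring the length spanned by a fixed number of consecutive gaps. What your route buys is a slightly more explicit picture of \emph{why} the cyclic distance and the torus distance track each other uniformly, which could be useful if one wanted sharper error terms; the paper's route buys brevity.
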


In view of this, in order to prove the case $d=1$ of the toroidal version of \cref{mainconjecture}, it would suffice to study the local resilience of (high) powers of cycles with respect to Hamiltonicity.
We propose the following stronger conjecture, which may be of independent interest.

\begin{conjecture}\label{powerconjecture}
    For all integers\/ $n\geq3$ and\/ $k\in [1,n/2]$, every graph\/ $H\subseteq \cC_n^k$ with\/ $\delta(H)\geq k+1$ is Hamiltonian.
\end{conjecture}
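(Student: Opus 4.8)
The plan is to first reformulate the statement in a more transparent way. Write $F \coloneqq E(\cC_n^k)\setminus E(H)$ and view $F$ as a graph on $[n]$. Since every vertex of $\cC_n^k$ has degree exactly $2k$, the hypothesis $\delta(H)\ge k+1$ is equivalent to $\Delta(F)\le k-1$. Thus the conjecture is equivalent to the following: for every $F\subseteq E(\cC_n^k)$ with $\Delta(F)\le k-1$, the graph $\cC_n^k - F$ is Hamiltonian. Phrased this way, the task is to route a cyclic traversal of $[n]$ using only jumps of cyclic length at most $k$, while at each point a bounded number of jumps are forbidden. A useful warm-up, and a genuinely necessary first step, is to prove that $\cC_n^k - F$ is $2$-connected. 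This can be done by decomposing each side of a hypothetical small separator into maximal arcs of $\cC_n$ and counting the $\cC_n^k$-edges crossing between consecutive arcs against the surplus $|N_{\cC_n^k}(v)\setminus(\text{own side})|\le k-1$ forced at every vertex; the only subtlety is the case analysis around short arcs, where a vertex can ``reach over'' a small arc of the other side. This step is essentially bookkeeping, but it already exposes how the degree hypothesis interacts with the interval structure, information that the Hamiltonicity argument must reuse.

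For Hamiltonicity itself, my primary plan is a rotation--extension (P\'osa) argument, with a secondary, more hands-on plan better suited to small $k$. In the hands-on approach, one starts from a canonical Hamilton cycle of $\cC_n^k$ (for instance $\cC_n$ itself, or a multi-lap cycle using only short jumps when $k$ is larger) and repairs it locally around each edge of $F$ by a bounded transposition of nearby vertices, arguing that the $k+1$ short edges surviving at every vertex always leave enough room for such a patch. In the rotation--extension approach, one takes a longest path $P$ in $\cC_n^k - F$; if $P$ is not spanning, standard rotations from an endpoint $v$ of $P$ produce a family of alternative endpoints, and the structural input that must replace the usual expansion hypothesis is that, because $v$ keeps at least $k+1$ of its $2k$ short neighbours, every rotated endpoint still has many neighbours inside $P$ that are geometrically close to it, so some rotation either closes $P$ into a cycle or lets us extend through an uncovered vertex.

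The crux of the difficulty is precisely the absence of multiplicative vertex expansion: a subgraph of $\cC_n^k$ with $\delta\ge k+1$ need not expand at all, since for any arc $S$ of $\cC_n$ of length $\ell$ the set $N_H(S)$ is contained in an arc of length $\ell+2k$, so $|N_H(S)\setminus S|\le 2k$ irrespective of $\ell$. Consequently, the boosters and absorbing structures that P\'osa's lemma supplies in its standard form are unavailable here, and a substitute mechanism is needed. A natural candidate is a ``segment-sliding'' lemma asserting that a longest path must already run monotonically through all but a bounded number of vertices of every long arc, so that a non-spanning longest path is forced into a very rigid shape which $2$-connectivity together with the local degree surplus can then rule out. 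The part I expect to be genuinely hard, and which is presumably why the statement is only conjectured, is controlling the interaction between many simultaneous local repairs (or rotations) when the forbidden set $F$ is densely clustered near one region of the cycle, which the hypothesis $\Delta(F)\le k-1$ permits; handling such adversarial clustering, especially in the regime where $k$ is small relative to $n$, is where a new idea beyond routine rotation--extension arguments seems to be required.
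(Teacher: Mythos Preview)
The statement you are addressing is stated in the paper as a \emph{conjecture}, not a theorem; the paper does not prove it. What the paper does prove is only the first nontrivial case $k=2$ (Proposition~\ref{prop:square}), and that proof is a short, direct construction: after disposing of the cases where $E(\cC_n)\subseteq E(H)$ or where all deleted edges lie in $E(\cC_n)$, one normalises so that $\{0,1\}\notin E(H)$ and $\{2,3\}\in E(H)$, then greedily builds a $(1,i)$-path covering $[i]$ by absorbing either one or three new vertices at a time, and finally closes it into a Hamilton cycle by a small case analysis on the terminal value of $i$. No rotation--extension, no $2$-connectivity lemma, and no expansion estimate are used.

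Your proposal is not a proof but a research plan, and you yourself identify this: you correctly note that P\'osa-type expansion fails badly in $\cC_n^k$ (since $|N(S)\setminus S|\le 2k$ for any arc $S$), and you flag the adversarial clustering of $F$ as the genuine obstacle. Both observations are accurate, and they are precisely why the paper leaves the general statement open. Your reformulation $\Delta(F)\le k-1$ is correct and useful. However, neither of your two suggested mechanisms --- local repair from a canonical cycle, or rotation--extension with a ``segment-sliding'' substitute for expansion --- is carried far enough to yield a proof, and the paper offers no hint that either succeeds for general $k$. If you want to engage with what the paper actually establishes, the relevant comparison is with the $k=2$ argument, which is much more hands-on and case-based than anything in your outline.
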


Note that the case $k=1$ holds trivially and the cases $k\geq n/2-1$ follow as a consequence of Dirac's theorem.
Thus, if true, \cref{powerconjecture} would extend Dirac's theorem to powers of cycles.
Here we prove the first non-trivial case of the conjecture.

\begin{proposition}\label{prop:square}
    For every integer\/ $n\geq4$, every graph\/ $H\subseteq \cC_n^2$ with\/ $\delta(H)\geq3$ is Hamiltonian.
\end{proposition}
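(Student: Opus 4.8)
The plan is to prove the contrapositive of the natural statement: any spanning subgraph $H \subseteq \cC_n^2$ with $\delta(H) \geq 3$ must be Hamiltonian. Label the vertices of $\cC_n$ as $0, 1, \dots, n-1$ in cyclic order, so that the edges of $\cC_n^2$ are exactly the pairs $\{i, i+1\}$ and $\{i, i+2\}$ (indices mod $n$). I will call an edge of $\cC_n^2$ \emph{short} if it is of the form $\{i,i+1\}$ and \emph{long} if it is of the form $\{i,i+2\}$. Since $\delta(\cC_n^2) = 4$ and $\delta(H) \geq 3$, at each vertex $i$ at most one of its four incident edges is missing in $H$. The key structural observation is that a Hamilton cycle in $\cC_n^2$ can be described combinatorially: it corresponds to a way of ``threading'' through the vertices using only steps of $\pm 1$ or $\pm 2$, visiting each vertex once and returning to the start. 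The standard Hamilton cycle is just $\cC_n$ itself (all short edges); the issue is that $H$ may be missing one short edge at many vertices, so we need to reroute using long edges.

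First I would reduce to understanding which edges are absent. For each $i$, let $m(i)$ denote the (at most one) edge of $\cC_n^2$ incident to $i$ that is not in $H$; if all four edges are present, $m(i)$ is undefined. Note that if $\{i,i+1\} \notin E(H)$, then this single missing edge is ``charged'' to both $i$ and $i+1$, so neither of those two vertices can have any other missing edge. Similarly a missing long edge $\{i,i+2\}$ forbids any other missing edge at $i$ and at $i+2$. The core case analysis is local: I would show that, reading the cyclic sequence of vertices, the missing edges are sparse enough that one can always locally reroute. Concretely, I would try to build the Hamilton cycle greedily as a sequence of moves around the cycle: start at vertex $0$, and at each step move to the next unvisited vertex among $\{+1, +2\}$, using a long edge $\{i, i+2\}$ precisely when the short edge $\{i,i+1\}$ is missing (jumping over $i+1$), and then picking up the skipped vertex $i+1$ via the long edges $\{i-1, i+1\}$ or $\{i+1, i+3\}$ on a later pass — except there is no ``later pass'' on a cycle, so this must be handled more carefully.

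The cleaner approach, which I would actually pursue, is the following. Consider the set $S$ of indices $i$ such that the short edge $\{i, i+1\}$ is \emph{missing} from $H$. If $S = \emptyset$, then $\cC_n \subseteq H$ and we are done. Otherwise, for each $i \in S$, the vertices $i$ and $i+1$ have all their other three edges present; in particular the long edges $\{i-1, i+1\}$, $\{i, i+2\}$, $\{i-2, i\}$, $\{i+1, i+3\}$ are all in $H$. I would then split the cycle at the missing short edges into \emph{arcs} of consecutive vertices (maximal runs $j, j+1, \dots, k$ with all internal short edges present) and show how to splice consecutive arcs together using long edges at the two endpoints, reversing the traversal direction of alternate arcs if needed — this is the classic ``detour'' trick for squares of paths. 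The one subtlety is short arcs: an arc consisting of a single vertex, or two vertices, where the endpoint long edges might conflict. Here I would use that two consecutive elements of $S$ cannot be too close — if $i, i+1 \in S$ then vertex $i+1$ would be incident to two missing edges, contradiction — so $S$ has no two consecutive elements, meaning every arc has at least two vertices. A bit more care handles arcs of length exactly two. When $n$ is small ($n = 4$), one just checks directly that $\cC_4^2 = K_4$ and $\delta(H) \geq 3$ forces $H = K_4$, which is Hamiltonian.

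The main obstacle I anticipate is the splicing step when several short arcs are short (two or three vertices long) and adjacent, since then the long edges available at the arc endpoints are limited and one must verify that a consistent choice of traversal directions exists globally around the cycle — essentially a small parity/consistency argument. I expect that the constraint ``no two consecutive indices lie in $S$'' (and more generally, the at-most-one-missing-edge-per-vertex condition) gives exactly enough room: between any two missing short edges there are at least two present short edges, so every arc can absorb a direction reversal. If a direct global argument proves fiddly, an alternative is an exchange/augmenting-path argument — start from a near-spanning cycle or a Hamilton path and show any obstruction yields a local modification increasing the number of covered vertices — but I believe the explicit arc-splicing construction is both cleaner and more likely to be what the authors intend, given that this is advertised as ``the first non-trivial case'' of \cref{powerconjecture}.
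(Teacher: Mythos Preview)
Your approach is in the right spirit (local rerouting around missing short edges), but the argument has a genuine gap.

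First, the claim ``between any two missing short edges there are at least two present short edges'' is false. From the observation that $i$ and $i+1$ cannot both lie in $S$ you only get that consecutive elements of $S$ differ by at least $2$; in particular $i$ and $i+2$ may both lie in $S$, and then the arc between the two missing short edges consists of exactly the two vertices $i+1,i+2$ and the single short edge $\{i+1,i+2\}$. So arcs of size two are unavoidable, and your stated reason for why ``every arc can absorb a direction reversal'' does not hold.

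More seriously, the arc-splicing scheme (traverse each arc via its internal short edges and jump between consecutive arcs via one long edge) does not close into a Hamilton cycle when arcs are short. Take $n=6$ and $H$ with exactly the short edges $\{0,1\},\{2,3\},\{4,5\}$ deleted; then $\delta(H)=3$, the arcs are $\{1,2\},\{3,4\},\{5,0\}$, and any attempt to thread them in cyclic order using their internal short edges and a single long edge between consecutive arcs ends up needing either the missing edge $\{0,1\}$ or a non-edge such as $\{2,5\}$ to close. A Hamilton cycle does exist (for instance $0,2,4,3,1,5,0$), but it does \emph{not} respect the arc decomposition --- it skips the short edge $\{1,2\}$ entirely.

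The paper's proof handles exactly this obstruction by separating off two easy cases first: if all short edges are present, $\cC_n\subseteq H$; if all missing edges are short, the long edges alone form either a Hamilton cycle ($n$ odd) or two $n/2$-cycles that can be patched with two surviving short edges ($n$ even). In the remaining case one can relabel so that $\{0,1\}\notin E(H)$ \emph{and} $\{2,3\}\in E(H)$, which is precisely what fails in the $n=6$ example above. From this starting configuration the paper extends a path greedily from $(1,2,3)$, replacing each missing short edge $\{i,i+1\}$ by the detour $i,i+2,i+1,i+3$, and then closes the path into a cycle by an explicit three-way case analysis depending on whether the greedy process terminates at $n-2$, $n-1$, or $n$. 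Both the preliminary case split and the closing-up step are essential and are missing from your proposal.
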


\begin{remark}
    The approach to local resilience of random geometric graphs via local resilience of powers of graphs is not exclusive to the torus.
    Indeed, one may simply observe that the conclusion of \cref{lemma:sandwich} can be replaced by $P_n^{(1-\eps)rn}\subseteq G_1(n,r)\subseteq P_n^{(1+\eps)rn}$ and, therefore, the case $d=1$ of \cref{mainconjecture} could also be derived from a structural statement.
    Providing natural analogues of Lemma~\ref{lemma:sandwich} and Conjecture~\ref{powerconjecture} for higher dimensions would definitely be of interest.
\end{remark}

\subsection{More related work}

In addition to the binomial random graph and the random regular graph, the local resilience of random graphs with respect to Hamiltonicity has also been considered in other settings.
For example, in the setting of binomial random directed graphs, \citet{Mont20} gave a best possible hitting time result, improving on earlier work~\cite{HSS16,FNNPS17}.
Hamiltonicity has also been considered in binomial random hypergraphs; see the works of \citet{CEP20} for Hamilton Berge cycles, of \citet{APP21} for tight Hamilton cycles, and of \citet{PT22} for loose Hamilton cycles in $3$-uniform hypergraphs.

In addition to Hamiltonicity, other graph properties have been studied in the context of local resilience.
This is the case, for instance, of pancyclicity~\cite{KLS10}, (almost) spanning trees~\cite{BCS11}, different (almost) $F$-factors~\cite{BLS12,CGSS14,NS20}, squares of (almost) Hamilton cycles~\cite{NS17,FSST22}, or general bounded-degree graphs~\cite{HLS12,ABET20,ABEST22}.
We believe studying these questions in random geometric graphs would be very interesting.

The study of local resilience of graph properties fits more generally within the framework of the study of robustness.
For more references on this topic, we refer the reader to the survey of \citet{Suda17}.

\subsection{Paper organisation}

In \cref{sec:notation}, we introduce the necessary notation for the paper, as well as an auxiliary probabilistic result.
We devote \cref{sect:conn} to our results on connectivity, and \cref{sec:cycles} to the results about long cycles.
In \cref{section:bounds}, we prove our lower bounds on the local resilience of random geometric graphs with respect to Hamiltonicity.
Lastly, in the short \cref{sec:torus}, we discuss the $1$-dimensional case on the torus.

%%%%%%%%%%%%%%%%%%%%%%%%%%%%%%%%%%%%%%
%%%%%%%%%%%%%%%%%%%%%%%%%%%%%%%%%%%%%%

\section{Notation and preliminary results}\label{sec:notation}

For any $n\in\mathbb{Z}$, we denote $[n]\coloneqq\{i\in\mathbb{N}:1\leq i\leq n\}$. 
We denote by $\log$ the logarithm with base $\nume$.
Given any real numbers $a,b,c\in\mathbb{R}$, we write $a=b\pm c$ to mean that $a\in[b-c,b+c]$.

We sometimes introduce parameters (say, $a$, $b$ and $c$) with the notation $0<a\lll b,c\leq 1$; parameters presented in this way are said to appear in a \emph{hierarchy}.
Parameters in a hierarchy are chosen from right to left.
Indeed, when a statement holds for $0<a\lll b,c\leq 1$, we mean that there exists some unspecified increasing function $f\colon (0,\infty)\times (0, \infty)\to (0, \infty)$ such that the statement holds for all $0<b,c\leq 1$ and for all $0<a\leq f(b,c)$.
This extends naturally to longer hierarchies or to hierarchies where some parameter depends on more than two other parameters.
Intuitively, when we use a hierarchy as above, the reader should simply understand that, for all $b,c\in(0,1]$, we choose~$a$ sufficiently small (in terms of $b$ and $c$) for the subsequent claims to hold.
When writing asymptotic statements, we will often implicitly assume that $n$ is sufficiently large for the claims to hold (and, in particular, much larger than all parameters defined in a hierarchy, which are always treated as constants).
For asymptotic comparisons, we use the standard `big-O' notation, where usually all asymptotic statements will be with respect to a parameter $n$ corresponding to the number of vertices of a graph. 
When using asymptotic statements with respect to other variables, the variables in question will appear as a subindex, such as $O_x$ and $o_t$.
For the sake of readability, we ignore rounding issues whenever they do not affect the validity of the claims.

The notation $|\cdot|$ is used to denote either the \emph{size} of a finite set or the \emph{$d$-dimensional volume} of a region of $\mathbb{R}^d$.
In particular, given a finite set of points $V\subseteq\mathbb{R}^d$ and a region $R\subseteq\mathbb{R}^d$, we write $|V\cap R|$ for the number of points of $V$ contained in $R$.
Given any point $p\in\mathbb{R}^d$ and any $0<r_1<r_2$, we will write $B(p,r_1)$ to denote the (closed) ball of radius~$r_1$ centred at $p$, and $B(p,r_1,r_2) \coloneqq B(p,r_2)\setminus B(p,r_1)$ denotes the annulus with radii $r_1$ and $r_2$.
Moreover, for a set $\Lambda\subseteq \mathbb R^d$, we denote by $\partial \Lambda$ the topological boundary of the set $\Lambda$.
We denote by $\theta_d$ the volume of a $d$-dimensional ball with radius $1$.
We will use the fact that, for all $d\geq1$,
\begin{equation}\label{equa:ballbound}
    \theta_d=|B(0,1)|=\frac{\uppi^{d/2}}{\mathrm{Gamma}(d/2+1)} \geq \max\left(\frac{\uppi^{d/2}}{2d^{d/2+1}}, \frac{1}{d^{d/2}}\right),
\end{equation}
where $\mathrm{Gamma}(\cdot)$ stands for the Gamma function.
Throughout the paper, vertices of $G_d(n,r)$ are often identified with their geometric positions in $[0,1]^d$ without explicit mention.

Given a graph $G$, we denote by $V(G)$ its vertex set and by $E(G)$ its edge set.
As is standard, if $\{u,v\}\in E(G)$, we abbreviate the notation to $uv=\{u,v\}$.
For any disjoint sets $U,W\subseteq V(G)$, we denote by $G[U]$ the subgraph of $G$ \emph{induced} by~$U$, and by $G[U,W]$ the bipartite subgraph of $G$ \emph{induced} by~$U$ and~$W$.
Given a set $U\subseteq V(G)$, we denote by $G-U\coloneqq G[V(G)\setminus U]$.
Given a vertex $v\in V(G)$, we define its \emph{neighbourhood} $N_G(v)\coloneqq\{u\in V(G):uv\in E(G)\}$ and its degree $d_G(v)\coloneqq|N_G(v)|$.
When talking about the neighbourhood of a vertex $v$ and considering two or more graphs, if we wish to emphasise that the graph in which a vertex $u$ is a neighbour of $v$ is $G$, we will say that $u$ is a \emph{$G$-neighbour} of $v$.
For any integer $k\geq1$, we say that $G$ is \emph{$k$-connected} if $|V(G)|>k$ and, for every $U\subseteq V(G)$ of size $|U|<k$, the graph $G-U$ is connected.

A \emph{path} is a graph $P$ whose vertex set admits a labelling $V(P)=\{u_1,\ldots,u_{\ell+1}\}$ such that $E(P)=\{u_iu_{i+1}:i\in[\ell]\}$.
(In particular, we allow for a path to consist of a single vertex.)
A \emph{cycle} is a graph~$C$ which can be obtained from a path $P$ with $\ell\geq 2$ by setting $V(C)\coloneqq V(P)$ and $E(C)\coloneqq E(P)\cup\{u_1u_{\ell+1}\}$.
The \emph{length} of a path or a cycle is its number of edges.
For a graph $G$ and two vertices $u,v\in V(G)$, we say that $G$ contains a \emph{$(u,v)$-path} if it contains a path starting at $u$ and ending at $v$.
Moreover, for two sets $U,V\subseteq V(G)$, we say that $G$ contains a \emph{$(U,V)$-path} if there exist $u\in U$ and $v\in V$ such that $G$ contains a $(u,v)$-path.
Extending the abbreviated notation for edges, a list of vertices should be understood as a path; for example, for vertices $u,v,x,y$, we write $P=uvxy$ to denote the path with vertex set $V(P)=\{u,v,x,y\}$ and edge set $E(P)=\{uv,vx,xy\}$.
This abbreviation allows to concatenate paths to construct longer paths (or cycles).
For the sake of clarity when using concatenations, if $P$ is a $(u,v)$-path, we write $uPv$ to emphasise the endpoints of~$P$.
As an example, if~$P$ is a $(u,v)$-path and $Q$ is a $(v,w)$-path with $V(P)\cap V(Q)=\{v\}$, we may write $uPvQw$ to denote the path obtained by concatenating $P$ and $Q$.
Similarly, if $Q$ is an $(x,y)$-path with $V(P)\cap V(Q)=\varnothing$ and~$vx$ is an edge, we write $uPvxQz$ to mean the concatenation of $P$ and $Q$ through the edge $vx$.
We denote iterated concatenations with the symbol $\bigtimes$.

Sometimes, rather than considering a random geometric graph, it is useful to consider a fixed set of points $V$ on $[0,1]^d$ and construct a geometric graph on this vertex set.
Given $r>0$, we denote by $G(V,r)$ the graph with vertex set $V$ where two vertices $x,y\in V$ are joined by an edge whenever $\lVert x-y\rVert\leq r$ (we omit the dimension from the notation $G(V,r)$ since it is implied by the set $V$).

Some of our proofs share certain ingredients, and so it is useful to provide some global definitions here.
In particular, it is often the case that we ``split'' the hypercube $[0,1]^d$ into smaller regions and use them to define large structures in $G_d(n,r)$.
To be more precise, we often tessellate the hypercube $[0,1]^d$ with smaller axis-parallel hypercubes of the same side length (which will be the inverse of an integer).
We refer to these smaller hypercubes as \emph{cells}, and denote the set of all cells by~$\cQ$.
Formally, we consider each cell as a closed hypercube, so different cells may intersect at their boundaries.
In practice, these intersections are negligible: when considering the vertices of $G_d(n,r)$, the probability that any of them lie on the boundary of one of the cells is $0$.
Given a tessellation of $[0,1]^d$ with cells, we will often use an auxiliary graph $\Gamma$ with vertex set $\cQ$ where two cells are joined by an edge whenever they share a common $(d-1)$-dimensional face.
(Note that $\Gamma$ is a $d$-dimensional grid.)

We will make use of the following well-known Chernoff bound (see, e.g., the book of \citet[Corollary 2.3]{JLR}).

\begin{lemma}[Chernoff's bound]\label{lem:chernoff}
Let $X$ be the sum of $n$ independent Bernoulli random variables and let $\mu \coloneqq \mathbb{E}[X]$.
Then, for all $\delta \in [0,1]$, we have that $\mathbb{P}[|X-\mu|\geq\delta\mu]\leq 2\nume^{-\delta^2\mu/3}$.
\end{lemma}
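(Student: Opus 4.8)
The plan is to prove this via the standard exponential moment (Bernstein) method; the result is exactly \cite[Corollary~2.3]{JLR}, so the argument below is included only for completeness. Write $X=\sum_{i=1}^n X_i$ with the $X_i\sim\mathrm{Bernoulli}(p_i)$ mutually independent, so that $\mu=\sum_{i=1}^n p_i$. If $\mu=0$ or $\delta=0$ the bound is trivial, so assume $\mu>0$ and $\delta\in(0,1]$. The first step is to bound the moment generating function: for any real $t$, independence and the elementary inequality $1+x\le\nume^x$ give $\Exp[\nume^{tX}]=\prod_{i=1}^n\Exp[\nume^{tX_i}]=\prod_{i=1}^n\bigl(1+p_i(\nume^t-1)\bigr)\le\prod_{i=1}^n\nume^{p_i(\nume^t-1)}=\nume^{\mu(\nume^t-1)}$.

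Next I would treat the two tails separately. For the upper tail, Markov's inequality applied to $\nume^{tX}$ with $t>0$ yields $\PP[X\ge(1+\delta)\mu]\le\nume^{-t(1+\delta)\mu}\Exp[\nume^{tX}]\le\nume^{\mu(\nume^t-1-t(1+\delta))}$, and optimising by the choice $t=\log(1+\delta)$ gives $\PP[X\ge(1+\delta)\mu]\le\nume^{-\mu h(\delta)}$, where $h(x)\coloneqq(1+x)\log(1+x)-x$. Symmetrically, for the lower tail Markov's inequality applied to $\nume^{-tX}$ with $t>0$ gives $\PP[X\le(1-\delta)\mu]\le\nume^{t(1-\delta)\mu}\Exp[\nume^{-tX}]\le\nume^{\mu(\nume^{-t}-1+t(1-\delta))}$, and the choice $t=-\log(1-\delta)>0$ yields $\PP[X\le(1-\delta)\mu]\le\nume^{-\mu h(-\delta)}$.

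The last step is the scalar inequality $h(\pm\delta)\ge\delta^2/3$ for all $\delta\in[0,1]$. For $h(-\delta)=(1-\delta)\log(1-\delta)+\delta$ one gets the stronger bound $h(-\delta)\ge\delta^2/2$, since $\delta\mapsto h(-\delta)-\delta^2/2$ vanishes at $0$ and has nonnegative derivative on $[0,1)$; for $h(\delta)=(1+\delta)\log(1+\delta)-\delta$ one checks $h(\delta)\ge\delta^2/(2+\delta)\ge\delta^2/3$ on $[0,1]$ by the same kind of one-variable comparison of derivatives (or by a power-series estimate). Feeding these into the two tail bounds and applying a union bound, $\PP[|X-\mu|\ge\delta\mu]\le\PP[X\ge(1+\delta)\mu]+\PP[X\le(1-\delta)\mu]\le 2\nume^{-\delta^2\mu/3}$, as claimed.

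The argument has no real obstacle: the exponential-moment step and the optimisation over $t$ are textbook, so the only mildly delicate point is verifying the two elementary estimates on $h$ that produce the clean constant $1/3$ uniformly over $\delta\in[0,1]$, and even these are routine calculus.
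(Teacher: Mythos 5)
Your proof is correct and is exactly the standard exponential-moment argument; the paper gives no proof of this lemma at all, simply citing \cite[Corollary~2.3]{JLR}, and that reference establishes the bound by precisely the route you describe (optimised Markov bounds on $\nume^{\pm tX}$ followed by the elementary estimates $h(\delta)\ge\delta^2/(2+\delta)$ and $h(-\delta)\ge\delta^2/2$). The only cosmetic point is that at $\delta=1$ your choice $t=-\log(1-\delta)$ degenerates, but the lower-tail bound there follows directly from $\PP[X\le 0]=\prod_i(1-p_i)\le\nume^{-\mu}$, so nothing is lost.
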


%%%%%%%%%%%%%%%%%%%%%%%%%%%%%%%%%%%%%%
%%%%%%%%%%%%%%%%%%%%%%%%%%%%%%%%%%%%%%

\section{Connectivity}\label{sect:conn}

\subsection{The local resilience of random geometric graphs with respect to connectivity}\label{section:conditions}

Our aim in this section is to prove \cref{thm:connectivityintro} and \cref{prop:conn_optimal}.
In fact, we prove the following stronger result, which clearly implies \cref{thm:connectivityintro}.

\begin{theorem}\label{thm:connectivity}
For every constant\/ $\epsilon\in (0, 1/2]$ and integer\/ $d\geq1$, there exist constants\/ $c_1 = c_1(d, \eps) > 0$ and\/ $C_1=C_1(d, \eps)>0$ such that, for every\/ $r\in [C_1(\log n/n)^{1/d}, \sqrt{d}]$, a.a.s.\ every\/ $(1/2+\epsilon)$-subgraph of\/ $G_d(n,r)$ is\/ $(c_1r^dn)$-connected. 
Moreover, one may take $C_1=K\eps^{-2/d}\sqrt{d}$, where $K$ is an absolute~constant.
\end{theorem}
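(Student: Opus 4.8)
The plan is to prove $(c_1 r^d n)$-connectivity of every $(1/2+\eps)$-subgraph $H$ of $G=G_d(n,r)$ directly from the definition: fix an arbitrary set $S\subseteq V(G)$ with $|S|< c_1 r^d n$ and show that $H-S$ is connected. I would first reduce to a clean geometric picture by tessellating $[0,1]^d$ into cells of side length $s\coloneqq r/(2\sqrt d)$ (so the diameter of a cell is at most $r/2$, and any two vertices in adjacent cells of the grid graph $\Gamma$ are $G$-adjacent). A standard Chernoff argument (\cref{lem:chernoff}), using that $r\ge C_1(\log n/n)^{1/d}$ with $C_1=K\eps^{-2/d}\sqrt d$ large, shows that a.a.s.\ \emph{every} cell contains $(1\pm\eps/100)\,\mu$ vertices of $G$, where $\mu\coloneqq n s^d=\Theta_d(r^d n)$; this is exactly the place where the lower bound on $r$ (and the $\sqrt d$ factor in $C_1$) is needed, since there are $s^{-d}=\Theta_d(r^{-d})$ cells and we need a union bound over all of them with failure probability $o(s^d)$. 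Call the graph \emph{good} if this density condition holds and also every vertex has $G$-degree $(1\pm\eps/100)\theta_d r^d n$ (again Chernoff plus union bound); a.a.s.\ $G$ is good, and from now on I condition on this.

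The core of the argument is a local expansion step. The key observation is that for any vertex $v$, \emph{all} vertices lying in $B(v,r)$ — in particular all vertices of the (up to $3^d$) cells at $\ell_\infty$-distance $1$ from $v$'s cell — are $G$-neighbours of $v$. Since $v$ keeps more than half of its $G$-neighbours in $H$, and $d_G(v)\le(1+\eps/100)\theta_d r^d n$, the number of $G$-neighbours of $v$ that are \emph{lost} in $H$ is at most $(1/2-\eps)d_G(v)\le (1/2-\eps/2)\theta_d r^d n$. Now I want to choose $c_1=c_1(d,\eps)$ small enough (roughly $c_1\ll \eps\, s^d=\eps\,(r/2\sqrt d)^d$, i.e.\ $c_1$ of order $\eps/(2\sqrt d)^d$) so that even after deleting the at most $|S|<c_1 r^d n$ vertices of $S$ \emph{and} the lost neighbours of $v$, every pair of cells $Q,Q'$ with $Q'$ adjacent to $Q$ in $\Gamma$ still contains an $H-S$ edge from $Q$ to $Q'$: indeed the count of vertices of $Q\cup Q'$ is $\ge (2-\eps/50)\mu$, while the number of such vertices that are in $S$ or are non-neighbours of a fixed $w\in Q$ is at most $|S| + (1/2-\eps/2)\theta_d r^d n$; comparing this with $\mu=s^d n$ and recalling $\theta_d s^d \le \theta_d (r/2)^d$ while $\mu = (r/2\sqrt d)^d n$ — this is where one must be slightly careful and is the quantitative heart of the proof — one checks that for $c_1$ small and $C_1$ large there is a vertex of $Q$ inside $B(w,r)$ that survives in $H-S$, giving the desired edge, and by iterating, every cell induces a connected subgraph in $H-S$ minus any single further vertex. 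More precisely I would prove: for every cell $Q$ and every vertex $v$ with $|S|<c_1 r^d n$, the graph $H-S$ restricted to $Q$ together with the adjacent cells is connected, by a greedy/union argument over the bounded number of neighbouring cells.

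Putting it together: the grid graph $\Gamma$ on the cells is connected (it is a $d$-dimensional grid), and for each edge $QQ'$ of $\Gamma$ we have just shown there is an $H-S$ edge between them; combined with the fact that each cell's vertex set lies in a single $H-S$ component, this forces $H-S$ to be connected, as every vertex of $H-S$ is joined within its cell to the rest, and the cells are globally linked through $\Gamma$. Since $S$ was an arbitrary set of size less than $c_1 r^d n$, this shows $H$ is $(c_1 r^d n)$-connected, and also $|V(H)| > c_1 r^d n$ (a.a.s., trivially), so the $k$-connectivity definition is satisfied. \Cref{prop:conn_optimal} is essentially the matching lower bound and is proved separately by an explicit adversarial construction, so it does not enter here.

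The step I expect to be the main obstacle is making the cell-size/constant bookkeeping genuinely work \emph{uniformly in $d$}: one needs $C_1=\Theta(\eps^{-2/d}\sqrt d)$ to be large enough that the Chernoff union bound over $\Theta_d(r^{-d})$ cells succeeds, while simultaneously keeping $c_1$ strictly positive and comparing $\theta_d$, $s^d$, $r^d$ and $n$ in the local-expansion inequality without losing a $d$-dependent factor that would kill the bound. The inequality \eqref{equa:ballbound} for $\theta_d$ is exactly what is needed to control these ratios, and I would use it to verify that the choice $s=r/(2\sqrt d)$ leaves enough slack. A secondary technical point is handling cells near $\partial([0,1]^d)$, where a cell may have fewer than $3^d-1$ neighbours, but since $\Gamma$ remains connected and boundary cells still have at least one neighbour, the argument goes through verbatim with the same constants.
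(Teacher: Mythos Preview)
Your argument has a genuine gap at what you call ``the quantitative heart of the proof''. The inequality you need simply does not hold, and this is not a matter of bookkeeping but a conceptual failure of the approach.

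Concretely: with $s=r/(2\sqrt d)$, a cell contains $\mu\approx s^d n = r^d n/(2\sqrt d)^d$ vertices, while a vertex $w$ has $G$-degree $\approx \theta_d r^d n$. The adversary may delete up to $(1/2-\eps)\theta_d r^d n$ edges at $w$. For your argument you need $2\mu$ (the size of $Q\cup Q'$) to exceed this number of deletions, i.e.\ $2/(2\sqrt d)^d > (1/2-\eps)\theta_d$. Already for $d=2$ the left side is $1/4$ and the right side is roughly $\pi/2$, so the inequality fails badly; the same is true for the $3^d$-block of $\ell_\infty$-neighbouring cells. In words: a cell (or any bounded block of cells) occupies only a tiny fraction of $B(w,r)$, so the adversary can delete \emph{all} edges from $w$ into that block while still keeping a $(1/2+\eps)$-fraction of $w$'s neighbours. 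Hence neither ``each cell is connected in $H-S$'' nor ``adjacent cells are joined by an $H-S$ edge'' can be deduced from a raw degree count.

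What is missing is the observation that any two vertices $u,v$ at distance at most $\delta r$ (for small $\delta$) share almost all of their $G$-neighbourhood: $|N_G(u)\cap N_G(v)|\ge (1-\eps)\max(d_G(u),d_G(v))$. In a $(1/2+\eps)$-subgraph they therefore retain at least an $\eps$-fraction of common $H$-neighbours, which survives the removal of $S$ if $c_1$ is small. This common-neighbour argument is what links nearby vertices in $H-S$; the connecting vertex lies \emph{somewhere} in $B(u,r)\cap B(v,r)$, not necessarily in the same cell. The paper runs exactly this idea: either in a simple form (appendix) that gives the theorem without the sharp $d$-dependence of $C_1$, or in a refined form (main text) that tracks cells where the common-neighbourhood property might fail and shows they are rare, yielding $C_1=K\eps^{-2/d}\sqrt d$. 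Your tessellation and grid-connectivity skeleton are fine, but you must replace the within-cell degree count by the common-neighbour step.
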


We point out that \cref{thm:connectivity} is optimal simultaneously with respect to the constant $1/2$, the connectivity of the subgraphs of $G_d(n,r)$ (up to the value of $c_1$) and the growth of $C_1$ with respect to~$d$.
For the second point, note that the connectivity of a graph is always at most its minimum degree and, in the given range of~$r$, a.a.s.\ the minimum degree of $G_d(n,r)$ is proportional to $r^dn$.
Moreover, concerning the last point, for any fixed $\eps$, the expression $C_1(\log n/n)^{1/d}$ has to be at least as large as the sharp threshold for connectivity of the random geometric graph $G_d(n,r)$.
This shows that $C_1=\Omega_d(\sqrt{d})$ is necessary in \cref{thm:connectivity} (see~\cite[Theorem~13.2]{Pen03} for a concrete expression), which matches our bound of $C_1=O_d(\sqrt{d})$ (for fixed $\eps$).

The proof of Theorem~\ref{thm:connectivity} relies on the following ideas. 
First, divide the hypercube $[0,1]^d$ into cells of side length $s = \Omega(r)$ (and, thus, volume $\Omega(\log n/n)$).
We say that a cell is good if, roughly speaking, it contains at least $s^d n/2$ vertices of $G_d(n,r)$ and every two vertices sufficiently close to the cell share almost the same neighbourhood in $G_d(n,r)$; we say that the cell is bad otherwise. 
We show that a.a.s.\ there are only a few bad cells and these are grouped into clusters of bounded size, which are far from each other. 
Then, fixing a $(1/2+\eps)$-subgraph $H$ of $G_d(n,r)$, we use this observation to deduce that there is a large cluster of good cells containing a set of points that spans a $(c_1r^dn)$-connected graph $H'\subseteq H$.
This relies on the fact that all pairs of vertices in neighbouring good cells have many common neighbours in any $(1/2+\eps)$-subgraph~$H$ of~$G_d(n,r)$.
Finally, we show that the points outside $H'$ could be added consecutively to $H'$ so that every vertex has at least $c_1 r^d n$ neighbours among the vertices already in the graph.

Several technical details of this proof can be simplified significantly if one does not try to optimise the dependency of $C_1$ on $d$.
For the sake of the reader who wants to understand the basic ideas, we include this simpler proof in \cref{appen:connectivity}.

We now proceed to prove \cref{thm:connectivity}.
We begin with an auxiliary combinatorial lemma.

\begin{lemma}\label{lem:k-conn}
Fix an integer $k\ge 1$ and a graph $G$ with vertex set $V$.
Suppose that there is a set $V'\subseteq V$ such that the following two properties hold.
\begin{enumerate}[label=$(\mathrm{A}\arabic*)$]
    \item\label{lem:k-connitem1} For every set $U\subseteq V$ of size $k-1$, any pair of vertices in $V'\setminus U$ is connected by a path in $G-U$.
    \item\label{lem:k-connitem2} Set $t \coloneqq |V|-|V'|$. 
    Then, there is an ordering $v_1, \ldots, v_t$ of the vertices in $V\setminus V'$ such that, for every $i\in [t]$, $v_i$ has at least $k$ neighbours in $V'\cup \{v_1, \ldots, v_{i-1}\}$.
\end{enumerate}
Then, the graph $G$ is $k$-connected.
\end{lemma}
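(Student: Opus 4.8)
The strategy is to verify the defining condition of $k$-connectivity directly: given an arbitrary set $U \subseteq V$ with $|U| \le k-1$, show that $G - U$ is connected (and, separately, note $|V| > k$, which follows since $|V'| > k$: indeed, property~\ref{lem:k-connitem1} applied to any $(k-1)$-set forces $|V'| \ge k+1$, as one needs at least two vertices of $V' \setminus U$ left over for the statement to be non-vacuous in the right way — more carefully, if $|V'| \le k$ one can take $U \supseteq V' $ minus a point and the condition is vacuous, so one should instead derive $|V| > k$ from~\ref{lem:k-connitem2} combined with $|V'|\ge 1$; I will sort out this bookkeeping, but it is not the crux). We may assume $|U| = k-1$ by enlarging $U$ if necessary (adding vertices only makes $G-U$ smaller, so connectivity of the larger deletion implies it for the smaller one), and we may assume $U \cap (V \setminus V')$ can be nonempty.

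The core of the argument splits into two steps. \emph{Step 1: the vertices of $V' \setminus U$ all lie in one component of $G - U$.} This is immediate from property~\ref{lem:k-connitem1}: any two vertices of $V' \setminus U$ are joined by a path in $G - U$, so $V' \setminus U$ is contained in a single component $D$ of $G-U$. \emph{Step 2: every vertex of $(V \setminus V') \setminus U$ also lies in $D$.} Here I use the ordering $v_1, \dots, v_t$ from property~\ref{lem:k-connitem2} and induct along it. Consider the vertices $v_i$ with $v_i \notin U$, in increasing order of index. For such a $v_i$, property~\ref{lem:k-connitem2} guarantees that $v_i$ has at least $k$ neighbours inside $V' \cup \{v_1, \dots, v_{i-1}\}$. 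Since $|U| = k - 1$, at least one of these $k$ neighbours, call it $w$, avoids $U$. Now $w \in V' \setminus U$ or $w = v_j$ for some $j < i$ with $v_j \notin U$; in either case, by Step~1 (if $w \in V'$) or by the induction hypothesis (if $w = v_j$), $w$ lies in $D$. Since $v_i w \in E(G)$ and $v_i \notin U$, the edge $v_i w$ survives in $G - U$, so $v_i \in D$ as well. This closes the induction, and we conclude that $V(G-U) = V \setminus U \subseteq D$, i.e.\ $G - U$ is connected.

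Combining the two steps: for every $U$ with $|U| < k$ we have shown $G - U$ is connected, and $|V| > k$, so $G$ is $k$-connected by definition.

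\textbf{Expected main obstacle.} There is no deep difficulty here; the lemma is essentially a packaging of a standard ``add vertices one at a time, each with enough back-edges'' argument. The only point requiring care is the induction in Step~2 — specifically making sure that when we pick a neighbour $w$ of $v_i$ outside $U$, the case $w = v_j$ indeed has $j < i$ and $v_j \notin U$, so that the induction hypothesis genuinely applies (it does, because $w \in \{v_1,\dots,v_{i-1}\}$ and we only ever claimed membership in $D$ for the $v_j \notin U$; if some earlier $v_j \in U$ it is simply not part of the induction and is never used as the connecting vertex). A secondary bit of bookkeeping is the reduction to $|U| = k-1$ and confirming $|V| > k$, which I would dispatch in a sentence at the start.
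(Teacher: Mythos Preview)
Your proposal is correct and follows essentially the same argument as the paper: fix a $(k-1)$-set $U$, use \ref{lem:k-connitem1} to put $V'\setminus U$ in one component, then induct along the ordering from \ref{lem:k-connitem2}, using that each $v_i\notin U$ retains a neighbour in $(V'\cup\{v_1,\dots,v_{i-1}\})\setminus U$ since $|U|<k$. Your additional remarks on the $|V|>k$ bookkeeping and the reduction to $|U|=k-1$ are more explicit than the paper's treatment, which simply fixes $|U|=k-1$ and does not comment on $|V|>k$.
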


\begin{proof}
Fix an arbitrary set $U\subseteq V$ of size $k-1$.
On the one hand, the first assumption implies that all vertices in $V'\setminus U$ belong to the same connected component of $G-U$.
On the other hand, for every $i\in [t]$, if $v_i\notin U$, then $v_i$ is adjacent to a vertex among $(V'\cup \{v_1, \ldots, v_{i-1}\})\setminus U$.
Thus, an immediate induction shows that each of the vertices in $\{v_1, \ldots, v_t\}\setminus U$ connects by a path in $G-U$ to $V'\setminus U$, which concludes the proof.
\end{proof}

Given $d\geq1$, $\eps\in(0,1/2]$ and $r\le \sqrt{d}$, let $\cQ$ be a tessellation of $[0,1]^d$ with cells of side length $s = s(d,\eps,r) = 1/\lceil 4\sqrt{d}/\delta r\rceil$, where $\delta = \delta(d, \eps) > 0$ is the unique positive solution of the equation
\begin{equation}\label{eq:delta}
((1+\delta)^d-(1-\delta)^d) 4^{d+1}d^{d/2}=\eps.
\end{equation}
We say that two cells in $\cQ$ are \emph{neighbours} (or \emph{neighbouring}) if they are adjacent in the auxiliary graph~$\Gamma$, that is, if their intersection is a $(d-1)$-dimensional hypercube.
Note that, by the definition of~$s$, the union of any pair of neighbouring cells has diameter at most $\delta r$.\COMMENT{The square of the diameter is 
\[(d-1)s^2+4s^2=(d+3)s^2\leq(d+3)\frac{\delta^2r^2}{16d}\leq\frac14\delta^2r^2.\]
The bound on the diameter follows.}
Given a set of $n$ points $V\subseteq[0,1]^d$ and the geometric graph $G=G(V,r)$, we call a cell $q\in \cQ$ \emph{bad} (for $G$) if at least one of the following conditions holds:
\begin{enumerate}[label=$(\mathrm{\alph*})$]
    \item\label{bad_condition_1} $|V\cap q|\le s^d n/2$;
    \item\label{bad_condition_2} there exist a cell $q'\in N_{\Gamma}(q)\cup\{q\}$ and distinct vertices $v\in V\cap q$ and $v'\in V\cap q'$ such that
    \[|N_G(v)\cap N_G(v')|\le (1-\eps)\max(|N_G(v)|, |N_G(v')|),\]
    that is, the common neighbours of $v$ and $v'$ in $G$ are at most a $(1-\eps)$-proportion of all neighbours of $v$ or of all neighbours of $v'$ in $G$.
\end{enumerate}
Otherwise, we say that the cell is \emph{good}.
Finally, we construct an auxiliary graph $\Gamma_{\mathrm{bad}}=\Gamma_{\mathrm{bad}}(G)$ whose vertex set is the set of bad cells in $\cQ$ and where a pair of bad cells form an edge if they are at (Euclidean) distance at most $4r$ from each other.

\begin{lemma}\label{lem:close points}
The following statements hold.
\begin{enumerate}[label=$(\mathrm{B}\arabic*)$]
    \item\label{lem:close pointsitem1} For every fixed integer\/ $d\geq1$ and\/ $\eps\in (0, 1/2]$, there exists a constant\/ $M = M(d,\eps)>0$ such that the following holds.
    If\/ $(\log n/n)^{1/d}\le r = o(1)$ and\/ $G\sim G_d(n,r)$, then a.a.s.\ every connected component of\/ $\Gamma_{\mathrm{bad}}$ has fewer than\/ $M$ vertices.
    \item\label{lem:close pointsitem2} Let $d\geq1$ and\/ $\eps\in (0, 1/2]$ be fixed, and let $r=\omega((\log n/n)^{1/d})$ with $r\le \sqrt{d}$.
    Let $G\sim G_d(n,r)$.
    Then, a.a.s.\ there are no bad cells.
\end{enumerate}
\end{lemma}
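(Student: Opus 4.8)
The plan is to derive both parts from a single estimate: for each fixed cell $q\in\cQ$, the probability that $q$ is bad for $G\sim G_d(n,r)$ is small. Part \ref{lem:close pointsitem2} will then follow from a union bound over the cells (of which there are at most $n$), while part \ref{lem:close pointsitem1} will follow from a clustering argument on $\Gamma_{\mathrm{bad}}$. The starting point is that badness is a \emph{local} property: condition \ref{bad_condition_1} depends only on the points of $V$ in $q$, while condition \ref{bad_condition_2} depends only on the points of $V$ in $q$, in the $\Gamma$-neighbours of $q$, and in the balls $B(v,r)$ around such points; all of these lie within a ball of radius $r+\operatorname{diam}(q\cup q')\le(1+\delta)r$ about the centre of $q$. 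Since each cell has side length $s=\Theta_{d,\eps}(r)$, the badness of $q$ is determined by the points of $V$ in a set $R_q$ consisting of at most $O_{d,\eps}(1)$ cells; in particular, if two cells lie at Euclidean distance more than a suitable constant times $r$, then $R_q$ and $R_{q'}$ are disjoint, so the events that these two cells are bad depend on disjoint regions of $[0,1]^d$ and are therefore independent (after the routine conditioning on the number of points in each region).

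Next I would bound $\Pr[q\text{ is bad}]$. For condition \ref{bad_condition_1}, $|V\cap q|$ is a binomial random variable with mean $s^dn=\Theta_{d,\eps}(r^dn)$, so \cref{lem:chernoff} gives $\Pr[|V\cap q|\le s^dn/2]\le 2\exp(-\Omega_{d,\eps}(r^dn))$. For condition \ref{bad_condition_2}, the key geometric observation is that, whenever $v$ lies in $q$ and $v'$ lies in $q$ or in a $\Gamma$-neighbour of $q$, we have $\lVert v-v'\rVert\le\delta r$, and hence $N_G(v)\setminus N_G(v')\subseteq\{v'\}\cup\bigl(V\cap\widetilde{A}_v\bigr)$, where $\widetilde{A}_v\coloneqq B(v,r)\setminus B(v,r-\delta r)$. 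The volume of $\widetilde{A}_v$ is at most $\theta_dr^d((1+\delta)^d-(1-\delta)^d)$, whereas $|B(v,r)\cap[0,1]^d|\ge 4^{-d}\theta_d\min(1,r^d)$ for every $v\in[0,1]^d$ (the factor $4^{-d}$ absorbs the loss when $v$ is near a corner of the cube, which one checks by reflecting so that all coordinates of $v$ are at most $1/2$ and fitting a suitably scaled half-orthant piece of the ball inside the cube). By the choice of $\delta$ in \eqref{eq:delta}, the former volume is a small constant factor (say, $\eps/8$) times the latter. Conditioning on the position of $v$ and applying \cref{lem:chernoff} to $|V\cap\widetilde{A}_v|$ and to $|V\cap B(v,r)|$, with probability $1-\exp(-\Omega_{d,\eps}(r^dn))$ we get that $|V\cap\widetilde{A}_v|$ is much smaller than $\eps\,|V\cap B(v,r)|$, which forces $|N_G(v)\cap N_G(v')|>(1-\eps)\max(|N_G(v)|,|N_G(v')|)$. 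Taking a union bound over the points $v$ in $q$ and its neighbouring cells (of which there are $O_{d,\eps}(s^dn)$ in expectation) and combining with the bound for condition \ref{bad_condition_1}, we obtain $\Pr[q\text{ is bad}]\le n^{-\gamma}$ for some constant $\gamma=\gamma(d,\eps)>0$ whenever $r^dn\ge\log n$ (which holds throughout), and $\Pr[q\text{ is bad}]=n^{-\omega(1)}$ when moreover $r=\omega((\log n/n)^{1/d})$.

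Part \ref{lem:close pointsitem2} now follows immediately: when $r=\omega((\log n/n)^{1/d})$, the above together with a union bound over the at most $n$ cells shows that a.a.s.\ no cell is bad. For part \ref{lem:close pointsitem1}, suppose $\Gamma_{\mathrm{bad}}$ has a connected component of order at least $M$; choosing a subtree on $M$ of its cells, all of these lie within a ball of radius $O(rM)$, which contains at most $N_M=O_{d,\eps}(M^d)$ cells. Among these $M$ bad cells one can greedily select a subfamily $q_1,\dots,q_t$ that is pairwise at distance more than the constant from the first paragraph (so that the events $\{q_i\text{ is bad}\}$ are independent), with $t\ge M/\kappa'$ for some constant $\kappa'=\kappa'(d,\eps)$. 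A union bound over the at most $n$ choices of such a ball and the at most $N_M^{\,t}$ choices of $q_1,\dots,q_t$ inside it then gives
\[\Pr[\Gamma_{\mathrm{bad}}\text{ has a component of order at least }M]\le n\cdot N_M^{M/\kappa'}\cdot n^{-\gamma M/\kappa'}.\]
Choosing $M=M(d,\eps)$ large enough that $\gamma M/\kappa'>1$ makes the right-hand side $o(1)$, which completes the proof.

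The main obstacle is the bookkeeping in part \ref{lem:close pointsitem1}: turning ``$\Gamma_{\mathrm{bad}}$ has a large component'' into ``many cells that are bad independently within a bounded region'' via the greedy sparsification, and then choosing $M$ in terms of the (possibly tiny) exponent $\gamma$. The other delicate point is the geometric estimate of the second paragraph — comparing the volume of the annulus $\widetilde{A}_v$ with that of $B(v,r)\cap[0,1]^d$ with exactly the dependence on $d$ and $\eps$ built into \eqref{eq:delta}, which forces one to control the boundary effects near $\partial[0,1]^d$. Verifying that the badness events of well-separated cells are genuinely independent is standard, but still requires the conditioning on the point counts of the relevant regions to be carried out with some care.
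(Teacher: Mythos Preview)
Your overall plan coincides with the paper's: bound $\Pr[q\text{ bad}]$ for a single cell, deduce \ref{lem:close pointsitem2} by a union bound, and for \ref{lem:close pointsitem1} argue that a large component of $\Gamma_{\mathrm{bad}}$ forces many well-separated bad cells whose joint badness is too unlikely. The enumeration (anchoring at one cell and counting nearby cells) and the greedy sparsification are also essentially what the paper does.

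The real gap is the sentence ``\ldots and are therefore independent (after the routine conditioning on the number of points in each region).'' This conditioning is not routine. The point counts $|V\cap R_{q_i}|$ in disjoint regions are multinomial components, hence negatively associated but not independent; and ``bad via \ref{bad_condition_2}'' is \emph{not} monotone in the count, so negative association does not yield a product bound. The paper gets around this in two steps. First, it replaces the condition on random vertices by a condition on the fixed centre $x$ of the cell: if $q$ is bad via \ref{bad_condition_2} but not \ref{bad_condition_1}, then $|V\cap S_3(x)|\ge(\eps/2)\,|V\cap S_1(x)|$ for the fixed regions $S_1(x)=[0,1]^d\cap B(x,(1+\delta)r)$ and $S_3(x)=S_1(x)\setminus B(x,(1-\delta)r)$ (this is their Claim~1). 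This also dispenses with your union bound over the random points $v\in V\cap q$. Second, from the $4\ell$ well-separated cells they split into two cases: either at least $2\ell$ are bad via \ref{bad_condition_1}, in which case a \emph{single} Chernoff bound on the total point count in the union of those cells (rather than a product of individual bounds) gives $o(1/n)$; or at least $2\ell$ are bad via \ref{bad_condition_2}, in which case they condition on the counts $|V\cap S_1(x_i)|$. Since $S_3(x_i)\subseteq S_1(x_i)$ and the $S_1(x_i)$ are pairwise disjoint, the events $\cB_i=\{|V\cap S_3(x_i)|\ge(\eps/2)|V\cap S_1(x_i)|\}$ really are conditionally independent, and each is small provided $|V\cap S_1(x_i)|$ is not itself too small; the probability that many of the latter fail is handled separately by sequential conditioning with stochastic domination. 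This case split and two-level conditioning is the substance behind your ``routine conditioning,'' and it is where the proof actually earns its keep.
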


\begin{proof}
Let $d$ and $\eps$ be given, and define $\delta=\delta(d,\eps)$ as in \eqref{eq:delta}. For each point $x\in [0,1]^d$, define the regions
\[
S_1(x) \coloneqq [0,1]^d\cap B(x,(1+\delta)r),\quad S_2(x) \coloneqq [0,1]^d\cap B(x,(1-\delta)r)\quad \text{and}\quad S_3(x) \coloneqq S_1(x)\setminus S_2(x).
\]
Note that, if $u,u'$ are two vertices as in \ref{bad_condition_2}, then, as the diameter of the union of two neighbouring cells is at most $\delta r$, we have that
\begin{equation}\label{equa:bad10}
    S_2(x)\subseteq [0,1]^d\cap B(u,r)\cap B(u',r)\quad\text{ and }\quad [0,1]^d\cap (B(u,r)\cup B(u',r))\subseteq S_1(x).
\end{equation}

By \eqref{eq:delta}, our choice of $\delta$ yields
\begin{align}
|S_3(x)|&\le ((1+\delta)^d-(1-\delta)^d)|B(x,r)| \nonumber\\
&= ((1+\delta)^d-(1-\delta)^d) (4d)^{d/2} \left|B\left(x, \frac{r}{2\sqrt{d}}\right)\right|\nonumber\\
&\le ((1+\delta)^d-(1-\delta)^d) (4d)^{d/2} 2^d |S_1(x)|=\frac{\eps |S_1(x)|}{4},\label{eq:S-s}
\end{align}
where the second inequality follows from the fact that, for any ball $B$ with radius $r/2\sqrt{d}\le 1/2$ centred at a point in $[0,1]^d$, $|B|\le 2^d |B\cap [0,1]^d|$.
Together with \eqref{eq:S-s}, the following claim ensures that, if a cell is bad, then there is a nearby region with atypically many or atypically few vertices.

\begin{claim}\label{claim:reduced}
    If a cell $q\in\cQ$ with centre $x$ does not satisfy \ref{bad_condition_1}, the presence of a cell $q'\in N_\Gamma(q)\cup\{q\}$ and vertices $u\in V(G)\cap q$ and $u'\in V(G)\cap q'$ witnessing \ref{bad_condition_2} implies that
    \[|V(G)\cap S_3(x)|\geq\frac{\eps}{2}|V(G)\cap S_1(x)|.\]
\end{claim}

\begin{claimproof}
    By \eqref{equa:bad10}, we conclude that
    \begin{align*}
        |V(G)\cap S_2(x)|&\leq|N_{G}(u)\cap N_{G}(u')|+2\\
        &\leq(1-\eps)(\max(|V(G)\cap B(u,r)|,|V(G)\cap B(u',r)|)-1)+2\\
        &\leq(1-\eps)|V(G)\cap S_1(x)|+2\\
        &\leq(1-\eps/2)|V(G)\cap S_1(x)|,
    \end{align*}
    where the second inequality uses \ref{bad_condition_2} and the last inequality relies on $|V(G)\cap S_1(x)|$ being sufficiently large, which is guaranteed if \ref{bad_condition_1} does not hold for $q$.
    The desired inequality now follows by substituting this into the equality $|V(G)\cap S_1(x)|=|V(G)\cap S_2(x)|+|V(G)\cap S_3(x)|$.
\end{claimproof} 

We focus on the proof of \ref{lem:close pointsitem1}.
First, since the diameter of a single cell is bounded from above by~$\delta r/2$,\COMMENT{In fact, the diameter is at most $\delta r/4$.} given a cell $q\in \cQ$ with centre~$x$, the ball $B(x,(16+\delta)r)$ contains all cells at distance at most~$16r$ from~$q$.
In particular, the number of these cells is at most
\[\frac{|B(0,(16+\delta)r)|}{s^d}\leq|B(0,16+\delta)|\bigg(\frac{8\sqrt{d}}{\delta}\bigg)^d\eqqcolon N.\]
Let $\ell\coloneqq \lceil \max\{15\cdot4^dd^{d/2}/\delta^{d}, \ 50\cdot2^d/\theta_d, \ 25\cdot8^{d}d^{d/2}/\eps\theta_d\}\rceil$.
We are going to show that $M \coloneqq 4N\ell$ satisfies the condition of the lemma.

Suppose that a connected component $\Gamma_c$ of $\Gamma_{\mathrm{bad}}$ with $|V(\Gamma_c)|\geq M$ exists.
Given such a component, we algorithmically construct a set $W\subseteq V(\Gamma_c)$ of bad cells at (Euclidean) distance at least $(12-\delta)r$ from each other as follows. 
Set $S \coloneqq V(\Gamma_c)$. 
At the first step, select an arbitrary cell $q\in S$, add it to~$W$ and delete from~$S$ all cells at distance at most $(12-\delta)r$ from $q$. 
Then, as long as $S$ is not empty, select a cell $q\in S$ at smallest distance to $V(\Gamma_c)\setminus S$, add $q$ to $W$ and delete from $S$ all cells at distance at most $(12-\delta)r$ from~$q$.

Observe that the above construction ensures that $|W|\ge 4\ell$ since at most $N$ cells are deleted from~$S$ at every step.
Moreover, the graph $\Gamma_c'$ with vertex set $W$ obtained by adding an edge between every two cells in $W$ at distance at most $16r$ from each other is connected: indeed, by the connectivity of~$\Gamma_c$, every cell added to $W$ is at distance at most $4r$ from an already deleted cell, and cells have diameter at most $\delta r/2$, so the conclusion follows by the triangle inequality.
Then, by pruning a spanning tree of~$\Gamma_c'$, we find a set $W'\subseteq W$ such that $|W'| = 4\ell$ and $\Gamma_c'[W']$ is a connected graph.

The conclusion at this point is that, in order for a connected component $\Gamma_c$ of $\Gamma_{\mathrm{bad}}$ with $|V(\Gamma_c)|\geq M$ to exist, one must be able to find a set of cells $W'\subseteq V(\Gamma_c)$ of size $4\ell$ such that
\begin{enumerate}
    \item every pair of cells in $W'$ are at distance at least $(12-\delta)r$ from each other,
    \item\label{property:blah} there is a labelling $q_1,\ldots,q_{4\ell}$ of the cells in $W'$ such that, for each $i\in[4\ell]\setminus\{1\}$, $q_i$ is at distance at most $16r$ from some cell $q_j$ with $j\in[i-1]$, and 
    \item all cells in $W'$ are bad.
\end{enumerate}
If this occurs, then all cells in a subset $U_1\subseteq W'$ of size $|U_1|=2\ell$ are bad because of condition \ref{bad_condition_1}, or all cells in a subset $U_2\subseteq W'$ of size $|U_2'|=2\ell$ are bad because of condition \ref{bad_condition_2} but not because of \ref{bad_condition_1}.
Our goal now is to show that the probability that such sets $W'$, $U_1$ and $U_2$ exist tends to $0$.
Let $\cU$ denote the family of all possible sets of $2\ell$ cells which are contained in one of the sets $W'$ described above.
For each $U\in\cU$, let $\cB_{\mathrm{a}}(U)$ be the event that all cells in $U$ are bad because of condition \ref{bad_condition_1}, and let $\cB_{\mathrm{b}}(U)$ be the event that all cells in $U$ are bad because of condition \ref{bad_condition_2} but not because of condition \ref{bad_condition_1}.
With this notation, from the discussion above, the probability that $\Gamma_{\mathrm{bad}}$ contains a connected component with at least $M$ cells is bounded from above by 
\begin{equation}\label{eq:BABUbound}
    \sum_{U\in\cU}\mathbb{P}[\cB_{\mathrm{a}}(U)]+\mathbb{P}[\cB_{\mathrm{b}}(U)].
\end{equation}

The number of choices for sets $U\in\cU$ that we need to consider can be bounded as follows.
First, one must construct a set $W'$ of $4\ell$ cells satisfying property~\eqref{property:blah}.
Such a set can be constructed sequentially in such a way that each added cell (except the first one) is at distance at most $16r$ from some previously considered cell.
The number of choices for the first cell is at most $|\cQ|$, and for each subsequent cell we have at most $4\ell N$ choices.\COMMENT{For the second cell we have at most $N$ choices, for the third cell at most $2N$, and so on. Thus, $4\ell N$ is a common upper bound for all of them. This uses the fact that, since the diameter of a cell is at most $\delta r/2$, any cell at distance at most $16r$ from another is contained in the ball of radius $(16+\delta)r$ centred at the centre of this other cell.}
Once $W'$ is defined, one can choose a subset of size $2\ell$ from all the chosen cells.
Thus, using the lower bound on $r$ given in the statement, we conclude that
\begin{equation}\label{eq:union_bound}
    |\cU|\leq|\cQ|(4\ell N)^{4\ell-1}\binom{4\ell}{2\ell}=O(|\cQ|)=o(n).
\end{equation}

Now, fix $U\in\cU$.
Consider the probability that all cells in $U$ are bad because of condition \ref{bad_condition_1}.
For this to happen, the union of the cells must contain at most $\ell s^dn$ vertices.
By Chernoff's inequality (\cref{lem:chernoff}), we conclude that
\begin{equation}\label{eq:empty}
    \mathbb{P}[\cB_{\mathrm{a}}(U)]\leq\mathbb{P}[\mathrm{Bin}(n, 2\ell s^d)\leq \ell s^dn] \leq 2\exp(-\ell s^d n/6)=o(1/n),
\end{equation}
where the last inequality holds by the choice of $\ell\geq15\cdot4^dd^{d/2}\delta^{-d}$.\COMMENT{We want to verify that $\ell s^d n/6\geq c\log n$ for some $c>1$. Since $r=o(1)$, it follows that $s=(1-o(1))\delta r/4\sqrt{d}$, and by the lower bound on $r$ we conclude that $s^d\geq\delta^d\log n/(2\cdot4^dd^{d/2}n)$.
Substituting this, it suffices to verify that $\ell>12\cdot4^dd^{d/2}/\delta^d$, which is satisfied by our choice of $\ell$. (Note that we could improve the $15$ to other smaller constant ($>6$), but this is not important).}

Next, we turn our attention to the possibility that all cells in $U$ are bad because of \ref{bad_condition_2} but not because of \ref{bad_condition_1}.
Label the cells in $U$ as $q_1,\ldots,q_{2\ell}$.
For each $i\in[2\ell]$, let $x_i$ denote the centre of $q_i$. 
Note that, by assumption on the set $U$, any pair of cells in $U$ are at distance at least $(12-\delta)r$ from each other.
Thus, since $\delta\leq 1$, the regions $\{S_1(x_i)\}_{i\in[2\ell]}$ are pairwise disjoint.
For each $i\in [2\ell]$, define the events
\begin{align*}
\cA_i &\coloneq \left\{|V(G)\cap S_1(x_i)|<|S_1(x_i)|n/2\right\},\\
\cB_i &\coloneq \{|V(G)\cap S_3(x_i)|\geq\eps|V(G)\cap S_1(x_i)|/2\}.
\end{align*}
In view of \cref{claim:reduced}, in order to bound $\mathbb{P}[\cB_{\mathrm{b}}(U)]$ from above, it suffices to bound the probability that all of the events $\cB_i$ occur:
\begin{equation}\label{eq:BUbound1}
    \mathbb{P}[\cB_{\mathrm{b}}(U)]\leq\mathbb P\left[\bigcap_{i\in[2\ell]}\cB_i\right].
\end{equation}

Let us first consider the events $\cA_i$.
Let $\cJ$ be the (random) set of indices $i\in [2\ell]$ for which $\cA_i$ does not hold. 
Note that, if $|\cJ|<\ell$, then there are $\ell$ indices $i\in[2\ell]$ for which $\cA_i$ holds.
Thus, using a union bound, we conclude that 
\begin{equation}\label{eq:B-s}
\mathbb{P}[|\cJ|<\ell]\leq\binom{2\ell}{\ell} \max_{I\subseteq [2\ell],\, |I|=\ell} \mathbb P\left[\bigcap_{i\in I}\cA_i\right]\leq4^{\ell} \max_{I\subseteq [2\ell],\, |I|=\ell}\Bigg(\prod_{i\in I} \mathbb P\Bigg[\cA_i\,\,\Bigg|\,\bigcap_{j\in I,\, j < i}\cA_j\Bigg]\Bigg).
\end{equation}

Fix a set $I\subseteq [2\ell]$ with $|I|=\ell$ and an index $i\in I$.
Upon conditioning on the event $\bigcap_{j\in I, j < i}\cA_j$, and since the regions $\{S_1(x_j)\}_{j\in[2\ell]}$ are pairwise disjoint and $r=o(1)$, the random variable $|V(G)\cap S_1(x_i)|$ stochastically dominates a binomial random variable with parameters $3n/4$ and $|S_1(x_i)|$.\COMMENT{All vertices not contained in the $S_1(x_j)$ that we condition on are still ``freely'' available to fall into $S_1(x_i)$. The bound on the number of such ``free'' vertices is very crude. Note also that the probability of success for subsequent events increases, but this is fine since we only need to dominate this variable.}
Therefore, using Chernoff's inequality (\cref{lem:chernoff}), we conclude that\COMMENT{From Chernoff,
\[\mathbb P\left[\mathrm{Bin}\left(\frac34n, |S_1(x_i)|\right)\leq|S_1(x_i)|\frac{n}{2}\right]\le 2\exp\left(-\frac{|S_1(x_i)|n}{36}\right)\le \exp\left(-\frac{|S_1(x_i)|n}{37}\right),\]
with the last inequality holding for sufficiently large $n$.}
\[
\mathbb P\Bigg[\cA_i\,\,\Bigg|\,\bigcap_{j\in I,\, j < i}\cA_j\Bigg]\le \mathbb P\left[\mathrm{Bin}\left(\frac34n, |S_1(x_i)|\right)\leq|S_1(x_i)|\frac{n}{2}\right]\le \exp\left(-\frac{|S_1(x_i)|n}{37}\right).
\]
Recall that for every point $x\in [0,1]^d$ we have that $|S_1(x)|\geq|B(0,r)|/2^d\geq\theta_dr^d/2^d$.
Thus, by our choice of $\ell>37\cdot 2^d/\theta_d$, substituting the above bounds into \eqref{eq:B-s} and using the lower bound on~$r$ implies that
\begin{equation}\label{eq:B-s2}
    \mathbb{P}[|\cJ|<\ell]=o(1/n).
\end{equation}

Finally, we focus on the events $\cB_i$.
Fix a set $J\subseteq[2\ell]$ of size $|J|\ge \ell$ and condition on the event \mbox{$\{\cJ = J\}$} and on the random variables $|V(G)\cap S_1(x_i)|$ for all $i\in J\subseteq [2\ell]$.
Since the regions $\{S_1(x_i)\}_{i\in[2\ell]}$ are pairwise disjoint, in our conditional space, the events $(\cB_i)_{i\in J}$ are mutually independent. 
Combining this with Chernoff's inequality for the binomial random variables $(|V(G)\cap S_3(x_i)|)_{i\in J}$ with (conditional) means $(\mu_i)_{i\in J}$, we obtain that
\COMMENT{\aedc{This was a long explanation from an old version, slightly different from what is now written in the paper. I keep it here since it helps me check some details.}
Note that $\cB_i=\{|V(G)\cap S_3(x_i)|\geq\eps|V(G)\cap S_1(x_i)|/2\}$.
So we consider the random variable $Y\coloneqq|V(G)\cap S_3(x_i)|$, which is distributed as a binomial random variable $\mathrm{Bin}(|V(G)\cap S_1(x_i)|,|S_3(x_i)|/|S_1(x_i)|)$.
For each possible value $k$ of $|V(G)\cap S_1(x_i)|$ we have that 
\[\frac{\eps}{8}k=\frac{k}{2}\frac{|S_3(x_i)|}{|S_2(x_i)|}\leq\mathbb{E}[Y]=k\frac{|S_3(x_i)|}{|S_1(x_i)|}\leq k\frac{|S_3(x_i)|}{|S_2(x_i)|}=\frac{\eps}{4}k,\]
where the first and last equalities hold by \eqref{eq:S-s} and the first inequality holds by \eqref{eq:delta} (the second inequality is trivial as $S_2(x_i)\subseteq S_1(x_i)$).
Thus, by Chernoff's inequality, we have that
\[
    \mathbb{P}\left[Y\geq\frac{\eps}{2}k\right]=\mathbb{P}\left[Y\geq2\frac{\eps}{4}k\right]\leq\mathbb{P}\left[Y\geq2\mathbb{E}[Y]\right]\leq2\nume^{-\mathbb{E}[Y]/3}\leq2\nume^{-\eps k/24}.
\]
This last term is clearly decreasing as $k$ increases, so we may bound all the probabilities that we must consider by using the smallest possible value of $k$.
And for all $j\in J$ we know that $|V(G)\cap S_1(x_i)|\ge |S_1(x_i)|n/2$, so we conclude that we can bound 
\[\mathbb{P}\left[|V(G)\cap S_3(x_i)|\geq\frac{\eps}{2}|V(G)\cap S_1(x_i)|\right]\leq2\nume^{-\eps|S_1(x_i)|n/48}\leq\exp\left(-\frac{|S_1(x_i)|n}{50}\right),\]
where the last bound uses the fact that $|S_1(x_i)|n$ tends to infinity by the lower bound on $r$.
Lastly, to obtain a bound which is independent of $i$, simply use the fact that $|S_1(x_i)|\geq(1+\delta)^d\theta_dr^d/2^d\geq\theta_dr^d/2^d$ to obtain that
\[\mathbb{P}\left[|V(G)\cap S_3(x_i)|\geq\frac{\eps}{2}|V(G)\cap S_1(x_i)|\right]\leq\exp\left(-\frac{\theta_dr^dn}{50\cdot2^d}\right).\]}
\begin{align}\label{eq:C-s2}
\mathbb P\left[\bigcap_{i\in J}\cB_i\,\,\middle|\, \cJ=J,(|V(G)\cap S_1(x_i)|)_{i\in J}\right] 
&= \prod_{i\in J} \mathbb P\left[\cB_i\mid \cJ=J,|V(G)\cap S_1(x_i)|\right]\nonumber\\
&\leq \prod_{i\in J} \mathbb P\left[\mathrm{Bin}\left(|V(G)\cap S_1(x_i)|, \frac{|S_3(x_i)|}{|S_1(x_i)|}\right)\ge 2\mu_i\right]\nonumber\\
&\leq 2^{2\ell} \exp\left(-\sum_{i\in J} \frac{\mu_i}{3}\right),
\end{align}
where the first inequality uses the fact that, by~\eqref{eq:S-s},
\[\mu_i = |V(G)\cap S_1(x_i)|\frac{|S_3(x_i)|}{|S_1(x_i)|}\le |V(G)\cap S_1(x_i)|\frac{\eps}{4}.\]
Moreover, for every $i\in J$, using that the event $\cA_i$ does not hold, the bounds on $r$\COMMENT{We use the lower bound clearly at the end, but also the upper bound to say that we lose at most a $2^d$ fraction of the volume.} and the definition of~$S_3(x_i)$, we have that
\begin{align*}
\mu_i = |V(G)\cap S_1(x_i)| \frac{|S_3(x_i)|}{|S_1(x_i)|}\ge \frac{|S_1(x_i)|n}{2}\frac{|S_3(x_i)|}{|S_1(x_i)|}
&\ge \frac{((1+\delta)^d - (1-\delta)^d)\theta_d r^d n}{2^{d+1}}\\*
&\ge \frac{((1+\delta)^d - (1-\delta)^d)\theta_d}{2^{d+1}} \log n.
\end{align*}
Substituting this into \eqref{eq:C-s2}, together with our choice of $\ell\geq25\cdot8^{d}d^{d/2}/\eps\theta_d$, we have that\COMMENT{Using the fact that $|J|\geq\ell$ and that the lower bound on $\mu_i$ is independent of $i$, we have that
\[\exp\left(-\sum_{i\in J} \frac{\mu_i}{3}\right)\leq\exp\left(-\ell\frac{((1+\delta)^d - (1-\delta)^d)\theta_d}{3\cdot2^{d+1}} \log n\right)\]
(note that, for the asymptotic statement, we do not care about the $4^\ell$ term, which is only a constant).
Thus, it suffices to have that $\ell\frac{((1+\delta)^d - (1-\delta)^d)\theta_d}{3\cdot2^{d+1}}>1$.
Using \eqref{eq:delta}, we can rewrite this as $\ell\eps\theta_d>3\cdot8^{d+1}d^{d/2}$.
It thus suffices to have $\ell>24\cdot8^{d}d^{d/2}/\eps\theta_d$.}
\[\mathbb P\left[\bigcap_{i\in J}\cB_i\,\,\middle|\, \cJ=J,(|V(G)\cap S_1(x_i)|)_{i\in J}\right]\leq4^{\ell} \exp\left(-\sum_{i\in J} \frac{\mu_i}{3}\right) = o(1/n).\]
Combining this with \eqref{eq:B-s2} shows that\COMMENT{Here are the details.
We have that
\[\mathbb{P}\left[\bigcap_{i\in[2\ell]}\cB_i\right]=\mathbb{P}\left[\bigcap_{i\in[2\ell]}\cB_i\,\,\middle|\,\,|\cJ|\geq\ell\right]\mathbb{P}[|\cJ|\geq\ell]+\mathbb{P}\left[\bigcap_{i\in[2\ell]}\cB_i\,\,\middle|\,\,|\cJ|<\ell\right]\mathbb{P}[|\cJ|<\ell].\]
By \eqref{eq:B-s2}, the second term is $o(1/n)$, so let us focus on the first term.
We may rewrite it as
\[\mathbb{P}\left[\bigcap_{i\in[2\ell]}\cB_i\,\,\middle|\,\,|\cJ|\geq\ell\right]\mathbb{P}[|\cJ|\geq\ell]=\sum_{\substack{J\subseteq[2\ell]\\|J|\geq\ell}}\mathbb{P}\left[\bigcap_{i\in[2\ell]}\cB_i\,\,\middle|\,\,\cJ=J\right]\mathbb{P}[\cJ=J]\leq\sum_{\substack{J\subseteq[2\ell]\\|J|\geq\ell}}\mathbb{P}\left[\bigcap_{i\in J}\cB_i\,\,\middle|\,\,\cJ=J\right]\mathbb{P}[\cJ=J].\]
The number of summands in this expression is constant, and we showed above that each summand is $o(1/n)$.
Thus, we conclude that the whole sum is $o(1/n)$.}
\begin{equation}\label{eq:C-s}
\mathbb{P}\left[\bigcap_{i\in[2\ell]}\cB_i\right]=o(1/n).
\end{equation}
As a result, \ref{lem:close pointsitem1} follows by combining \eqref{eq:BABUbound}, \eqref{eq:union_bound}, \eqref{eq:empty}, \eqref{eq:BUbound1} and \eqref{eq:C-s}.

The proof of \ref{lem:close pointsitem2} is similar, but much more direct.
A combination of Chernoff's inequality and a union bound shows that a.a.s.\ for every cell $q\in\cQ$ we have that $|V(G)\cap q|=(1\pm o(1))s^dn$.\COMMENT{For a fixed cell $q$, $|V(G)\cap q|$ follows a binomial random variable with expectation $s^dn=\omega(\log n)$, so by Chernoff
\[\mathbb{P}[||V(G)\cap q|-s^dn|\geq\alpha s^dn]\leq2\nume^{\alpha^2s^dn/3}.\]
The claim follows by choosing, say, $\alpha=\alpha(n)=o(1)$ such that $\alpha^2s^dn=\omega(\log n)$ and a union bound over all cells.}
In particular, a.a.s.\ no cell is bad because of condition~\ref{bad_condition_1}.
As for condition \ref{bad_condition_2}, for each $q\in\cQ$, let~$\cB(q)$ denote the event that $q$ is bad because of \ref{bad_condition_2} but not because of \ref{bad_condition_1}.
For a fixed cell~$q\in\cQ$ with centre~$x$, we combine \eqref{eq:S-s}, \cref{claim:reduced}, the fact that $4/3<2\cdot 3/4$ and Chernoff's inequality to bound
\begin{align*}
    \mathbb{P}[\cB(q)]
    &\leq\mathbb{P}\left[|V(G)\cap S_3(x)|\geq\frac{\eps}{2}|V(G)\cap S_1(x)|\right]\\
    &\leq\mathbb{P}\left[|V(G)\cap S_3(x)|\geq \frac43\mathbb E[|V(G)\cap S_3(x)|]\right] + \mathbb{P}\left[|V(G)\cap S_1(x)|\leq \frac34\mathbb E[|V(G)\cap S_1(x)|]\right]\\
    &\leq4\exp(-|S_3(x)|n/48)=o(1/n),
\end{align*}
where the second inequality follows since the event $\{|V(G)\cap S_3(x)|\geq\eps|V(G)\cap S_1(x)|/2\}$ is contained in the event\COMMENT{This is easy to see by considering the complement. We want to show that if $\{|V(G)\cap S_3(x)|\leq 4\mathbb E[|V(G)\cap S_3(x)|]/3\}\cap \{|V(G)\cap S_1(x)|\geq 3\mathbb E[|V(G)\cap S_1(x)|]/4\}$ holds, then $\{|V(G)\cap S_3(x)|<\eps|V(G)\cap S_1(x)|/2\}$. Indeed, using \eqref{eq:S-s} we have that 
\begin{align*}
    |V(G)\cap S_3(x)|\leq \frac43\mathbb E[|V(G)\cap S_3(x)|]=\frac43|S_3(x)|n&\leq\frac43\frac{\eps|S_1(x)|n}{4}=\frac43\frac{\eps\mathbb{E}[|V(G)\cap S_1(x)|]}{4}\\
    &\leq\left(\frac43\right)^2\frac{\eps|V(G)\cap S_1(x)|}{4}<\frac{\eps|V(G)\cap S_1(x)|}{2}.
\end{align*}}
\[\left\{|V(G)\cap S_3(x)|\geq \frac43\mathbb E[|V(G)\cap S_3(x)|]\right\}\cup \left\{|V(G)\cap S_1(x)|\leq\frac34\mathbb E[|V(G)\cap S_1(x)|]\right\}.\]
The claim follows by a union bound over all $q\in\cQ$.
\end{proof}

In order to prove \cref{thm:connectivity} as an application of \cref{lem:k-conn}, we need one last auxiliary result.

\begin{lemma}\label{lem:down}
For every integer\/ $d\ge 1$ and $\eps\in (0, 1/2]$, fix $C_1' = C_1'(d, \eps) \coloneqq 6400\eps^{-2/d} \sqrt{d}$.
Let $C_1' (\log n/n)^{1/d}\le r = o(1)$ and $G\sim G_d(n,r)$.
Then, a.a.s.\ the following holds: every vertex $v\in V(G)$ whose first coordinate $e_1 = e_1(v)$ is at most $2/3$ (resp.\ at least $1/3$) has at least $(1/2-\eps/2)$-proportion of its neighbours in the half-space $h_v^+ \coloneqq \{(x_1,\ldots,x_d)\in\mathbb{R}^d: x_1 > e_1\}$ (resp.\ in the half-space $h_v^- \coloneqq \{(x_1,\ldots,x_d)\in\mathbb{R}^d: x_1 < e_1\}$).
\end{lemma}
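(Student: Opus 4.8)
The plan is to fix a vertex $v$ with first coordinate $e_1\le 2/3$ (the case $e_1\ge 1/3$ is symmetric) and show that the desired inequality holds with probability $1-o(1/n)$, after which a union bound over all $n$ vertices finishes the proof. First I would expose $v$ and its position, and then argue conditionally on $e_1(v)$: the remaining $n-1$ points are still i.i.d.\ uniform on $[0,1]^d$, so the neighbours of $v$ are those points falling in $B(v,r)\cap[0,1]^d$, and each such point independently lands in the ``forward'' half $B(v,r)\cap h_v^+\cap[0,1]^d$ with the conditional probability $p^+ \coloneqq |B(v,r)\cap h_v^+\cap[0,1]^d|\,/\,|B(v,r)\cap[0,1]^d|$. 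The key geometric claim is that $p^+\ge 1/2 - \eps/4$ (say) for every admissible position of $v$. This is where the hypothesis $e_1\le 2/3$ enters: it guarantees that the ball $B(v,r)$ (which has radius $r=o(1)$, in particular $r<1/3$) is not chopped off on the $x_1$-increasing side by the boundary face $\{x_1=1\}$, so the only way the forward half-ball can lose volume relative to the full clipped region is through the \emph{other} $2d-1$ faces of the cube, and those faces cut the two halves $h_v^\pm$ essentially symmetrically in the $x_1$-direction. Quantitatively, $|B(v,r)\cap h_v^+\cap[0,1]^d|$ differs from $|B(v,r)\cap h_v^-\cap[0,1]^d|$ only because of a possible clip by $\{x_1=0\}$ when $e_1<r$; in that case the lost sliver has volume at most, crudely, $r\cdot|B(0,r)\cap\{x_1=0\}|=O(r^{d})$ times an absolute constant, while the full region has volume $\Theta(r^d)$, and one checks the ratio is at least $1/4$ always, so $p^+\ge 1/4$ unconditionally and $p^+\ge 1/2-\eps/4$ once $e_1\ge$ (small constant); the regime $e_1<r$ needs a slightly more careful but still elementary estimate. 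I would state this as an internal geometric claim and prove it by the reflection/symmetry argument just sketched.

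With the geometric bound in hand, the probabilistic step is a routine Chernoff argument. Writing $d_G(v)=|V(G)\cap B(v,r)\cap[0,1]^d|-1$, which conditionally on the position of $v$ is $\mathrm{Bin}(n-1,q)-1$ with $q = |B(v,r)\cap[0,1]^d|\ge \theta_d r^d/2^d$, and writing $Z$ for the number of forward-neighbours, $Z$ conditionally on $d_G(v)$ is $\mathrm{Bin}(d_G(v),p^+)$. The assumption $r\ge C_1'(\log n/n)^{1/d}$ with $C_1'=6400\eps^{-2/d}\sqrt d$ forces $qn\ge \theta_d (C_1')^d\log n/2^d\ge (C_1')^d\log n/(2d)^{d/2}$, which is $\ge (6400/ \eps^{2/d}\cdot \text{stuff})\log n$, in particular at least $\omega(\log n)$ and, more precisely, large enough (by a factor depending only on $\eps,d$) that two applications of \cref{lem:chernoff} give: first, $d_G(v)\ge qn/2$ with probability $1-o(1/n)$; and second, conditionally on $d_G(v)\ge qn/2$, $Z\ge (1-\eps/4)p^+ d_G(v)\ge (1/2-\eps/2)d_G(v)$ with probability $1-o(1/n)$, using $p^+\ge 1/2-\eps/4$ and the deviation $\delta=\eps/4$ against a mean $\ge (1/2-\eps/4)qn/2$ which is $\Omega_{\eps,d}(\log n)$ with the implied constant made large by the explicit value of $C_1'$. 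The slightly delicate bookkeeping is to make the constant $6400$ (and the exponent $2/d$) actually suffice: one tracks that the Chernoff exponent is $\delta^2\mu/3 = \Theta(\eps^2)\cdot\Theta_d(\theta_d r^d n) \ge \Theta(\eps^2)\cdot\Theta_d((C_1')^d/d^{d/2})\log n$, and $(C_1')^d=6400^d\eps^{-2}d^{d/2}$ makes this comfortably exceed $\log n$, hence $o(1/n)$; I would not grind through the exact constant chase but indicate it goes through.

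The main obstacle I anticipate is the \emph{uniformity} of the geometric bound $p^+\ge 1/2-\eps/4$ over all positions of $v$ with $e_1\le 2/3$, and in particular handling $v$ close to the $\{x_1=0\}$ face and simultaneously close to other faces of the cube, where the clipped region $B(v,r)\cap[0,1]^d$ is an awkward shape. The clean way around this is the observation that the cube is a product: for the $x_1$-direction, clipping by $\{x_1=0\}$ when $e_1<r$ removes a region $\{x_1<0\}\cap B(v,r)$ which lies \emph{entirely} in $h_v^-$ (since $x_1<0\le e_1$ there), so clipping can only \emph{help} $p^+$, not hurt it; clipping by $\{x_1=1\}$ is excluded by $e_1\le 2/3$ and $r<1/3$; and clipping by the faces $\{x_i=0\},\{x_i=1\}$ for $i\ge 2$ acts symmetrically in $x_1$, hence removes equal volume from $h_v^+$ and $h_v^-$. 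Consequently $|B(v,r)\cap h_v^+\cap[0,1]^d|\ge |B(v,r)\cap h_v^-\cap[0,1]^d|$, so $p^+\ge 1/2$ outright — there is in fact no error term in the $x_1$-direction, and the crude sliver estimate above is not even needed. This makes the geometric step essentially free and concentrates all the work into the (routine) Chernoff bounds and the union bound.
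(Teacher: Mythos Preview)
Your proposal is correct and takes essentially the same approach as the paper: the geometric observation that $p^+\ge 1/2$ (your reflection argument is in fact more explicit than the paper's one-line justification of \eqref{eq:pqbound}), followed by Chernoff bounds exploiting $\eps^2\theta_d(C_1')^d/2^{d+1}\gg 1$, and a union bound over the $n$ vertices. The only cosmetic difference is that the paper applies Chernoff separately to the two absolute counts $|V(G)\cap B(v,r)|$ (upper tail) and $|V(G)\cap B(v,r)\cap h_v^+|$ (lower tail), each as a $\mathrm{Bin}(n-1,\cdot)$ variable, rather than your two-stage conditioning on $d_G(v)$; both routes are equally valid.
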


\begin{proof}
We show the statement for $h_v \coloneqq h_v^+$ when $e_1\le 2/3$; the proof of the statement for $h_v^-$ when $e_1\ge 1/3$ is analogous.
Fix a vertex $v\in V(G)$ and define
\[p = p(v) \coloneqq |[0,1]^d\cap B(v,r)|\quad \text{ and }\quad p^+ = p^+(v) \coloneqq |[0,1]^d\cap B(v,r)\cap h_v|.\]

Condition on the event that $e_1=e_1(v)\leq2/3$.
Then, by our choice of $v$ and $h_v$ and the fact that $r \le 1/3$, we have that
\begin{equation}\label{eq:pqbound}
    p^+\geq p/2\geq\theta_dr^d/2^{d+1}.
\end{equation}
Note, moreover, that, conditionally on the position of $v$, the random variables $|V(G)\cap B(v,r)|$ and $|V(G)\cap B(v,r)\cap h_v|$ have distribution $\mathrm{Bin}(n-1,p)$ and $\mathrm{Bin}(n-1,p^+)$, respectively.
Thus, by Chernoff's inequality,
\begin{equation}\label{eq:q}
\mathbb P\left[|V(G)\cap B(v,r)|\ge \left(1+\frac{\eps}{10}\right)p(n-1)\right]\leq 2\exp\left(-\frac{\eps^2 p (n-1)}{300}\right)\le \exp\left(-\frac{\eps^2 p n}{400}\right)
\end{equation}
and, similarly,
\begin{equation}\label{eq:p}
\mathbb P\left[|V(G)\cap B(v,r)\cap h_v|\le \left(1-\frac{\eps}{10}\right)p^+(n-1)\right]\le \exp\left(-\frac{\eps^2 p^+ n}{400}\right).
\end{equation}
Note that, by \eqref{eq:pqbound} and \eqref{equa:ballbound}, we have that\COMMENT{Using \eqref{equa:ballbound} we have that \[\eps^2 \frac{\theta_d}{2^{d+1}} (C_1')^d\geq\frac{\eps^2\uppi^{d/2}}{2^{d+2}d^{d/2+1}}(C_1')^d.\] In order for this to be at least $800\log n$, isolating $C_1'$, we need that \[C_1'\geq(4\cdot800d)^{1/d}2\sqrt{d}/\eps^{2/d}\uppi^{1/2},\]
so it suffices to take a $C_1'$ which is an upper bound for this last expression. The term $d^{1/d}/\uppi^{1/2}$ is bounded from above by $1$, and the other term in $(4\cdot 800)^{1/d}$ is maximised when $d=1$. So our choice of $C_1'$ satisfies the inequality.}
\[\eps^2 p^+ n\geq \eps^2 \frac{\theta_dr^d}{2^{d+1}} n \geq \eps^2 \frac{\theta_d}{2^{d+1}} (C_1')^d \log n\geq800\log n.\]
Thus, using that $p\geq p^+$ and that $(1-\eps/10)p^+\ge (1+\eps/10)(1/2-\eps/2)p$, and substituting the above into \eqref{eq:q} and \eqref{eq:p}, we conclude that
\begin{align*}
&\mathbb P\left[|V(G)\cap B(v,r)\cap h_v|\leq\left(\frac12-\frac{\eps}{2}\right)|V(G)\cap B(v,r)|\right]\\
\le\; 
&\mathbb P\left[|V(G)\cap B(v,r)|\ge \left(1+\frac{\eps}{10}\right)p(n-1)\right]+\mathbb P\left[|V(G)\cap B(v,r)\cap h_v|\le \left(1-\frac{\eps}{10}\right)p^+(n-1)\right]\\
\le\; 
&2\exp\left(-\frac{\eps^2 p^+ n}{400}\right) = o(1/n),
\end{align*}
and the lemma follows from a union bound over all $n$ vertices of $G$.
\end{proof}

We now proceed to the main proof in this section.

\begin{proof}[Proof of Theorem~\ref{thm:connectivity}]
Set $C_1 = C_1'$ (which is defined in \cref{lem:down}), let $G\sim G_d(n,r)$, and condition on the event that $G$ satisfies the properties described in \cref{lem:close points} and, if $r=o(1)$, \cref{lem:down} (which hold a.a.s.).
Let $H\subseteq G$ be a $(1/2+\eps)$-subgraph of $G$.

The theorem follows by an application of \cref{lem:k-conn} for $H$ and $k \coloneqq \lfloor \eps s^d n/4\rfloor=\Omega(r^dn)$, using a suitably chosen set $V'$.
Fix a set $U\subseteq V(G)$ of size $k-1$, and suppose that a good cell $q\in\cQ$, a cell $q'\in\cQ$ neighbouring~$q$, and two vertices $v\in V(G)\cap q\setminus U$ and $v'\in V(G)\cap(q\cup q')\setminus U$ are given.
By \ref{bad_condition_1} and \ref{bad_condition_2}, it follows that~$v$ and~$v'$ have at least $\eps |N_G(v)\cap N_G(v')|\geq\eps(|V(G)\cap q|-2)\geq\eps(s^d n/2-2)\geq k$ common $H$-neighbours,\COMMENT{By \ref{bad_condition_2}, the number of common $G$-neighbours of $v$ and $v'$ is at least $(1-\eps)\max\{|N_G(v)|,|N_G(v')|\}\geq(1-\eps)(|N_G(v)|+|N_G(v')|)/2$. By considering $H$, we may delete at most $(1/2-\eps)|N_G(v)|$ common neighbours by deleting neighbours of $v$ and at most $(1/2-\eps)|N_G(v')|$ common neighbours by deleting neighbours of $v'$. Thus, the number of common $H$-neighbours of $v$ and $v'$ is at least $(1-\eps)(|N_G(v)|+|N_G(v')|)/2-(1/2-\eps)|N_G(v)|-(1/2-\eps)|N_G(v')|=\eps(|N_G(v)|+|N_G(v')|)/2$.
Lastly, of course, $|N_G(v)\cap N_G(v')|$ is bounded from above by the size of each of the sets, so the claimed bound follows: $|N_H(v)\cap N_H(v')|\geq\eps|N_G(v)\cap N_G(v')|$.\\
For the first inequality in the text, we use the fact that all vertices contained in $q$ are contained in the common neighbourhood of $v$ and $v'$ except $v$ and $v'$ themselves. The second inequality then follows by using \ref{bad_condition_1}. The last inequality holds since $k=\omega(1)$.} so they are in the same connected component of $H-U$.
Using that every good cell contains at least $s^dn/2\ge 2k$ vertices of $G$, we conclude that, for any set $W\subseteq\cQ$ of good cells such that $\Gamma[W]$ is connected, if we let $V'$ be the set of all vertices which lie in some cell of $W$, then $V'$ satisfies \cref{lem:k-conn}~\ref{lem:k-connitem1}.
\COMMENT{This is simply using, without mention, the fact that one can iteratively connect any pairs of vertices by jumping from neighbouring cell to neighbouring cell until reaching the cell containing the desired vertex.}
In particular, if $r = \omega((\log n/n)^{1/d})$, then we may simply set $V' = V(G)$ and conclude the proof since, by \cref{lem:close points}~\ref{lem:close pointsitem2}, there are no bad cells.

Now, suppose that $r = o(1)$ and consider the $d$-dimensional rectangle
$\Pi \coloneqq [1/3,2/3]\times [0,1]^{d-1}$.
By Lemma~\ref{lem:close points}~\ref{lem:close pointsitem1}, no bad cell intersecting $[1/3, 1/3+2r]\times [0,1]^{d-1}$ is connected in $\Gamma_{\mathrm{bad}}$ to any bad cell intersecting $[2/3-2r,2/3]\times [0,1]^{d-1}$. %Hence,
Using this property, we will show that there is a set~$W$ of good cells contained in~$\Pi$ such that $[0,1]^d\setminus (\bigcup_{q\in W} q)$ consists of two (topologically) connected regions at distance more than~$r$ from each other, and $\{1/3\}\times [0,1]^{d-1}$ and $\{2/3\}\times [0,1]^{d-1}$ are contained in different regions (which allows us to refer to them as the left and right parts, respectively).
In particular, $\Gamma[W]$ is a connected graph and, as discussed above, assumption~\ref{lem:k-connitem1} is satisfied for the set~$V'$ of all vertices in~$W$.
The set $W$ can be constructed as follows (see Figure~\ref{fig:setW} for a schematic representation of part of the construction).

\begin{ConsW}
Consider the union $U$ of the region $[2/3-2r,1]\times [0,1]^{d-1}$ and the balls with radius $4r$ centred at the centres of the bad cells connected to $[2/3-2r,1]\times [0,1]^{d-1}$ in $\Gamma_{\mathrm{bad}}$.
The region $[0,1]^d\setminus U$ may have several simply-connected components, but, since no component of~$\Gamma_{\mathrm{bad}}$ has more than $M=\Theta(1)$ vertices (by \cref{lem:close points}~\ref{lem:close pointsitem1}) and $r=o(1)$, one of these components must contain the $d$-dimensional rectangle $[0,1/3]\times[0,1]^{d-1}$.
Let the region containing this rectangle be denoted by $U'$. 
Then, let~$W$ be the set consisting of all cells contained in $[0,1]^d\setminus U'$ and at distance at most $2r-\sqrt{d}s$ from~$U'$.
Note that all such cells must be good by the definition of $\Gamma_{\mathrm{bad}}$.
Then, the union of the good cells in $W$ disconnects $[0,1/3]\times [0,1]^{d-1}$ and $[2/3,1]\times [0,1]^{d-1}$, and the two regions in $[0,1]^d\setminus\bigcup_{q\in W}q$ containing these $d$-dimensional rectangles remain at distance at least $2r - \sqrt{d}s - \sqrt{d}s > r$ from each other.
\end{ConsW}

\begin{figure}
    \centering
    \begin{tikzpicture}[scale=0.17,x=1cm,y=1cm]
    \clip (-10.5,-4.7) rectangle (45.5,45.5);

\draw [black,line width=1.5pt] (43/2+22,-1) -- (43/2+22,46);
\node[below] at (43/2+22,-1) {$\tfrac23$};
\draw [black,line width=1.5pt] (43/2+14,-1) -- (43/2+14,46);
\node[below] at (43/2+14,-1) {$\tfrac23-2r$};

        \foreach \i in {-10,...,45}{
\draw [line width=0.1pt] (\i,-0.5)-- (\i,45.5);
}
      \foreach \i in {0,...,45}{
\draw [line width=0.1pt] (-10.5,\i)-- (45.5,\i);
}

\draw [fill=black] (11,45) rectangle (17,32);
\draw [fill=black] (12,35) rectangle (18,31);
\draw [fill=black] (10,42) rectangle (11,35);
\draw [fill=black] (12,31) rectangle (22,30);
\draw [fill=black] (13,30) rectangle (24,29);
\draw [fill=black] (14,29) rectangle (25,28);
\draw [fill=black] (15,28) rectangle (26,27);
\draw [fill=black] (16,27) rectangle (26,26);
\draw [fill=black] (17,26) rectangle (27,25);
\draw [fill=black] (19,25) rectangle (27,24);
%\draw [purple, thick] (20,23.5) circle (8);
\draw [fill=black] (20,24) rectangle (27,23);
\draw [fill=black] (17,23) rectangle (27,22);
\draw [fill=black] (14,22) rectangle (27,21);
\draw [fill=black] (12,21) rectangle (27,20);
\draw [fill=black] (11,20) rectangle (26,19);
\draw [fill=black] (10,19) rectangle (25,18);
\draw [fill=black] (9,18) rectangle (24,17);
\draw [fill=black] (8,17) rectangle (22,16);
\draw [fill=black] (7,16) rectangle (17,15);
\draw [fill=black] (7,15) rectangle (15,14);
\draw [fill=black] (6,14) rectangle (14,13);
\draw [fill=black] (6,13) rectangle (13,12);
\draw [fill=black] (6,12) rectangle (13,11);
\draw [fill=black] (5,11) rectangle (12,10);
\draw [fill=black] (5,10) rectangle (12,9);
\draw [fill=black] (5,9) rectangle (12,8);
\draw [fill=black] (5,8) rectangle (12,7);
\draw [fill=black] (5,7) rectangle (12,6);
\draw [fill=black] (5,6) rectangle (12,5);
\draw [fill=black] (5,5) rectangle (12,4);
\draw [fill=black] (6,4) rectangle (13,3);
\draw [fill=black] (6,3) rectangle (13,2);
\draw [fill=black] (6,2) rectangle (14,1);
\draw [fill=black] (7,1) rectangle (15,0);
\foreach \i/\j in {17/44,18/44,17/43,17/42,18/32,18/31,19/31}{
\draw [fill=black] (\i,\j) rectangle (\i+1,\j+1);
}
\draw[blue,thick] (40.5,40.5) circle (8);
%\draw[blue,thick] (35.5,4.5) circle (8);
\centerarc[blue,thick](35.5,4.5)(-40:220:8)

\draw[red,thick] (40.5,40.5) circle (16);
%\draw[red,thick] (35.5,4.5) circle (16);
\centerarc[red,thick](35.5,4.5)(0:200:16)

\draw[blue,thick] (25.5,38.5) circle (8);
\draw[blue,thick] (20.5,7.5) circle (8);
\draw[red,thick] (25.5,38.5) circle (16);

%\draw[red,thick] (20.5,7.5) circle (16);
\centerarc[red,thick](20.5,7.5)(-30:210:16)

%\draw [purple, thick] (20,23.5) circle (8);
\centerarc[magenta,thick](20,23.5)(-90:73:7.77)
\centerarc[magenta,thick](20.5,7.5)(93:260:8.23)
\centerarc[magenta,thick](25.5,38.5)(110:250:8.23)
\foreach \i/\j in {35/3,36/4,36/3,20/7}{
\draw [fill=orange] (\i,\j) rectangle (\i+1,\j+1);
}

\foreach \i/\j in {40/40,41/40,42/40,25/38}{
\draw [fill=orange] (\i,\j) rectangle (\i+1,\j+1);
}
    \end{tikzpicture}
    \caption{The construction of $W$ in the case $d=2$. A part of the grid depicting bad cells (in orange) in two connected components of $\Gamma_{\mathrm{bad}}$ of size $4$ which intersect $[2/3-2r,1]\times[0,1]$, and the set $W$ (in black). The red circles have radius $4r$ and the blue ones have radius $2r$. The set $U$ thus consists of the union of the rectangle $[2/3-2r,1]\times[0,1]$ and the red circles. (This union may have holes, as in this example). The pink contour leaves the area at distance at most $2r-\sqrt{d}s$ from $[0,1]^2\setminus U'$ on its left.}
    \label{fig:setW}
\end{figure}
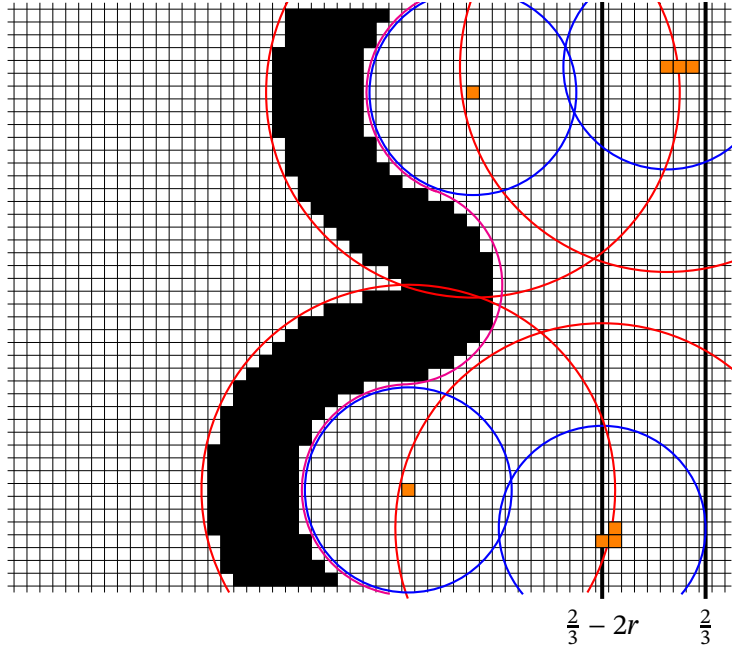

We will next prove that the vertices in $V(G)\setminus V'$ satisfy \cref{lem:k-conn}~\ref{lem:k-connitem2}.
To this end, let $v_1,\ldots,v_m$ (resp.\ $u_1,\ldots,u_\ell$) be an ordering of the vertices in the left part (resp.\ right part) by decreasing (resp.\ increasing) order of their first coordinate.
By Lemma~\ref{lem:down}, and since the left and right parts are at distance more than $r$ from each other, for every $i\in [m]$ we know that 
\[|N_{H}(v_i)\cap (V'\cup \{v_1, \ldots, v_{i-1}\})|\geq \eps |N_G(v_i)|/2,\]
and an analogous statement holds for $u_1,\ldots,u_\ell$.\COMMENT{Note that, for each $i\in[m]$, the set $V'\cup \{v_1, \ldots, v_{i-1}\}$ contains all $G$-neighbours of $v_i$ to its right. Therefore, by \cref{lem:down}, it contains at least a $(1/2-\eps/2)$-proportion of $N_G(v_i)$. By passing to $H$, at least an $\eps/2$-proportion survives.}
It remains to show that for every $v\in V(G)\setminus V'$ we have $|N_G(v)|\ge 2k/\eps$: indeed, the ordering $v_1,\ldots,v_m,u_1,\ldots,u_\ell$ would then satisfy assumption~\ref{lem:k-connitem2} and the proof would be completed. 

We show that every vertex has at least $2k/\eps$ neighbours in $G$.
Indeed, for a fixed vertex $v\in V(G)$, its expected degree satisfies
\[\mathbb{E}[d_G(v)]\geq\frac{\theta_dr^dn}{2^d}\geq\frac{2^dd^{d/2}\theta_ds^dn}{\delta^d}\geq\frac{2^{d+2}d^{d/2}\theta_dk}{\eps\delta^d}\geq \frac{4k}{\eps},\]
where the second inequality uses the definition of $s$, the third one uses the definition of $k$, and the fourth one uses \eqref{equa:ballbound} and the fact that $\delta\leq1$.
Note also that
\[\mathbb{E}[d_G(v)]\geq\frac{\theta_dr^dn}{2^d}\geq\frac{1}{2^dd^{d/2}}C_1^d \log n\ge 25\log n,\]
where the second inequality uses \eqref{equa:ballbound} and the lower bound on $r$, and the third uses the definition of~$C_1$.
Now, an application of Chernoff's inequality and a union bound conclude the proof. \COMMENT{For the fixed vertex $v$ we have by Chernoff that
\[\mathbb{P}[d_G(v)\leq k]\leq\mathbb{P}[d_G(v)\leq\mathbb{E}[d_G(v)]/2]\leq2\nume^{-\mathbb{E}[d_G(v)]/12}\leq2\nume^{-25\log n/12}=o(1/n),\] 
so a union bound over all vertices works.}
\end{proof}

We close this section by showing that the constant $1/2$ in \cref{thm:connectivity} is best possible in a strong sense, by proving \cref{prop:conn_optimal}.
The proof is quite simple and shows that, for a suitably large~$C_2$, one may divide the hypercube $[0,1]^d$ into strips of volume between $2r$ and $4r$ via hyperplanes orthogonal to the $x$-axis and delete the edges crossing these hyperplanes to obtain the required $(1/2-\eps)$-subgraph of $G_d(n,r)$.

\begin{proof}[Proof of Proposition~\ref{prop:conn_optimal}]
The statement holds trivially if $r\ge 1/5$, so we may assume that \mbox{$r < 1/5$}.
We divide $[0,1]^d$ into strips $S_1, \ldots, S_k$ which are orthogonal to the $x$-axis and have width \mbox{$1/k\in [2r, 4r]$} (which is possible since there is at least one integer in the interval $[1/4r, 1/2r]$). 
Set \mbox{$S_0,S_{k+1}\coloneqq\varnothing$}.
Then, for every $i\in[k]$ and every $y\in S_i$, we have that 
\begin{itemize}
    \item $B(y,r)\cap[0,1]^d\subseteq S_{i-1}\cup S_i$ or \mbox{$B(y,r)\cap[0,1]^d\subseteq S_i\cup S_{i+1}$}, and
    \item $|S_{i-1}\cap B(y,r)|,|S_{i+1}\cap B(y,r)|\leq|S_i\cap B(y,r)|$.
\end{itemize}

Let $G \sim G_d(n,r)$ and $V\coloneqq V(G)$. 
An application of Chernoff's bound (\cref{lem:chernoff}) for the binomial variables $|V\cap S_i|$, together with a union bound, shows that a.a.s.\ for every $i\in [k]$, we have\COMMENT{The expected value is $n/k\leq4rn$, which is much larger than $\log n$; in particular, Chernoff's inequality gives very high concentration.} 
\begin{equation}\label{equa:optcon1}
    |V\cap S_i|\leq5rn.
\end{equation}
Moreover, for sufficiently large $C_2$, an application of Chernoff's bound shows that, for every $i\in[k]$ and every vertex $v\in V\cap S_i$,\COMMENT{For the second inequality, the expectation is $|S_i\cap B(v,r)|n\geq|B(v,r)\cap[0,1]^d|n/2$, so by Chernoff's inequality
\[\mathbb P[|V\cap S_i\cap B(v,r)|\le (1-\eps)|B(v,r)\cap[0,1]^d|n/2]\leq2\exp(-\eps^2|B(v,r)\cap[0,1]^d|n/6)\leq2\exp(-\eps^22^{-d}r^d\theta_dn/6) = o(n^{-2}),\]
where the last inequality holds for $C_2$ sufficiently large.
The previous inequality follows similarly.}
\[\mathbb P[|V\cap B(v,r)|\ge (1+\eps)|B(v,r)\cap[0,1]^d|n] = o(1/n)\]
and
\[\mathbb P[|V\cap S_i\cap B(v,r)|\le (1-\eps)|B(v,r)\cap[0,1]^d|n/2] = o(1/n).\]
Hence, by a union bound over all $v\in V$, for every $i\in [k]$ and every vertex $v\in V\cap S_i$, a.a.s.
\begin{equation}\label{equa:optcon2}
    |V\cap S_i\cap B(v,r)|\ge (1-\eps)\frac{|B(v,r)\cap[0,1]^d|n}{2}\geq\frac{1-\eps}{2+2\eps} |V\cap B(v,r)|.
\end{equation}

Condition on the event that $G$ satisfies the properties above.
Now, consider a subgraph $H$ of $G$ obtained by deleting all edges of $G$ which are not contained in one of the strips $S_i$.
Clearly, each component of $H$ is contained in a strip, and so has order at most $5rn$ by \eqref{equa:optcon1}.
Moreover, by \eqref{equa:optcon2}, $H$ is a $(1/2-\eps)$-subgraph of $G$.\COMMENT{Note that $\frac{1-\eps}{2+2\eps}>\frac12-\eps$, so the fact that we are ignoring the central vertex (so there should be $-1$ everywhere when considering degrees) does not affect us.}
\end{proof}

\subsection{Connectivity in random geometric graphs is not born resilient}

We devote this section to proving Theorem~\ref{thm:C}.
First, we focus on the case $d=1$.
The proof relies upon the fact that a.a.s.\ there is an interval $I\subseteq[0,1]$ of length $\log n - o(\log n)$ containing no vertex of $G_1(n,r)$. 
It can then be shown that deleting all edges crossing $I$ a.a.s.\ leaves a disconnected $(1/2+\eps)$-subgraph of $G_1(n,r)$.

\begin{proof}[Proof of Theorem~\ref{thm:C}~\ref{thm:Citem1}]
If $r\leq (\log n-(\log n)^{1/3})/n$, then a.a.s.\ $G\sim G_1(n,r)$ is not connected~\cite[Theorem~12]{GJ96}, so we may assume that $\eps\in(0,1/4]$ and $(\log n-(\log n)^{1/3})/n\leq r\leq \log n/4\eps n$.

Fix $\ell = \ell(n) \coloneqq (\log n-(\log n)^{1/2})/n < r$ and consider a family $\cI$ of $\lfloor n/(3\log n+6rn)\rfloor$ pairwise disjoint intervals of length $\ell+2r$ in the segment $[1/3, 2/3]$.
Note that, if $[a-r,b+r] \in \cI$, then $a-r<a-r+\ell<a<b<b+r-\ell<b+r$.
For every interval $I = [a-r,b+r] \in \cI$, define the event~$\cE_I$ that $V(G)\cap [a,b] = \varnothing$, and the event $\cF_I$ that the intervals $[a-r+\ell,a]$ and $[b,b+r-\ell]$ contain $(r-\ell)n\pm (rn)^{2/3}$ vertices and the intervals $[a-r,a]$ and $[b,b+r]$ contain $rn\pm (rn)^{2/3}$ vertices.
By Chernoff's inequality (for the event $\cF_I$ conditionally on~$\cE_I$), one can verify that, for every~$I\in \mathcal I$,\COMMENT{We can bound $\mathbb P[\cF_I\mid \cE_I]$ from above by the sum of the probabilities that $[a-r+\ell,a]$ and $[b,b+r-\ell]$ contain $(r-\ell)n \pm (rn)^{2/3}$ points and the intervals $[a-r,a]$ and $[b,b+r]$ contain $rn\pm (rn)^{2/3}$ points.
The number of points in the intervals $[a-r,a]$ and $[b,b+r]$ follows a binomial distribution $X\sim\mathrm{Bin}(n,r/(1-\ell))$ and the number of points in the intervals $[a-r+\ell,a]$ and $[b,b+r-\ell]$ follows a binomial distribution $Y\sim\mathrm{Bin}(n,(r-\ell)/(1-\ell))$ (the $1/(1-\ell)$ correction is due to the conditioning on $\cE_I$).
By Chernoff,
\[\mathbb{P}[X\neq rn\pm (rn)^{2/3}]\leq\mathbb{P}[X\neq(1\pm(rn)^{-1/3}/2)\mathbb{E}[X]]\leq2\exp(-(rn)^{-2/3}rn/12(1-p))\leq\exp(-(rn)^{1/3}/15)=o(1),\]
and, similarly,
\[\mathbb{P}[Y\neq(r-\ell)\pm(rn)^{2/3}]\leq\mathbb{P}[Y\neq(r-\ell)\pm((r-\ell)n)^{2/3}]\leq\exp(-((r-\ell)n)^{1/3}/15)=o(1)\]
by the choice of $\ell$ and the lower bound on $r$.}
\[\mathbb P[\cE_I\cap \cF_I] = \mathbb P[\cE_I]\mathbb P[\cF_I\mid \cE_I] = (1\pm o(1))(1-\ell)^{n} = (1\pm o(1))\nume^{-\ell n}.\]
Similarly, for every pair of distinct intervals $I,J\in \mathcal I$,
\begin{align*}
\mathbb P[\cE_I\cap \cF_I\cap \cE_J\cap \cF_J] = \mathbb P[\cE_I\cap \cE_J]\mathbb P[\cF_I\cap \cF_J\mid \cE_I\cap \cE_J] = (1\pm o(1))(1-2\ell)^{n} = (1\pm o(1))\nume^{-2\ell n}. 
\end{align*}
Thus, a second moment argument shows that a.a.s.\ the event $\cE_I\cap \cF_I$ holds for at least one interval $I\in \cI$.\COMMENT{For each $I\in\cI$, let $X_I$ be the indicator random variable that $\cE_I\cap \cF_I$ holds, and let $X\coloneqq\sum_{I\in\cI}X_I$.
We have shown that $\mathbb{E}[X_I]=(1\pm o(1))\nume^{-\ell n}$ and $\mathbb{E}[X_IX_J]=(1\pm o(1))\nume^{-2\ell n}$.
Together with the fact that $\operatorname{Var}[X_I]=\mathbb{E}[X_I](1-\mathbb{E}[X_I])=(1\pm o(1))\nume^{-\ell n}$ (since $\nume^{-\ell n}=o(1)$), we conclude that $\mathbb{E}[X]=\lfloor n/(3\log n+6rn)\rfloor(1\pm o(1))\nume^{-\ell n}$ and 
\[\operatorname{Var}[X]\leq\lfloor n/(3\log n+6rn)\rfloor(1\pm o(1))\nume^{-\ell n}+\lfloor n/(3\log n+6rn)\rfloor^2(1\pm o(1))\nume^{-2\ell n}\leq n\nume^{-\ell n}.\]
Therefore, by Chebyshev's inequality, 
\[\mathbb{P}[X=0]\leq\frac{\operatorname{Var}[X]}{\mathbb{E}[X]^2}\leq\frac{n\nume^{-\ell n}}{(1\pm o(1))\lfloor n/(3\log n+6rn)\rfloor^2\nume^{-2\ell n}}=\Theta(\log^2n\nume^{-\sqrt{\log n}})=o(1).\]}
We condition on this event, and let $I = [a-r,b+r]\in \cI$ be one interval for which $\cE_I\cap \cF_I$ holds.

Let $H\subseteq G$ be obtained by deleting all edges $uv$ with $u\in [0,a)$ and $v\in (b,1]$.
The probability that $V(G)\cap [0,1/3) = \varnothing$ or $V(G)\cap (2/3,1] = \varnothing$ is exponentially small (both in the original probability space and in our conditional setting), so a.a.s.\ $H$ is a disconnected graph.
Moreover, $H$ is a $(1/2+\eps)$-subgraph of $G$ since, for every vertex $v\in V(G)\cap ([0,a)\cup (b,1])$, we have that\COMMENT{The first inequality is simply from the conditioning above. Now say that $r=(1\pm o(1))x\log n/n$ for some $x\in[1,1/4\eps]$. Then, substituting the values of $r$ and $\ell$, we have that 
\[\frac{rn-(rn)^{2/3}-1}{(2r-\ell)n+2(rn)^{2/3}}=(1\pm o(1))\frac{x}{2x-1}.\]
This function is decreasing as $x$ increases, so the minimum is achieved when $x=1/4\eps$.
Lastly, note that \[\frac{1/4\eps}{2/4\eps-1}=\frac{1}{2-4\eps}>\frac12+\eps.\]}
\[\frac{d_H(v)}{d_G(v)}\geq \frac{rn-(rn)^{2/3}-1}{(2r-\ell)n+2(rn)^{2/3}}\geq (1-o(1))\frac{1}{2-4\eps}>\frac12+\eps,\]
which finishes the proof. 
\end{proof}

Now, we fix $d=2$ and direct our attention to the proof of Theorem~\ref{thm:C}~\ref{thm:Citem2}. 
The main idea of the proof is similar to the $1$-dimensional case and goes roughly as follows.
We look for an annulus in~$[0,1]^2$ with radii of order $\log n/n$ which contains no vertices of $G_2(n,r)$.
Then, one can delete all edges of $G_2(n,r)$ which cross this annulus to obtain a disconnected graph.
This idea unfortunately fails for two dimensions since the width of the empty annulus is not sufficient to compensate the fact that, due to the curvature, some vertices in the ball inside the annulus are still expected to have more neighbours beyond the empty annulus than inside the ball, so the resulting graph after those edge deletions will not be a $(1/2+\eps)$-subgraph of $G_2(n,r)$.
To correct this, we look for two concentric annuli in $[0,1]^2$ with radii of order $\log n/n$ such that (i) the external boundary of the internal annulus coincides with the internal boundary of the external annulus; (ii) the external annulus contains no vertices of $G_2(n,r)$, and (iii) the density of vertices in the internal annulus is atypically high.
While this last assumption requires to reduce the width of the external annulus, careful comparison of large deviations shows that, by choosing the parameters of the problem in a suitable way, the density boost in the internal annulus manages to compensate the effect of the curvature.

We turn to the technical details. First, we prepare the terrain with a couple of auxiliary lemmas.
Set $\zeta=\zeta(n)\coloneqq(\log n/n)^{1/2}$.
For any given constant $A>0$, we define $t_A$ as the unique positive solution of the equation
\[\uppi ((1+t)A)^2 - \uppi A^2 = \uppi A^2(t^2+2t) = 1.\]
We define a function $c\colon[0,+\infty)^4\to\mathbb{R}$ by setting
\[c = c(A,C,t,\nu) \coloneqq 1 - \uppi A^2 (t^2+2t) - 4\uppi C (A-C) \left(1 - (1+\nu)\log\left(\frac{\nume}{1+\nu}\right)\right).\]

\begin{fact}\label{fact:AC}
For all fixed $C > 0$, $A>C$ and $t\in [0, t_A]$, there is a unique non-negative solution $\nu_t$ to the equation 
\begin{equation}\label{eq:relate t nu}
c(A,C,t,\nu)=0.
\end{equation}
Moreover, if $t\in[0,t_A)$ and $\nu\in[0,\nu_t)$, then $c(A,C,t,\nu)>0$.
\end{fact}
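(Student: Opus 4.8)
The plan is to treat $c(A,C,t,\cdot)$ as a one-variable function of $\nu$, with $A$, $C$ and $t$ regarded as fixed parameters, and to isolate the only non-linear ingredient. Writing
\[
\beta_t \coloneqq 1 - \uppi A^2(t^2+2t), \qquad \gamma \coloneqq 4\uppi C(A-C), \qquad g(\nu)\coloneqq 1 - (1+\nu)\log\!\left(\frac{\nume}{1+\nu}\right),
\]
we have $c(A,C,t,\nu) = \beta_t - \gamma\, g(\nu)$. Since $A>C>0$, the coefficient $\gamma$ is strictly positive, so the whole statement reduces to understanding the monotonicity and range of $g$ on $[0,\infty)$.

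For this I would substitute $u=1+\nu\ge 1$ and rewrite $g$ as $g = 1 - u\log(\nume/u) = 1 - u + u\log u$. Then $\frac{\mathrm{d}}{\mathrm{d}u}\big(1-u+u\log u\big) = \log u$, which is $\ge 0$ for $u\ge 1$ and $>0$ for $u>1$; hence $g$ is continuous and strictly increasing in $\nu$ on $[0,\infty)$, with $g(0)=0$ and $g(\nu)=1+u(\log u - 1)\to\infty$ as $\nu\to\infty$. Consequently $\nu\mapsto c(A,C,t,\nu)=\beta_t-\gamma\, g(\nu)$ is strictly decreasing, equals $\beta_t$ at $\nu=0$, and tends to $-\infty$ as $\nu\to\infty$.

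It remains to locate $\beta_t$. By the defining equation of $t_A$, namely $\uppi A^2(t_A^2+2t_A)=1$, and since $t\mapsto t^2+2t$ is increasing on $[0,\infty)$, we get $\beta_t\ge 0$ for all $t\in[0,t_A]$, with $\beta_t>0$ precisely when $t\in[0,t_A)$ and $\beta_{t_A}=0$. In the case $t<t_A$ we have $c(A,C,t,0)=\beta_t>0$ and $c(A,C,t,\nu)\to-\infty$, so the intermediate value theorem together with the strict monotonicity above yields a unique $\nu_t>0$ with $c(A,C,t,\nu_t)=0$; strict monotonicity then also gives $c(A,C,t,\nu)>0$ for every $\nu\in[0,\nu_t)$, which is the ``moreover'' part. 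In the boundary case $t=t_A$ we have $c(A,C,t_A,0)=0$ and $c$ strictly decreasing, so $\nu_{t_A}=0$ is the unique non-negative root.

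There is no real obstacle here: once $g$ is written as $1-u+u\log u$, the statement is a sign-and-monotonicity bookkeeping exercise. The only point requiring a little care is the boundary value $t=t_A$, where $\beta_t$ vanishes and the unique non-negative solution collapses to $\nu_t=0$; this is precisely why the ``moreover'' conclusion is only claimed for $t\in[0,t_A)$ and $\nu\in[0,\nu_t)$.
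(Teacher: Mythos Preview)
Your proof is correct and follows essentially the same approach as the paper: both isolate the $\nu$-dependent factor $(1+\nu)\log(\nume/(1+\nu))$, observe it is monotone on $[0,\infty)$ with the right boundary values, and combine this with $c(A,C,t,0)\ge 0$ (from the definition of $t_A$) to conclude via the intermediate value theorem. Your version is simply more explicit, computing the derivative after the substitution $u=1+\nu$ and spelling out the boundary case $t=t_A$ separately.
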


\begin{proof}
The function $f\colon[0, \infty)\to\mathbb{R}$ defined by $f(\nu)=(1+\nu)\log(\nume/(1+\nu))$ is decreasing (so $c$ is also decreasing as a function of $\nu$), $f(0)=1$ and $f(\nu)$ tends to $-\infty$ when $\nu\to \infty$.
Since $c(A,C,t,0)\ge 0$ by the definition of $t_A$, the claim follows.
\end{proof}

Let $C>0$ and $A\geq2C$ be fixed constants.
For every $t\ge 0$, define $Z_t \coloneqq ((1+t)A+2C)\zeta$.
For any point $x\in[0,1]^2$ and any $t,\nu\geq0$, we define the events
\begin{align*}
\cE_{x,t} = \cE_{x,t}(A) &\coloneqq \{V(G_2(n,r))\cap B(x, A\zeta, (1+t)A\zeta) = \varnothing\},\\
\cF_{x,\nu} = \cF_{x,\nu}(A,C) &\coloneqq \{|V(G_2(n,r))\cap B(x,(A-2C)\zeta,A\zeta)|\in [(1+\nu)4\uppi C(A-C)\log n,(\log n)^2]\},\\
\cG_{x,t} = \cG_{x,t}(A,C) 
&\coloneqq \{|V(G_2(n,r))\cap B(x,(1+t)A\zeta,Z_t)|=4\uppi(C+(1+t)A)C\log n\pm(\log n)^{2/3}\}.
\end{align*}

\begin{lemma}\label{lem:AC}
Fix $C > 0$ and $A\ge 2C$.
For every $t\in [0, t_A)$ and every $\nu\in (0, \nu_t)$,
a.a.s.\ $\cE_{x,t}\cap \cF_{x,\nu}\cap \cG_{x,t}$ holds for some point $x\in [Z_{t_A},1-Z_{t_A}]^2$.
\end{lemma}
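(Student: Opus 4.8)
The plan is to run a second moment argument over a family of well-separated candidate centres, exactly in the spirit of the proof of \cref{thm:C}~\ref{thm:Citem1}. First I would fix a maximal collection $\mathcal{X}$ of points in $[Z_{t_A},1-Z_{t_A}]^2$ that are pairwise at distance at least $3Z_{t_A}$; since $Z_{t_A}=\Theta(\zeta)=\Theta((\log n/n)^{1/2})$, we have $|\mathcal{X}|=\Theta(n/\log n)$, and for distinct $x,x'\in\mathcal{X}$ the balls $B(x,Z_{t_A})$ and $B(x',Z_{t_A})$ are disjoint. For each $x\in\mathcal{X}$ let $Y_x$ be the indicator of the event $\mathcal{E}_{x,t}\cap\mathcal{F}_{x,\nu}\cap\mathcal{G}_{x,t}$, and set $Y\coloneqq\sum_{x\in\mathcal{X}}Y_x$. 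It suffices to show $\Exp[Y]\to\infty$ and $\Var[Y]=o(\Exp[Y]^2)$, since then Chebyshev gives $\Pr[Y=0]\le\Var[Y]/\Exp[Y]^2=o(1)$. The disjointness of the $B(x,Z_{t_A})$ makes the events $\{Y_x=1\}$ and $\{Y_{x'}=1\}$ independent for $x\ne x'$ (each depends only on the configuration of vertices inside $B(x,Z_{t_A})$, resp.\ $B(x',Z_{t_A})$), so $\Exp[Y_xY_{x'}]=\Exp[Y_x]\Exp[Y_{x'}]$ and the variance reduces to $\Var[Y]=\sum_x\Var[Y_x]\le\Exp[Y]$; thus the whole argument collapses to estimating the single-centre probability $p_x\coloneqq\Pr[\mathcal{E}_{x,t}\cap\mathcal{F}_{x,\nu}\cap\mathcal{G}_{x,t}]$ and checking $|\mathcal{X}|\,p_x\to\infty$.

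The core computation is therefore the large-deviation estimate for $p_x$. Condition first on the number of vertices in the ``working disc'' $B(x,Z_t)$ (or equivalently compute directly with the Poissonised/binomial counts in the three disjoint regions: the inner annulus $B(x,(A-2C)\zeta,A\zeta)$ of area $4\uppi C(A-C)\zeta^2=4\uppi C(A-C)\log n/n$, the empty annulus $B(x,A\zeta,(1+t)A\zeta)$ of area $\uppi A^2(t^2+2t)\zeta^2$, and the outer collar $B(x,(1+t)A\zeta,Z_t)$). These three regions are pairwise disjoint, so the three vertex counts are independent multinomial/binomial variables, and
\[
p_x=\Pr[\mathcal{F}_{x,\nu}]\cdot\Pr[\mathcal{E}_{x,t}]\cdot\Pr[\mathcal{G}_{x,t}].
\]
Now $\Pr[\mathcal{E}_{x,t}]=(1-\uppi A^2(t^2+2t)\log n/n)^{\Theta(n)}=n^{-\uppi A^2(t^2+2t)+o(1)}$; the count in $\mathcal{F}_{x,\nu}$ is binomial with mean $4\uppi C(A-C)\log n$, so by the standard Chernoff/Poisson upper-tail bound $\Pr[\text{count}\ge(1+\nu)4\uppi C(A-C)\log n]=n^{-4\uppi C(A-C)((1+\nu)\log(\frac{1+\nu}{\nume})+1)+o(1)}=n^{-4\uppi C(A-C)(1-(1+\nu)\log(\frac{\nume}{1+\nu}))+o(1)}$ (the upper bound $(\log n)^2$ on the count is satisfied with probability $1-o(1)$ and costs nothing); and $\Pr[\mathcal{G}_{x,t}]=1-o(1)$ since $\mathcal{G}_{x,t}$ only asks the count in the outer collar to be within $(\log n)^{2/3}$ of its mean $\Theta(\log n)$, which is typical by Chernoff. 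Multiplying, $p_x=n^{-(\,1-c(A,C,t,\nu)\,)+o(1)}$ by the very definition of $c$, and since $\nu<\nu_t$ and $t<t_A$ give $c(A,C,t,\nu)>0$ by \cref{fact:AC}, we get $|\mathcal{X}|\,p_x=\Theta(n/\log n)\cdot n^{-1+c-o(1)}=n^{c-o(1)}\to\infty$. For the lower bound on $\Pr[\mathcal{E}_{x,t}]$ and on $\Pr[\mathcal{F}_{x,\nu}]$ one uses the matching lower-tail estimate (for the binomial upper tail, the probability of hitting an interval of width $\Theta(\log n)$ around $(1+\nu)\mu$ is only polynomially smaller than the probability of reaching it, so the exponents match up to $o(1)$); a clean way to handle all of this simultaneously is to work in the Poisson model $T_2(\mathrm{Po}(n),r)$ or to condition on the total count in $B(x,Z_{t_A})$, exactly as is done via the $\mathbb{P}[\cE_I]\mathbb{P}[\cF_I\mid\cE_I]$ splitting in the proof of \cref{thm:C}~\ref{thm:Citem1}.

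The main obstacle is purely bookkeeping rather than conceptual: one must keep careful track of the conditioning so that the three regional counts really are independent binomials with the stated means (the conditioning on $\mathcal{E}_{x,t}$ inflates the densities in the other two regions by a factor $1+\Theta(\log n/n)=1+o(1)$, which is negligible, just as the $1/(1-\ell)$ correction was negligible in the one-dimensional proof), and one must verify that the two-sided large-deviation exponents of the binomial counts in $\mathcal{F}_{x,\nu}$ and $\mathcal{E}_{x,t}$ agree, up to the $o(1)$ in the exponent, with the value $1-c(A,C,t,\nu)$ read off from the definition of $c$ — this is precisely where the rate function $(1+\nu)\log(\nume/(1+\nu))$ enters. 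Once these routine estimates are in place, the disjoint-balls independence makes the second moment bound immediate and the lemma follows.
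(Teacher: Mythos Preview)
Your overall approach is the same as the paper's: a second moment argument over a family of well-separated centres, with the single-centre probability computed via the chain rule $\Pr[\cE_{x,t}]\cdot\Pr[\cF_{x,\nu}\mid\cE_{x,t}]\cdot\Pr[\cG_{x,t}\mid\cE_{x,t}\cap\cF_{x,\nu}]$ and large-deviation estimates for each factor. The identification of the resulting exponent with $1-c(A,C,t,\nu)$ and the use of \cref{fact:AC} to conclude $\Exp[Y]\to\infty$ are exactly what the paper does.

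There is, however, a real slip in your variance step. You assert that disjointness of the balls $B(x,Z_{t_A})$ makes the indicators $Y_x$ and $Y_{x'}$ \emph{independent}, because ``each depends only on the configuration of vertices inside'' its own ball. This is false in the binomial model: with exactly $n$ points placed in $[0,1]^2$, the counts in disjoint regions are multinomial and hence negatively correlated, so conditioning on what happens inside one ball does change the distribution outside it. (You seem half-aware of this, since you later note the $1+\Theta(\log n/n)$ density correction and mention Poissonisation, but the claim ``$\Exp[Y_xY_{x'}]=\Exp[Y_x]\Exp[Y_{x'}]$'' as stated is simply wrong.) The paper does not claim independence; instead it verifies term by term that
\[
\Pr[\cE_{x_i,t}\cap\cE_{x_j,t}]=(1\pm o(1))\Pr[\cE_{x,t}]^2,\quad \Pr[\cF_{x_i,\nu}\mid\cE_{x_i,t}\cap\cE_{x_j,t}]=(1\pm o(1))\Pr[\cF_{x,\nu}],\ \text{etc.},
\]
obtaining $\Exp[Y_{x_i}Y_{x_j}]=(1\pm o(1))\Exp[Y_{x_i}]\Exp[Y_{x_j}]$ and hence $\Var[Y]=o(\Exp[Y]^2)$. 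Your Poissonisation suggestion would give genuine independence, but then you owe a de-Poissonisation step to get back to $G_2(n,r)$; the paper stays in the binomial model throughout and pays the $(1\pm o(1))$ bookkeeping instead. Either route works, but the argument as you wrote it does not.
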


\begin{proof}
Fix $t\in [0, t_A)$, $\nu \in (0, \nu_t)$ and a point $x\in [Z_{t_A},1-Z_{t_A}]^2$. 
Then,
\begin{equation}\label{eq:P(E)}
\mathbb P[\cE_{x,t}] = (1 - \uppi A^2(t^2+2t)\zeta^2)^n = \exp(-(1+o(1)) \uppi A^2 (t^2+2t) \log n) = n^{-\uppi A^2 (t^2+2t)\pm o(1)}.    
\end{equation}
Note that, by our assumption that $t\in [0,t_A)$, we have that $\uppi A^2 (t^2+2t)\in [0,1)$.
At the same time, setting $i_0 \coloneqq \lceil (1+\nu)4\uppi C(A-C)\log n\rceil$ and $p \coloneqq 4\uppi C(A-C)\zeta^2 /(1 - \uppi A^2 (t^2+2t) \zeta^2)$, we have that
\begin{equation}\label{eq:FcondE}
\mathbb P[\cF_{x,\nu}\mid \cE_{x,t}] = \sum_{i = i_0}^{(\log n)^2} \binom{n}{i} p^i (1-p)^{n-i}.
\end{equation}
Moreover, as for every $i\ge i_0$ we have that
\COMMENT{For the inequality, we use that 
\begin{align*}
    \frac{(n-i)p}{(i+1)(1-p)}&\leq\frac{(n-i_0)p}{(i_0+1)(1-p)}\\
    &\leq\frac{(n-(1+\nu)4\uppi C(A-C)\log n)4\uppi C(A-C)\log n}{(n-\uppi A^2 (t^2+2t)\log n)(1+(1+\nu)4\uppi C(A-C)\log n)\left(1-\frac{4\uppi C(A-C)\log n}{n - \uppi A^2 (t^2+2t) \log n}\right)}\\
    &=\frac{(n-(1+\nu)4\uppi C(A-C)\log n)4\uppi C(A-C)\log n}{(1+(1+\nu)4\uppi C(A-C)\log n)\left(n - \uppi A^2 (t^2+2t) \log n-4\uppi C(A-C)\log n\right)}.
\end{align*}
Now one observes immediately that the numerator is $(1-o(1))4\uppi C(A-C)n\log n$, while the denominator is $(1+o(1))(1+\nu)4\uppi C(A-C)n\log n$. Given the signs of the smaller order terms, the desired bound follows.}
\[\binom{n}{i+1}p^{i+1}(1-p)^{n-i-1}\bigg{/} \binom{n}{i}p^{i}(1-p)^{n-i} = \frac{(n-i)p}{(i+1)(1-p)}\le \frac{1}{1+\nu},\]
the sum in~\eqref{eq:FcondE} is of the order of its first term, that is,\COMMENT{The first bound follows since we have a geometric sum. In fact, we have that 
\[\mathbb P[\cF_{x,\nu}\mid \cE_{x,t}]=\sum_{i = i_0}^{n^{0.1}} \binom{n}{i} p^i (1-p)^{n-i}\leq\binom{n}{i_0} p^{i_0} (1-p)^{n-i_0} \sum_{j=0}^{\infty} \frac{1}{(1+\nu)^j}=\frac{1+\nu}{\nu}\binom{n}{i_0} p^{i_0} (1-p)^{n-i_0}.\]
For the second bound, using Stirling's approximation, we have that
\begin{align*}
    \binom{n}{i_0}\sim\frac{\sqrt{2\uppi n}}{\sqrt{2\uppi i_0}\sqrt{2\uppi(n-i_0)}}\frac{\left(\frac{n}{\nume}\right)^n}{\left(\frac{n-i_0}{\nume}\right)^{n-i_0}\left(\frac{i_0}{\nume}\right)^{i_0}}=\Theta\left(i_0^{-1/2}\frac{n^n}{(n-i_0)^{n-i_0}i_0^{i_0}}\right).
\end{align*}
We also use the usual bounds $1-p=\nume^{-(1+o(1))p}$.
Moreover, since $i_0p=o(1)$, we have that $\nume^{i_0p}\leq2$.}
\[\mathbb P[\cF_{x,\nu}\mid \cE_{x,t}]=\Theta\left(\binom{n}{i_0} p^{i_0} (1-p)^{n-i_0}\right)=\Theta\left(i_0^{-1/2}\frac{n^n}{(n-i_0)^{n-i_0}}\left(\frac{p}{i_0}\right)^{i_0}\nume^{-np}\right),\]
where the second equality follows by Stirling's approximation.
In particular, since we have that $np/i_0=(1+o(1))/(1+\nu)$ by definition, then\COMMENT{We have that
\begin{align*}
\log\mathbb P[\cF_{x,\nu}\mid \cE_{x,t}]&=n\log n-(n-i_0)\log(n-i_0)+i_0 \log p-i_0\log i_0-np-\log i_0/2\\
&=i_0+i_0\log n+i_0 \log p-i_0\log i_0-np+o(\log n)\\
&=i_0\log\frac{\nume np}{i_0}-np+o(\log n)=i_0\log\frac{\nume}{1+\nu}-np+o(\log n),
\end{align*}
where the first equality uses the fact that $\log(n-i_0)=\log n-i_0/n-O(i_0^2/n^2)$ (this is used to obtain the first $i_0$ as well as to hide several terms as smaller order terms), and in the fourth we are using the fact that (as follows similarly as in a previous footnote) $np/i_0=(1+o(1))/(1+\nu)$.}
\begin{align}\label{eq:nu}
\log\mathbb P[\cF_{x,\nu}\mid \cE_{x,t}]&=i_0 \log\left(\frac{\nume}{1+\nu}\right) - np + o(\log n)\nonumber\\*
&= \left((1+\nu)4\uppi C(A-C)\log\left(\frac{\nume}{1+\nu}\right) - 4\uppi C(A-C) \pm o(1)\right) \log n.
\end{align}
Finally, an immediate application of Chernoff's bound (\cref{lem:chernoff}) shows that $\mathbb P[\cG_{x,t}\mid \cE_{x,t}\cap \cF_{x,\nu}] = 1-o(1)$.\COMMENT{The annulus under question has area $4\uppi C(C+(1+t)A)\log n/n)$. Upon conditioning on each possible outcome of the events $\cE_{x,t}$ and $\cF_{x,\nu}$, the number of points that are randomly assigned a position is at most $n$.
The probability that each point falls in the desired surface must be corrected by a factor of $1/(1-\uppi((1+t)A)^2\log n/n)$ by conditioning that it does not fall in a place that affects the events we conditioned on, but this correction is essentially negligible.
So the expected number of vertices in this annulus is at most $(1+O(\log n/n))4\uppi C(C+(1+t)A)\log n$ (and at least $(1-O(n^{-0.9}))4\uppi C(C+(1+t)A)\log n$, so it is $\Theta(\log n)$).
The probability that there is a deviation of order $\log^{2/3}n$ can thus be bounded by $\exp(-\Theta(\log^{1/3} n))=o(1)$.}
Combining this with~\eqref{eq:P(E)} and \eqref{eq:nu}, we have that 
\begin{equation}\label{equa:EFGprob1}
    \mathbb{P}[\cE_{x,t}\cap \cF_{x,\nu}\cap \cG_{x,t}] = \mathbb{P}[\cE_{x,t}] \mathbb{P}[\cF_{x,\nu}\mid \cE_{x,t}] P[\cG_{x,t}\mid \cE_{x,t}\cap \cF_{x,\nu}] = n^{c-1\pm o(1)}
\end{equation}
with $c = c(A,C,t,\nu) > 0$ (see \cref{fact:AC}).

Now, fix $t\in [0,t_A)$ and $\nu\in (0,\nu_{t})$ and consider a set $S$ of $k \coloneq \lceil n/(\log n)^3\rceil$ points $\{x_1,\ldots,x_k\}$ in~$[0,1]^2$ at distance at least $2Z_{t_A}$ from the boundary and from each other. 
We will show that $\cE_{x,t}\cap \cF_{x,\nu}\cap \cG_{x,t}$ holds a.a.s.\ for at least one of the points $x\in S$.
Let $X$ be the sum of the indicator random variables of the events $\cE_{x_i,t}\cap \cF_{x_i,t}\cap \cG_{x_i,t}$. 
From \eqref{equa:EFGprob1}, we have that $\mathbb E[X] = k n^{c-1\pm o(1)}=\omega(1)$. 
On the other hand, since the set $\{B(x_i,Z_t)\}_{i\in[k]}$ consists of pairwise disjoint balls, computations similar to the ones above show that, for every pair of distinct integers $i,j\in [k]$,\COMMENT{The first two are exactly the same (for the second one, one must slightly change the definition of $p$, but the correction does not change the asymptotics). The last two are also very similar, just direct applications of Chernoff's inequality. The important one is the third one. Here, one can use stochastic domination to write that 
\[\sum_{i = i_0}^{n^{0.1}} \binom{n-n^{0.1}}{i} p^i (1-p)^{n-n^{0.1}-i}\leq\mathbb P[\cF_{x_j,\nu}\mid \cE_{x_i,t}\cap \cE_{x_j,t}\cap \cF_{x_i,\nu}]\leq\sum_{i = i_0}^{n^{0.1}} \binom{n}{i} p^i (1-p)^{n-i}.\]
The log of the upper bound gives the same as before. 
For the lower bound, however, we get something worse: using the Stirling approximation, we have that
\[P[\cF_{x_j,\nu}\mid \cE_{x_i,t}\cap \cE_{x_j,t}\cap \cF_{x_i,\nu}]=\Omega\left(i_0^{-1/2}\frac{(n-n^{0.1})^{n-n^{0.1}}}{(n-n^{0.1}-i_0)^{n-n^{0.1}-i_0}}\left(\frac{p}{i_0}\right)^{i_0}\nume^{-(n-n^{0.1})p}\right).\]
Taking logarithms and arguing like before, the right side behaves in the same way (all the new terms also become smaller order terms).}
\begin{align*}
\mathbb P[\cE_{x_i,t}\cap \cE_{x_j,t}] 
&= (1\pm o(1)) \mathbb P[\cE_{x,t}]^2,\\
\mathbb P[\cF_{x_i,\nu}\mid \cE_{x_i,t}\cap \cE_{x_j,t}] 
&= (1\pm o(1))\mathbb P[\cF_{x,\nu}],\\
\mathbb P[\cF_{x_j,\nu}\mid \cE_{x_i,t}\cap \cE_{x_j,t}\cap \cF_{x_i,\nu}] 
&= (1\pm o(1))\mathbb P[\cF_{x,\nu}],\\
\mathbb P[\cG_{x_i,t}\mid \cE_{x_i,t}\cap \cE_{x_j,t}\cap \cF_{x_i,\nu}\cap \cF_{x_j,\nu}] 
&= 1-o(1),\\
\mathbb P[\cG_{x_j,t}\mid \cE_{x_i,t}\cap \cE_{x_j,t}\cap \cF_{x_i,\nu}\cap \cF_{x_j,\nu}\cap \cG_{x_i,t}] 
&= 1-o(1).
\end{align*}
In particular, $\operatorname{Var}[X]$ is bounded from above by
\[\mathbb E[X]+\sum_{\substack{(i,j)\in[k]^2\\i\neq j}} \left(\mathbb P[\cE_{x_i,t}\cap \cE_{x_j,t}\cap \cF_{x_i,\nu}\cap \cF_{x_j,\nu}\cap \cG_{x_i,t}\cap \cG_{x_i,t}] - \mathbb P[\cE_{x,t}\cap \cF_{x,\nu}\cap \cG_{x,t}]^2\right) = o(\mathbb E[X]^2),\]
so by Chebyshev's inequality we conclude that a.a.s.\ $\cE_{x,t}\cap \cF_{x,\nu}\cap \cG_{x,t}$ holds for at least one of the points $x\in S$.
\end{proof}

When $t=0$,~\eqref{eq:relate t nu} implies that
\[1 - (1+\nu_0)\log\left(\frac{\nume}{1+\nu_0}\right) = \frac{1}{4\uppi C(A-C)}.\]
Note that, for any fixed $C > 0$, $\nu_0$ decreases to $0$ as $A$ grows to infinity.
Thus, by using Taylor expansion in the regime when $\nu_0\to 0$, we have that 
\[1 - (1+\nu_0)\log\left(\frac{\nume}{1+\nu_0}\right) = (1+\nu_0)\log(1+\nu_0) - \nu_0 = \frac{\nu_0^2}{2} - O_{\nu_0}(\nu_0^3).\]
In particular, for all $C>0$ and sufficiently large $A>0$, one may observe that\COMMENT{Note that here is the first key moment in the construction: the dependence of $\nu_0$ (and, in fact, of any $\nu_t$ for $t < t_A$) on $C,A$ is roughly $1/\sqrt{CA}$ for $d=2$, and $1/\sqrt{CA^{d-1}}$ in general.}
\begin{equation}\label{eq:value_nu}
\nu_0 = \frac{1+o_A(1)}{\sqrt{2\uppi C (A-C)}}.
\end{equation}
Fix any $x\in\mathbb{R}^2$.
For every point $y\in\mathbb{R}^2$, every $t\geq0$ and $\rho = \rho(n)\ge 0$, define the regions
\begin{align*}
R_1(y, \rho) &\coloneqq B(y,\rho)\cap B(x,A\zeta),\\
R_2(y, t, \rho) &\coloneqq B(y,\rho)\setminus B(x,(1+t)A\zeta).
\end{align*}
We continue with a geometric lemma.

\begin{lemma}\label{lem:geom}
Let $C > 0$ and $x\in\mathbb{R}^2$ be fixed.
For all sufficiently large $A$, there exist $t\in (0, t_A)$, $\nu\in (0, \nu_t)$ and $\eps = \eps(C, A, t, \nu) > 0$ such that the following hold.
\begin{enumerate}[label=$(\mathrm{\roman*})$]
    \item For all $y\in \partial B(x,A\zeta)$ we have that $(1+\nu)|R_1(y, C\zeta)|\geq(1+10\eps)|R_2(y,t,C\zeta)|$.
    \item For all $z\in \partial B(x, (1+t)A\zeta)$ we have that $|R_2(z,t,C\zeta)|\geq(1+10\eps)(1+\nu)|R_1(z,C\zeta)|$.
\end{enumerate}
\end{lemma}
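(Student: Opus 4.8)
The plan is to strip away the randomness and the scale first. Rescaling every ball by $1/\zeta$ and using the rotational symmetry about $x$, the four quantities $|R_1(y,C\zeta)|$ and $|R_2(y,t,C\zeta)|$ with $y\in\partial B(x,A\zeta)$, and $|R_1(z,C\zeta)|$ and $|R_2(z,t,C\zeta)|$ with $z\in\partial B(x,(1+t)A\zeta)$, are independent of the choice of $y$, resp.\ $z$, and equal $\zeta^2$ times quantities $\alpha,\beta,\alpha',\beta'$ depending only on $A$, $C$ and $t$. It therefore suffices to produce $t\in(0,t_A)$, $\nu\in(0,\nu_t)$ and $\eps>0$ with $(1+\nu)\alpha\ge(1+10\eps)\beta$ and $\beta'\ge(1+10\eps)(1+\nu)\alpha'$. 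I will only use values of $t$ with $s\coloneqq tA\le t_AA=o_A(1)$, and I will compute all four areas up to an error $o_A(1/A)$.

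The geometric input is the following second-order picture. A radius-$C$ disk centred on $\partial B(x,A)$ meets $B(x,A)$ in slightly \emph{less} than a half-disk, since the circle curves away from the disk's centre; expanding the arc gives $\alpha=\uppi C^2/2-C^3/(3A)+O_A(1/A^2)$. In the first configuration $R_2$ is the part of the small disk lying beyond the circle of radius $(1+t)A$, which sits at distance $s$ \emph{outside} $\partial B(x,A)$: its flat model is a circular segment of area $\uppi C^2/2-2sC+O(s^3)$, and the curvature of $\partial B(x,(1+t)A)$ contributes an extra $C^3/(3A)+O_A(1/A^2)$, so $\beta=\uppi C^2/2-2sC+C^3/(3A)+o_A(1/A)$. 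In the second configuration $R_1$ is a segment of the small disk cut off at distance $s$ \emph{inside} $\partial B(x,A)$, reduced by the same curvature term, $\alpha'=\uppi C^2/2-2sC-C^3/(3A)+o_A(1/A)$, while $R_2$ is a half-disk enlarged by the curvature of $\partial B(x,(1+t)A)$, $\beta'=\uppi C^2/2+C^3/(3A)+o_A(1/A)$. Hence
\[
\alpha-\beta=2sC-\frac{2C^3}{3A}+o_A\!\left(\tfrac1A\right),\qquad \beta'-\alpha'=2sC+\frac{2C^3}{3A}+o_A\!\left(\tfrac1A\right).
\]
The decisive qualitative fact is that the curvature term $2C^3/(3A)$ enters with a \emph{minus} sign in $\alpha-\beta$ (so it obstructs condition~(i)) and with a \emph{plus} sign in $\beta'-\alpha'$ (so it helps condition~(ii)).

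Now I fix the parameters. Take $t\coloneqq t_A/2$; since $t_A=(1+o_A(1))/(2\uppi A^2)$ this gives $s=(1+o_A(1))/(4\uppi A)$, and feeding $t=t_A/2$ into~\eqref{eq:relate t nu} together with $1-(1+\nu)\log(\nume/(1+\nu))=\nu^2/2+O(\nu^3)$ yields $\nu_t=\Theta_A(A^{-1/2})$. I look for $\nu$ of order $A^{-1}$; then $\nu$, $\eps$ and $s$ are all $O_A(1/A)$, all bilinear terms in the two target inequalities are $o_A(1/A)$, and dividing by $\uppi C^2/2$ turns condition~(i) into $\nu\ge\nu_-\coloneqq10\eps+\tfrac{4C}{3\uppi A}-\tfrac{4s}{\uppi C}+o_A(1/A)$ and condition~(ii) into $\nu\le\nu_+\coloneqq\tfrac{4s}{\uppi C}+\tfrac{4C}{3\uppi A}-10\eps-o_A(1/A)$. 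Since $\nu_+-\nu_-=\tfrac{8s}{\uppi C}-20\eps-o_A(1/A)$, which is positive (and $\nu_+>0$) as soon as $\eps<\tfrac{2s}{5\uppi C}$ and $A$ is large, I set $\eps\coloneqq(40\uppi^2CA)^{-1}$ and pick any $\nu\in(\max\{0,\nu_-\},\nu_+)$. Then $\nu=\Theta_A(1/A)=o_A(\nu_t)$, so $\nu\in(0,\nu_t)$, while $t\in(0,t_A)$ and $\eps>0$; and both inequalities, hence both (i) and (ii), hold, which completes the proof.

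The main obstacle is the middle step: computing each lens/segment area to order $1/A$ and, above all, getting the sign of the curvature correction right (curvature shrinks the ``inner'' lens and enlarges the ``outer'' lens in \emph{both} configurations), and then checking that the window $(\nu_-,\nu_+)$ is simultaneously nonempty and contained in $(0,\nu_t)$. This is exactly what forces the coupled choices $t=\Theta(t_A)$ — so that the empty-annulus half-width $s$ has the same order $1/A$ as the curvature correction and can open the window — together with $\eps\ll1/A$ and $\nu=\Theta(1/A)$.
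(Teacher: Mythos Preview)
Your proof is correct. Both you and the paper rely on the same geometric heart---expanding the lens areas to order $1/A$ and isolating the curvature correction $C^3/(3A)$---and your signs in $\alpha,\beta,\alpha',\beta'$ match the paper's computations (e.g.\ your $\alpha$ is exactly~\eqref{eq:interior}, and your $\beta$ at $s=0$ recovers the paper's $|R_2(y,0)|$).

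The execution differs. The paper argues at the two endpoints: it shows $(1+\nu_0)|R_1(y)|>|R_2(y,0)|$ at $t=0$ and $|R_2(z,t_A)|>|R_1(z)|$ at $t=t_A$, then appeals to continuity of $(t,\nu)\mapsto\bigl(|R_2(y,t)|-(1+\nu)|R_1(y)|,\ |R_2(z,t)|-(1+\nu)|R_1(z)|\bigr)$, together with the monotone ordering of the two coordinates, to locate an intermediate pair $(t,\nu)$; the existence of $\eps>0$ then comes from strictness. Your approach instead fixes $t=t_A/2$ up front and linearises the two inequalities in $\nu$ and $\eps$, producing the explicit window $(\max\{0,\nu_-\},\nu_+)$ and a concrete choice $\eps=(40\uppi^2CA)^{-1}$. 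This is more quantitative---you read off the admissible $(\nu,\eps)$ directly---at the cost of having to track the $o_A(1/A)$ bookkeeping uniformly. The paper's intermediate-value route is slightly softer (no need to solve for the window) but leaves $\eps$ implicit. Either way, the reason the argument works in $d=2$ but not in higher dimensions is the same: the curvature defect $\Theta(1/A)$ is beaten by $\nu_t=\Theta(A^{-1/2})$, which is exactly the comparison driving your window being nonempty and inside $(0,\nu_t)$.
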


We remark that this lemma is ``scale-free'', in the sense that $\zeta$ could be replaced (in the statement as well as the definitions of $R_1$ and $R_2$) by any positive real number without changing the conclusion.
Our choice of phrasing is adapted to our later application.

\begin{proof}[Proof of \cref{lem:geom}]
\begin{figure}
\centering
\begin{tikzpicture}[scale=0.7,line cap=round,line join=round,x=1cm,y=1cm]
\clip(-14,-0.3) rectangle (6,6.61);
\draw [line width=0.5pt] (-4,0) circle (5cm);
\draw [line width=0.5pt] (-4,0) circle (5.3cm);
\draw [line width=0.5pt] (-7,4) circle (1.4142135623730951cm);
\draw [line width=0.5pt] (-7.18,4.24) circle (1.4142135623730951cm);
\draw [line width=0.5pt] (-8,3)-- (-4,0);
\draw [line width=0.5pt] (-7,4)-- (-4,0);
\draw [line width=0.5pt] (-7.18,4.24)-- (-7,4);

\draw [shift={(-4,0)},line width=1.8pt,color=black, fill=black, opacity=0.3] (126.86989764584402:2) arc (126.86989764584402:143.13010235415598:2);

\begin{scriptsize}
\draw [fill=black] (-4,0) circle (2pt);
\draw [fill=black] (-7,4) circle (2pt);
\draw [fill=black] (-7.18,4.24) circle (2pt);
\draw [fill=black] (-5.76,4.68) circle (2pt);
\draw [fill=black] (-8,3) circle (2pt);

\draw[color=black] (-3.7,0) node {\large{$x$}};
\draw[color=black] (-7.1,3.5) node {\large{$y$}};
\draw[color=black] (-7.9,2.5) node {\large{$y_1$}};
\draw[color=black] (-5.3,4.5) node {\large{$y_2$}};
\draw[color=black] (-7.55+0.3,5-0.4) node {\large{$z$}};

\draw[color=black] (-5.6,1.6) node {\large{$\theta$}};

\draw[color=black] (-0.65,1.6) node {\large{$\partial B(x, A\zeta)$}};
\draw[color=black] (1.7,4.2) node {\large{$\partial B(x, (1+t)A\zeta)$}};
\end{scriptsize}
\end{tikzpicture}
\caption{The configuration from the proof of Lemma~\ref{lem:geom}.}
\label{fig:1}
\end{figure}
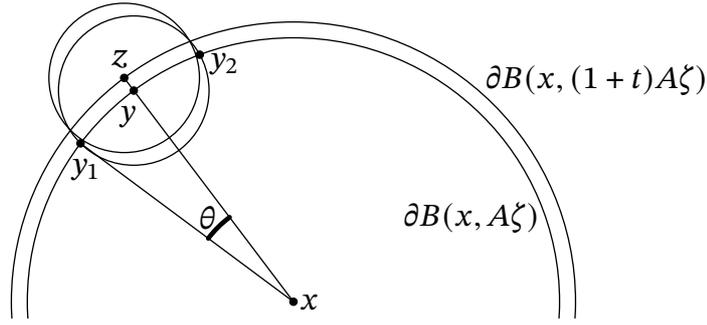

For $y\in\mathbb{R}^2$, define for brevity $R_1(y) \coloneqq R_1(y, C\zeta)$ and $R_2(y,t) \coloneqq R_2(y,t,C\zeta)$.
Fix a point $y\in \partial B(x, A\zeta)$ and let $z$ be the intersection point of the ray $xy$ and $\partial B(x,(1+t)A\zeta)$.
Note that $z$ is a function of $t$, but we omit the dependence from the notation for simplicity.
We will show that, when $t=0$, we have $(1+\nu_0)|R_1(y)| > |R_2(y,0)|$ (and hence $(1+\nu_0)|R_1(z)| > |R_2(z,0)|$ \COMMENT{Simply because $z=y$.}), but when $t = t_A$, we have $|R_2(z,t_A)| > (1+\nu_{t_A})|R_1(z)|$.\COMMENT{Note that $\nu_{t_A}=0$.}
Then, by continuity of
\[(t,\nu)\mapsto (|R_2(y,t)|-(1+\nu)|R_1(y)|, |R_2(z,t)|-(1+\nu)|R_1(z)|)\]
and the fact that the second component is strictly larger than the first one for any choice of parameters $t\in (0,t_A)$ and $\nu\in (0,\nu_t)$, we can find $t$ and $\nu$ for which
\[(1+\nu)|R_1(y)| > |R_2(y,t)|\qquad\text{ and }\qquad |R_2(z,t)| > (1+\nu)|R_1(z)|,\]
which implies the lemma.\COMMENT{The idea is that the first inequality we will prove (which yields the second inequality between parenthesis) ensures that the image of this function contains a point with both coordinates being negative. We need to prove that there exists a point in the image whose first coordinate is negative and whose second coordinate is positive. By continuity, and since the image is above the $y=x$ line, for this it suffices to show that there is some point in the image whose second coordinate is positive. And this is ensured by the last inequality we claim.}

Denote by $y_1, y_2$ the intersection points of the circle $\partial B(y,C\zeta)$ with $\partial B(x,A\zeta)$, and let $\theta$ be the measure of the angle $yxy_1$, see Figure~\ref{fig:1}.
Then, $R_1(y)$ is obtained from the sector between~$yy_1$ and~$yy_2$ in $B(y,C\zeta)$ by adding the caps that $yy_1$ and $yy_2$ separate from $B(x,A\zeta)$.
Since the angle $y_1yy_2$ has measure $\uppi-\theta$,
\[|R_1(y)|=\frac{\uppi-\theta}{2} C^2 \zeta^2 + 2\left(\frac{\theta}{2} - \frac{\sin\theta}{2}\right) A^2 \zeta^2 = \frac{\uppi-\theta}{2} C^2 \zeta^2 + (\theta - \sin\theta) A^2 \zeta^2.\]
Using that $x - \sin x = x^3/6 + O_x(x^5)$ when $x\to 0$, the fact that $\sin(\theta/2) = C/2A$ (which means that $\theta = C/A + O_A(C^3/A^3)$) and the relation $A = \omega_A(C)$, we have that the above expression rewrites as
\begin{equation}\label{eq:interior}
|R_1(y)|=\frac{\uppi}{2} C^2 \zeta^2 - \left(\frac{\theta}{2} C^2 - \frac{\theta^3}{6} A^2 + O_A(\theta^5 A^2)\right)\zeta^2 = \frac{\uppi}{2} C^2 \zeta^2 - \left(\frac{C^3}{3A} + o_A\left(\frac{C^3}{A}\right)\right)\zeta^2.
\end{equation}
In particular, for all sufficiently large $A$ and using \eqref{eq:value_nu}, we have that
\[|R_2(y,0)|-|R_1(y)| = 2\left(\frac{C^3}{3A} + o_A\left(\frac{C^3}{A}\right)\right)\zeta^2 < \frac{\uppi C^2 \zeta^2/3}{\sqrt{2\uppi C(A-C)}} < \nu_0 |R_1(y)|,\]
which implies that $(1+\nu_0)|R_1(y)| > |R_2(y,0)|$.\COMMENT{Here is the other crucial moment: the general version of this inequality is $C^{d+1}/A = O(C^d/\sqrt{C A^{d-1}})$, which is not true for large $A$ if $d\ge 3$.}

It remains to show that $|R_2(z,t_A)| > (1+\nu_{t_A})|R_1(z)| = |R_1(z)|$.
To this end, note that the angles $xy_1y$ and $yy_2x$ are acute and, thus, the part of the annulus between the rays $xy_2$ and $xy_1$ contains $B(x,A\zeta,(1+t_A)A\zeta)\cap B(y, C\zeta)$. 
Thus, by \eqref{eq:interior} and using the definition of $t_A$,
\begin{align}
|R_2(y,t_A)| \ge \uppi C^2\zeta^2 - |R_1(y)| - \frac{\theta}{\uppi} \zeta^2 
&= \frac{\uppi}{2} C^2\zeta^2 + \bigg(\frac{C^3}{3A}+o_A\bigg(\frac{C^3}{A}\bigg)-\frac{C}{\uppi A}\bigg)\zeta^2\nonumber\\*
&= |R_1(y)| + \bigg(\frac{2C^3}{3A}+o_A\bigg(\frac{C^3}{A}\bigg)-\frac{C}{\uppi A}\bigg)\zeta^2.\label{eq:R1y}
\end{align}
At the same time, using that $C$ is a fixed constant, $A$ is large and $t_A = 1/2\uppi A^2 + o_A(1/A^2)$, up to lower order terms, the region $(R_1(y)\setminus R_1(z))\cup (R_2(z,t_A)\setminus R_2(y,t_A))$ consists of the symmetric difference of $B(y,C\zeta)$ and $B(z,C\zeta)$. 
Setting $w$ to be an intersection point of the last two circles, $\varphi$ to be the measure of the angle $wyz$ and $\ell \coloneqq |yz| = At_A \zeta = (1+o_A(1))\zeta/2\uppi A$ (see Figure~\ref{fig 2}), a simple inclusion-exclusion principle and the fact that $\arccos(x) = \uppi/2 - x - o_x(x)$ as $x\to 0$, we obtain that
\begin{align}
|B(y,C\zeta)\triangle B(z,C\zeta)|&=2 \bigg(\dfrac{2\uppi - 2\varphi}{2\uppi}\uppi (C\zeta)^2 - \dfrac{2\varphi}{2\uppi} \uppi (C\zeta)^2 + 2 \, \dfrac{\ell\cdot C\zeta \sin\varphi}{2}\bigg)\nonumber\\ 
&=\left(2\uppi - 4\arccos\left(\dfrac{\ell}{2C\zeta}\right)\right) (C\zeta)^2 + 2\ell\cdot C\zeta \sqrt{1 - \dfrac{\ell^2}{4(C\zeta)^2}}\nonumber\\
&=(1+o_{A}(1))\frac{4\ell}{2C\zeta} (C\zeta)^2 = (1+o_A(1)) \frac{C\zeta^2}{\uppi A}.\label{eq:compute}
\end{align}

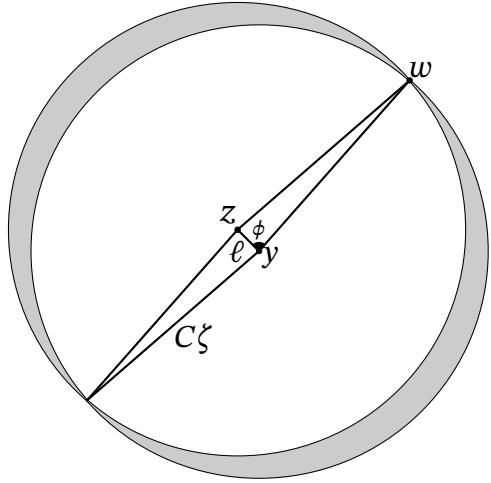
\begin{figure}
\centering
\begin{tikzpicture}[line cap=round,line join=round,x=1cm,y=1cm,scale=0.4,rotate=-45]
\clip(-0.5,-0.5) circle[radius=8.5];

\draw [shift={(0,-0.5)},line width=0.8pt,color=black,fill=black,fill opacity=0.10000000149011612] (0,0) -- (93.81407483429037:0.2703580656096896) arc (93.81407483429037:180:0.2703580656096896) -- cycle;

\draw [line width=0.8pt] (-1,-0.5) circle (7.516648189186455cm);
\draw [line width=0.8pt] (0,-0.5) circle (7.516648189186455cm);

\fill[gray!40,even odd rule] (0,-0.5) circle (7.516648189186455cm) (-1,-0.5) circle (7.516648189186455cm);

\draw [line width=0.8pt] (-1,-0.5)-- (0,-0.5);
\draw [line width=0.8pt] (-1,-0.5)-- (-0.5,7);
\draw [line width=0.8pt] (-0.5,7)-- (0,-0.5);
\draw [line width=0.8pt] (-1,-0.5)-- (-0.5,-8);
\draw [line width=0.8pt] (0,-0.5)-- (-0.5,-8);
\begin{scriptsize}
\draw [fill=black] (-1,-0.5) circle (2.5pt);
\draw[color=black] (-1.5623813401027786,-0.35512488171559686) node {\large{$z$}};
\draw [fill=black] (-0.5,7) circle (2.5pt);
\draw[color=black] (-0.5,7.53) node {\large{$w$}};
\draw [fill=black] (0,-0.5) circle (2.5pt);
\draw[color=black] (0.438268345408925,-0.3821606882765659) node {\large{$y$}};
\draw[color=black] (-0.5,-1) node {\large{$\ell$}};
\draw[color=black] (0.54087447323826635,-4.0590303805683545) node {\large{$C\zeta$}};
\draw[color=black] (-0.5,0) node {$\varphi$};
\end{scriptsize}
\end{tikzpicture}  
\caption{An illustration for the computation in~\eqref{eq:compute}.}
\label{fig 2}
\end{figure}
\noindent
Combining~\eqref{eq:R1y} and~\eqref{eq:compute}, we have that
\begin{align*}
|R_2(z,t_A)| - |R_1(z)| 
&= (|R_2(z,t_A)| -|R_2(y,t_A)|) + (|R_2(y,t_A)| - |R_1(y)|) + (|R_1(y)| - |R_1(z)|)\\
&=|R_2(y,t_A)| - |R_1(y)| + (1\pm o_A(1))|B(y,C\zeta)\triangle B(z,C\zeta)|\\
&= \bigg(\frac{2C^3}{3A} + o_A\bigg(\frac{C^3}{A}\bigg)\bigg)\zeta^2 > 0,
\end{align*}
which finishes the proof.
\end{proof}

We are now ready to prove Theorem~\ref{thm:C}.

\begin{proof}[Proof of \cref{thm:C}~\ref{thm:Citem2}]
First of all, note that, for every $C < 1/\uppi^{1/2}$ and $r\le C \zeta$, a.a.s.\ the random graph $G_2(n,r)$ is disconnected and the statement trivially holds (see~\cite[Theorem~13.2]{Pen03}).
Now, fix $C\geq1/\uppi^{1/2}$ and $r\in [\zeta/2\uppi^{1/2},C\zeta]$, let $A, t, \nu$ be as in Lemma~\ref{lem:geom}, let $\eps \coloneqq \min\{\eps(C,A,t,\nu), 1/20\}$, and fix any $x\in[Z_{t_A},1-Z_{t_A}]^2$. 
Let $G\sim G_2(n,r)$ and $V \coloneqq V(G)$.
Let $\cA_{x}$ be the event that, for every $v\in V\cap B(x,A\zeta)$, we have that
\begin{equation}\label{equa:eventA}
    |(V\setminus \{v\})\cap R_1(v,r)|\ge (1+5\eps)|V\cap R_2(v,t,r)|,
\end{equation}
and let $\cB_{x}$ be the event that, for every $v\in V\setminus B(x,(A+t)\zeta)$, we have that
\begin{equation}\label{equa:eventB}
    |(V\setminus \{v\})\cap R_2(v,t,r)|\ge (1+5\eps)|V\cap R_1(v,r)|.
\end{equation}
We are going to show that
\begin{equation}\label{eq:claim}
    \text{$\cA_{x}\cap \cB_{x}$ holds a.a.s.\ conditionally on $\cE_{x,t}\cap \cF_{x,\nu}\cap \cG_{x,t}$.}
\end{equation}

Consider the probability space conditioned on the event $\cE_{x,t}\cap \cF_{x,\nu}\cap \cG_{x,t}$.
We write $\mathbb{P}'$ and~$\mathbb{E}'$ to refer to probabilities and expectations in this conditional space, respectively.
We begin verifying \eqref{equa:eventA} for vertices $v\in V\cap B(x,A\zeta)$.
For vertices at distance more than $C\zeta$ from $\partial B(x,A\zeta)$, \eqref{equa:eventA} holds trivially.
Moreover, for every vertex $v\in V\cap B(x, A\zeta)$ at distance at most $C\zeta$ from $\partial B(x,A\zeta)$, Lemma~\ref{lem:geom} implies that $(1+\nu)|R_1(v,r)|\ge (1+10\eps)|R_2(v,t,r)|$.
Furthermore, by Chernoff's bound,
\[\mathbb P'[|(V\setminus \{v\})\cap R_1(v,r)|\le (1-\eps)\mathbb E'[|(V\setminus \{v\})\cap R_1(v,r)|]] = n^{-\Omega(1)},\]
and
\[\mathbb P'[|V\cap R_2(v,t,r)|\ge (1+\eps)\mathbb E'[|V\cap R_2(v,t,r)|]] = n^{-\Omega(1)}.\]
As a result, conditionally on $\cE_{x,t}\cap \cF_{x,\nu}\cap \cG_{x,t}$, with probability $1 - n^{-\Omega(1)}$, we have that
\begin{align*}
|(V\setminus \{v\})\cap R_1(v,r)|
&\ge (1-\eps)(1+\nu)|R_1(v,r)|(n-1)\ge (1-\eps)(1+9\eps)|R_2(v,t,r)|n\\*
&\ge (1\pm o(1)) (1-\eps)(1+9\eps)\frac{|V\cap R_2(v,t,r)|}{1+\eps}\ge (1+5\eps)|V\cap R_2(v,t,r)|,  
\end{align*}
where the third inequality uses that $\mathbb E[|V\cap R_2(v,t,r)|] = (1\pm o(1)) |R_2(v,t,r)|n$ thanks to $\cG_{x,t}$.

Next, we consider \eqref{equa:eventB}.
In the same way as above, if $u\in V\setminus B(x,((1+t)A+C)\zeta)$, then \eqref{equa:eventB} holds trivially.
For every vertex $u\in V\cap B(x,(1+t)A\zeta,((1+t)A+C)\zeta)$, Lemma~\ref{lem:geom} implies that $|R_2(u,t,r)|\ge (1+10\eps)(1+\nu)|R_1(u,r)|$.
Moreover, by Chernoff's bound,
\[\mathbb P'[|V\cap R_1(u,r)|\ge (1+\eps)\mathbb E'[|V\cap R_1(u,r)|]] = n^{-\Omega(1)},\]
and
\[\mathbb P'[|(V\setminus \{u\})\cap R_2(u,t,r)|\le (1-\eps)\mathbb E'[|(V\setminus \{u\})\cap R_2(u,t,r)|]] = n^{-\Omega(1)}.\]
Again, conditionally on $\cE_{x,t}\cap \cF_{x,\nu}\cap \cG_{x,t}$, with probability $1 - n^{-\Omega(1)}$, we have that
\begin{align*}
|(V\setminus \{v\})\cap R_2(u,t,r)|
&\ge(1\pm o(1))(1-\eps)|R_2(u,t,r)|(n-1)\\
&\ge (1\pm o(1))(1-\eps)(1+9\eps)(1+\nu)|R_1(u,r)|n\\
&\ge (1\pm o(1))(1-\eps)(1+9\eps)\frac{|V\cap R_1(u,r)|}{1+\eps}\ge (1+5\eps)|V\cap R_1(u,r)|.    
\end{align*}

Then, using a union bound over the vertices in $V\cap B(x,(A-C)\zeta,A\zeta)$ for the event $\overline{\cA}_{x}$ (which are at most $(\log n)^2$ by $\cF_{x,\nu}$) and over the vertices in $V\cap B(x,(1+t)A\zeta, ((1+t)A+C)\zeta)$ for the event~$\overline{\cB}_{x}$ (which are at most $(\log n)^2$ by $\cG_{x,t}$), we obtain that
\begin{align*}
\mathbb P[\cA_{x}\cap \cB_{x}\mid \cE_{x,t}\cap \cF_{x,\nu}\cap \cG_{x,t}] 
&\ge 1 - \mathbb P[\overline{\cA}_{x}\mid \cE_{x,t}\cap \cF_{x,\nu}\cap \cG_{x,t}] - \mathbb P[\overline{\cB}_{x}\mid \cE_{x,t}\cap \cF_{x,\nu}\cap \cG_{x,t}]\\*
&\ge 1 - (\log n)^2 n^{-\Omega(1)} = 1-o(1),
\end{align*}
so \eqref{eq:claim} holds.

It remains to recall that, by Lemma~\ref{lem:AC}, a.a.s.\ there exists a point $x\in [Z_{t_A},1-Z_{t_A}]^2$ for which $\cE_{x,t}\cap \cF_{x,\nu}\cap \cG_{x,t}$ holds.
Fixing such a point $x$, let $H\subseteq G$ be obtained from $G$ by removing all edges with one endpoint in $B(x,A\zeta)$ and the other outside $B(x,(1+t)A\zeta)$.
Clearly, $H$ is disconnected, and it follows from \eqref{eq:claim} that a.a.s.\ $H$ is a $(1/2+\eps)$-subgraph of $G$,\COMMENT{Let us consider a vertex $v$ inside the internal annulus. By \eqref{equa:eventA}, the number of neighbours it has inside the ball is at least $(1+5\eps)$ times the number of neighbours it has outside. Since $d_G(v)=|(V\setminus\{v\})\cap R_1(v,r)|+|V\cap R_2(v,t,r)|$, we conclude that
\[d_H(v)=|(V\setminus\{v\})\cap R_1(v,r)|\geq(1/2+\eps)d_G(v)\]
since
\begin{align*}
    &\ |(V\setminus\{v\})\cap R_1(v,r)|\geq\left(\frac{1}{2}+\eps\right)(|(V\setminus\{v\})\cap R_1(v,r)|+|V\cap R_2(v,t,r)|)\\
    \iff&\ \left(\frac{1}{2}-\eps\right)|(V\setminus\{v\})\cap R_1(v,r)|\geq\left(\frac{1}{2}+\eps\right)|V\cap R_2(v,t,r)|\\
    \iff&\ |(V\setminus\{v\})\cap R_1(v,r)|\geq\frac{\left(\frac{1}{2}+\eps\right)}{\left(\frac{1}{2}-\eps\right)}|V\cap R_2(v,t,r)|,
\end{align*}
where the last inequality holds by \eqref{equa:eventA} for sufficiently small $\eps$.\\
The case for vertices $v$ in the external annulus is analogous, using \eqref{equa:eventB}: the number of neighbours it has outside the empty annulus is at least $(1+5\eps)$ times the number of neighbours it has inside, and 
\begin{align*}
    d_H(v)&= |(V\setminus\{v\})\cap R_2(v,t,r)|\geq\left(\frac{1}{2}+\eps\right)(|V\cap R_1(v,r)|+|(V\setminus\{v\})\cap R_2(v,t,r)|)\\
    &\iff \left(\frac{1}{2}-\eps\right)|(V\setminus\{v\})\cap R_2(v,t,r)|\geq\left(\frac{1}{2}+\eps\right)|V\cap R_1(v,r)|\\
    &\iff |(V\setminus\{v\})\cap R_2(v,t,r)|\geq\frac{\left(\frac{1}{2}+\eps\right)}{\left(\frac{1}{2}-\eps\right)}|V\cap R_1(v,r)|,
\end{align*}
where the last inequality holds by \eqref{equa:eventB} for sufficiently small $\eps$.} which finishes the proof.
\end{proof}

%%%%%%%%%%%%%%%%%%%%%%%%%%%%%%%%%%%%%%
%%%%%%%%%%%%%%%%%%%%%%%%%%%%%%%%%%%%%%

\section{Long cycles}\label{sec:cycles}

This section is dedicated to the proof of the following technical result, which readily implies \cref{thm:long cycles intro}. 

\begin{theorem}\label{thm:long cycles}
For every integer $d\ge 1$, every $\eta\in [1/2, 1)$ and every $\eps\in (0, 1-\eta]$, there exist positive constants $c_3 = c_3(d, \eps)$ and $C_3 = C_3(d, \eps)$ such that, if $r\in [C_3 (\log n/n)^{1/d}, c_3]$, then a.a.s.\ $G_d(n,r)$ satisfies the following property:
for every integer $L\in [c_3^{-1}, 2\eta n/(1+\eta)]$ and every vertex $v\in V$, every $(\eta+\eps)$-subgraph of $G_d(n,r)$ contains a cycle of length $L$ containing $v$.
\end{theorem}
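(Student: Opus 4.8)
The plan is to leverage the cell decomposition and the ``common neighbourhood'' mechanism that drives the proof of \cref{thm:connectivity}. Tessellate $[0,1]^d$ by cells of side length $s=\Theta(r)$ as in \cref{sect:conn}. Taking $C_3=C_3(d,\eps)$ large enough, a union bound over cells (the argument behind \cref{lem:close points}~\ref{lem:close pointsitem2}, with the individual failure probabilities now pushed below $o(1/n)$ rather than merely $o(1)$) shows that a.a.s.\ \emph{every} cell is good: each cell has more than $s^dn/2$ vertices, and any two vertices lying in the same cell or in adjacent cells have more than $(1-\eps)\max(d_G(\cdot),d_G(\cdot))$ common $G$-neighbours. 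Fix any $(\eta+\eps)$-subgraph $H\subseteq G_d(n,r)$. Since $\eta\ge 1/2$, for any such pair $u,w$ one checks exactly as in the proof of \cref{thm:connectivity} that $|N_H(u)\cap N_H(w)|>(2\eta-1+\eps)\max(d_G(u),d_G(w))\ge \eps\theta_dr^dn/2^d=:\kappa r^dn$, and that all these common neighbours lie in $B(u,r)\cap B(w,r)$, hence within $O_d(1)$ cells around $u$ and $w$; the same computation gives $\Omega_d(r^dn)$ common $H$-neighbours for any pair at distance at most $\lambda r$, for a suitable $\lambda=\lambda(d,\eta,\eps)>0$. Since $\kappa r^dn$ has order $r^dn$ while a single cell holds only $\Theta_d(s^dn)=o_d(r^dn)$ vertices, every bounded local region contains far more usable ``connectors'' than vertices of any one cell; this gap is the slack the construction will spend.

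Next I would reduce to building paths of prescribed length. Fix $v$. All $d_H(v)\ge(\eta+\eps)d_G(v)=\Omega(r^dn)$ neighbours of $v$ in $H$ lie in $B(v,r)$, hence in $O_d(1)$ cells, so by pigeonhole some cell $q_v$ contains $\Omega_d(r^dn)$ of them; fix two distinct such neighbours $a,b\in V(G)\cap q_v$. It then suffices to find, for every integer $\ell$ with $c_3^{-1}\le \ell+2\le 2\eta n/(1+\eta)$, an $(a,b)$-path of length exactly $\ell$ in $H-v$, since closing it through $v$ produces the required cycle of length $L=\ell+2$ containing $v$. (A little extra care makes the endpoints of the length interval come out exactly as stated; the additive discrepancies are absorbed by $c_3$ and the $o(\cdot)$ terms.)

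The main construction threads a path along a walk in the auxiliary grid $\Gamma$. Choose a closed walk $q_v=q_0,q_1,\dots,q_t=q_v$ in $\Gamma$ visiting a prescribed collection of cells, and label each visited cell a \emph{harvest cell} (from which essentially all vertices will be placed on the path) or a \emph{reservoir cell} (whose vertices serve only as connectors) according to a fixed, locally balanced pattern whose proportion of harvest cells is as large as the connector budget permits. Traversing the walk, whenever one enters a harvest cell $q$ one links the current path-endpoint through the vertices of $V(G)\cap q$ one by one, inserting between any two consecutive ones (and between $q$'s stretch and the endpoints entering from, and leaving to, adjacent cells) an as-yet-unused common $H$-neighbour lying in a reservoir cell; such a connector exists because the local supply $\kappa r^dn$ still exceeds the $O_d(s^dn)$ vertices consumed so far in the relevant radius-$O(r)$ region, provided one maintains a running count keeping every such region well below saturation. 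The number of vertices picked up is governed by the density of harvest cells, by the length of the walk, and by how much of the last, partially-harvested cell one uses; varying these parameters realises every target $\ell$ from the constant $c_3^{-1}-2$ up to $\tfrac{2\eta}{1+\eta}n-2$, the value of the threshold being what drops out of optimising the harvest/reservoir trade-off against the resilience parameter $\eta$ (via $\lambda$). To respect the endpoints, traverse $q_v$ first and last, beginning at $a$ and finishing at $b$, and forbid $v$ from every connector choice --- a single vertex, hence harmless. In the short regime ($\ell$ close to $c_3^{-1}-2$) the walk simply stays at $q_v$ and one alternates arbitrary vertices of $q_v$ and its neighbouring cells with connectors.

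I expect the main obstacle to be the exact length control together with the global bookkeeping: one must simultaneously ensure that (i) the running total of consumed vertices in every ball of radius $\Theta(r)$ stays below the level at which the $\kappa r^dn$ connector supply would be exhausted, and (ii) the achievable lengths form \emph{every} integer of the interval, which forces a careful treatment of the last, partially-harvested cell and of the interface between the short and the linear regimes. A secondary difficulty is designing the harvest/reservoir pattern and fixing the admissible local density so that the constant is exactly $\tfrac{2\eta}{1+\eta}$ and not something smaller; this is where $\eta\in[\tfrac12,1)$ genuinely enters, and presumably also why the method stops short of $n$, i.e.\ of Hamiltonicity.
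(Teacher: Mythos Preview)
Your high-level picture---tessellate, use common-neighbour connectors, harvest some cells and spend others---is the right shape, but there is a genuine gap in the connector bookkeeping, and the paper resolves it by a device you do not have.

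The problem is this. When you harvest a cell $q$ containing $m$ vertices, you need $m$ \emph{distinct} connectors, one between each consecutive pair of harvested vertices, and then further distinct connectors for adjacent harvest cells in the same ball of radius $r$. You argue that each pair has $\kappa r^dn$ common $H$-neighbours and that only $O_d(s^dn)$ have been consumed locally, but $s=\Theta_d(r)$ so $s^dn=\Theta_d(r^dn)$, not $o_d(r^dn)$ as you write; the ``slack'' is only a constant factor, and the sets of common neighbours for different pairs overlap almost entirely. Saying ``maintain a running count'' does not show that a system of distinct representatives exists. This is not a technicality: it is precisely the step where the paper's proof does real work, and a greedy argument of the kind you sketch does not obviously go through. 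You also do not explain why the harvest/reservoir trade-off yields exactly $2\eta/(1+\eta)$ rather than some smaller constant; you note yourself that this is unresolved.

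The paper's mechanism is different in kind. It does not partition \emph{cells} into harvest and reservoir; it randomly $3$-colours the \emph{vertices}: blue with probability $\eta/(1+\eta)+O(\eps)$, red with probability $1/(1+\eta)-O(\eps)$, and a tiny green remainder. Each red vertex is then assigned uniformly at random to a single cell inside its ball, and each green vertex to a single adjacent pair of cells. This random pre-assignment is exactly what prevents connector reuse: red vertices assigned to cell $q$ are used only between blue vertices in $q$, and are disjoint from those assigned to other cells. The structural engine is then Jackson's bipartite theorem (\cref{thm:Jackson}): the bipartite graph between blue vertices in $q$ and red vertices assigned to $q$ satisfies its hypotheses (this is where $\eta\ge 1/2$ enters, via the condition $|R_q|\le 2k-2$), so one gets a cycle covering any chosen subset of the blue vertices in $q$. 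Green vertices glue these cycles across cell boundaries, and an exact-count lemma (\cref{lem:cycle_cubes}) selects exactly $\ell$ blue vertices in a connected run of cells, giving cycles of every even length $2\ell\le 2\eta n/(1+\eta)$. Odd lengths are obtained by a local surgery. The constant $2\eta/(1+\eta)$ is thus not an optimisation output but simply twice the blue proportion.
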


Observe that, compared with \cref{thm:long cycles intro}, \cref{thm:long cycles} can be used to obtain longer cycles provided that the subgraphs of $G_d(n,r)$ have higher degrees, and that these cycles become almost spanning as $\eta$ approaches $1$.
In fact, the same proof can be used to obtain cycles of lengths up to $(2\eta/(1+\eta)+\eps')n$ for a sufficiently small $\eps'$ depending on $\eps$.
As this small improvement does not yield a more interesting result, we omit the details for the clarity of our exposition.

\begin{remark}\label{rem:long cycles}
Under the conditions of \cref{thm:long cycles}, for every vertex $v$, we can also prove the existence of short even cycles going through $v$.
In fact, in the proof of Theorem~\ref{thm:long cycles}, we also construct a cycle of length $L$ through each vertex for all even $L\in [4, 2\eta n/(1+\eta)]$.
However, short odd cycles not only seem more complicated to construct in general, but in some cases the adversary has the power to destroy all such cycles containing a given vertex.
As an example, the adversary may delete all triangles containing a given vertex if $\eta=1/2$ and $\eps$ is sufficiently small.

Indeed, fix a vertex $v$ (at distance at least $2r$ from the boundary of $[0,1]^d$) and define the region $R \coloneqq B(v,r)\setminus B(v,t)$, where $t<r$ is chosen so that $|R| = (1/2+2\eps)|B(v,r)|$.\COMMENT{So we need to take $t=(1/2-2\eps)^{1/d}r$ (notice that this tends to $r$ as $d$ grows).}
Then, by choosing~$\eps$ sufficiently small, one can show that, for every point $x\in R$, $|B(x,r)\cap R|\leq(1/2-2\eps)|B(x,r)|$.\COMMENT{Intuitively, for fixed $\eta$ and $\eps$, the worst case must be $d=1$ (as $d$ increases, $B(x,r)\cap R$ seems to get smaller and smaller, since $t$ tends to $r$; in particular, for all $d\geq2$, if $\eps$ is sufficiently small, then $t\geq r/2$). For the case $d=1$, $R$ is the union of two disjoint segments, $R_1$ and $R_2$. Say $x\in R_1$. Observe that $x$ is at distance at least $(1/2-2\eps)r$ from $v$, and thus $|B(x,r)\cap R_2|\leq4\eps r$. Moreover, $R_1\subseteq B(x,r)$. It follows that $|B(x,r)\cap R|\leq(1/2+6\eps)r$. If $\eps\leq1/20$, it follows that $|B(x,r)\cap R|\leq(1/2-2\eps)|B(x,r)|$, as desired.}
Thus, it is not hard to check using Chernoff's bound that, when $r\ge C (\log n/n)^{1/d}$ for a suitably large constant $C>0$, a.a.s.\ deleting all edges between $v$ and $V\cap B(v,t)$ as well as all edges whose endpoints lie in $V\cap R$ leaves us with a $(1/2+\eps)$-subgraph of $G_d(n,r)$ with no triangle containing $v$.
\end{remark}

Let us begin with a sketch of the proof of \cref{thm:long cycles}.
For this sketch, we consider only Theorem~\ref{thm:long cycles intro} (that is, the case $\eta=1/2$), since no new ideas are required to prove the more general version.
The main part of the proof is to show that we can construct cycles of even length $L\in [4, 2n/3]$.
To achieve this, we first tessellate $[0,1]^d$ with cells of diameter $\delta r$, for some small constant $\delta > 0$, and consider a $(1/2+\eps)$-subgraph $H$ of $G_d(n,r)$ and a fixed vertex $v$. 
Then, we consider a colouring of~$V(H)$ in three colours so that $v$ is blue, roughly $2/3$ of the vertices are red, roughly $1/3$ of them are blue, and a very small proportion are green.
The idea is to build a cycle of length $L$ in $H$ by starting from $v$ in such a way that
\begin{enumerate}[label=(\roman*)]
    \item every second vertex is blue;
    \item many blue vertices contained in a single cell appear in a short segment of the cycle;
    \item red vertices are inserted between blue vertices which lie in the same cell, and
    \item green vertices ensure the transitions between blue vertices in different cells.  
\end{enumerate}
To ensure better control on the places where red and green vertices are inserted, we associate each red (resp.\ green) vertex to a unique cell (resp.\ pair of cells) within distance $r-\delta r$ from it, and only use it to connect blue vertices in this cell (resp.\ pair of cells).
We prove the existence of a colouring and an assignment of cells to vertices satisfying the properties that we need to construct our cycles by considering random ones and showing that our desired properties hold with positive probability.
The above associations to cells (for the red vertices) or pairs of cells (for the green vertices) allow us to avoid repetitions of red and green vertices along the cycles we aim to construct.
For each cell~$q$, we ensure the existence of long red-blue paths containing as many blue vertices in $q$ as we need by using a theorem of \citet{Jac81} which provides long paths in bipartite graphs (see \cref{thm:Jackson}).
Gluing the ends of such a path (if $L$ is sufficiently small) or gluing several paths (if $L$ is large) proves that we can construct the desired cycle of even length~$L$.
Extending the result to sufficiently long odd cycles is done by replacing a path of length two in an already constructed cycle of even length by a suitably long path of odd length.

Before we proceed with the formal proof of Theorem~\ref{thm:long cycles}, we state the theorem of \citet{Jac81} which will be important for our proof.

\begin{lemma}[{\cite[Theorem~1]{Jac81}}]\label{thm:Jackson}
Let\/ $k\in\mathbb{N}$.
If\/ $G=(A,B,E)$ is a bipartite graph where\/ $|A|\in[2,k]$,\/ $|B|\in [k,2k-2]$, and every vertex in\/ $A$ has degree at least\/ $k$, then $G$ contains a cycle of length\/ $2|A|$.
\end{lemma}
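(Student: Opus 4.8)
The plan is to prove \cref{thm:Jackson} by a rotation--extension argument of P\'osa type, adapted to the bipartite setting in which only the vertices of $A$ are assumed to have large degree. The key elementary observation is that any two vertices $a,a'\in A$ have at least two common neighbours in $B$: indeed, $|N(a)\cap N(a')|\ge d(a)+d(a')-|B|\ge 2k-(2k-2)=2$. This density on the $A$-side powers every step below; in particular it already produces a $4$-cycle, so $G$ does contain an alternating cycle meeting $A$ in at least two vertices.

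First I would pass to paths. Among all paths of $G$ that alternate between $A$ and $B$ and have both endpoints in $A$, fix one of maximum length, say $P=a_0b_1a_1b_2\cdots b_ta_t$, which meets $A$ in $t+1$ vertices and $B$ in $t$ vertices. I claim $P$ must contain all of $A$. Suppose not, and fix $a^\ast\in A\setminus V(P)$. If some common neighbour of $a^\ast$ and the endpoint $a_0$ lay outside $V(P)$, prepending it would yield a longer alternating $A$--$A$ path, a contradiction; hence all (at least two) common neighbours of $a^\ast$ and $a_0$ lie among $b_1,\dots,b_t$, and symmetrically at $a_t$. Now perform P\'osa rotations fixing $a_0$ and moving the other endpoint: this produces a family of longest alternating $A$--$A$ paths on the vertex set $V(P)$, with a set $S\subseteq A\cap V(P)$ of attainable second endpoints, and running the previous observation at each $s\in S$ forces $N(s)\cap(B\setminus V(P))$ to avoid $N(a^\ast)$. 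Combining these degree restrictions for the vertices of $S$ with the cardinality bounds $t\le|A|-2\le k-2$, the hypothesis $d(s)\ge k$, and the two-common-neighbour property applied also to rotations from the other endpoint, one reaches a contradiction. Thus $V(P)\supseteq A$.

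It remains to close such a path $P=a_0b_1\cdots b_ta_t$ with $\{a_0,\dots,a_t\}=A$ into an alternating cycle of length $2|A|$. Again I would use rotations from both ends to generate many longest alternating $A$--$A$ paths through all of $A$ on the same vertex set, with various endpoint pairs $(a',a'')$; it then suffices to exhibit one such pair together with a vertex of $B$, distinct from the $t$ currently used ones, adjacent to both $a'$ and $a''$. Since $|B|\ge k>t$ at least one vertex of $B$ is unused, and the guaranteed two common neighbours of $a'$ and $a''$ (together, if needed, with the option of trading one $b_i$ out of the path) give enough room to find such a vertex; closing through it yields the required cycle.

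The crux — and the place where the precise numerology $|B|\le 2k-2$ must be used — is that in the extremal regime $|A|\approx|B|\approx k$ a near-spanning structure leaves only about one vertex of $B$ free, so neither can $a^\ast$ be inserted nor the final path be closed in a naive way. The technical heart of the argument is therefore to orchestrate the rotations so that the freed $B$-vertices, the lone spare $B$-vertex, and the abundance of common neighbours guaranteed by $|N(a)\cap N(a')|\ge 2$ are exploited simultaneously; this is exactly the bookkeeping that Jackson carries out, and in the present paper the lemma is invoked purely as a black box.
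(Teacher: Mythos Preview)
The paper does not prove this lemma at all; it is quoted from \cite{Jac81} and used as a black box, exactly as you note in your final sentence. So there is no ``paper's proof'' to compare against.

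As for the sketch itself: the opening observation that any two vertices of $A$ share at least two common neighbours in $B$ is correct and is indeed the engine of Jackson's argument. However, your proposal is an outline rather than a proof, and the two places where you write ``one reaches a contradiction'' and ``give enough room to find such a vertex'' are precisely where the work lies. In particular, in Step~2 you never actually derive the contradiction: knowing that every attainable endpoint $s$ has its common neighbours with $a^\ast$ confined to the $t\le k-2$ internal $B$-vertices does not by itself contradict anything, since you have no lower bound on $|S|$ and no argument linking the various constraints. The standard P\'osa bound $|N(S)|\le 2|S|-1$ is not directly available here because you lack degree control on $B$, so the rotation argument needs the more delicate version Jackson develops. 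Similarly, in Step~3 the phrase ``the option of trading one $b_i$ out of the path'' hides a nontrivial case analysis. You are right that this is ``exactly the bookkeeping that Jackson carries out''; it would be misleading to present the above as a self-contained proof.
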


We can use Chernoff's inequality to obtain several properties about the distribution of a random set of points over $[0,1]^d$.
These properties will be useful for the proof of \cref{thm:long cycles}.

\begin{lemma}\label{lem:basic properties}
    Fix $0<1/C\lll\delta\lll\eps,1/d\leq1$ and let $r\in [C(\log n/n)^{1/d},\sqrt{d}]$.
    Let $V$ be a set of $n$ points sampled independently and uniformly from $[0,1]^d$.
    Then, a.a.s.\ the following properties hold:
    \begin{enumerate}[label=$(\mathrm{\alph*})$]
        \item\label{lem:basic propertiesitem1} If we tessellate $[0,1]^d$ with cells of side length $s\geq\delta r/2d$, then each cell contains $(1\pm\eps/100)s^dn$ points of\/ $V$, and there are no points on the boundary of any cell.
        \item\label{lem:basic propertiesitem2} For every point $v\in V$, we have that
        \begin{enumerate}[label=$(\mathrm{b.\arabic*})$]
            \item\label{lem:basic propertiesitem2.1} $B(v,r)$ contains at most $(1+\eps/50)|B(v,r)\cap[0,1]^d|n$ points of\/ $V$;
            \item\label{lem:basic propertiesitem2.2} $B(v,(1-\delta)r)$ contains at least $(1-\eps/20)|B(v,r)\cap[0,1]^d|n$ points of\/ $V$ other than $v$, and
            \item\label{lem:basic propertiesitem2.3} the annulus $B(v,r)\setminus B(v,(1-\delta)r)$ contains at most $\eps\theta_dr^dn/20$ points of\/ $V$.
        \end{enumerate}
        \item\label{lem:basic propertiesitem3} For every pair of points $u,v\in V$ such that $\lVert u-v\rVert\leq 2\delta r$, $B(u,(1-2\delta)r)\cap B(v,(1-2\delta)r)\cap[0,1]^d$ contains at least $(1-\eps/20)|B(v,r)\cap[0,1]^d|n$ points of\/ $V$ other than $u$ and $v$.
    \end{enumerate}
\end{lemma}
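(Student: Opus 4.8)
The plan is to prove each item by an application of Chernoff's bound (\cref{lem:chernoff}) followed by a union bound, so the only real work is to bound the relevant expectations and to count how many events we union over. Throughout, recall that $r\ge C(\log n/n)^{1/d}$ forces every region of volume $\Theta(r^d)$ to contain $\Omega_{d,\eps}(\log n)$ points in expectation, with the implicit constant as large as we like by taking $C$ large, so the error probabilities from Chernoff will comfortably beat the (polynomial in $n$) number of events in each union bound.

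First I would handle \ref{lem:basic propertiesitem1}. Tessellate $[0,1]^d$ with cells of side length $s$; since $s\ge\delta r/2d$, each cell has volume $s^d=\Omega_{d,\eps}(r^d)$, hence expected number of points $s^dn=\Omega_{d,\eps}(\log n)$. By \cref{lem:chernoff} with $\delta_{\mathrm{Ch}}=\eps/100$, the probability that a fixed cell deviates from $(1\pm\eps/100)s^dn$ is at most $2\exp(-(\eps/100)^2 s^dn/3)=o(n^{-2})$ once $C$ is large enough in terms of $d$ and $\eps$; there are at most $s^{-d}\le n$ cells, so a union bound gives the concentration a.a.s. The statement about no points on cell boundaries holds since the boundary has measure zero, so with probability $1$ no point lands there.

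For \ref{lem:basic propertiesitem2} I would first condition on the position of the fixed point $v$; the other $n-1$ points are then i.i.d.\ uniform, and the count of points of $V$ in any fixed region $R$ (not containing $v$, or counted excluding $v$) is $\mathrm{Bin}(n-1,|R|)$. For \ref{lem:basic propertiesitem2.1}, take $R=B(v,r)\cap[0,1]^d$, whose volume is between $\theta_d r^d/2^d$ and $\theta_d r^d$, so $\Exp=\Theta_d(r^dn)=\Omega_d(\log n)$; Chernoff with $\delta_{\mathrm{Ch}}=\eps/100$ (say) and the fact that $(n-1)|R|\le|B(v,r)\cap[0,1]^d|n$ gives the upper bound $(1+\eps/50)|B(v,r)\cap[0,1]^d|n$ with failure probability $o(n^{-2})$. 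For \ref{lem:basic propertiesitem2.2}, take $R=B(v,(1-\delta)r)\cap[0,1]^d$; since $\delta\lll\eps$, we have $(1-\delta)^d\ge 1-d\delta\ge 1-\eps/100$, so $|R|\ge(1-\eps/100)|B(v,r)\cap[0,1]^d|$, and another Chernoff bound with a slightly larger deviation parameter yields at least $(1-\eps/20)|B(v,r)\cap[0,1]^d|n$ points other than $v$. Item \ref{lem:basic propertiesitem2.3} then follows either directly from \ref{lem:basic propertiesitem2.1} and \ref{lem:basic propertiesitem2.2} by subtraction, or by a direct Chernoff bound on the annulus, whose volume is at most $(1-(1-\delta)^d)\theta_d r^d\le d\delta\theta_d r^d\le\eps\theta_d r^d/40$ by the hierarchy $\delta\lll\eps,1/d$; since the expected count is $\Omega_{d,\eps}(\log n)$ when $\delta$ is not too small relative to $1/C$ (which is where the hierarchy $1/C\lll\delta$ is used to guarantee it is still $\omega(\log n)$, or alternatively one just notes the expectation is $O(\eps\theta_d r^dn/40)$ and Chernoff gives deviation $\le\eps\theta_d r^dn/20$ with room to spare), we get at most $\eps\theta_d r^dn/20$ points a.a.s. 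A union bound over the $n$ choices of $v$ closes \ref{lem:basic propertiesitem2}. Item \ref{lem:basic propertiesitem3} is entirely analogous: fix $u,v$ with $\lVert u-v\rVert\le2\delta r$; by the triangle inequality $B(v,(1-3\delta)r)\subseteq B(u,(1-2\delta)r)\cap B(v,(1-2\delta)r)$ (since a point within $(1-3\delta)r$ of $v$ is within $(1-3\delta)r+2\delta r=(1-\delta)r\le(1-2\delta)r+\dots$ — more precisely within $(1-\delta)r\le(1-2\delta)r$ fails, so instead use $B(v,(1-4\delta)r)\subseteq B(u,(1-2\delta)r)\cap B(v,(1-2\delta)r)$ as any point within $(1-4\delta)r$ of $v$ is within $(1-4\delta)r+2\delta r=(1-2\delta)r$ of $u$), and $(1-4\delta)^d\ge 1-4d\delta\ge 1-\eps/100$ by the hierarchy, so the intersection has volume at least $(1-\eps/100)|B(v,r)\cap[0,1]^d|$; a Chernoff bound on the $\mathrm{Bin}(n-2,\cdot)$ count and a union bound over the at most $n^2$ pairs $u,v$ at distance $\le2\delta r$ (or over all $\binom n2$ pairs) finish the proof.

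The proof is essentially routine; the only points requiring a little care are (i) keeping track of the hierarchy so that all the geometric error terms $1-(1-c\delta)^d$ are absorbed into $\eps/100$ (this is where $\delta\lll\eps,1/d$ is used), and (ii) ensuring the expected counts are $\omega(\log n)$, not merely $\Omega(\log n)$, so that each Chernoff failure probability is $o(n^{-2})$ and survives the union bound over $O(n)$ or $O(n^2)$ events — this is guaranteed by taking $C$ large (equivalently $1/C\lll\delta$). I do not anticipate a genuine obstacle; the ``hardest'' bookkeeping is simply choosing the Chernoff deviation parameters (all of order $\eps$) consistently so that the final constants come out as $\eps/100$, $\eps/50$, and $\eps/20$ as stated.
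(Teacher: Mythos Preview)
Your proposal is correct and follows essentially the same approach as the paper: Chernoff bounds on the relevant regions, volume comparisons controlled by the hierarchy $\delta\lll\eps,1/d$, and union bounds over $O(n)$ or $O(n^2)$ events. The paper organises \ref{lem:basic propertiesitem2} slightly differently (it applies Chernoff once to $|V\cap B(v,r)\cap[0,1]^d|$ and once to the annulus $B(v,(1-\delta)r,(1+\delta)r)$, then derives all three sub-items from these two estimates), and for \ref{lem:basic propertiesitem3} it uses exactly the inclusion $B(v,(1-4\delta)r)\subseteq B(u,(1-2\delta)r)\cap B(v,(1-2\delta)r)$ you arrived at after your self-correction; one small point to tidy is that your step ``$(1-\delta)^d\ge1-\eps/100$ so $|R|\ge(1-\eps/100)|B(v,r)\cap[0,1]^d|$'' is not literally a scaling identity once you intersect with $[0,1]^d$---the correct bookkeeping (which the paper does) is to bound the \emph{difference} by the full annulus volume and divide by the $2^{-d}\theta_d r^d$ lower bound on $|B(v,r)\cap[0,1]^d|$, which costs an extra $2^d$ absorbed by the hierarchy.
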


\begin{proof}
    Property~\ref{lem:basic propertiesitem1} follows from a direct application of Chernoff's bound (\cref{lem:chernoff}) together with a union bound over all cells\COMMENT{To have enough concentration, we require that $\delta r/2d$ is sufficiently large, which holds by having $1/C\lll\delta,1/d$.} and from the fact that the boundaries of all cells form a set of measure~$0$.
    So let us focus on~\ref{lem:basic propertiesitem2} and~\ref{lem:basic propertiesitem3}.
    
    For any two points $u,v\in[0,1]^d$, define the regions
    \begin{align*}
    S_1 = S_1(v) &\coloneqq [0,1]^d\cap B(v,(1-\delta)r,(1+\delta)r),\\
    S_2 = S_2(v) &\coloneqq [0,1]^d\cap B(v,r),\\
    S_3 = S_3(u,v) &\coloneqq [0,1]^d\cap B(u,(1-2\delta)r)\cap B(v,(1-2\delta)r).
    \end{align*}
Note that
\begin{equation}\label{equa:basic34}
    2^{-d}\delta d\theta_dr^d\le 2^{-d} ((1+\delta)^d - (1-\delta)^d) |B(v,r)|\le |S_1|\leq ((1+\delta)^d - (1-\delta)^d)|B(v,r)|\leq3\delta d \theta_d r^d.
\end{equation}
Now, suppose that $\lVert u-v\rVert\leq2\delta r$.
Then, similarly as above,
\begin{equation}\label{equa:basic99}
    |S_2\setminus S_3|\leq |B(v,(1-4\delta)r,r)|\le (1 - (1-4\delta)^d)|B(v,r)|\leq 5\delta d \theta_d r^d.
\end{equation}
Moreover, since $S_3\subseteq S_2$, \COMMENT{This uses also the fact that $\delta$ is small enough.}
\begin{equation}\label{equa:basic11}
4^{-d} \theta_d r^d\le |[0,1]^d\cap B(u, r/2)|\le |S_3|\le |S_2|\le \theta_d r^d.
\end{equation}
Combining \eqref{equa:basic34}, \eqref{equa:basic99} and \eqref{equa:basic11} with the choice of $\delta$, we conclude that\COMMENT{Combine the upper bound in \eqref{equa:basic34} with the lower bound on \eqref{equa:basic11} to obtain that $|S_1|\leq3\delta d4^d|S_3|$. The claimed bound follows by choosing $\delta$ sufficiently small. The next bound is now trivial since $S_3\subseteq S_2$.}
\begin{equation}\label{equa:basic33}
|S_1|\leq\eps|S_3|/50\leq\eps|S_2|/50
\end{equation}
and\COMMENT{Simply use that $|S_2|=|S_2\setminus S_3|+|S_3|$, the upper bound in \eqref{equa:basic99} and the lower bound in \eqref{equa:basic11} to obtain that $|S_2|\leq(1+5\delta d4^d)|S_3|$. Together (again) with a sufficiently small choice of $\delta$, we conclude what we want.}
\begin{equation}\label{equa:basic33bis}
    |S_2|\leq (1+\eps/30)|S_3|.
\end{equation}
    
    For~\ref{lem:basic propertiesitem2}, consider a fixed vertex $v\in V$ and reveal its position.
    By the lower bounds in \eqref{equa:basic34} and \eqref{equa:basic11} and the choice of $C$,\COMMENT{Simply choose $C$ large enough to guarantee that both $S_1$ and $S_2$ are large enough to have concentration (so we need the lower bounds on $|S_1|,|S_2|$).} direct applications of Chernoff's bound show that, with probability $1-o(1/n)$, we have that
    \begin{equation}\label{equa:basic1}
        \lvert|V\cap S_2|-|S_2|n\rvert\leq\eps|S_2|n/50
    \end{equation}
    and
    \begin{equation}\label{equa:basic2}
        |V\cap S_1|\leq3|S_1|n/2.
    \end{equation}
    We claim that, conditionally on these two events holding, it follows that \ref{lem:basic propertiesitem2} holds for $v$.
    Indeed, \ref{lem:basic propertiesitem2.1} follows immediately from \eqref{equa:basic1} and, since $B(v,r)\setminus B(v,(1-\delta)r)\subseteq S_1$, \ref{lem:basic propertiesitem2.3} follows immediately from \eqref{equa:basic2}, the upper bound in \eqref{equa:basic34} and the choice of $\delta$.\COMMENT{The number of vertices contained in $B(v,r)\setminus B(v,(1-\delta)r)$ is at most $3|S_1|n/2\leq9\delta d\theta_dr^dn/2\leq\eps\theta_dr^dn/20$, where the last inequality uses the choice of $\delta$.}
    Lastly, \ref{lem:basic propertiesitem2.2} can be derived from \eqref{equa:basic1} and \eqref{equa:basic2} in conjunction with \eqref{equa:basic33}.\COMMENT{By \eqref{equa:basic1}, $S_2$ contains at least $(1-\eps/50)|S_2|n$ vertices. 
    By \eqref{equa:basic2}, $S_1$ contains at most $3|S_1|n/2$ vertices.
    It follows that $B(v,(1-\delta)r)$ contains at least $(1-\eps/50)|S_2|n-3|S_1|n/2\geq(1-\eps/50)|S_2|n-3\eps|S_2|n/100=(1-\eps/20)|S_2|n$, where the inequality holds by \eqref{equa:basic33}.
    The extra vertex that we need to take care of can be hidden by making the constants smaller.}
    Therefore, \ref{lem:basic propertiesitem2} follows by a union bound over all vertices of the complementary events.

    Now we focus on \ref{lem:basic propertiesitem3}.
    Fix two arbitrary vertices $u,v\in V$ and reveal their positions.
    If $\lVert u-v\rVert> 2\delta r$, there is nothing to prove, so we may assume that $\lVert u-v\rVert\leq 2\delta r$.
    Consider the random variable $X\coloneqq|S_3\cap(V(G)\setminus\{u,v\})|$ with distribution $\mathrm{Bin}(n-2, |S_3|)$.
    By \eqref{equa:basic11}\COMMENT{This is here to give a lower bound on $|S_3|$.} and the choice of $C$, an application of \cref{lem:chernoff} implies that, with probability $1-o(n^{-2})$, we have 
    \[X\geq(1-\eps/60)|S_3|n\geq\frac{1-\eps/60}{1+\eps/30}|S_2|n\geq(1-\eps/20)|S_2|n,\]
    where the second inequality follows from \eqref{equa:basic33bis}.
    Therefore, a union bound over the complementary events for all $O(n^2)$ pairs of vertices leads to the desired conclusion.
\end{proof}

The proof of \cref{thm:long cycles} relies on several technical lemmas.
We first describe the precise framework in which these lemmas are phrased.
We will assume that $d\in\mathbb{N}$, $\eta\in[1/2,1)$ and $\eps\in(0,1-\eta]$ are defined globally.
Let $0<1/C_3\lll\delta,c_3\lll\eps,1/d\leq1$ (we will reiterate the necessary relations in each lemma).
Consider a tessellation of the hypercube $[0,1]^d$ with cells of side length $s\coloneqq\lceil d/\delta r\rceil^{-1}$ (so, in particular, each cell has diameter at most $\delta r$).\COMMENT{Each cell has diameter at most $\delta r/\sqrt{d}$.}
We denote the set of cells by~$\cQ$ and recall the auxiliary graph~$\Gamma$ with vertex set~$\cQ$ where two cells are joined by an edge whenever they share a common $(d-1)$-dimensional face. 
Next, we consider a set of~$n$ vertices and assign to each of them a uniformly random position on $[0,1]^d$, independently of each other.
This set will play the role of the vertex set of $G_d(n,r)$.
Note that \cref{lem:basic properties} applies with our choice of parameters. 
Then, we randomly assign to each vertex one of three colours: red, blue or green.
To be precise, we colour each vertex in blue with probability $\eta/(1+\eta)+\eps/40$, in red with probability $1/(1+\eta)-\eps/20$, and in green with probability $\eps/40$, independently from all other vertices (and independently of the position of the vertex in $[0,1]^d$).
Moreover, to each red vertex $u$, we associate a cell $q(u)\in\cQ$ by choosing it uniformly at random among all cells that are entirely contained in the ball $B(u,r)$.
Similarly, to each green vertex $z$, 
we associate a pair of cells $q_1, q_2\in\cQ$ with $q_1q_2\in E(\Gamma)$ by choosing uniformly at random one among all such pairs with $q_1\cup q_2\subseteq B(z,r)$.

Some parts of our construction become more technical if we consider vertices which are close to the boundary of $[0,1]^d$.
For this reason, in the coming lemmas, we will restrict most of our attention to vertices which are far from the boundary; we will deal with vertices close to the boundary later on.
Let $\cQ_{2r}\subseteq\cQ$ denote the subset of cells in~$\cQ$ which are at distance at least $2r$ from the boundary of~$[0,1]^d$, and let $Q_{2r}\coloneqq\bigcup_{q\in\cQ_{2r}}q$.
Our choice of~$\cQ_{2r}$ ensures that all neighbours of vertices in a cell of~$\cQ_{2r}$ are also at distance at least $r$ from the boundary; we will use this fact throughout.

First, we need some simple properties about how blue vertices are distributed over $[0,1]^d$ and over the neighbours of any vertex.
Both results follow from direct applications of Chernoff's bound (\cref{lem:chernoff}), and so we omit their proofs.

\begin{lemma}\label{lem:colours}
    Let $0<1/C\lll\delta\lll\eps,1/d\leq1$ and suppose that $C(\log n/n)^{1/d}\leq r\leq\sqrt{d}$. Let $V$ be a set of $n$ vertices on $[0,1]^d$ satisfying the property described in \cref{lem:basic properties}~\ref{lem:basic propertiesitem1}.
    Then, a.a.s.\ every cell $q\in\cQ$ contains at least $(\eta/(1+\eta)+\eps/70)s^dn$ and at most $(\eta/(1+\eta)+\eps/3)s^dn$ blue vertices.\COMMENT{Fix a cell $q\in\cQ$.
Consider first the upper bound.
By \cref{lem:basic properties}~\ref{lem:basic propertiesitem1}, $q$ contains at most $(1+\eps/100)s^dn$ vertices.
Now colour them randomly, and let $X$ denote the number of blue vertices.
It follows that $\mathbb{E}[X]\leq(\eta/(1+\eta)+\eps/40)(1+\eps/100)s^dn\leq(\eta/(1+\eta)+\eps/30)s^dn$ (where the second inequality follows by using e.g. the fact that $\eta/(1+\eta)\leq1/2$ and $\eps\leq1$).
Then, making $C$ sufficiently large, \cref{lem:chernoff} gives that what we want holds with probability $1-o(1/n)$, and so the conclusion follows by a union bound.\\
Now consider the lower bound, which is similar.
By \cref{lem:basic properties}~\ref{lem:basic propertiesitem1}, $q$ contains at least $(1-\eps/100)s^dn$ vertices, so $\mathbb{E}[X]\geq(\eta/(1+\eta)+\eps/40)(1-\eps/100)s^dn\geq(\eta/(1+\eta)+\eps/60)s^dn$ (where the second inequality follows again by using that $\eta/(1+\eta)\leq1/2$ and $\eps\leq1$).
Then, making $C$ sufficiently large, \cref{lem:chernoff} gives that what we want holds with probability $1-o(1/n)$, and so the conclusion follows by a union bound.}  
\end{lemma}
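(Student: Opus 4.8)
The plan is a short first-moment computation: since the positions of the vertices are now fixed (they satisfy \cref{lem:basic properties}~\ref{lem:basic propertiesitem1}), only the colouring is random, and the statement will follow from Chernoff's bound applied cell by cell together with a union bound.

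First I would fix a cell $q\in\cQ$. By \cref{lem:basic properties}~\ref{lem:basic propertiesitem1} we have $|V\cap q|=(1\pm\eps/100)s^dn$, and each of these vertices is coloured blue independently with probability $\eta/(1+\eta)+\eps/40$; hence, writing $X_q$ for the number of blue vertices of $V$ in $q$, we have $\Exp[X_q]=(\eta/(1+\eta)+\eps/40)\,|V\cap q|$, and using $\eta/(1+\eta)\le1/2$ and $\eps\le1$ one checks that
\[
\left(\frac{\eta}{1+\eta}+\frac{\eps}{60}\right)s^dn\le\Exp[X_q]\le\left(\frac{\eta}{1+\eta}+\frac{\eps}{30}\right)s^dn.
\]
Next, observe that $s\ge\delta r/2d$ (from $s=\lceil d/\delta r\rceil^{-1}$ and $r\le\sqrt d$), so $s^dn\ge(\delta C/2d)^d\log n$, which can be made at least $K\log n$ for any prescribed constant $K$ by choosing $C$ large in terms of $\delta$ and $d$; since $\eta/(1+\eta)\ge1/3$, this gives $\Exp[X_q]\ge(K/3)\log n$. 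Applying \cref{lem:chernoff} with deviation parameter $\eps/1000$ then yields
\[
\Pr\!\left[\bigl|X_q-\Exp[X_q]\bigr|\ge\frac{\eps}{1000}\Exp[X_q]\right]\le 2\exp\!\left(-\frac{\eps^2}{3\cdot10^6}\cdot\frac{K}{3}\log n\right)=o(1/n),
\]
once $C$ (hence $K$) is large enough in terms of $\eps$. On the complementary event,
\[
\left(\frac{\eta}{1+\eta}+\frac{\eps}{70}\right)s^dn\le\left(1-\frac{\eps}{1000}\right)\Exp[X_q]\le X_q\le\left(1+\frac{\eps}{1000}\right)\Exp[X_q]\le\left(\frac{\eta}{1+\eta}+\frac{\eps}{3}\right)s^dn,
\]
where both outer inequalities hold because the gaps $\eps/70<\eps/60$ and $\eps/30<\eps/3$ comfortably absorb the multiplicative $\eps/1000$ error.

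Finally I would take a union bound over all cells: since $|\cQ|=s^{-d}=O(n/\log n)=o(n)$, the probability that some cell violates the desired bounds is $o(n)\cdot o(1/n)=o(1)$, which proves the lemma. I do not expect any genuine obstacle here; the only point that deserves a moment's attention is the bookkeeping of constants, namely checking that the $\eps/100$ error in $|V\cap q|$ and the $\eps/1000$ Chernoff deviation both fit inside the window $\eps/70<\eps/40<\eps/3$ around the blue-probability offset $\eps/40$, which they do.
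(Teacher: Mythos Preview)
Your proposal is correct and follows essentially the same approach as the paper's own proof (which the paper leaves as a direct application of Chernoff's bound): fix a cell, use \cref{lem:basic properties}~\ref{lem:basic propertiesitem1} to pin down $|V\cap q|$, apply Chernoff to the binomial number of blue vertices with the expectation bracketed between $(\eta/(1+\eta)+\eps/60)s^dn$ and $(\eta/(1+\eta)+\eps/30)s^dn$, and finish with a union bound over the $o(n)$ cells. Your explicit verification that $s^dn\ge K\log n$ via $s\ge\delta r/2d$ and the hierarchy $1/C\lll\delta$ is exactly what the paper means by ``making $C$ sufficiently large''.
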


\begin{lemma}\label{lem:blue neighbours}
    Let $G$ be an $n$-vertex graph.
    Colour $V(G)$ in such a way that each vertex receives colour blue with probability at least $1/3$, independently of every other vertex.
    Then, a.a.s.\ every vertex $v\in V(G)$ with $d_G(v)\geq 40\log n$ has at least $d_G(v)/6\ge 1$ blue neighbours.\COMMENT{Fix any vertex with the right degree. The expected number of blue neighbours is at least $6\log n$, and thus, by \cref{lem:chernoff}, the probability that it has fewer than $\log n$ blue neighbours is bounded from above by
    \[2\nume^{-25\log n/18}=o(1/n).\]
    The conclusion follows by a union bound over the at most $n$ vertices which satisfy the degree condition.}
\end{lemma}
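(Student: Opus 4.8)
The plan is a routine concentration-plus-union-bound argument. First I would fix a vertex $v\in V(G)$ with $d_G(v)\ge 40\log n$ and let $X_v$ denote the number of blue neighbours of $v$. Since each vertex of $G$ is coloured independently of all others, $X_v$ is a sum of $d_G(v)$ independent Bernoulli random variables, namely the indicators that the vertices in $N_G(v)$ receive colour blue, and each of these has success probability at least $1/3$ by hypothesis. Consequently $\mu_v\coloneqq\mathbb{E}[X_v]=\sum_{u\in N_G(v)}\mathbb{P}[u\text{ is blue}]\ge d_G(v)/3$.

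Next I would observe that $d_G(v)/6\le\mu_v/2$, so the event that $v$ has fewer than $d_G(v)/6$ blue neighbours is contained in the event $\{|X_v-\mu_v|\ge\mu_v/2\}$. Applying Chernoff's bound (\cref{lem:chernoff}) with $\delta=1/2$, together with the estimate $\mu_v\ge d_G(v)/3\ge 40\log n/3$, yields
\[
\mathbb{P}\bigl[X_v<d_G(v)/6\bigr]\le\mathbb{P}\bigl[|X_v-\mu_v|\ge\mu_v/2\bigr]\le 2\nume^{-\mu_v/12}\le 2\nume^{-40\log n/36}=2n^{-10/9}=o(1/n).
\]
Since there are at most $n$ vertices of $G$ with degree at least $40\log n$, a union bound over the corresponding complementary events shows that a.a.s.\ every such vertex $v$ satisfies $X_v\ge d_G(v)/6$. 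Finally, $d_G(v)/6\ge 40\log n/6\ge 1$ for $n$ sufficiently large, which gives the last inequality asserted in the statement.

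I do not expect any real obstacle here. The only point that requires a word of care is that the blue-probabilities of distinct vertices need not be equal; however, they are uniformly bounded below by $1/3$, so $X_v$ is still a sum of independent Bernoulli variables with mean at least $d_G(v)/3$, and \cref{lem:chernoff} applies verbatim to such a (not necessarily identically distributed) sum.
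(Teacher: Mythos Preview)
Your proof is correct and follows exactly the approach the paper has in mind (Chernoff's bound applied to the count of blue neighbours of a fixed high-degree vertex, followed by a union bound over all such vertices). The paper in fact omits the proof as routine; your write-up matches the intended argument, including the observation that \cref{lem:chernoff} applies to sums of independent but not necessarily identically distributed Bernoulli variables.
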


Our next lemma shows that, after colouring the vertices, we can find large sets of blue vertices contained in a small number of cells forming a path in $\Gamma$.
Moreover, we can find such sets even if a few of the vertices are ``forbidden''.
We will later show that we can construct cycles containing precisely the sets of blue vertices given by this result.

\begin{lemma}\label{lem:cycle_cubes}
Let $0<1/C,c\lll\eps,1/d\leq1$.
Suppose $r\in [C(\log n/n)^{1/d},c]$, and let $V$ be a set of~$n$ vertices in $[0,1]^d$ satisfying the property described in \cref{lem:basic properties}~\ref{lem:basic propertiesitem1}.
Then, a.a.s.\ the following event holds:
for every blue vertex $v\in V\cap Q_{2r}$, every set $A\subseteq (V\cap Q_{2r})\setminus\{v\}$ such that $|A\cap q|\leq2$ for all $q\in\cQ_{2r}$, and every integer $\ell\in[3, \eta n/(1+\eta)-|A|]$,  there exist
\begin{itemize}
    \item a positive integer $m\leq (\ell-1)/2$,
    \item distinct cells $q_1, \ldots, q_m\in \cQ_{2r}$, and
    \item for each $j\in[m]$, a set of blue vertices $S_j\subseteq (V\cap q_j)\setminus A$
\end{itemize}
such that $v\in S\coloneqq S_1\cupdot\ldots\cupdot S_m$ and $|S|=\ell$, and
\begin{enumerate}[label=$(\mathrm{\alph*})$]
    \item\label{lem:cycle_cubesitem1} for each $j\in [m]$, $|S_j\setminus\{v\}|\geq2$ and $S_j\subseteq q_j$, and
    \item\label{lem:cycle_cubesitem2} for each $j\in [m-1]$, $q_jq_{j+1}\in E(\Gamma)$.
\end{enumerate}
\end{lemma}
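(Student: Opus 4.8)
The plan is to construct the cells $q_1,\ldots,q_m$ and the sets $S_j$ greedily, by walking through the graph $\Gamma$ restricted to $\cQ_{2r}$, starting from the cell containing $v$ and picking up blue vertices cell by cell until we have collected exactly $\ell$ of them. The key quantitative input is \cref{lem:colours}: a.a.s.\ every cell in $\cQ$ contains at least $(\eta/(1+\eta)+\eps/70)s^dn$ blue vertices, and since $|A\cap q|\le 2$ for every cell, after removing $A$ each cell of $\cQ_{2r}$ still contains at least $(\eta/(1+\eta)+\eps/70)s^dn - 2 \ge (\eta/(1+\eta)+\eps/80)s^dn=:b$ blue vertices not in $A$ (for $n$ large). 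Condition on the a.a.s.\ events of \cref{lem:basic properties}~\ref{lem:basic propertiesitem1} and \cref{lem:colours}; everything below is deterministic.

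First I would fix the starting cell $q_1$ to be the cell of $\cQ_{2r}$ containing $v$ (it is indeed in $\cQ_{2r}$ since $v\in V\cap Q_{2r}$). Now enumerate a breadth-first (or any) walk through $\Gamma[\cQ_{2r}]$ starting at $q_1$, obtaining a sequence of cells, consecutive ones adjacent in $\Gamma$; note $\Gamma[\cQ_{2r}]$ is connected and has at least $\eta n/(1+\eta)/b = \Omega(r^{-d})$ vertices, far more than the $m\le (\ell-1)/2 \le n/2$ cells we will ever need, because each cell we fully use contributes at least $\min(b,\dots)\ge 2$ blue vertices while the total budget is $\ell\le \eta n/(1+\eta)$. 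Process the cells of this walk one at a time: in cell $q_j$, if the number of blue vertices of $V\cap q_j\setminus A$ collected so far plus those still needed would overshoot, take exactly the number still needed (which we must check is either $0$, or at least $2$ and at most $b$); otherwise take, say, $\min(b, \text{something})$ — concretely, take exactly $\lfloor b/2\rfloor$ or all remaining needed, whichever is smaller but at least $2$. Stop once exactly $\ell$ blue vertices have been collected, which defines $m$ and the $S_j$.

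The one genuine obstacle is the bookkeeping that forces $|S_j\setminus\{v\}|\ge 2$ and $m\le (\ell-1)/2$ simultaneously with $|S|=\ell$ exactly. The clean way is: aim to put \emph{at least three} blue vertices into each cell we use except possibly the last, so that $m \le \ell/3 + 1 \le (\ell-1)/2$ using $\ell\ge 3$ (one checks $\ell/3+1\le(\ell-1)/2 \iff \ell\ge 9$; for the finitely many $\ell\in\{3,\dots,8\}$ one argues directly — e.g.\ for $\ell=3$ take $m=1$ and $S_1$ to be $v$ plus two further blue vertices of $q_1\setminus A$, which exist since $b\ge 3$). To make the last cell also have $\ge 2$ fresh vertices without overshooting, when the remaining needed count $\rho$ after filling cell $q_{j}$ would be $1$, instead take one fewer vertex in $q_j$ so that $\rho$ becomes $2$, and place those $2$ in $q_{j+1}$; this is possible because we always have slack ($b\ge 3$ and we only ever put a bounded-away-from-$b$ number per cell). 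Since $\ell - |A| \le \eta n/(1+\eta) - |A|$ by hypothesis and $|A|\le 2|\cQ_{2r}|$, there is never a shortage of cells: we use at most $O(\ell) = O(n)$ cells, far fewer than $|\cQ_{2r}|$ for $n$ large. Finally \ref{lem:cycle_cubesitem2} holds by construction (consecutive cells in the walk are $\Gamma$-adjacent) and distinctness of $q_1,\dots,q_m$ holds because a BFS/DFS walk visits each vertex once; and $v\in S_1\subseteq S$ by the choice of $q_1$. This completes the plan; the proof is a routine induction on the number of cells processed, tracking the invariant ``$\rho$ blue vertices still needed, $\rho=0$ or $\rho\ge 2$, and at least $\lceil \rho/3\rceil$ unused cells of $\cQ_{2r}$ remain ahead on the walk.''
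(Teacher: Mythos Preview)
Your overall strategy---walk along a path in $\Gamma[\cQ_{2r}]$ starting from the cell containing $v$ and greedily collect blue vertices---is the same as the paper's. However, there is a genuine gap in how you produce the path. You write that a BFS or DFS ``walk'' through $\Gamma[\cQ_{2r}]$ yields ``a sequence of cells, consecutive ones adjacent in $\Gamma$'' and later that ``distinctness of $q_1,\dots,q_m$ holds because a BFS/DFS walk visits each vertex once.'' These two properties are incompatible: neither a BFS nor a DFS \emph{ordering} of the vertices has the property that consecutive vertices are adjacent (think of a star, or even a path traversed from an interior vertex). What you actually need is a \emph{Hamilton path} in $\Gamma[\cQ_{2r}]$. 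This exists for a structural reason you do not invoke: $\Gamma[\cQ_{2r}]$ is a $d$-dimensional grid, and such grids always contain a Hamilton path. The paper uses exactly this fact.

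Your counting argument for why you never run out of cells is also garbled. You claim ``we use at most $O(\ell) = O(n)$ cells, far fewer than $|\cQ_{2r}|$ for $n$ large,'' but $|\cQ_{2r}| = \Theta(s^{-d}) = O(n/\log n)$, which is certainly not far more than $n$. If you only take three blue vertices per cell, you may need $\Theta(n)$ cells and will run out. The correct argument (which the paper makes) uses the upper bound $r\le c$: since $c\lll\eps,1/d$, the region $Q_{2r}$ occupies at least a $(1-\eps/100)$-fraction of $[0,1]^d$, so by \cref{lem:colours} the \emph{total} number of blue vertices in $Q_{2r}$ is at least $\eta n/(1+\eta)$. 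Hence a Hamilton path through \emph{all} cells of $\cQ_{2r}$ collects at least $\ell+|A|$ blue vertices, and one can simply take $m$ minimal. The paper then takes essentially all available blue vertices per cell (at least four per cell after removing $A$), giving $m\le\lceil\ell/4\rceil\le(\ell-1)/2$ directly without your case analysis for small $\ell$.
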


\begin{proof}
Let $0<1/C\lll\delta\lll\eps,1/d\leq1$ and $0<c\lll\eps,1/d\leq1$ so that $|[0,1]^d\setminus Q_{2r}|\leq\eps/100$.
Thus, by \cref{lem:colours}, a.a.s.\ $Q_{2r}$ contains at least $\eta n/(1+\eta)$ blue vertices\COMMENT{The number of cells which lie in $Q_{2r}$ is at least $(1-\eps/100)s^{-d}$, and each cell contains at least $(\eta/(1+\eta)+\eps/70)s^dn$ blue vertices by \cref{lem:colours}, so the total number of blue vertices is at least $(1-\eps/100)(\eta/(1+\eta)+\eps/70)n\geq\eta n/(1+\eta)$.} and each cell in $\cQ_{2r}$ contains at least six blue vertices.
We condition on these events.

Now fix a blue vertex $v$ in some cell $q\in\cQ_{2r}$.
As the auxiliary graph $\Gamma_{2r}\coloneqq\Gamma[\cQ_{2r}]$ contains a Hamilton path,\COMMENT{This graph is a $d$-dimensional grid, and the claim has a very simple proof by induction on the dimension.} for each $m\in [|\cQ_{2r}|]$ it must contain a path of length $m-1$ which itself contains~$q$.
Now, let $m$ be the smallest integer such that $\Gamma_{2r}$ contains a path $P$ of length $m-1$ with $q\in V(P)$ and such that the cells in $V(P)$ contain a total of at least $\ell$ blue vertices not contained in~$A$ (note that, as $Q_{2r}$ contains at least $\eta n/(1+\eta)$ blue vertices, $m$ is well defined).
In particular, since all cells in~$\cQ_{2r}$ contain at least four blue vertices outside $A$, $m\leq\lceil\ell/4\rceil\leq(\ell-1)/2$.
Let $V(P)=\{q_1,\ldots,q_m\}$, where the labelling is given by the order in which the cells appear on the path so that \ref{lem:cycle_cubesitem2} holds.
Now, let $S\subseteq (V\setminus A)\cap\bigcup_{j=1}^mq_j$ be such that $|S|=\ell$, $v\in S$ and $|(S\cap q_j)\setminus\{v\}|\geq2$ for all $j\in[m]$ (the existence of such a set is guaranteed by the minimality of $m\leq(\ell-1)/2$ in the definition of $P$ and the fact that each cell contains at least four vertices outside $A$).\COMMENT{If we did not take $m$ to be minimal, we might have taken e.g. a path which is so long that we can only take one vertex from each cell.}
In order to complete the proof, for each $j\in[m]$, let $S_j\coloneqq S\cap q_j$ so that the partition $S=\bigcupdot_{j=1}^m S_j$ satisfies~\ref{lem:cycle_cubesitem1}.
\end{proof}

Our next lemma gives us further information on the distribution of red and green vertices and guarantees that they can be used to string together long paths.

\begin{lemma}\label{lem:bipartite}
Let $0<1/C\lll \delta\lll\eps,1/d\leq1$.
Suppose $r\in [C(\log n/n)^{1/d}, \sqrt{d}]$,
and let $V$ be a set of vertices on $[0,1]^d$ satisfying the properties described in \cref{lem:basic properties}.
Consider the geometric graph $G=G(V,r)$ and let $H\subseteq G$ be an $(\eta+\eps)$-subgraph.
Then, a.a.s.\ the following properties hold:
\begin{enumerate}[label=$(\mathrm{\alph*})$]
    \item\label{lem:bipartiteitem1} For every cell $q\in \cQ_{2r}$ and every vertex $v\in V\cap q$, there are at least $(\eta/(\eta+1)+\eps/3)s^d n$ red $H$-neighbours $u$ of $v$ satisfying $q(u) = q$.
    \item\label{lem:bipartiteitem2} For every cell $q\in \cQ_{2r}$, there are at most $(1/(\eta+1) + \eps/20) s^d n$ red vertices $u$ such that $q(u) = q$.
    \item\label{lem:bipartiteitem3} For every pair of cells $q_1,q_2\in\cQ$ with $q_1q_2\in E(\Gamma)$, each pair of vertices $v_1,v_2\in V\cap(q_1\cup q_2)$ have at least five common green $H$-neighbours associated to $q_1, q_2$.
\end{enumerate}
\end{lemma}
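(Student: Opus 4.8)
The three statements are each established by a Chernoff estimate followed by a union bound, once the right conditional expectations have been identified. Reveal the randomness in the order: first the positions of the $n$ vertices; then $H$, which we treat as an arbitrary function of the positions that is a.s.\ an $(\eta+\eps)$-subgraph of $G=G(V,r)$; and only then the colours and the cell/pair assignments, which are sampled independently of the positions, hence of $H$. Thus, conditionally on the positions, the colour of each vertex together with its cell assignment (if red) or pair assignment (if green) are mutually independent across vertices, so Chernoff's bound (\cref{lem:chernoff}) applies to the relevant counting variables. We use repeatedly that $s^dn=\omega(\log n)$ once $C$ is large, together with two elementary geometric facts stemming from the choice $s\le\delta r/d$: (i) if $v\in q$ and $\lVert u-v\rVert\le(1-2\delta)r$ then $q\subseteq B(u,r)$, and if $v_1\in q_1\cup q_2$ with $q_1q_2\in E(\Gamma)$ and $\lVert z-v_1\rVert\le(1-2\delta)r$ then $q_1\cup q_2\subseteq B(z,r)$ (because $\operatorname{diam}(q)\le\delta r$ and $\operatorname{diam}(q_1\cup q_2)\le2\delta r$); and (ii) whenever $B(w,r)\subseteq[0,1]^d$, the number $N(w)$ of cells entirely contained in $B(w,r)$ satisfies $(1-\eps/100)\theta_dr^d/s^d\le N(w)\le\theta_dr^d/s^d$ (upper bound by disjointness, lower bound since $\sqrt{d}\,s\le\delta r$), while the number $M(w)$ of edges of $\Gamma$ whose two cells both lie in $B(w,r)$ is at most $d\theta_dr^d/s^d$.

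For \ref{lem:bipartiteitem1}, fix $q\in\cQ_{2r}$ and $v\in V\cap q$, and put $W\coloneqq N_H(v)\cap B(v,(1-2\delta)r)$. By \cref{lem:basic properties}\ref{lem:basic propertiesitem3} (with $u=v$) the ball $B(v,(1-2\delta)r)$ contains at least $(1-\eps/20)\theta_dr^dn$ vertices other than $v$, all $G$-neighbours of $v$, while at most $(1-\eta-\eps)d_G(v)\le(1-\eta-\eps)(1+\eps/50)\theta_dr^dn$ of the $G$-neighbours of $v$ are missing from $H$ (using \ref{lem:basic propertiesitem2.1}); hence $|W|\ge(\eta+\tfrac{93}{100}\eps)\theta_dr^dn$. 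Every $u\in W$ lies within $(1-2\delta)r$ of $v$, so $q\subseteq B(u,r)$ by fact~(i), and therefore $\mathbb P[u\text{ red and }q(u)=q\mid\text{positions}]=p_{\mathrm{red}}/N(u)\ge p_{\mathrm{red}}s^d/(\theta_dr^d)$, where $p_{\mathrm{red}}=1/(1+\eta)-\eps/20$. Consequently the number of red $H$-neighbours $u$ of $v$ with $q(u)=q$ has conditional expectation at least $(\eta+\tfrac{93}{100}\eps)p_{\mathrm{red}}s^dn$; multiplying out, using $1/(1+\eta)\ge1/2$, $\eta\le1$, $\eps\le1$ and $\eta/(1+\eta)=\eta\cdot\bigl(1/(1+\eta)\bigr)$, this is at least $\bigl(\tfrac{\eta}{1+\eta}+\tfrac{\eps}{3}+\tfrac{\eps}{50}\bigr)s^dn$. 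A Chernoff lower-tail bound with relative deviation $\eps/60$ gives the claimed bound $\bigl(\tfrac{\eta}{1+\eta}+\tfrac{\eps}{3}\bigr)s^dn$ except with probability $o(1/n)$, and we union bound over the at most $n$ choices of $(q,v)$.

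For \ref{lem:bipartiteitem2}, fix $q\in\cQ_{2r}$ with centre $x_q$; since $q\in\cQ_{2r}$, $x_q$ is at distance at least $2r$ from the boundary, so every $u$ with $q(u)=q$ lies in $B(x_q,r)\subseteq[0,1]^d$, and a standard Chernoff/union-bound argument (which can be folded into \cref{lem:basic properties}) shows that a.a.s.\ $|V\cap B(x_q,r)|\le(1+\eps/100)\theta_dr^dn$ for all such $q$. For each candidate $u$ one has $\mathbb P[q(u)=q\mid u\text{ red},\text{positions}]=1/N(u)\le(1+\eps/99)s^d/(\theta_dr^d)$ by fact~(ii), so the number of red vertices $u$ with $q(u)=q$ has conditional expectation at most $(1/(1+\eta)-\Omega(\eps))s^dn$; a Chernoff upper-tail bound with an $\Omega(\eps)$ relative deviation yields the bound $(1/(1+\eta)+\eps/20)s^dn$ except with probability $o(1/n)$, and we union bound over the $O(n)$ cells of $\cQ$. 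For \ref{lem:bipartiteitem3}, fix $q_1q_2\in E(\Gamma)$ and $v_1,v_2\in V\cap(q_1\cup q_2)$; then $\lVert v_1-v_2\rVert\le\operatorname{diam}(q_1\cup q_2)\le2\delta r$, so by \cref{lem:basic properties}\ref{lem:basic propertiesitem3} the region $B(v_1,(1-2\delta)r)\cap B(v_2,(1-2\delta)r)\cap[0,1]^d$ contains at least $(1-\eps/20)|B(v_1,r)\cap[0,1]^d|n$ common $G$-neighbours of $v_1$ and $v_2$; removing the at most $(1-\eta-\eps)d_G(v_1)$ and $(1-\eta-\eps)d_G(v_2)$ missing $G$-neighbours at $v_1$ and $v_2$ (and using \ref{lem:basic propertiesitem2.1}) leaves at least
\[
\Bigl[(1-\tfrac{\eps}{20})-2(1-\eta-\eps)(1+\tfrac{\eps}{50})\Bigr]\,|B(v_1,r)\cap[0,1]^d|\,n\;\ge\;\eps\,|B(v_1,r)\cap[0,1]^d|\,n\;\ge\;\eps\theta_dr^dn/2^d
\]
common $H$-neighbours of $v_1$ and $v_2$ in that region, the first inequality using $2\eta-1\ge0$, i.e.\ $\eta\ge1/2$. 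Every such vertex $z$ satisfies $q_1\cup q_2\subseteq B(z,r)$ by fact~(i), so $\mathbb P[z\text{ green and assigned }\{q_1,q_2\}\mid\text{positions}]=p_{\mathrm{green}}/M(z)\ge(\eps/40)s^d/(d\theta_dr^d)$; hence the number of common green $H$-neighbours of $v_1,v_2$ associated to $q_1,q_2$ has conditional expectation at least $\eps^2s^dn/(40\cdot2^dd)=\omega(\log n)$, which by Chernoff is at least $5$ except with probability $o(n^{-3})$. A union bound over the $O(n)$ edges of $\Gamma$ and, for each, the $O(s^{2d}n^2)$ pairs $v_1,v_2$ (using \ref{lem:basic propertiesitem1}) completes the proof.

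The only genuinely delicate point is the arithmetic showing that these conditional expectations clear the prescribed thresholds; this is the one place where the exact constants matter, and where the hypothesis $\eta\ge1/2$ is essential — in \ref{lem:bipartiteitem3} through $2\eta-1\ge0$, and in \ref{lem:bipartiteitem1} (as well as in the later comparison of \ref{lem:bipartiteitem1} and \ref{lem:bipartiteitem2} through \cref{thm:Jackson}) through $1/(1+\eta)\ge1/2$, which makes $p_{\mathrm{red}}\approx1/(1+\eta)$ large enough. The estimate in \ref{lem:bipartiteitem1} is essentially tight, leaving only an $O(\eps)$ margin for the Chernoff fluctuation. The other point to phrase with care is that, although the asserted properties refer to $H$, the colouring is independent of $H$: accordingly the lemma is applied to each fixed $(\eta+\eps)$-subgraph $H$ of $G$ with a fresh random colouring, and the $o(1)$ failure probability then guarantees that some colouring works for that particular $H$, which is exactly what the proof of \cref{thm:long cycles} requires.
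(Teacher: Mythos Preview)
Your proof is correct and follows essentially the same route as the paper's: bound the per-vertex probability of the relevant colour/cell (or cell-pair) event via the count of cells contained in a ball, combine with the neighbour counts supplied by \cref{lem:basic properties}, and apply Chernoff plus a union bound. Two cosmetic remarks: in part~\ref{lem:bipartiteitem1} you invoke \cref{lem:basic properties}\ref{lem:basic propertiesitem3} with ``$u=v$'', but that item is stated for distinct vertices (the paper uses \ref{lem:basic propertiesitem2.2} and \ref{lem:basic propertiesitem2.3} at this point, which give the same lower bound), and in your closing discussion the inequality $1/(1+\eta)\ge1/2$ comes from $\eta\le1$ rather than from $\eta\ge1/2$.
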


\begin{proof}
For every point $x\in[r,1-r]^d$, the number of cells $q\in\cQ$ with $q\subseteq B(x,r)$ lies in the interval $[(1-\eps/20)\theta_dr^d/s^d, \theta_dr^d/s^d]$.\COMMENT{The upper bound is trivial, as we are simply using the fact that each cell has volume $s^d$.
For the lower bound, simply note that, by making $\delta$ arbitrarily small and the balls arbitrarily large with respect to $\delta$, we can make it so that the cells that intersect the boundary of the ball have an arbitrarily small proportion of the volume of the ball.}
Hence, for any vertex $u\in V\cap[r, 1-r]^d$ and any cell $q\in\cQ$ with $q\subseteq B(u,r)$, we have that
\begin{equation}\label{eq:binomial reds2}
    \mathbb{P}[u\text{ is red and }q(u) = q]\in[p_{\min},p_{\max}],
\end{equation}
where
\begin{equation}\label{eq:binomial reds}
p_{\min} \coloneqq \left(\frac{1}{1+\eta} - \frac{\eps}{20}\right)\frac{s^d}{\theta_dr^d}\qquad\text{ and }\qquad p_{\max} \coloneqq \left(\frac{1}{1+\eta} - \frac{\eps}{20}\right)\frac{s^d}{(1-\eps/20)\theta_dr^d}.
\end{equation}

We begin by proving \ref{lem:bipartiteitem1}.
Fix a vertex $v\in V\cap Q_{2r}$ and let $q\in\cQ_{2r}$ be the cell containing~$v$.
By \cref{lem:basic properties}~\ref{lem:basic propertiesitem2.2} and~\ref{lem:basic propertiesitem2.3},
\begin{align}
|N_H(v)\cap B(v,(1-\delta)r)|
&\geq (\eta+\eps)|N_G(v)\cap B(v, r)|-|N_G(v)\cap (B(v, r)\setminus B(v,(1-\delta)r))|\nonumber\\
&\geq (\eta+\eps) \left(1 - \frac{\eps}{20}\right)\theta_dr^dn - \frac{\eps}{20}\theta_dr^dn \geq (\eta + 0.9\eps)\theta_dr^dn.\label{eq:binomial reds3}
\end{align}
Now, let $X$ denote the number of red $H$-neighbours~$u$ of~$v$ with $q(u) = q$.
Since the diameter of $q$ is at most $\delta r$, if $u\in B(v,(1-\delta)r)$, then $q\subseteq B(u,r)$.
Thus, by \eqref{eq:binomial reds2} and \eqref{eq:binomial reds3}, $X$ dominates a binomial random variable $\mathrm{Bin}((\eta+0.9\eps)\theta_dr^dn,p_{\min})$.\COMMENT{This claim also uses the fact that the choices are made independently.}
Consequently, Chernoff's bound (\cref{lem:chernoff}) and \eqref{eq:binomial reds} imply that, with probability $1-o(n^{-1})$,\COMMENT{The inequality uses the fact that
\[\left(1-\frac{\eps}{20}\right)\cdot (\eta+0.9\eps)\cdot \left(\frac{1}{1+\eta} - \frac{\eps}{20}\right)\ge (\eta+0.9\eps)\cdot \left(\frac{1}{1+\eta} - \frac{\eps}{10}\right)\ge \frac{\eta}{1+\eta} + \frac{0.9\eps}{2} - \frac{\eps}{10} \ge \frac{\eta}{1+\eta} + \frac{\eps}{3}\]
(where the second inequality uses that $\eta+0.9\eps\leq1$).}
\[X\geq\left(1-\frac{\eps}{20}\right)\cdot (\eta+0.9\eps)\theta_dr^dn\cdot p_{\min}\ge \left(\frac{\eta}{1+\eta}+\frac{\eps}{3}\right) s^dn.\]
A union bound implies that a.a.s.\ the above holds for every vertex $v\in V\cap Q_{2r}$, thus proving~\ref{lem:bipartiteitem1}.

We now turn to~\ref{lem:bipartiteitem2}.
Fix $q\in\cQ_{2r}$ and a point $x$ in $q$.
Observe that, in order for a red vertex $u$ to satisfy $q(u)=q$, the distance between~$u$ and~$x$ must be at most~$r$.
By \eqref{eq:binomial reds2} and \cref{lem:basic properties}~\ref{lem:basic propertiesitem2.1}, the number of red vertices~$u$ at distance at most~$r$ from~$x$ which also satisfy $q(u) = q$ is dominated by a binomial random variable $\mathrm{Bin}((1+\eps/50)\theta_dr^dn,p_{\max})$.
Thus, by combining Chernoff's bound and~\eqref{eq:binomial reds} we get that, with probability $1-o(n^{-1})$, the number of red vertices~$u$ with $q(u) = q$ is at most\COMMENT{For the second inequality we used the fact that $t\in [1/2, \infty)\mapsto (t+\eps/20)/(t-\eps/20)$ is a decreasing function and $1/2\le 1/(1+\eta)\le 1$.}
\[\left(1+\frac{\eps}{50}\right)\cdot\left(1+\frac{\eps}{50}\right)\theta_dr^dn\cdot p_{\max}\le \frac{1+\eps/20}{1-\eps/20}\left(\frac{1}{1+\eta} - \frac{\eps}{20}\right) s^d n\le \left(\frac{1}{1+\eta}+\frac{\eps}{20}\right) s^d n.\]
A union bound over all $O(n)$ cells in $\cQ_{2r}$ finishes the proof of~\ref{lem:bipartiteitem2}.

Lastly, we concentrate on~\ref{lem:bipartiteitem3}. 
For each point $x\in[0,1]^d$, the number of edges $q_1q_2\in E(\Gamma)$ with $q_1\cup q_2\subseteq B(x,r)$ is at most $2d\theta_dr^d/s^d$.\COMMENT{We in fact have that this number lies in $[(1-\eps/20)\theta_dr^d/2^{d+1}s^d, 2d\theta_dr^d/s^d]$.
The upper bound is trivial, as we are simply using the fact that each cell has volume $s^d$ and that each cell shares a $(d-1)$-dimensional face with at most $2d$ other cells.
For the lower bound, simply note that, by making $\delta$ arbitrarily small and the balls arbitrarily large with respect to $\delta$, we can make it so that the cells that intersect the boundary of the ball have an arbitrarily small proportion of the volume of the ball. Moreover, every cell has at least one other cell to attach to inside the ball, the ball intersected with $[0,1]^d$ retains at least a $2^{-d}$ proportion of its volume, and each pair of cells is counted at most twice.}
Therefore, for any vertex $z\in V\cap[0,1]^d$ and any edge $q_1q_2\in E(\Gamma)$ with $q_1\cup q_2\subseteq B(z,r)$, we have that\COMMENT{For the last inequality we use the fact that $s\geq\delta r/2d$.}
\begin{equation}\label{eq:binomial green}
    \mathbb{P}[z\text{ is green and associated to }\{q_1,q_2\}]\geq\frac{\eps}{40}\frac{s^d}{2d\theta_dr^d}\geq\frac{\eps\delta^d}{5\cdot2^{d+4}d^{d+1}\theta_d}\eqqcolon p_*.
\end{equation}

Now, for any pair of distinct vertices $v,v'\in V$ and any graph $F\subseteq G$, let $q,q'\in\cQ$ be such that $v\in q$ and $v'\in q'$, and let\COMMENT{That is, $N^{\cap}_F(v,v')$ denotes the set of common $F$-neighbours $z$ of $v$ and $v'$ such that $q\cup q'$ is contained in the ball of radius $r$ centred at $z$. I think I prefer to have only the formula.}
\[N^{\cap}_F(v,v')\coloneqq\{z\in V\cap N_F(v)\cap N_F(v'):q\cup q'\subseteq B(z,r)\}.\]
For every edge $q_1q_2\in E(\Gamma)$, each pair of vertices $v_1,v_2\in V\cap(q_1\cup q_2)$ are at distance at most $2\delta r$ from each other.
Thus, by \cref{lem:basic properties}~\ref{lem:basic propertiesitem3}, $|N^{\cap}_G(v_1,v_2)|\geq(1-\eps/20)|B(v,r)\cap[0,1]^d|n$.
Moreover, by \cref{lem:basic properties}~\ref{lem:basic propertiesitem2.1}, every vertex $v\in V$ has degree at most $(1+\eps/50)|B(v,r)\cap[0,1]^d|n$.
Since $H\subseteq G$ is an $(\eta+\eps)$-subgraph of $G$ and $\eta\geq1/2$, it follows that\COMMENT{Note that $|N^{\cap}_{G}(v_1,v_2)|\geq(1-\eps/20)|B(v_1,r)\cap[0,1]^d|n$ and $|N^{\cap}_{G}(v_1,v_2)|\geq(1-\eps/20)|B(v_2,r)\cap[0,1]^d|n$ (by the symmetric application of \cref{lem:basic properties}~\ref{lem:basic propertiesitem3}), so $|N^{\cap}_{G}(v_1,v_2)|\geq(1-\eps/20)(|B(v_1,r)\cap[0,1]^d|+|B(v_2,r)\cap[0,1]^d|)n/2$.
Thus, we have
\begin{align*}
    |N^{\cap}_{H}(v_1,v_2)|&\geq |N^{\cap}_{G}(v_1,v_2)|-(1-\eta-\eps)(d_G(v_1)+d_G(v_2))\\
    &\geq(1-\eps/20)(|B(v_1,r)\cap[0,1]^d|+|B(v_2,r)\cap[0,1]^d|)n/2 \\
    &\quad- (1/2-\eps)(1+\eps/50)(|B(v_1,r)\cap[0,1]^d|+|B(v_2,r)\cap[0,1]^d|)n\\
    &\geq\eps(|B(v_1,r)\cap[0,1]^d|+|B(v_2,r)\cap[0,1]^d|)n/2\\
    &\geq\eps2^{-d}\theta_dr^dn,
\end{align*}
where in the third inequality we use that $(1-\eps/20)-2(1/2-\eps)(1+\eps/50)=\eps(2-1/20-1/25+\eps/25)\geq\eps$.}
\begin{equation}\label{eq:binomial green2}
    |N^{\cap}_{H}(v_1,v_2)|\geq |N^{\cap}_{G}(v_1,v_2)|-(1-\eta-\eps)(d_G(v_1)+d_G(v_2))\geq\eps2^{-d}\theta_dr^dn.
\end{equation}

Now, fix an edge $q_1q_2\in E(\Gamma)$ and a pair of vertices $v_1,v_2\in V\cap(q_1\cup q_2)$, and let $Z$ denote the number of green common $H$-neighbours~$z$ of~$v_1,v_2$ associated to $q_1,q_2$.
By \eqref{eq:binomial green} and \eqref{eq:binomial green2}, $Z$ dominates a binomial random variable $\mathrm{Bin}(\eps2^{-d}\theta_dr^dn,p_*)$.\COMMENT{This claim also uses the fact that the choices are made independently.}
Thus, Chernoff's bound implies that, with probability $1-o(n^{-2})$, $v_1$ and $v_2$ have at least five green common $H$-neighbours associated to~$q_1,q_2$.
A union bound over the $O(n^2)$ such pairs of vertices completes the proof.
\end{proof}

For each cell $q\in \cQ$, let $B_q$ denote the (random) set of blue vertices in $q$ and $R_q$ be the (random) set of red vertices $u$ such that $q(u) = q$.

\begin{corollary}\label{cor:bipartite}
Let $0<1/C\lll \delta\lll\eps,1/d\leq1$.
Suppose $r\in [C(\log n/n)^{1/d}, \sqrt{d}]$, and let $V$ be a set of vertices in $[0,1]^d$ satisfying all properties described in \cref{lem:basic properties}.
Consider the geometric graph $G=G(V,r)$ and let $H\subseteq G$ be an $(\eta+\eps)$-subgraph.
Then, a.a.s.\ the following holds: for every cell $q\in \cQ_{2r}$ and every set $B_q'\subseteq B_q$ with $|B_q'|\geq 2$, the bipartite graph $H[B_q', R_q]$ contains a cycle with $2|B_q'|$ vertices. 
\end{corollary}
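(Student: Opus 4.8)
The plan is to read off \cref{cor:bipartite} from Jackson's theorem (\cref{thm:Jackson}) applied to the bipartite graph $H[B_q', R_q]$ for each relevant cell $q$, with all the cardinality and degree bounds supplied by \cref{lem:colours} and \cref{lem:bipartite}.

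Concretely, I would condition on the a.a.s.\ events that the conclusions of \cref{lem:colours} and of all three parts of \cref{lem:bipartite} hold simultaneously; these are events on the random colouring of $V$ and on the random cell-assignments $q(\cdot)$, and they hold for every $(\eta+\eps)$-subgraph $H\subseteq G$ at once. Now fix a cell $q\in\cQ_{2r}$ and a subset $B_q'\subseteq B_q$ with $|B_q'|\geq2$, and set $k\coloneqq\lfloor(\eta/(\eta+1)+\eps/3)s^dn\rfloor$. Since $B_q'$ consists of blue vertices in $q$ and $R_q$ of red vertices, these two sets are disjoint, so I may apply \cref{thm:Jackson} with $A\coloneqq B_q'$ and $B\coloneqq R_q$. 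The bound $|A|\in[2,k]$ holds because $|B_q'|\geq2$ by hypothesis and, by \cref{lem:colours}, the integer $|B_q'|$ is at most $|B_q|\leq(\eta/(\eta+1)+\eps/3)s^dn$, hence at most $k$. The minimum-degree condition holds because, by \cref{lem:bipartite}~\ref{lem:bipartiteitem1}, every $v\in B_q'\subseteq V\cap q$ has at least $(\eta/(\eta+1)+\eps/3)s^dn\geq k$ red $H$-neighbours $u$ with $q(u)=q$, and these are exactly the neighbours of $v$ inside $H[B_q',R_q]$. Finally, \cref{lem:bipartite}~\ref{lem:bipartiteitem1} applied to any vertex of $q$ gives $|R_q|\geq(\eta/(\eta+1)+\eps/3)s^dn\geq k$, while \cref{lem:bipartite}~\ref{lem:bipartiteitem2} gives $|R_q|\leq(1/(\eta+1)+\eps/20)s^dn$; since $\eta\geq1/2$ we have
\[
2\left(\frac{\eta}{\eta+1}+\frac{\eps}{3}\right)-\left(\frac{1}{\eta+1}+\frac{\eps}{20}\right)=\frac{2\eta-1}{\eta+1}+\frac{37\eps}{60}\geq\frac{37\eps}{60}>0,
\]
and, as $s^dn\to\infty$, for $n$ large this forces $|R_q|\leq2(\eta/(\eta+1)+\eps/3)s^dn-4\leq2k-2$. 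Thus \cref{thm:Jackson} yields a cycle of length $2|A|=2|B_q'|$ in $H[B_q',R_q]$, which is exactly what is claimed. Since $|\cQ_{2r}|=O(n)$ and, once the events above are fixed, the conclusion for each $q$ is deterministic, the corollary follows.

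The only delicate point is the upper bound $|R_q|\leq2k-2$ demanded by Jackson's theorem, and this is precisely where the assumption $\eta\geq1/2$ is used: it guarantees that the number of blue vertices in a cell (which governs the threshold $k$) is at least half the number of red vertices assigned to that cell (which governs $|R_q|$). Everything else is bookkeeping, since all the probabilistic work has already been done in \cref{lem:bipartite}.
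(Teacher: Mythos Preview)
Your proof is correct and follows essentially the same approach as the paper: condition on the conclusions of \cref{lem:colours} and \cref{lem:bipartite}, set $k=(\eta/(\eta+1)+\eps/3)s^dn$, and verify the hypotheses of Jackson's theorem for $H[B_q',R_q]$ using these lemmas together with $\eta\ge 1/2$ for the upper bound $|R_q|\le 2k-2$. The only cosmetic differences are that you take the floor in the definition of $k$ and spell out the arithmetic for the upper bound on $|R_q|$ a bit more explicitly.
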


\begin{proof}
Reveal the colours of each vertex in $V$ and, for each red vertex, reveal the cell it is associated to.
Condition on the events from \cref{lem:colours} and from \cref{lem:bipartite}~\ref{lem:bipartiteitem1} and~\ref{lem:bipartiteitem2}, which hold a.a.s.
We are going to verify that, conditionally on these events, for every cell $q\in \cQ_{2r}$ and every set $B_q'\subseteq B_q$ with $|B_q'|\geq2$, the graph $H[B_q',R_q]$ satisfies the assumptions of \cref{thm:Jackson} for $k\coloneqq(\eta/(\eta+1)+\eps/3)s^dn$.
Indeed, fix $q\in \cQ_{2r}$ and assume $B_q'$ plays the role of $A$ and $R_q$ plays the role of $B$ from \cref{thm:Jackson}.
On the one hand, \cref{lem:colours} readily ensures that $|B_q|\leq k$.
On the other hand, the minimum degree assumption on the vertices of $B_q'$ follows from \cref{lem:bipartite}~\ref{lem:bipartiteitem1}, which immediately implies that $|R_q|\geq k$. 
Lastly, by \cref{lem:bipartite}~\ref{lem:bipartiteitem2} and since $\eta\geq1/2$ we have that
\[|R_q|\leq\left(\frac{1}{\eta+1} + \frac{\eps}{20}\right) s^d n\leq\left(\frac{2\eta}{\eta+1} + \frac{\eps}{20}\right) s^d n\leq2k-2.\]
Thus, by \cref{thm:Jackson}, $H[B_q', R_q]$ contains a cycle of length $2|B_q'|$.
\end{proof}

We are now ready to prove Theorem~\ref{thm:long cycles}.

\begin{proof}[Proof of Theorem~\ref{thm:long cycles}]
Let $0<1/C_3\lll c_3\lll\delta\lll\eps,1/d\leq1$ be such that \cref{lem:basic properties,lem:colours,lem:cycle_cubes,lem:bipartite} (and thus also \cref{cor:bipartite}) hold with $C_3$ and $c_3$ playing the roles of $C$ and $c$, respectively.
Consider $G\sim G_d(n,r)$ and condition on the event that $V\coloneqq V(G)$ satisfies the properties described in \cref{lem:basic properties} (which hold a.a.s.).
Fix any vertex $v\in V$, and let $H$ be an $(\eta+\eps)$-subgraph of~$G$.
We are going to show that $H$ contains a cycle of each of the desired lengths which itself contains~$v$.

Consider the random colouring of the vertices and the random assignment of cells and pairs of cells to red and green vertices.
Since the conclusions of \cref{lem:colours}, \cref{lem:blue neighbours} (for $H$ with minimum degree at least $40\log n$ and probability $\eta/(1+\eta)\ge 1/3$ that a vertex is blue), \cref{lem:cycle_cubes,lem:bipartite,cor:bipartite} hold a.a.s., they also hold a.a.s.\ conditionally on $v$ being blue.
Hence, we can (and do) fix a colouring of $V$ where the vertex $v$ is blue and an assignment of cells and pairs of cells to red and green vertices satisfying the conclusions of \cref{lem:colours,lem:blue neighbours,lem:cycle_cubes,lem:bipartite,cor:bipartite}.

The following claim allows us to construct cycles of any even length $2\ell\le 2\eta n/(1+\eta)$ whose vertices are not too close to the boundary.
Moreover, it ensures that the cycles satisfy certain additional properties with respect to the colouring, which will be useful later to construct the cycles we want.

\begin{claim}\label{claim:paths}
    Let\/ $v'\in Q_{2r}$ be a blue vertex, let\/ $A\subseteq V\setminus\{v'\}$ be a set of blue vertices such that\/ $|A\cap q|\leq2$ for all\/ $q\in\cQ$, and let\/ $q'\in\cQ_{2r}$ be the cell containing\/ $v'$.
    Then, for every integer\/ $\ell\in[2,\eta n/(1+\eta)-|A|]$, there exists a cycle\/ $\mathfrak{O}'\subseteq H-A$ of length\/ $2\ell$ with\/ $v'\in V(\mathfrak{O}')$ such that
    \begin{enumerate}[label=$(\mathrm{P}\arabic*)$]
        \item\label{claim:pathsitem2} every blue vertex in\/ $\mathfrak{O}'$ is contained in\/ $Q_{2r}$;
        \item\label{claim:pathsitem4} every green vertex in\/ $\mathfrak{O}'$ joins two blue vertices which lie in distinct cells sharing a\/ $(d-1)$-dimensional face and is associated to this pair of cells;
        \item\label{claim:pathsitem5} for every pair of cells in\/ $\cQ_{2r}$ which share a $(d-1)$-dimensional face, the number of green vertices of\/ $\mathfrak{O}'$ associated to this pair of cells is at most\/ $2$, and
        \item\label{claim:pathsitem6} $\mathfrak{O}'$ contains a subpath of length\/ $2$ whose endpoints are $v'$ and another blue vertex in\/ $q'$.
    \end{enumerate}
\end{claim}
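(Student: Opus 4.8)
\emph{Proof proposal.} The plan is to assemble $\mathfrak{O}'$ from short red--blue paths, one contained in each cell of a suitable path of cells in $\Gamma$, glued together by single green vertices, and to close everything into a cycle by traversing that path of cells forwards and then backwards (a ``there-and-back'' walk). First I would dispose of the degenerate configurations. If $\ell=2$, choose a blue vertex $b\in B_{q'}\setminus(A\cup\{v'\})$ (which exists because $|B_{q'}|$ is linear in $n$ by \cref{lem:colours}, while $|A\cap q'|\le 2$) and apply \cref{cor:bipartite} with $B'_{q'}=\{v',b\}$ to obtain a $4$-cycle in $H[\{v',b\},R_{q'}]$; here \ref{claim:pathsitem2}, \ref{claim:pathsitem4} and \ref{claim:pathsitem5} are vacuous, and \ref{claim:pathsitem6} is immediate. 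Likewise, whenever all the blue vertices of the cycle may be taken from a single cell, \cref{cor:bipartite} applied to that cell yields the required $2\ell$-cycle through $v'$ directly, and \ref{claim:pathsitem6} holds because each of the two red neighbours of $v'$ on that cycle is adjacent on the cycle to another blue vertex of the same cell.

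For the main case $\ell\ge 3$, I would first apply \cref{lem:cycle_cubes} to $v'$, $A$ and $\ell$, obtaining distinct cells $q_1,\dots,q_m\in\cQ_{2r}$ with $q_jq_{j+1}\in E(\Gamma)$ and blue vertex sets $S_j\subseteq (V\cap q_j)\setminus A$ with $v'\in S=S_1\cupdot\dots\cupdot S_m$, $|S|=\ell$, and $|S_j\setminus\{v'\}|\ge 2$ (so $|S_j|\ge 2$ for all $j$ and $|S_1|\ge 3$). For each $j$, \cref{cor:bipartite} with $B'_{q_j}=S_j$ produces a cycle $C_j$ in $H[S_j,R_{q_j}]$ of length $2|S_j|$, alternating between $S_j$ and distinct vertices of $R_{q_j}$. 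From $C_1$ (resp.\ $C_m$) I delete one red vertex to obtain a path $P_1$ (resp.\ $P_m$) through all of $S_1$ (resp.\ $S_m$) with both endpoints blue, choosing the deleted vertex in $C_1$ so that $v'$ becomes an internal vertex of $P_1$, which is possible since $|S_1|\ge 3$. From each interior $C_j$ (with $1<j<m$) I delete two red vertices, splitting it into two paths $P_j^{F},P_j^{B}$ (possibly single vertices) whose blue vertex sets partition $S_j$ into two nonempty pieces.

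The cycle $\mathfrak{O}'$ is then the closed walk that runs through $P_1$, then the forward pieces $P_2^{F},\dots,P_{m-1}^{F}$ of the interior cells (none if $m=2$), then $P_m$, then the backward pieces $P_{m-1}^{B},\dots,P_2^{B}$, and back to $P_1$, where each step between a path in $q_j$ and a path in $q_{j+1}$ passes through a green vertex: the two blue endpoints to be joined both lie in $q_j\cup q_{j+1}$ and $q_jq_{j+1}\in E(\Gamma)$, so \cref{lem:bipartite}~\ref{lem:bipartiteitem3} supplies at least five common green $H$-neighbours associated to $\{q_j,q_{j+1}\}$, from which I pick distinct ones for the forward and backward crossings (their prescribed blue endpoints differ, since the forward and backward pieces of $S_j$ are disjoint). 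All vertices of $\mathfrak{O}'$ are automatically distinct and avoid $A$: the $S_j$ are pairwise disjoint and disjoint from $A$; each red vertex is associated to a unique cell and, inside one cell, every red used lies on the single cycle $C_j$ with at most two vertices removed; each green vertex is associated to a unique face-sharing pair of cells, and within a pair the two chosen greens are distinct; and reds and greens are not blue, hence not in $A$. A count gives exactly $\ell$ blue vertices and $(|S_1|-1)+(|S_m|-1)+\sum_{1<j<m}(|S_j|-2)+2(m-1)=\ell$ non-blue vertices, so $\mathfrak{O}'$ has length $2\ell$. Finally, \ref{claim:pathsitem2} holds since every $q_j\in\cQ_{2r}$; \ref{claim:pathsitem4} and \ref{claim:pathsitem5} are visible from the construction, as every green vertex joins blue vertices of the face-sharing pair it is associated to and each such pair is used at most once in each direction; and \ref{claim:pathsitem6} holds because $v'$ is internal to $P_1\subseteq\mathfrak{O}'$, so both of its $\mathfrak{O}'$-neighbours are red vertices of $R_{q_1}$ whose other neighbour on $P_1$ is a blue vertex of $q_1=q'$.

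I expect the genuinely delicate part to be organisational rather than mathematical: verifying that the ``there-and-back'' traversal uses each red and each green vertex at most once, that the green crossings can always be chosen distinct on a repeated pair of cells, and that the non-blue count comes out to exactly $\ell$ (so that the length is precisely $2\ell$, not slightly off). All the substantive work --- distributing the $\ell$ blue vertices over a short path of cells, producing the within-cell red--blue cycles, and guaranteeing enough green connectors --- has already been isolated in \cref{lem:cycle_cubes}, \cref{cor:bipartite} (via Jackson's theorem, \cref{thm:Jackson}), and \cref{lem:bipartite}~\ref{lem:bipartiteitem3}.
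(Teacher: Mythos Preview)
Your overall architecture is the same as the paper's: use \cref{lem:cycle_cubes} to select a path of cells and blue sets $S_j$, apply \cref{cor:bipartite} inside each cell, cut one red from the two end cycles and two reds from each interior cycle, and glue the pieces with green vertices (two per adjacent pair) into a ``there-and-back'' cycle. The length count and the verification of \ref{claim:pathsitem2}--\ref{claim:pathsitem5} are fine.

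There is, however, a genuine gap in your treatment of \ref{claim:pathsitem6}. You assert $|S_1|\ge 3$ and later $q_1=q'$, i.e.\ that $v'\in S_1$. \cref{lem:cycle_cubes} does \emph{not} guarantee this: it only says $v'\in S$, and its proof builds the path of cells so that the cell $q'$ containing $v'$ lies \emph{somewhere} on the path, possibly in the interior. If $v'$ lies in an interior cell $q_j$, your construction splits $S_j$ into two nonempty pieces $P_j^F,P_j^B$; when $|S_j|=3$ (which is all that $|S_j\setminus\{v'\}|\ge 2$ gives), every such split has pieces of sizes $1$ and $2$, and $v'$ is necessarily an endpoint of its piece, not internal. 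Your argument for \ref{claim:pathsitem6} (``both $\mathfrak{O}'$-neighbours of $v'$ are red in $R_{q_1}$'') then fails. The paper handles exactly this by letting $j$ be the index with $q_j=q'$, choosing the two removed length-$2$ subpaths of $\mathfrak{O}_j$ so that at least one avoids $v'$, and then observing that the surviving subpath of $\mathfrak{O}_j$ through $v'$ has length at least $2$; this is what forces a red neighbour of $v'$ inside $q'$ and yields \ref{claim:pathsitem6}. Your proof needs the same case-aware choice (or an explicit argument that one may always arrange $v'$ to be in a piece containing at least two blues of $q'$), rather than the unjustified assumption $q_1=q'$.
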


\begin{claimproof}
    By \cref{lem:colours}, $q'$ contains at least four blue vertices.
    Let~$v''\in V\setminus A$ be a blue vertex in~$q'$ other than~$v'$.
    By \cref{cor:bipartite}, $v'$ and $v''$ are contained in a cycle of length $4$ in $H$ which satisfies \ref{claim:pathsitem2}--\ref{claim:pathsitem6}. 
    Thus, we may assume that $\ell\in[3,\eta n/(1+\eta)-|A|]$.
    
    Fix $\ell\in[3,\eta n/(1+\eta)-|A|]$.
    Consider the integer $m$, the cells $q_1,\ldots,q_m\in \cQ_{2r}$ and the set of blue vertices $S=S_1\cupdot\ldots\cupdot S_m\subseteq V\setminus A$ with $S_j\subseteq q_j$, $|S|=\ell$ and $v'\in S$ given by \cref{lem:cycle_cubes} with $v'$ playing the role of $v$.
    For each \mbox{$j\in [m]$}, by \cref{cor:bipartite} and since $|S_j|\geq2$ by \cref{lem:cycle_cubes}~\ref{lem:cycle_cubesitem1}, the set $S_j$ is contained in a cycle $\mathfrak{O}_j\subseteq H-A$ of length $2|S_j|$ where blue vertices and red vertices associated to $q_j$ alternate.\COMMENT{This last property is here so that the two paths which are chosen below are edge-disjoint.}
    Since each red vertex is assigned to a single cell, the cycles $\mathfrak{O}_1,\ldots,\mathfrak{O}_m$ are vertex-disjoint.
    Now, for every $j\in [m]$, fix an orientation of $\mathfrak{O}_j$ and choose two distinct paths $P_j,P_j'\subseteq\mathfrak{O}_j$ of length~$2$ whose endpoints are blue and such that, for the value of $j$ satisfying $q_j = q'$, at least one of them does not contain $v'$ (which can be done by \cref{lem:cycle_cubes}~\ref{lem:cycle_cubesitem1} applied with $v'$ playing the role of $v$ and since $\ell\ge 3$).
    For every $j\in [m]$, let $w_{1,j}$ be the starting point and $w_{2,j}$ be the endpoint of $P_j$, let $w_{1,j}'$ be the starting point and $w_{2,j}'$ be the endpoint of $P_j'$, and let $T_j,T_j'\subseteq\mathfrak{O}_j$ be the $(w_{2,j},w_{1,j}')$-subpath and $(w_{2,j}',w_{1,j})$-subpath of $\mathfrak{O}_j$, respectively.\COMMENT{Note that the paths of length $2$ may overlap. Indeed, it could be that $w_{1,j}'=w_{2,j}$ or $w_{1,j}=w_{2,j}'$.
    So $T_j$ or $T_j'$ could consist of a single vertex.}
    Then, by \cref{lem:cycle_cubes}~\ref{lem:cycle_cubesitem2} and \cref{lem:bipartite}~\ref{lem:bipartiteitem3}, for every $j\in[m-1]$, there exist two distinct green vertices $z_{j,j+1}$ and $z_{j,j+1}'$ associated to $q_{j}$ and $q_{j+1}$ such that $w_{1,j}'z_{j,j+1},z_{j,j+1}w_{2,j+1},w_{1,j+1}z_{j,j+1}',z_{j,j+1}'w_{2,j}'\in E(H)$.\COMMENT{This only necessitates two distinct green vertices for each pair of blue vertices.}
    Moreover, since each green vertex is associated to a single pair of cells, the collection $\{z_{j,j+1},z_{j,j+1}':j\in[m-1]\}$ has size $2m-2$.
    Thus, $H-A$ contains the cycle
    \[\mathfrak{O}'\coloneqq w_{1,1}P_1\bigg(\!\bigtimes_{j=1}^{m-1} w_{2,j}T_jw_{1,j}'z_{j,j+1}\bigg)w_{2,m}T_mw_{1,m}'P_m'w_{2,m}'T_m'w_{1,m}\bigg(\!\bigtimes_{j=1}^{m-1} z_{m-j,m-j+1}'w_{2,m-j}'T_{m-j}'w_{1,m-j}\bigg),\]
    which contains $S$ (and no other blue vertices) and alternates between blue vertices and vertices of other colours, and thus has length $2\ell$.
    Now, \ref{claim:pathsitem2}, \ref{claim:pathsitem4} and \ref{claim:pathsitem5} hold by construction.
    Finally, in order to verify \ref{claim:pathsitem6}, assume that $q'=q_j$ and note that, by \cref{lem:cycle_cubes}~\ref{lem:cycle_cubesitem1} and the fact that at least one of $P_j,P_j'$ does not contain $v'$, the path among $T_j,T_j'$ containing $v'$ has length at least~2.
    \COMMENT{$\mathfrak{O}'$ contains at least three blue vertices, which appear non-consecutively. After removing two paths of length $2$ with blue endpoints, at least one such path remains.}
\end{claimproof}

Now, fix an integer $\ell\in [2, \eta n/(1+\eta)]$.
First, we are going to show that $H$ contains a cycle of length $2\ell$ which contains $v$.
If $v\in Q_{2r}$, then a cycle of length $2\ell$ containing $v$ is given immediately by \cref{claim:paths} (with $v$ playing the role of $v'$ and $A=\varnothing$).
Thus, suppose that $v\in[0,1]^d\setminus Q_{2r}$ and let $q\in \cQ\setminus \cQ_{2r}$ be the cell containing $v$.
Let $t$ be the length of a shortest $(q,\cQ_{2r})$-path in $\Gamma$, and let the cells of one such path be $q=q'_0, q'_1, \ldots, q'_t$, where the labelling follows the order in which they appear in the path.
We now consider two cases depending on the values of $\ell$ and $t$.

\pagebreak

\noindent
\emph{Case 1: $\ell\in [2t]$.}
In this case, fix a sequence $v = v_0, v_1, \ldots, v_{\ell-1}$ of distinct blue vertices such that $v_0\in q_0'$, $v_j,v_{\ell-j}\in q_j'$ for all $j\in [\lfloor \ell/2\rfloor - 1]$, and $v_{\lfloor \ell/2\rfloor}$ and $v_{\lceil \ell/2\rceil}$ (which coincide when $\ell$ is even) lie in $q_{\lfloor \ell/2\rfloor}'$.
(The existence of such a sequence is guaranteed by \cref{lem:colours}.)
For the sake of notation, let $v_\ell\coloneqq v$.
Then, by Lemma~\ref{lem:bipartite}~\ref{lem:bipartiteitem3}, for each $i\in[\ell]$, the blue vertices $v_{i-1},v_i$ lying in a pair of consecutive cells can be connected by a path of length two via distinct green vertices associated to these cells.
Moreover, when $\ell = 2k+1$ is odd, the vertices $v_k,v_{k+1}\in q_k'$ have an unused common green neighbour associated to the cells $q_k'$ and $q_{k-1}'$ (which is ensured by Lemma~\ref{lem:bipartite}~\ref{lem:bipartiteitem3}).
\vspace{0.5em}

\noindent
\emph{Case 2: $\ell > 2t$.} In this case, by \cref{claim:paths} with $A=\varnothing$, we may find a cycle of length $2\ell-4t+2$\COMMENT{Note that $2\ell-4t+2=2(\ell-2t+1)\geq2\cdot 2$, so we satisfy the required assumptions.} satisfying \ref{claim:pathsitem2}--\ref{claim:pathsitem6} such that the endpoints of the path $P$ of length $2$ guaranteed by \ref{claim:pathsitem6} are contained in~$q_t'$.
Let us label the endpoints of $P$ as $v_t$ and $v_t'$, and let us denote the path resulting from the cycle after deleting the middle vertex of $P$ by $P_1$.
Now, by \cref{lem:colours}, for every $j\in[t-1]$, we can choose two distinct blue vertices $v_j$ and $v_j'$ in $q_j'$.
Also, set $v_0=v_0'\coloneqq v$.
Then, by \cref{lem:bipartite}~\ref{lem:bipartiteitem3}, for every $j\in[t]$, we can find distinct green vertices that join $v_{j-1}$ to $v_j$ and $v_{j-1}'$ to $v_j'$ and are associated to $q_{j-1}'$ and~$q_j'$.
This results in a $(v_t,v_t')$-path $P_2$ of length $4t$ which, by \ref{claim:pathsitem2} and \ref{claim:pathsitem4} and since green vertices are assigned to a unique pair of cells, satisfies that $V(P_1)\cap V(P_2)=\{v_t,v_t'\}$. Hence, $P_1\cup P_2$ is a cycle containing $v$ of length $L=2\ell$, as desired.

\vspace{0.5em}

Finally, we have to consider cycles of odd length containing a vertex $v$ in a cell $q$.
Fix any odd $L\in[c_3^{-1},2\eta n/(1+\eta)]$.
By Lemmas~\ref{lem:colours} and~\ref{lem:blue neighbours} and the fact that $d_H(v) > 6(\eta/(1+\eta)+\eps/3)s^dn$,\COMMENT{By \cref{lem:colours}, each cell contains at most $(\eta/(1+\eta)+\eps/3)s^dn$ blue vertices, so if $v$ has more than that many blue neighbours, it must have a blue neighbour in another cell. By \cref{lem:blue neighbours}, the number of blue neighbours is at least $1/6$-th of the total degree, so it suffices to verify the condition above. And this holds easily by the choice of parameters.} there is an edge incident to $v$ whose other endpoint is a blue vertex outside $q$.
Let this other endpoint be denoted $v'$, and let $q'\in\cQ$ be the cell containing $v'$.

We begin by choosing a set of cells where we will pick blue vertices for our construction.
Consider a shortest $(q,q')$-path $\mathfrak{P}\subseteq\Gamma$, let $t$ be its length, and label its cells as $q=q_0,q_1,\ldots,q_t=q'$ following the order in which they appear in $\mathfrak{P}$.
Note that, by the choice of the side length $s$, we must have $t\leq 2d^2/\delta$.\COMMENT{Since $v'$ is a neighbour of $v$, it is at distance at most $r$ from $v$. As $s\geq\delta r/2d$, it follows that we must traverse at most $2d/\delta$ cells in each dimension. So just add up over all dimensions. This is a crude upper bound.}
Moreover, consider a shortest $(V(\mathfrak{P}),\cQ_{2r})$-path $\mathfrak{P}'\subseteq\Gamma$, and let $t'$ be its length.
Let $i^*\in[0,t]$ denote the index such that $q_{i^*}\in V(\mathfrak{P}')$ and label the cells in $V(\mathfrak{P}')$ as $q_{i^*}=q_0',q_1',\ldots,q_{t'}'$, following the order in which they appear in $\mathfrak{P}'$.
(We remark here that our choice of $c_3$ ensures that the paths we look for are long enough that we will always reach $Q_{2r}$, thus not requiring the same case distinction as the cycles of even length.)
Note that, similarly as above, we must have that $t'\leq4d^2/\delta$.
See \Cref{fig:shortodd} for a representation.

Next, we are going to choose vertices in the cells picked above and use them to construct a short path of odd length containing $v$ and whose endpoints are in $Q_{2r}$.
Recall that, by \cref{lem:colours}, every cell contains $\Theta(\log n)$ blue vertices.
Define $v_0\coloneqq v$, $v_t\coloneqq v'$ and, for every $i\in[t-1]$, choose an arbitrary blue vertex $v_i\in V\cap q_i$.
Let $w_0\coloneqq v_{i^*}$.
For every $i\in[t']$, choose a blue vertex $w_i\in V\cap q_i'$.
Finally, for every $i\in[0,t'-1]$, choose a blue vertex $w_i'\in (V\cap q_i')\setminus\{w_i\}$.
All of these vertices will be used to construct a path later.
Let $B\coloneqq\{v_i:i\in[0,t]\setminus\{i^*\}\}\cup\{w_i,w_i':i\in[0,t'-1]\}$ denote the set of all vertices defined above except $w_{t'}$.
Observe that, by the minimality of $\mathfrak{P}$ and $\mathfrak{P}'$ and our choices above, $B$ contains at most two vertices in each cell.
Define $\ell_P\coloneqq 2t+4t'+1=2|B|+1$ so that $\ell_P\leq 20d^2/\delta+1\leq c_{3}^{-1}-2\leq L-2$.

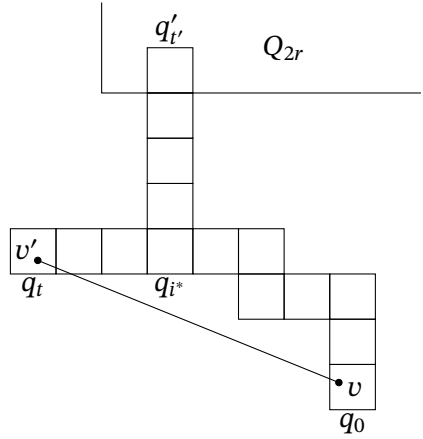
\begin{figure}[ht]
    \centering
    \begin{tikzpicture}[scale=0.6]
    \foreach \i/\j in
    {1/2,2/2,3/2,4/2,5/2,6/2, 6/1,7/1,8/1,8/0,8/-1,4/3,4/4,4/5,4/6}
    \draw (\i,\j) -- (\i+1,\j) -- (\i+1,\j+1) -- (\i,\j+1) -- cycle;
    \draw[color=black] (1.35,2.55) node {$v'$};
    \draw [fill=black] (1.6,2.3) circle (2pt);
    \draw[color=black] (8.55,-0.55) node {$v$};
    \draw [fill=black] (8.2,-0.4) circle (2pt);
    \draw (1.6,2.3) -- (8.2,-0.4) ;
    \draw (3,8) -- (3,6) -- (10,6);
    \draw[color=black] (7,7) node {$Q_{2r}$};
    \draw[color=black] (1.5,1.7) node {$q_t$};
    \draw[color=black] (4.5,7.4) node {$q'_{t'}$};
    \draw[color=black] (8.5,-1.3) node {$q_0$};
    \draw[color=black] (4.5,1.7) node {$q_{i^*}$};
    \end{tikzpicture}
    \caption{Sketch for the construction of a short odd cycle containing $v$, in the case $d=2$, when both $v$ and its blue neighbour $v'$ lie outside $Q_{2r}$.
    The highlighted cells are used to construct paths from $v$ and $v'$ to $q'_{t'}$, where an application of \cref{claim:paths} allows to close them into a cycle.}
    \label{fig:shortodd}
\end{figure}

By applying Claim~\ref{claim:paths} (with $w_{t'}$, $q_{t'}'$ and $B$ playing the roles of $v'$, $q'$ and $A$), one may find a cycle~$\mathfrak{O}'$ of length $L-\ell_P+2\geq4$\COMMENT{Note that, by the preceding upper bound on $\ell_P$, we have $(L-\ell_P+2)/2\geq2$, so we have the lower bound needed for \cref{claim:paths}. For the upper bound, we must check that $(L-\ell_P+2)/2\leq\eta n/(1+\eta)-|B|$.
Using the definition of $\ell_P$ and the assumed upper bound on $L$, it suffices to check that $|B|\geq\ell_p/2-1$, which holds.} satisfying \ref{claim:pathsitem2}--\ref{claim:pathsitem6}. 
Let $w_{t'}'$ be the other (blue) endpoint of the subpath of length $2$ given by \ref{claim:pathsitem6}, let $P'$ be the other subpath of $\mathfrak{O}'$ between $w_{t'}$ and $w_{t'}'$ (which has length $L-\ell_P$), and let $\cG$ denote the set of all green vertices of~$\mathfrak{O}'$.

By \cref{lem:bipartite}~\ref{lem:bipartiteitem3} in conjunction with \ref{claim:pathsitem5}, for each $i\in [t]\setminus \{i^*+1\}$, we may find a green vertex $z_i\in V\setminus\cG$ which is joined by an edge to both $v_{i-1}$ and $v_i$ and is associated to $q_{i-1}$ and $q_i$; additionally, for $i=i^*+1$, we may find a green vertex $z_{i^*+1}\in V\setminus\cG$ associated to $q_{i^*}$ and $q_{i^*+1}$ joined to both~$v_{i^*}'$ and~$v_{i^*+1}$.
Similarly, by \cref{lem:bipartite}~\ref{lem:bipartiteitem3} and \ref{claim:pathsitem5}, for each $i\in[t']$ we may find two green vertices~$y_i$ and~$y_i'$, both associated to $q_{i-1}'$ and $q_i'$, such that $y_i$ is joined by an edge to both $w_{i-1}$ and $w_i$, $y_i'$ is joined to both $w_{i-1}'$ and $w_i'$, and all these vertices are pairwise distinct and distinct from the $z_j$'s and the vertices in $\cG$. This results in the path
\[P\coloneqq \bigtimes_{j=0}^{t'-1}(w_{t'-j}'y_{t'-j}')v_{i^*}'\bigtimes_{j=i^*+1}^{t}(z_{j}v_j)v_0\bigtimes_{j=1}^{i^*}(z_{j}v_j)\bigtimes_{j=1}^{t'}(y_{j}w_{j}).\]
Observe that, by construction, $P$ is a path of length $\ell_P$.
Moreover, by the construction of $\mathfrak{O}'$ and $B$,  and since $P$ has no red vertices, it is internally disjoint from $\mathfrak{O}'$.
In conclusion, $P\cup P'$ is a cycle of length $L$ containing $v$, as desired.
\end{proof}

%%%%%%%%%%%%%%%%%%%%%%%%%%%%%%%%%%%%%
%%%%%%%%%%%%%%%%%%%%%%%%%%%%%%%%%%%%%

\section{Bounds on the local resilience of \texorpdfstring{$G_d(n,r)$}{RGGs} with respect to Hamiltonicity}\label{section:bounds}

Our ideas for proving lower bounds on the local resilience of random geometric graphs with respect to Hamiltonicity build on the classical approach to construct Hamilton cycles in $G_d(n,r)$.
This approach relies on the fact that vertices which are geometrically close in $[0,1]^d$ form large cliques.
More precisely, the general idea is to tessellate the hypercube with cells in such a way that, for every pair of cells sharing a $(d-1)$-dimensional face,
the vertices in them form a clique in $G_d(n,r)$.
If $r\geq C(\log n/n)^{1/d}$ for a sufficiently large $C$, then all cells will contain many vertices.
At this point, one may consider an auxiliary path which visits every cell once and construct a Hamilton cycle by traversing the cells twice following the path (forwards and then backwards), incorporating the vertices into the cycle while doing so. 
Technical variations of this idea have been used to obtain many results about Hamilton cycles in random geometric graphs~\cite{BBP-GP17,Es23,EH23,FP-G20,Petit01,DMP07,BBKMW11,Man23, MPW11,FMMS21}.

Here, we use a simple new variant where, instead of having cliques inside each region, we note that it suffices that the vertices in each cell can be covered by two paths (in a sufficiently flexible way), and that there are some edges joining vertices in neighbouring cells so that the paths can be ``glued'' together.
We show that both properties hold for $\alpha$-subgraphs of $G_d(n,r)$ for sufficiently large~$r$ and~$\alpha < 1$.

To be more precise, in order to prove \cref{thm:lowerboundgeneral}, we rely on a simple structural lemma (\cref{lemma:2paths}) providing a sufficient minimum-degree condition to have two vertex-disjoint paths with specified endpoints which contain all the vertices of a graph.
The minimum degree required is comparable with that of the original theorem of \citet{Dirac52}.
Our simple proof of this lemma uses the following classical result of \citet{BC76}, which also appears somewhat implicitly in an earlier work of \citet{Ore60}.

\begin{theorem}[\cite{BC76}]\label{thm:BC76}
    Let\/ $G$ be a graph on\/ $n\geq 3$ vertices.
    Assume\/ $u,v\in V(G)$ with\/ $uv\notin E(G)$ and\/ $d_G(u)+d_G(v)\geq n$.
    Then,\/ $G$ is Hamiltonian if and only if\/ $G\cup\{uv\}$ is Hamiltonian.
\end{theorem}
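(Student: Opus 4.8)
The plan is to prove the two implications separately. The forward direction is trivial: if $G$ is Hamiltonian, then any Hamilton cycle of $G$ is still a Hamilton cycle of $G\cup\{uv\}$. All the content lies in the converse, and the approach there is the classical rotation/pigeonhole argument. First I would reduce to the existence of a Hamilton path: assuming $G\cup\{uv\}$ is Hamiltonian, either some Hamilton cycle of $G\cup\{uv\}$ avoids the edge $uv$ --- in which case it lies in $G$ and we are done --- or every Hamilton cycle of $G\cup\{uv\}$ uses $uv$, and deleting $uv$ from any one of them yields a Hamilton path $P=w_1w_2\cdots w_n$ of $G$ with $w_1=u$ and $w_n=v$.

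The key step will be to close $P$ into a cycle within $G$ via a counting argument on the two endpoints. I would set $S\coloneqq\{\,i\in[n-1]:w_1w_{i+1}\in E(G)\,\}$ and $T\coloneqq\{\,i\in[n-1]:w_iw_n\in E(G)\,\}$. Since $uv=w_1w_n\notin E(G)$, the map $w_j\mapsto j-1$ identifies $N_G(u)$ with $S$, so $|S|=d_G(u)$, and symmetrically $|T|=d_G(v)$. The hypothesis then gives $|S|+|T|=d_G(u)+d_G(v)\ge n>|[n-1]|$, so by pigeonhole $S\cap T\ne\varnothing$; picking $i\in S\cap T$, the closed walk
\[
w_1\, w_2\, \cdots\, w_i\, w_n\, w_{n-1}\, \cdots\, w_{i+1}\, w_1
\]
uses only edges of $P$ together with the two chords $w_iw_n$ and $w_{i+1}w_1$ (both present in $G$), visits every vertex exactly once, and hence is a Hamilton cycle of $G$. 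This contradicts the standing assumption (that $G\cup\{uv\}$ has no Hamilton cycle avoiding $uv$ while $G$ is non-Hamiltonian), completing the proof.

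I do not expect any genuine obstacle here: the single delicate point is the identity $|S|=d_G(u)$ (and its twin), which relies precisely on $v=w_n$ not being a $G$-neighbour of $u=w_1$, so that the successor index of every neighbour of $u$ along $P$ is at most $n$; this also shows $n-1\notin S$, so the chord $w_{i+1}w_1$ selected above can never accidentally coincide with $uv$. The assumption $n\ge3$ is used only to keep $u\ne v$ and to ensure the resulting closed walk is a bona fide cycle. An alternative would be to invoke the Bondy--Chvátal closure operator, but for this one-edge statement the direct argument is shorter and self-contained.
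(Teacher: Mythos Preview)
The paper does not prove this statement; it is stated as a classical result of Bondy and Chv\'atal (with a nod to Ore) and is used as a black box in the proofs of \cref{lemma:2paths} and \cref{lemma:2pathsv2}. Your proposed proof is correct and is exactly the standard Ore-type rotation argument, including the careful check that $n-1\notin S$ (equivalently $1\notin T$) so that the two chords selected are genuine edges of $G$ and the closed walk is a bona fide Hamilton cycle.
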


\begin{lemma}\label{lemma:2paths}
    Let\/ $G$ be a graph on\/ $n\geq4$ vertices.
    If\/ $\delta(G)\geq n/2+1$, then, for any two pairs of distinct vertices\/ $A=\{u,v\}$ and\/ $B=\{w,x\}$ such that\/ $A\cap B=\varnothing$, the graph\/ $G$ contains two vertex-disjoint\/ $(A,B)$-paths\/ $P$ and\/ $Q$ such that $V(P)\cup V(Q)=V(G)$.
\end{lemma}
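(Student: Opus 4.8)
\textbf{Proof plan for Lemma~\ref{lemma:2paths}.}
The plan is to reduce the existence of two vertex-disjoint $(A,B)$-paths covering $V(G)$ to the Hamiltonicity of a suitably constructed auxiliary graph, and then apply the Bondy--Chv\'atal closure result (\cref{thm:BC76}). First I would form $G'$ from $G$ by adding two new vertices $a$ and $b$, joining $a$ to both $u$ and $v$ (the two vertices of $A$) and joining $b$ to both $w$ and $x$ (the two vertices of $B$). A Hamilton cycle in $G'$ must use both edges at $a$ and both edges at $b$, so deleting $a$ and $b$ from such a cycle leaves exactly two vertex-disjoint paths whose endpoints are $\{u,v\}$ on one side and $\{w,x\}$ on the other, together covering $V(G)$; conversely, two such paths close up with $a$ and $b$ into a Hamilton cycle of $G'$. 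Thus it suffices to prove that $G'$ is Hamiltonian.

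To prove $G'$ is Hamiltonian, I would pass to its Bondy--Chv\'atal closure. Write $N \coloneqq |V(G')| = n+2$. For any two non-adjacent vertices $y,z \in V(G)$ we have $d_{G'}(y) + d_{G'}(z) \ge d_G(y) + d_G(z) \ge 2\delta(G) \ge n + 2 = N$, so by \cref{thm:BC76} we may add the edge $yz$ without changing whether $G'$ is Hamiltonian. Iterating, we may add \emph{all} missing edges among $V(G)$, so it suffices to show that the graph $G''$ obtained from $G'$ by making $V(G)$ into a clique (and keeping $a$ adjacent only to $u,v$ and $b$ adjacent only to $w,x$) is Hamiltonian. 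But $G''$ is easy to handle directly: since $n \ge 4$, the vertices $u,v,w,x$ are four distinct vertices of the clique on $V(G)$, so we may order the remaining $n-4$ vertices of $V(G)$ arbitrarily as $p_1,\ldots,p_{n-4}$ and take the cycle $a\, u\, p_1\, p_2\, \cdots\, p_{n-4}\, w\, b\, x\, \cdots\, v\, a$; concretely, $a u\, (\text{path through } p_1,\dots,p_{n-4}) \, w\, b\, x\, v\, a$, using that $uv, vx \notin$ matters not since all edges within $V(G)$ are present in $G''$. (When $n = 4$ the interior path is empty and we take $a\,u\,w\,b\,x\,v\,a$.) Every consecutive pair in this cyclic sequence is an edge of $G''$: the edges $au$, $wb$, $bx$, $va$ are the four added edges, and every other consecutive pair lies within the clique $V(G)$. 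Hence $G''$ is Hamiltonian, so $G'$ is Hamiltonian, and the required paths $P,Q$ exist.

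I do not expect any serious obstacle here; the only points requiring a little care are (i) checking the degree sum in $G'$ is at least $N = n+2$, which is exactly why the hypothesis is $\delta(G) \ge n/2 + 1$ rather than $n/2$, and (ii) verifying that a Hamilton cycle of $G'$ genuinely decomposes into two $(A,B)$-paths, which uses that $a$ and $b$ have degree exactly $2$ and are non-adjacent, so in the cycle $a$ sits between $u$ and $v$ and $b$ sits between $w$ and $x$, and removing them splits the cycle into two arcs with the claimed endpoints. One should also note $A \cap B = \varnothing$ guarantees $u,v,w,x$ are four distinct vertices, which is needed both for the construction of $G'$ and for the explicit Hamilton cycle in $G''$ when $n \ge 4$.
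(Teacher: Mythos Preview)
Your proposal is correct and is essentially identical to the paper's proof: both add two new degree-$2$ vertices (the paper calls them $y,z$) attached to $A$ and to $B$, observe that Hamiltonicity of the resulting graph is equivalent to the desired pair of spanning $(A,B)$-paths, and then use \cref{thm:BC76} to replace $G$ by a clique on $V(G)$, after which an explicit Hamilton cycle is immediate from $A\cap B=\varnothing$. The paper is somewhat terser at the final step, but the argument is the same.
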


\begin{proof}
    Suppose $\delta(G)\geq n/2+1$.
    Observe that the conclusion holds if and only if for any two pairs of distinct vertices $\{u,v\}$ and $\{w,x\}$ such that $\{u,v\}\cap\{w,x\}=\varnothing$, the graph 
    \[G'\coloneqq(V(G)\cup\{y,z\},E(G)\cup\{uy,yv,wz,zx\}),\]
    where $y,z$ are two new vertices, is Hamiltonian. 
    Note that $G'$ has $n'\coloneqq n+2$ vertices, and so all vertices of $V(G)$ have degree at least $n'/2$ in $G'$.
    Thus, by \cref{thm:BC76}, $G'$ is Hamiltonian if and only if the complete graph on $V(G)$ together with the paths $uyv$ and $wzx$ is Hamiltonian, which it clearly is since $\{u,v\}\cap\{w,x\}=\varnothing$.
\end{proof}

With this, we can already prove \cref{thm:lowerboundgeneral}.

\begin{proof}[Proof of \cref{thm:lowerboundgeneral}]
    Let $0<1/C\lll\delta\lll1/d,\eps\leq1$ and let $C(\log n/n)^{1/d}\leq r=o(1)$.
    For the convenience of the reader, we recall that we wish to show that a.a.s.\ every $(1-1/(2d^{d/2}\theta_d)+\eps)$-subgraph of $G_d(n,r)$ is Hamiltonian, where $\theta_d$ is the volume of a $d$-dimensional ball of radius~$1$.
    
    To achieve this, we consider two tessellations of $[0,1]^d$ with cells.
    First, we consider a tessellation with cells of side length $s\coloneqq \lceil\sqrt{d}/r\rceil^{-1}$;\COMMENT{Essentially, $s=r/\sqrt{d}$, only slightly smaller so that it really gives a partition into cells. Note that the bound $r=o(1)$ is in place so that really $s=(1-o(1))r/\sqrt{d}$.}
    we denote the set of all the resulting cells by $\cQ$.
    Then, we let~$k$ be the smallest multiple of $1/s$ with $k\geq 1/\delta s$ and consider a second tessellation $\cQ^*$ of $[0,1]^d$ with cells of side length $1/k$.
    Observe that $\cQ^*$ is a refinement of $\cQ$, that is, $\cQ^*$ tessellates each cell of~$\cQ$ with smaller cells.
    Also, recall the auxiliary graph $\Gamma$ with vertex set $\cQ$ where two cells are joined by an edge if and only if they share a $(d-1)$-dimensional face.

    Let $G\sim G_d(n,r)$.
    By the choice of $s$, for any vertex $v\in V(G)$, the cell containing $v$ is itself contained in $B(v,r)$, and so all vertices which lie in the same cell of $\cQ$ induce a clique in $G$.
    By Chernoff's bound (\cref{lem:chernoff}), a.a.s.\ the following properties hold:
    \begin{enumerate}[label=$(\mathrm{P}\arabic*)$]
        \item\label{item:lowbound2} every cell of $\cQ^*$ contains $(1\pm\eps/18)n/k^d$ vertices of $V(G)$, and
        \item\label{item:lowbound3} every vertex $v\in V(G)$ satisfies $d_G(v)=(1\pm\eps/8)|B(v,r)\cap[0,1]^d|n$.
    \end{enumerate}
    Condition on the event that $G$ satisfies these two properties.

    Let $H\subseteq G$ be any $(1-1/(2d^{d/2}\theta_d)+\eps)$-subgraph of $G$.
    For each $q\in\cQ$, let $G[q]\coloneqq G[V(G)\cap q]$ and $H[q]\coloneqq H[V(G)\cap q]$.
    Now, fix an arbitrary cell $q\in\cQ$.
    We claim that 
    \begin{equation}\label{equa:lowbound1}
        \delta(H[q])\geq(1+\eps)|V(H[q])|/2.
    \end{equation}
    Indeed, fix any vertex $v$ in $H[q]$.
    By \ref{item:lowbound2} and since the vertices in $q$ induce a clique in $G$, we know that $d_{G[q]}(v)\geq (1-\eps/16)s^dn$\COMMENT{Just apply \ref{item:lowbound2} for each of the subcells. Since here we are looking at degree, we have to subtract $1$ from what is given by \ref{item:lowbound2}. So we can simply write a worse constant in the error.}, and by \ref{item:lowbound3}, we know that $d_G(v)\leq(1+\eps/8)|B(v,r)\cap[0,1]^d|n$.
    Moreover, note that $|B(v,r)\cap[0,1]^d|\leq\theta_dr^d\leq(1+\eps/8)\theta_dd^{d/2}s^d$ by the definition of $s$.
    Now, \eqref{equa:lowbound1} follows by combining these three estimates.
    Indeed, since $H$ is a $(1-1/(2d^{d/2}\theta_d)+\eps)$-subgraph of~$G$, it follows that the number of edges of $G$ incident to $v$ which have been deleted when choosing $H$ satisfies that
    \begin{align*}
        d_G(v) - d_{H}(v)&\leq(1/(2d^{d/2}\theta_d)-\eps)d_G(v)\\
        &\leq(1/(2d^{d/2}\theta_d)-\eps)(1+\eps/8)|B(v,r)\cap[0,1]^d|n\\
        &\leq(1/(2d^{d/2}\theta_d)-\eps)(1+\eps/8)(1+\eps/8)\theta_dd^{d/2}s^dn\\
        &\leq(1/2-\eps)(1+\eps/8)^2s^dn\\
        &\leq(1/2-\eps)\frac{(1+\eps/8)^2}{1-\eps/16} d_{G[q]}(v)\\
        &\leq(1/2-\eps)(1+\eps/8)^3 d_{G[q]}(v)\\
        &\leq(1/2-3\eps/4) d_{G[q]}(v),
    \end{align*}
    where the fourth inequality uses that $\theta_d d^{d/2}\geq1$ by~\eqref{equa:ballbound}, the sixth one uses that $1/(1-x)\leq1+2x$ for $x\in[0,1/2]$, and for the last one we have that $(1+\eps/8)^3=1+3\eps/8+3\eps^2/64+(\eps/8)^3\leq1+\eps/2$ (which holds for $\eps\leq1$) and $(1/2-\eps)(1+\eps/2)\leq1/2-3\eps/4$.
    By considering the complement, it follows that $d_{H[q]}(v) \geq (1+\eps)d_{G[q]}(v)/2$, as desired.\COMMENT{The reason why we do not write $(1-\eps)/2$ in the last inequality is that that we can ignore the $\pm1$ which appears because of the degree of a vertex in a clique.}

    Moreover, we claim that, for any pair of cells $q,q'\in\cQ$ such that $qq'\in E(\Gamma)$,
    \begin{equation}\label{equa:lowbound2}
        \text{there are vertices $v,w\in V(G)\cap q$ and $v',w'\in V(G)\cap q'$ such that $vv',ww'\in E(H)$.}
    \end{equation}
    To prove this, consider an arbitrary pair of cells $q,q'\in\cQ$ with $qq'\in E(\Gamma)$ and let $q^*\in\cQ^*$ be a cell in $q$ which contains the centre of the $(d-1)$-dimensional face common for $q$ and $q'$.
    Note that $q^*$ contains at least $2$ vertices of $G$ by \ref{item:lowbound2}; let $v$ and $w$ be any such vertices and observe that, by the choice of $\delta$, the region $q'\cap B(v,r)\cap B(w,r)$ contains at least $(sk)^d-1$ cells of $\cQ^*$ (in fact, $q'\subseteq B(v,r)\cap B(w,r)$ for all $d\geq2$).\COMMENT{The bound on the number of cells is only in place for the case $d=1$.}
    Now, by the same argument using \ref{item:lowbound2} and \ref{item:lowbound3} as above, and using the choices of $\delta$ and $k$, we conclude that both $v$ and $w$ are $H$-neighbours to at least half of the $\omega(1)$ vertices in $q'$,\COMMENT{Let us discuss the argument for $v$, as it is analogous for $w$. The only difference that one needs to consider is the fact that we are not guaranteed that $q'\subseteq B(v,r)$ (when $d=1$), so we do not immediately have access to the lower bound on the $G$-degree of $v$ into $q'$. However, since $q'\cap B(v,r)$ contains at least $(sk)^d-1$ cells of $\cQ^*$, by applying \ref{item:lowbound2} we know that $v$ has at least \[\left(1-\frac{\eps}{18}\right)\frac{n}{k^d}(s^dk^d-1)=\left(1-\frac{\eps}{18}\right)s^dn-\left(1-\frac{\eps}{18}\right)\frac{n}{k^d}\geq\left(1-\frac{\eps}{18}\right)s^dn-\left(1-\frac{\eps}{18}\right)\delta^ds^dn\geq\left(1-\frac{\eps}{16}\right)s^dn\] $G$-neighbours (where the first inequality holds by the choice of $k$ and the last holds by the choice of $\delta$). From this point on, the argument is exactly the same as for \eqref{equa:lowbound1}.}
    and thus, we can find distinct $H$-neighbours $v',w'\in V(G)\cap q'$ of $v$ and $w$, respectively.

    We are now ready to construct a Hamilton cycle.
    Let $P\subseteq\Gamma$ be an arbitrary spanning path of~$\Gamma$.\COMMENT{Note that $\Gamma$ is simply a grid, so it does contain such a path.}
    Label the cells in $\cQ$ as $q_1,\ldots,q_{1/s^d}$ following the order given by $P$.
    For each $i\in[1/s^d-1]$, by applying \eqref{equa:lowbound2}, let $v_i,w_i\in V(G)\cap q_i$ and $v_{i+1}',w_{i+1}'\in V(G)\cap q_{i+1}$ be such that $v_iv_{i+1}',w_iw_{i+1}'\in E(H)$.
    Moreover, choose an arbitrary edge in $q_1$ with endpoints $v_1', w_1'$ different from $v_1, w_1$; similarly, choose an arbitrary edge in $q_{1/s^d}$ with endpoints $v_{1/s^d}, w_{1/s^d}$ different from $v_{1/s^d}', w_{1/s^d}'$.\COMMENT{Recall such edges must exist since $H[q]$ has large minimum degree, by combining \eqref{equa:lowbound1} and \ref{item:lowbound2}.}
    Then, for every $q\in\cQ$,~\eqref{equa:lowbound1} and \ref{item:lowbound2} show that $H[q]$ satisfies the conditions of \cref{lemma:2paths}.
    As a result, for every $i\in[1/s^d]$, we can find two vertex-disjoint $(\{v_i',w_i'\},\{v_i,w_i\})$-paths which together contain all the vertices of $H[q]$.
    We label these paths inductively as follows.
    
    Let $x_1'\coloneqq v_1'$.
    For each $i\in[1/s^d-1]$, starting at $i=1$ and increasing $i$ by one at each step, we let $P_i\subseteq H[q_i]$ denote the path given by \cref{lemma:2paths} having $x_i'$ as an endpoint, and let $P_i'$ denote the other path.
    Moreover, we let $x_i$ denote the other endpoint of $P_i$ and set 
    \[x_{i+1}'\coloneqq\begin{cases}
        v_{i+1}'&\text{ if }x_i=v_i,\\
        w_{i+1}'&\text{ if } x_i = w_i.
    \end{cases}\]
    Finally, when $i=1/s^d$, we similarly let $P_i\subseteq H[q_i]$ denote the path given by \cref{lemma:2paths} having $x_i'$ as an endpoint, let $P_i'$ denote the other path, and let $x_i$ denote the other endpoint of $P_i$.
    Now, for each $i\in[1/s^d]$, let $y_i\in\{v_i,w_i\}$ and $y_i'\in\{v_i',w_i'\}$ be such that $\{x_i,y_i\}=\{v_i,w_i\}$ and $\{x_i',y_i'\}=\{v_i',w_i'\}$.
    
    With the notation that we have set up, it follows that 
    \[y_1'\bigg(\bigtimes_{i=1}^{1/s^d} x_i'P_ix_i\bigg)\bigg(\bigtimes_{i=0}^{1/s^d-1} y_{1/s^d-i}P_{1/s^d-i}'y_{1/s^d-i}'\bigg)\] 
    is a Hamilton cycle.
\end{proof}

In order to improve \cref{thm:lowerboundgeneral} using similar ideas, one can simply observe that the minimum-degree condition in \cref{lemma:2paths} can be sharpened by considering degree sequences.
Given a graph~$G$ on $n$ vertices, consider an ordered list of the degrees of the vertices in $G$, which we label as \mbox{$d_1\leq\ldots\leq d_n$}.
We refer to this list as the \emph{degree sequence} of $G$.
The following lemma is a strengthening of \cref{lemma:2paths} and can be seen as a variant of a classical result of \citet{Posa62} which yields a sufficient condition for Hamiltonicity.

\begin{lemma}\label{lemma:2pathsv2}
    Let\/ $G$ be a graph on\/ $n\geq5$ vertices\COMMENT{The reason why we take $5$ rather than $4$ is that, the way the condition is written, it would give no restrictions at all for $n=4$. The only $4$-vertex graph for which the statement holds is $K_4$.} whose degree sequence $d_1\leq\ldots\leq d_n$ satisfies that\/ $d_i\geq i+3$ for all\/ $i<n/2-1$.
    Then, for any two pairs of distinct vertices\/ $A=\{u,v\}$ and\/ $B=\{w,x\}$ of\/ $G$ such that\/ $A\cap B=\varnothing$, the graph\/ $G$ contains two vertex-disjoint\/ $(A,B)$-paths\/ $P$ and\/ $Q$ such that $V(P)\cup V(Q)=V(G)$.
\end{lemma}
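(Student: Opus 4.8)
The plan is to mimic the proof of \cref{lemma:2paths}, but to replace the single invocation of the Bondy--Chvátal closure by a more careful closure argument that uses the full degree sequence. As in \cref{lemma:2paths}, the existence of two vertex-disjoint $(A,B)$-paths $P,Q$ with $V(P)\cup V(Q)=V(G)$ is equivalent to the Hamiltonicity of the graph
\[
G'\coloneqq\bigl(V(G)\cup\{y,z\},\,E(G)\cup\{uy,yv,wz,zx\}\bigr),
\]
where $y,z$ are two new vertices; set $n'\coloneqq n+2=|V(G')|$. Since the hypothesis (applied with $i=1$, using $n\ge5$) forces $d_1\ge4$, every original vertex has degree at least $4$ in $G'$, while $d_{G'}(y)=d_{G'}(z)=2$.

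First I would perform a \emph{selective} Bondy--Chvátal closure: starting from $G'$, repeatedly add an edge between two \emph{original} vertices $a,b$ whenever $ab$ is a non-edge of the current graph and the current degrees of $a$ and $b$ sum to at least $n'$; by \cref{thm:BC76}, each such step preserves (non-)Hamiltonicity. Let $H$ be the graph obtained once no further such edge can be added. Crucially, $y$ and $z$ are never touched, so $d_H(y)=d_H(z)=2$, and $H\supseteq G$ on the original vertex set.

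The key step is to show that $H$ is complete on the $n$ original vertices. Suppose not, and among non-adjacent pairs of original vertices pick $u^\ast,v^\ast$ with $d_H(u^\ast)+d_H(v^\ast)$ maximum, say with $k\coloneqq d_H(u^\ast)\le d_H(v^\ast)$. Since $u^\ast v^\ast$ was not added, $d_H(u^\ast)+d_H(v^\ast)\le n'-1=n+1$, so $k\le(n+1)/2$; also $k\ge d_H(u^\ast)\ge d_1\ge4$. Now $v^\ast$ has at least $n'-1-d_H(v^\ast)\ge k$ non-neighbours in $H$ (other than itself); at most two of them are $y$ or $z$, so at least $k-2\ge1$ of them are original vertices, and by the maximality of $u^\ast,v^\ast$ each such original non-neighbour $s$ satisfies $d_H(s)\le d_H(u^\ast)=k$, hence $d_G(s)\le k$. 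Thus $G$ has at least $k-2$ vertices of degree at most $k$, i.e.\ $d_{k-2}\le k$ in the degree sequence of $G$. On the other hand, $k\le(n+1)/2$ gives $k-2<n/2-1$, so the hypothesis yields $d_{k-2}\ge(k-2)+3=k+1$, a contradiction. Hence $H$ is complete on the originals.

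Finally, $H$ consists of a clique on the $n$ original vertices together with the two paths $uyv$ and $wzx$; since $u,v,w,x$ are distinct, one directly exhibits a Hamilton cycle of $H$ (for instance $y,u,o_1,\dots,o_{n-4},w,z,x,v,y$, where $o_1,\dots,o_{n-4}$ are the remaining originals). Therefore $H$, and hence $G'$, is Hamiltonian, which gives the two required paths in $G$. The delicate points are the bookkeeping for the two low-degree vertices $y,z$ throughout the closure (which is exactly why the closure must be taken selectively, so as not to inflate their degrees) and the index arithmetic guaranteeing that $k-2$ falls in the range $[1,n/2-1)$ where the degree hypothesis is available; I expect this off-by-one bookkeeping to be the part most likely to need care.
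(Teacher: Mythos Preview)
Your proof is correct. Both your argument and the paper's reduce to showing that $G'$ is Hamiltonian and conclude by arriving at the same graph: a clique on the $n$ original vertices together with the two pendant paths $uyv$ and $wzx$. The difference is in how you get there. The paper performs an explicit, ordered sequence of Bondy--Chv\'atal steps: it first uses \cref{thm:BC76} to add all edges among the vertices of degree at least $n'/2$, and then works downwards one vertex at a time, using the degree-sequence hypothesis at each stage to justify the next batch of edge additions. You instead take the (selective) Bondy--Chv\'atal closure in one shot and then run a Chv\'atal-style extremal argument: choose a non-adjacent pair of maximum degree sum and count low-degree non-neighbours to contradict the degree-sequence hypothesis. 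Your approach is a little slicker and closer in spirit to Chv\'atal's original proof; the paper's approach is more constructive and makes the role of the hypothesis at each degree level explicit. Your care in keeping $y$ and $z$ out of the closure is exactly right, and your index arithmetic ($k\ge4$ from $d_1\ge4$, and $k\le(n+1)/2$ giving $k-2<n/2-1$) is correct.
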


\begin{proof}
    Let $G$ be a graph satisfying the conditions from the statement.
    Observe that the conclusion holds for $G$ if and only if, for any two pairs of distinct vertices $\{u,v\}$ and $\{w,x\}$ of $G$ such that $\{u,v\}\cap\{w,x\}=\varnothing$, the graph 
    \[G'\coloneqq(V(G)\cup\{y,z\},E(G)\cup\{uy,yv,wz,zx\}),\] 
    where $y,z$ are two new vertices, is Hamiltonian.
    We are going to apply \cref{thm:BC76} iteratively to show that this is the case.

    Let $\ell\coloneqq \lceil n/2\rceil$.
    Note that $G'$ has $n'\coloneqq n+2$ vertices, and that its degree sequence $d'_1\leq\ldots\leq d'_{n'}$ satisfies that $d'_1=d'_2=2$ and $d'_i\geq i+1$ for all $i\in\{3,\ldots,\ell\}$ (where $\ell\coloneqq\lceil n'/2\rceil-1$).
    Let us label the vertices of $G'$ as $v_1,\ldots,v_{n'}$ in such a way that $d_{G'}(v_i)=d'_i$ for each $i\in[n']$.
    The degree sequence of~$G'$ guarantees that $d'_\ell\geq\lceil n'/2\rceil$, so at least $n'-\ell+1=\lfloor n'/2\rfloor+2$ vertices of $G'$ have degree at least~$n'/2$.
    Thus, by \cref{thm:BC76}, $G'$ is Hamiltonian if and only if the graph
    \[G_0\coloneqq(V(G'),E(G')\cup\{v_iv_j:\ell\leq i<j\leq n'\})\]
    is Hamiltonian.
    Note that $d_{G_0}(v_\ell)\geq\lfloor n'/2\rfloor+1$.

    We now define a sequence of nested graphs $G_0\subseteq G_1\subseteq\ldots\subseteq G_{\ell-3}$ where, for each $i\in[\ell-3]$, we set
    \[G_i\coloneqq(V(G'),E(G_{i-1})\cup\{v_{\ell-i}v_j:\ell-i+1\leq j\leq n'\}).\]
    For each $i\in[\ell-3]$, we have inductively that $d_{G_{i-1}}(v_j)\geq\lfloor n'/2\rfloor+i$ for all $j\in \{\ell-i+1,\ldots,n'\}$,  
    and since $d'_{\ell-i}\geq \ell-i+1$, it follows from \cref{thm:BC76} that $G_{i-1}$ is Hamiltonian if and only if $G_i$ is Hamiltonian.
    The conclusion then is that $G'$ is Hamiltonian if and only if $G_{\ell-3}$ is Hamiltonian.
    But~$G_{\ell-3}$ consists of a clique of size $n$ together with the edges $uy,yv,wz,zx$, which is clearly Hamiltonian since $\{u,v\}\cap\{w,x\}=\varnothing$.
\end{proof}

We can now use this lemma to prove a strengthening of \cref{thm:lowerboundgeneral} by ensuring that the graphs inside each cell satisfy the necessary degree-sequence condition instead of the minimum-degree condition of \cref{lemma:2paths}.
Checking this condition carefully becomes more complicated for higher dimensions, so here we consider only the case $d=1$.

\begin{proof}[Proof of \cref{thm:lowerboundd=1}]
    Let $0<1/C\lll\delta\lll\eps\leq1/3$ and let $C\log n/n\leq r=o(1)$.
    Consider two tessellations of $[0,1]$ with intervals.
    First, we consider a tessellation $\mathcal{Q}$ with intervals of length $s\coloneqq \lceil 3/4r\rceil^{-1}$.
    \COMMENT{Essentially, $s=4r/3$, only slightly smaller so that it really gives a tessellation with intervals of the same length. Note that the bound $r=o(1)$ is in place so that really $s=(1-o(1))4r/3$.}
    For each interval $I\in\mathcal{Q}$, let $I_c$ denote the interval of length $s/2$ whose midpoint coincides with the one of $I$. 
    Also, let $k$ be the smallest multiple of $4/s$ with $k\geq 1/\delta s$ and consider a second tessellation $\mathcal{Q}^*$ of $[0,1]$ with intervals of length $1/k$.
    Note that $\mathcal{Q}^*$ is a refinement of $\mathcal{Q}$, that is, $\mathcal{Q}^*$ tessellates each interval of $\mathcal{Q}$ with intervals of $\mathcal{Q}^*$, and also that each interval $I_c$ for $I\in\cQ$ is tessellated with intervals of $\cQ^*$.\COMMENT{This is by the choice that $k$ is $4$ times a multiple of $1/s$.}
    Recall the auxiliary graph $\Gamma$ with vertex set $\mathcal{Q}$ where two intervals are adjacent if and only if they share an endpoint (so $\Gamma$ is a path on $1/s$ vertices).

    Let $G\sim G_1(n,r)$.
    By Chernoff's bound (\cref{lem:chernoff}), a.a.s.\ the following properties hold:
    \begin{enumerate}[label=$(\mathrm{P}\arabic*)$]
        \item\label{item:lowbound2v2} every interval of $\mathcal{Q}^*$ contains $(1\pm\delta)n/k$ vertices of $V(G)$, and
        \item\label{item:lowbound3v2} every vertex $v\in V(G)$ satisfies $d_G(v)=(1\pm\delta)|B(v,r)\cap[0,1]|n$.
    \end{enumerate}
    Condition on the event that $G$ satisfies these two properties.
    Let $H\subseteq G$ be any $(2/3+\eps)$-subgraph of~$G$.
    For each $I\in\mathcal{Q}$, let $G[I]\coloneqq G[V(G)\cap I]$ and $H[I]\coloneqq H[V(G)\cap I]$.

    \begin{claim}\label{claim:lowbound1}
        Fix an arbitrary interval\/ $I\in\mathcal{Q}$ and let\/ $x\coloneqq|V(G)\cap I|$.
        Let\/ $d_1\leq\ldots\leq d_x$ be the degree sequence of\/ $H[I]$.
        Then, for all\/ $i<x/2-1$, we have that\/ $d_i\geq i+3$.
    \end{claim}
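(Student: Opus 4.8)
The plan is to prove \cref{claim:lowbound1} deterministically, using only that \ref{item:lowbound2v2} and \ref{item:lowbound3v2} hold, and to reduce the degree-sequence condition to a simple packing estimate for the vertices of $I$ according to their distance to the two endpoints of $I$. Throughout, write $s$ for the common length of the intervals in $\cQ$, so that $r/s = 3/4 + o(1)$ and in particular $r < s < 2r$ for $n$ large. Since each interval of $\cQ$ is a union of intervals of $\cQ^*$ and $1/k = O(\delta r)$, property \ref{item:lowbound2v2} implies that every subinterval $J\subseteq I$ of length at least $r$ contains $(1\pm O(\delta))|J|n$ vertices of $V(G)$; in particular $x = |V(G)\cap I| = (1\pm O(\delta))sn$.

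The first step is a pointwise lower bound on degrees in $H[I]$. For $v\in V(G)\cap I$, let $\phi(v)$ be the distance from $v$ to the nearer endpoint of $I$. A short computation (using $r<s<2r$) shows that $I\cap B(v,r)$ is an interval of length exactly $\min\!\bigl(s,\,r+\phi(v)\bigr)$, and crucially this value does not depend on where $I$ sits inside $[0,1]$. Hence, by the density estimate above, $d_{G[I]}(v)\ge (1-O(\delta))\min\!\bigl(s,\,r+\phi(v)\bigr)n$. On the other hand, as $H$ is a $(2/3+\eps)$-subgraph, the number of edges at $v$ deleted in $H$ is at most $(1/3-\eps)d_G(v)$, and by \ref{item:lowbound3v2} together with the crude bound $|B(v,r)\cap[0,1]|\le 2r$ this is at most $(1/3-\eps)(1+\delta)\,2rn\le (2/3-\eps)rn$ for $\delta$ small. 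Since $d_{H[I]}(v)\ge d_{G[I]}(v)-(d_G(v)-d_H(v))$, one obtains, uniformly over all $I$ (boundary intervals included, since $I$'s location entered only through $|B(v,r)\cap[0,1]|\le 2r$),
\[
d_{H[I]}(v)\;\ge\;(1-O(\delta))\min\!\bigl(s,\,r+\phi(v)\bigr)n\;-\;(2/3-\eps)\,rn.
\]

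The second step is a contradiction argument. Suppose $d_j\le j+2$ for some $j<x/2-1$, so that at least $j$ vertices of $H[I]$ have degree $<j+3$. For such a ``low'' vertex $v$, the displayed bound forces $\min\!\bigl(s,\,r+\phi(v)\bigr)<c$, where $c\coloneqq\bigl(j+3+(2/3-\eps)rn\bigr)/\bigl((1-O(\delta))n\bigr)$. Using $j<x/2-1\le(1+O(\delta))\tfrac23 rn$ and $s\ge(\tfrac43-O(\delta))r$, one checks that $c<s$ — this is the step where $\eps>0$ and $\delta\lll\eps$ are used — so every low vertex satisfies $\phi(v)<c-r$, i.e.\ lies within distance $c-r$ of an endpoint of $I$. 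The set of points of $I$ within distance $c-r$ of an endpoint has length $2(c-r)$, so by the density estimate the number of low vertices is at most $(1+O(\delta))\,2(c-r)n+O(\delta rn)$, giving $j\le (1+O(\delta))\,2(c-r)n+O(\delta rn)$. Substituting $c-r=\bigl(j+3-(1/3+\eps-O(\delta))rn\bigr)/\bigl((1-O(\delta))n\bigr)$ and solving the resulting linear inequality for $j$ yields $j\ge(2/3+2\eps-O(\delta))rn$, which contradicts $j<(2/3+O(\delta))rn$ once $\delta$ is small enough in terms of $\eps$. This proves $d_i\ge i+3$ for all $i<x/2-1$, which is exactly the hypothesis later needed to apply \cref{lemma:2pathsv2} to $H[I]$.

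The only genuinely delicate part is the bookkeeping: one must make sure that the factor-$2\eps$ slack produced by the adversary being only a $(1/3-\eps)$-destroyer really dominates the accumulated $O(\delta)$ losses — arising from the discretization $1/k$, the $(1\pm\delta)$ fluctuations in \ref{item:lowbound2v2}--\ref{item:lowbound3v2}, the gap between $s$ and $4r/3$, and the $\pm 1$'s when passing between vertex counts in a region and vertex degrees — which is precisely what the hierarchy $1/C\lll\delta\lll\eps$ guarantees. It is also worth recording explicitly that the identity $|I\cap B(v,r)|=\min\!\bigl(s,\,r+\phi(v)\bigr)$ and the bound $|B(v,r)\cap[0,1]|\le 2r$ render the entire estimate insensitive to whether $I$ meets the boundary of $[0,1]$, so that no separate boundary case is needed.
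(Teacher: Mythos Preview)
Your proof is correct and follows essentially the same approach as the paper: both arguments observe that only vertices within distance roughly $s-r\approx r/3$ of an endpoint of $I$ can have small degree in $H[I]$, and then count such vertices against the required degree-sequence condition. The only difference is organizational---the paper discretises into explicit layers $\cQ^*_i$ and bounds degrees layer by layer, whereas you phrase the same estimate via the continuous parameter $\phi(v)$ and close with a clean contradiction; neither approach gains anything substantive over the other.
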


    \begin{claimproof}
        Let $A \coloneqq V(G)\cap I_c$.
        By the choice of $s$, for any point $p\in I_c$ we have that $I\subseteq B(p,r)$,\COMMENT{The maximal possible distance from a point in $I_c$ to a point in $I$ is at most $r$.} so every vertex in $A$ is adjacent (in $G$) to every other vertex inside $I$.
        Using \ref{item:lowbound2v2} shows that $x=(1\pm\delta)sn$ and $|A|=(1\pm\delta)sn/2$.\COMMENT{Just apply \ref{item:lowbound2v2} to each subinterval (and recall that both $I$ and $I_c$ are tessellated by $\cQ^*$).}
        This guarantees that $|A|\geq (1-\delta)x/(2+2\delta)\ge (1-2\delta)x/2$.
        Moreover, all vertices of~$A$ have degree~$x-1$ in $G[I]$.
        As $H$ is a $(2/3+\eps)$-subgraph of $G$, using \ref{item:lowbound3v2} we conclude that, for all $v\in A$,
        \COMMENT{To be more precise, note that we have
        \begin{align*}
            d_{H(I)}(v)&\geq x-1-(1/3-\eps)d_G(v)\\
            &=(1\pm\delta)sn-1-(1/3-\eps)(1\pm\delta)|B(v,r)\cap[0,1]^d|n\\
            &\geq(1-\delta)sn-(1+\delta)2rn/3+(1+\delta)\eps2rn-1\\
            &\geq(1-\delta)sn-(1+2\delta)sn/2+(1+\delta)\eps2rn-1\\
            &\geq (s/2+\eps2r)n\geq(1-\delta)(x/2+\eps2rn)\geq(1+5\delta)x/2,
        \end{align*}
        where in the fourth line we use that $r\leq(1+\delta/2)3s/4$ since $s=(1-o(1))4r/3$, and the last inequality simply uses the fact that $x=\Theta(rn)$ and that $\delta\ll\eps$.}
        \begin{equation}\label{equa:resbounddegseq1}
            d_{H[I]}(v)\geq x-1-(1/3-\eps)d_G(v)\geq(1+5\delta)x/2.
        \end{equation}

        Now, consider the set $\mathcal{Q}^*[I]\coloneqq\{I^*\in\mathcal{Q}^*:I^*\subseteq I\}$.
        We partition $\mathcal{Q}^*[I]$ into sets $\mathcal{Q}^*_1,\ldots,\mathcal{Q}^*_{ks/2}$ where each $\mathcal{Q}^*_i$ with $i\in[ks/2]$ contains the two intervals $I^*\in\mathcal{Q}^*[I]$ at distance $(i-1)/k$ from the endpoints of $I$.
        Our next goal is to show that the vertices contained in intervals $I^*\in\mathcal{Q}_i^*$ with $i\in[ks/4]$ have sufficiently large degrees in $H[I]$ to satisfy the desired degree sequence condition (larger values of $i$ satisfy the desired condition by \eqref{equa:resbounddegseq1}).
        By using the definitions of $s$ and $k$, it readily follows that, for each $i\in[ks/4]$ and each $I^*\in\mathcal{Q}_i$, every point $p\in I^*$ satisfies that 
        \[|\{I^{**}\in\mathcal{Q}^*[I]:I^{**}\subseteq B(p,r)\}|\geq \frac{3ks}{4}+i-1.\]
        Therefore, by \ref{item:lowbound2v2}, all vertices $v\in V(G)$ contained in an interval of $\mathcal{Q}^*_i$ with $i\in[ks/4]$ satisfy that\COMMENT{The $2\delta$ is there only so we can ignore the $-1$ missing from ignoring the vertex itself.}
        \[d_{G[I]}(v)\geq (1-\delta) \bigg(\frac{3ks}{4}+i-1\bigg) \frac{n}{k} - 1\geq (1-2\delta) \bigg(\frac{3sn}{4}+\frac{(i-1)n}{k}\bigg)\]
        and, as $H$ is a $(2/3+\eps)$-subgraph of $G$ and $d_G(v)\le (1+\delta)2rn$ by \ref{item:lowbound3v2}, we conclude that
        \COMMENT{\aedc{I believe this footnote is now outdated. I checked again the calculation and am happy, though the last inequality is nontrivial to me right now} We have
        \begin{align*}
            d_{H[I]}(v)&\geq(1-2\delta)\frac12\left(1+\frac{i-1}{\lceil ks/4\rceil}\right)sn-(1/3-\eps)d_{G}(v)\\
            &\geq(1-2\delta)\frac12\left(1+\frac{i-1}{\lceil ks/4\rceil}\right)sn-(1+\delta)(1/3-\eps)2rn\\
            &\geq(1-2\delta)\frac{i-1}{\lceil ks/4\rceil}\frac{sn}{2}-4\delta sn+\eps2rn\\
            &\geq\frac{i-1}{\lceil ks/4\rceil}\frac{x}{2}+10\delta x,
        \end{align*}
        where in the third line we use that $2r/3\leq(1+\delta/2)s/2$, so $(1+\delta)2rn/3\geq(1+2\delta)sn/2$, and the last inequality follows again since $x=\Theta(rn)$ and $\delta\ll\eps$.}
        \begin{align}
        d_{H[I]}(v)
        &\geq(1-2\delta)\bigg(\frac{3sn}{4}+\frac{(i-1)n}{k}\bigg)-\bigg(\frac{1}{3}-\eps\bigg)(1+\delta)2rn\nonumber\\
        &\geq \frac{1-2\delta}{1+\delta} \bigg(\frac{3}{4} + \frac{i-1}{ks}\bigg) x -\bigg(\frac{1}{3}-\eps\bigg)\frac{3(1+\delta)}{2} x\nonumber\\
        &\geq\left(\frac{2i}{ks}+5\delta\right)x\geq(1+\delta)\frac{2in}{k}+3.\label{eq:bounddegree1}
        \end{align}
        Thus, for each $i\in[ks/4]$, by \ref{item:lowbound2v2} and \eqref{eq:bounddegree1}, at most $2(i-1)\cdot (1+\delta)n/k$ vertices have degree at most $(1+\delta)2in/k+3$ in $H$.
        This readily implies that the desired condition holds.\COMMENT{\aedc{Same as the previous, this footnote is now outdated.}It suffices to verify that
        \[(i/\lceil ks/4\rceil+5\delta)x/2>(1+\delta)2in/k+10.\]
        For this, using the lower bound on $x$, it suffices to have that
        \[(i/\lceil ks/4\rceil+5\delta)(1-\delta)s>(1+\delta+o(1))4i/k.\]
        So it suffices to verify that
        \[(1-\delta)\left(\frac{i}{\lceil ks/4\rceil}+5\delta\right)s>(1+\delta+o(1))\frac{i}{ks/4}s,\]
        which clearly holds (since $i/\lceil ks/4\rceil\in[0,1]$).}
    \end{claimproof}

    Arguing analogously to the proof of \eqref{equa:lowbound2}, we also have the following claim.

    \begin{claim}\label{claim:lowbound2}
        For any pair of intervals $I,I'\in\mathcal{Q}$ such that $II'\in E(\Gamma)$, there are vertices $v,w\in V(G)\cap I$ and $v',w'\in V(G)\cap I'$ such that $vv',ww'\in E(H)$.
    \end{claim}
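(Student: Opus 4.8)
The plan is to mimic exactly the argument used for \eqref{equa:lowbound2} in the proof of \cref{thm:lowerboundgeneral}, adapted to the one-dimensional tessellation $\cQ$ at hand. Fix a pair of intervals $I,I'\in\cQ$ with $II'\in E(\Gamma)$, so that $I$ and $I'$ share an endpoint $p_0$. First I would locate an interval $I^*\in\cQ^*$ with $I^*\subseteq I$ which is \emph{incident to} (or as close as possible to) the common endpoint $p_0$; by \ref{item:lowbound2v2}, $I^*$ contains at least two vertices of $G$, and we let $v,w$ be any two such vertices. The point of choosing $v,w$ near $p_0$ is that then a long initial portion of $I'$ lies within distance $r$ of both $v$ and $w$: using the definitions of $s$ (so that $s = (1-o(1))4r/3 < r$) and $k\geq 1/\delta s$, one checks that $I'\cap B(v,r)\cap B(w,r)$ contains all but at most $O(1)$ of the intervals of $\cQ^*$ that tessellate $I'$.

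Next I would bound from below the number of $H$-neighbours that $v$ (resp.\ $w$) has inside $I'$. By \ref{item:lowbound2v2}, $v$ has at least $(1-\delta)(k s - O(1))n/k \geq (1-2\delta)sn$ many $G$-neighbours inside $I'$ (using $k\geq1/\delta s$ to absorb the $O(1)$ term into the error, exactly as in the corresponding footnote of the proof of \cref{thm:lowerboundgeneral}). On the other hand, by \ref{item:lowbound3v2} we have $d_G(v)\leq (1+\delta)2rn \leq (1+2\delta)\tfrac{3}{2}sn$. Since $H$ is a $(2/3+\eps)$-subgraph of $G$, at most $(1/3-\eps)d_G(v) \leq (1/3-\eps)(1+2\delta)\tfrac32 sn = (1/2 - \tfrac32\eps + O(\delta))sn$ of the edges at $v$ are deleted. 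Hence $v$ retains at least $(1-2\delta)sn - (1/2 - \tfrac32\eps + O(\delta))sn \geq (1/2 + \eps)sn$ many $H$-neighbours inside $I'$; since $|V(G)\cap I'| \leq (1+\delta)sn$ by \ref{item:lowbound2v2}, this means $v$ is $H$-adjacent to strictly more than half (indeed, a $(1/2+\Omega(\eps))$-proportion) of the vertices of $I'$, and the same holds for $w$.

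Finally, since both $v$ and $w$ are $H$-adjacent to more than half of the $|V(G)\cap I'| = \omega(1)$ vertices of $I'$, their $H$-neighbourhoods inside $I'$ overlap in more than $|V(G)\cap I'|/2$ vertices, so in particular $|N_H(v)\cap (V(G)\cap I')| + |N_H(w)\cap (V(G)\cap I')| > |V(G)\cap I'| + 1$, and we may pick $v'\in N_H(v)\cap(V(G)\cap I')$ and then $w'\in (N_H(w)\cap(V(G)\cap I'))\setminus\{v'\}$, which are distinct by a trivial counting argument. Then $vv',ww'\in E(H)$ with $v,w\in V(G)\cap I$ and $v',w'\in V(G)\cap I'$ distinct, as required. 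I do not expect any real obstacle here; the only point needing a little care is the bookkeeping that ensures the $O(1)$ intervals of $\cQ^*$ lost near the boundary of $B(v,r)\cap B(w,r)$ are negligible, which is handled precisely by the choice $k\geq 1/\delta s$ together with $\delta\lll\eps$, exactly as in the analogous step of the proof of \cref{thm:lowerboundgeneral}.
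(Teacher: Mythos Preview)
Your overall approach is correct and matches the paper's intended argument, but there is a concrete arithmetic slip that propagates through the computation. You write ``$s = (1-o(1))4r/3 < r$'', yet $4r/3 > r$, so in fact $s > r$ in this proof (this is precisely the point of the larger cells in \cref{thm:lowerboundd=1} compared with \cref{thm:lowerboundgeneral}). Consequently, for $v$ lying in the $\cQ^*$-interval of $I$ adjacent to $p_0$, the ball $B(v,r)$ extends only a distance of at most $r \approx 3s/4$ into $I'$, so $I'\cap B(v,r)\cap B(w,r)$ covers roughly $3ks/4$ of the $ks$ intervals of $\cQ^*$ in $I'$, not ``all but $O(1)$''. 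Your lower bound on $|N_G(v)\cap I'|$ therefore should read approximately $(3/4-O(\delta))sn$, not $(1-2\delta)sn$, and the resulting lower bound on $|N_H(v)\cap I'|$ becomes
\[
(3/4-O(\delta))sn - (1/3-\eps)(1+\delta)2rn \;\ge\; \bigl(\tfrac14 + \tfrac32\eps - O(\delta)\bigr)sn,
\]
which is \emph{not} more than half of $|V(G)\cap I'|$ when $\eps\le 1/6$. So your ``more than half'' pigeonhole step to obtain distinct $v',w'$ does not go through as written.

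The fix is immediate and the approach survives: the bound $(1/4+3\eps/2-O(\delta))sn$ is still $\omega(1)$, so each of $v,w$ has at least two $H$-neighbours in $I'$; pick $v'\in N_H(v)\cap I'$ and then $w'\in (N_H(w)\cap I')\setminus\{v'\}$. With this correction, your argument is essentially the same as the one the paper refers to (the paper simply says ``arguing analogously to the proof of~\eqref{equa:lowbound2}'' and gives no further details).
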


    We are now ready to construct a Hamilton cycle along the lines of the proof of \cref{thm:lowerboundgeneral}.
    Label the intervals in $\mathcal{Q}$ as $I_1,\ldots,I_{1/s}$ following the order given by $\Gamma$.
    For each $i\in[1/s-1]$, by applying \cref{claim:lowbound2}, let $v_i,w_i\in V(G)\cap I_i$ and $v_{i+1}',w_{i+1}'\in V(G)\cap I_{i+1}$ be such that $v_iv_{i+1}',w_iw_{i+1}'\in E(H)$.
    Choose an arbitrary edge in $H[I_1]-\{v_1,w_1\}$ and let $v_1'$ and $w_1'$ be its endpoints; similarly, choose an arbitrary edge in $G[I_{1/s}]-\{v'_{1/s},w'_{1/s}\}$ and let $v_{1/s}$ and $w_{1/s}$ be its endpoints.
    Now, for each $i\in[1/s]$, by \cref{lemma:2pathsv2} (which we can apply by \cref{claim:lowbound1}), we can find two vertex disjoint $(\{v_i',w_i'\},\{v_i,w_i\})$-paths which together cover all vertices of $V(G)\cap I_i$.
    We label these paths inductively as follows.
    
    Let $x_1'\coloneqq v_1'$.
    For each $i\in[1/s-1]$, sequentially, we let $P_i\subseteq H(I_i)$ denote the $(\{v_i',w_i'\},\{v_i,w_i\})$-path which has $x_i'$ as an endpoint, let $P_i'$ denote the other path and $x_i$ denote the other endpoint of $P_i$, and set 
    \[x_{i+1}'\coloneqq\begin{cases}
        v_{i+1}'&\text{ if }x_i=v_i,\\
        w_{i+1}'&\text{ if }x_i=w_i.
    \end{cases}\]
    Lastly, for $i=1/s$, we let $P_i\subseteq H(I_i)$ denote the path given by \cref{lemma:2paths} having $x_i'$ as an endpoint, let $P_i'$ denote the other path, and let $x_i$ denote the other endpoint of $P_i$.
    Now, for each $i\in[1/s]$, let $y_i\in\{v_i,w_i\}$ and $y_i'\in\{v_i',w_i'\}$ be such that $\{x_i,y_i\}=\{v_i,w_i\}$ and $\{x_i',y_i'\}=\{v_i',w_i'\}$.
    
    With this notation, it follows that 
    \[y_1'\bigg(\bigtimes_{i=1}^{1/s} x_i'P_ix_i\bigg)\bigg(\bigtimes_{i=0}^{1/s-1} y_{1/s-i}P_{1/s-i}'y_{1/s-i}'\bigg)\] 
    is a Hamilton cycle.
\end{proof}

\begin{remark}\label{remark:lowerboundPosa}
    The same proof ideas can be adapted for higher dimensions. To do this, one must choose the region of each cell where we want to impose that all vertices have degree at least half of the number of vertices in the cell, and verify that the remaining vertices have degrees which satisfy the condition of \cref{lemma:2pathsv2}.
    If we impose that, for every cell $q$, all the vertices contained in a subhypercube centred at the centre of $q$ of volume half the volume of $q$ have degree at least half of the number of vertices in $q$, it is possible to find the optimal length of the side of a cell and prove that the vertices in $q$ outside this subhypercube satisfy the right degree condition.
    This leads to a proof that a.a.s.\ every $(1-\alpha_d+\eps)$-subgraph of $G_d(n,r)$ is Hamiltonian, where 
    \[\alpha_d\coloneqq\frac{2^d}{(2^{1/d}+1)^dd^{d/2}\theta_d}.\]
    This results in an improvement over \cref{thm:lowerboundgeneral}, as the proportion of edges incident to each vertex which can be removed increases by a factor of $2^{d+1}/(2^{1/d}+1)^d$, which is an increasing function of $d$ tending to $\sqrt{2}$.

    We believe that this approach cannot be sharpened any further for $G_1(n,r)$.
    For $d\geq2$, it is possible to improve upon it by optimising on the ``shapes'' that we use.\COMMENT{Indeed, the locus of all points contained in a cell which are at distance at most $r$ from all other points in the cell is not a hypercube, which is what we have used above, when $d\geq2$.
    Using this locus and optimising over the side-length of the cells could yield further substantial improvements on the lower bound for the local resilience.}
    However, the bounds which can be obtained in this way would still tend to $0$ at least exponentially fast as $d$ grows.\COMMENT{
    Indeed, consider an arbitrary cell~$c$ (which is far from the boundary of $[0,1]^d$) and let~$v$ be a vertex which lies very close to a corner of~$c$.
    Here, the axis-parallel hyperplanes which define this corner of~$c$ split the neighbours of~$v$ into $2^d$ regions, each of them containing roughly a $2^{-d}$ proportion of the neighbours of~$v$.
    In order to apply our proof method, $v$ would need to retain at least some neighbours in $c$, which is contained in one of the orthants.
    But this means the adversary cannot be allowed to delete more than a $2^{-d}$ proportion of the edges incident to $v$.
    As this is very far from \cref{mainconjecture} and the analysis becomes more complicated, we have chosen to only present the idea that we use in its simplest form, which is also the case when we come closest to \cref{mainconjecture}.
    In order to obtain further improvements on this, we need techniques which allow us to deal with the case when the graph induced on the cells is not Hamiltonian.}
\end{remark}

\begin{remark}
The proofs of \cref{thm:lowerboundgeneral,thm:lowerboundd=1} can be adapted to each different norm, resulting in different values for $\alpha_d$.
\end{remark}

%%%%%%%%%%%%%%%%%%%%%%%%%%%%%%%%%%%%%%
%%%%%%%%%%%%%%%%%%%%%%%%%%%%%%%%%%%%%%

\section{The one-dimensional torus and powers of cycles}\label{sec:torus}

This last section is dedicated to the proofs of \cref{lemma:sandwich} and \cref{prop:square}.

\begin{proof}[Proof of \cref{lemma:sandwich}]
    When considering $T_1(n,r)$, the embedding of the vertices into the unit circle naturally defines a cyclic order on them (recall that a.s.\ no two vertices are assigned to the same position), with each vertex being joined by edges to a number of vertices on each side.
    Thus, it suffices to show that the number of neighbours on each side is sufficiently concentrated.
    This follows from standard concentration inequalities.

    More precisely, let $0<1/C\lll\eps<1$.
    Fix a vertex $i\in[n]$ and reveal its position $x_i\in \mathbb{R}/\mathbb{Z}$.
    Conditionally on $x_i$, let $X_L$ and $X_R$ denote the number of vertices in $[n]\setminus\{i\}$ falling into $[x_i-r, x_i)+\mathbb Z$ and $(x_i, x_i+r]+\mathbb Z$, respectively.
    We have that $X_L,X_R\sim\mathrm{Bin}(n-1,r)$, and so $\mathbb{E}[X_L]=\mathbb{E}[X_R]\sim rn$.
    A standard application of Chernoff's bound (\cref{lem:chernoff}) shows that 
    \[\mathbb{P}[|X_L-\mathbb{E}[X_L]|\geq\eps rn]=\mathbb{P}[|X_R-\mathbb{E}[X_R]|\geq\eps rn]=o(1/n),\]
    and a union bound over all vertices of $T_1(n,r)$ completes the proof.
\end{proof}

\begin{proof}[Proof of \cref{prop:square}]
    The only $4$-vertex graph $H$ with $\delta(H)\geq3$ is $K_4$, which is Hamiltonian, so we may assume that $n\geq5$.
    We also assume that $V(C_n)=\mathbb Z/n\mathbb{Z}$ with $ij\in E(C_n)$ if and only if $i-j\equiv\pm1 \pmod{n}$ (so $ij\in E(C_n^2)$ if and only if~$i-j\equiv\pm1 \pmod{n}$ or $i-j\equiv\pm2 \pmod{n}$).
    If~$E(C_n)\subseteq E(H)$, then $H$ is Hamiltonian, so we may assume this is not the case.
    
    Next, we claim that, if $E(C_n^2)\setminus E(H)\subseteq E(C_n)$, then $H$ is Hamiltonian.
    Indeed, if $n$ is odd, then the edges $\{ij:i-j\equiv \pm 2\pmod{n}\}\subseteq E(H)$ form a Hamilton cycle, so we may assume $n$ is even.
    In this case, the edges $\{ij:i-j\equiv\pm2\pmod{n}\}\subseteq E(H)$ form two cycles of length $n/2$ each.
    By relabelling the vertices if necessary, we may assume that $\{0,1\}\notin E(H)$, which implies that $\{n-1,0\},\{1,2\}\in E(H)$.
    Then, one obtains a Hamilton cycle by starting at $1$, following the long cycle containing the odd integers up to $n-1$, then using the edge $\{n-1,0\}$, following the cycle containing the even integers backwards until $2$, and finally using the edge $\{1,2\}$.
    Therefore, we may assume that $E(C_n^2)\setminus E(H)\nsubseteq E(C_n)$.
    
    Now, by relabelling the vertices if necessary, we may assume that $\{0,1\}\notin E(H)$ and $\{2,3\}\in E(H)$.
    Note that the first condition implies that $\{n-2,0\},\{n-1,0\},\{0,2\},\{n-1,1\},\{1,2\},\{1,3\}\in E(H)$.
    We are first going to construct a path containing most of the vertices of $H$, and then show that it can be closed into a Hamilton cycle.

    Define $P_3\coloneqq(1,2,3)$ and let $i$ be a counter initiated as $i=3$.
    Now, while $i<n-2$, suppose that we have defined a $(1,i)$-path $P_i$ such that $V(P_i)=[i]$.
    If $\{i,i+1\}\in E(H)$, then we define $P_{i+1}\coloneqq (P_i,i+1)$ and update the counter by adding $1$ to it.
    Otherwise, as $\delta(H)\geq3$, it must be the case that $\{i,i+2\},\{i+1,i+2\},\{i+1,i+3\}\in E(H)$.
    In this case, we define $P_{i+3}\coloneqq(P_i,i+2,i+1,i+3)$ and update the counter by adding $3$ to it.
    When this process ends, $i$ can take one of three values.
    If $i=n-2$, then $(P_{n-2},0,n-1,1)$ forms a Hamilton cycle.
    If $i=n-1$, let $P'\coloneqq P_i-\{1,2\}$; then, $(P',0,2,1,3)$ forms a Hamilton cycle.
    Lastly, if $i=n$, then we again let $P'\coloneqq P_i-\{1,2\}$ and observe that $(P',2,1,3)$ is a Hamilton cycle.
\end{proof}

\section*{Acknowledgements}

Parts of the research leading to these results were conducted during visits of (subsets of) the authors to Universit\'e Lyon 1, Technische Universit\"at Ilmenau and IST Austria.
We are grateful to these institutions for their hospitality.
L.~Lichev would like to thank Matthew Kwan for turning our attention to the work of~\citet{Sti96}.

% Use with natbib, not biblatex:
\bibliographystyle{mystyle} 
\bibliography{HamResRGG}

\begin{thebibliography}{55}
\newcommand{\enquotenew}[1]{`#1'}
\providecommand{\natexlab}[1]{#1}
\providecommand{\url}[1]{\texttt{#1}}
\providecommand{\urlprefix}{URL }
\providecommand{\doi}[1]{\textsc{doi}:
  \href{https://doi.org/#1}{\nolinkurl{#1}}}
\providecommand*{\eprint}[2][]{arXiv:
  \href{https://arxiv.org/abs/#2}{\nolinkurl{#2}}}

\bibitem[{Allen, B{\"o}ttcher, Ehrenm{\"u}ller, Schnitzer and
  Taraz(2022)}]{ABEST22}
P.~Allen, J.~B{\"o}ttcher, J.~Ehrenm{\"u}ller, J.~Schnitzer and A.~Taraz, {A
  spanning bandwidth theorem in random graphs}. \emph{Comb. Probab. Comput.}
  31.4 (2022),  598--628, \doi{10.1017/S0963548321000481}.

\bibitem[{Allen, B{\"o}ttcher, Ehrenm{\"u}ller and Taraz(2020)}]{ABET20}
P.~Allen, J.~B{\"o}ttcher, J.~Ehrenm{\"u}ller and A.~Taraz, {The bandwidth
  theorem in sparse graphs}. \emph{Adv. Comb.}  (2020),  paper no.~6, 60~pages,
  \doi{10.19086/aic.12849}.

\bibitem[{{Allen}, {Parczyk} and {Pfenninger}(to appear)}]{APP21}
P.~{Allen}, O.~{Parczyk} and V.~{Pfenninger}, {{Resilience for tight
  Hamiltonicity}}. \emph{Comb. Theory}  (to appear).

\bibitem[{Bal, Bennett, P{\'e}rez-Gim{\'e}nez and Pra{\l}at(2017)}]{BBP-GP17}
D.~Bal, P.~Bennett, X.~P{\'e}rez-Gim{\'e}nez and P.~Pra{\l}at, {Rainbow perfect
  matchings and {Hamilton} cycles in the random geometric graph}. \emph{Random
  Struct. Algorithms} 51.4 (2017),  587--606, \doi{10.1002/rsa.20717}.

\bibitem[{Balogh, Bollob{\'a}s, Krivelevich, M{\"u}ller and
  Walters(2011)}]{BBKMW11}
J.~Balogh, B.~Bollob{\'a}s, M.~Krivelevich, T.~M{\"u}ller and M.~Walters,
  {Hamilton cycles in random geometric graphs}. \emph{Ann. Appl. Probab.} 21.3
  (2011),  1053--1072, \doi{10.1214/10-AAP718}.

\bibitem[{Balogh, Csaba and Samotij(2011)}]{BCS11}
J.~Balogh, B.~Csaba and W.~Samotij, {Local resilience of almost spanning trees
  in random graphs}. \emph{Random Struct. Algorithms} 38.1-2 (2011),  121--139,
  \doi{10.1002/rsa.20345}.

\bibitem[{Balogh, Lee and Samotij(2012)}]{BLS12}
J.~Balogh, C.~Lee and W.~Samotij, {Corr{\'a}di and {Hajnal}'s theorem for
  sparse random graphs}. \emph{Comb. Probab. Comput.} 21.1-2 (2012),  23--55,
  \doi{10.1017/S0963548311000642}.

\bibitem[{Ben-Shimon, Krivelevich and Sudakov(2011{\natexlab{a}})}]{BKS11b}
S.~Ben-Shimon, M.~Krivelevich and B.~Sudakov, {Local resilience and
  {H}amiltonicity maker-breaker games in random regular graphs}. \emph{Comb.
  Probab. Comput.} 20.2 (2011{\natexlab{a}}),  173--211,
  \doi{10.1017/S0963548310000453}.

\bibitem[{Ben-Shimon, Krivelevich and Sudakov(2011{\natexlab{b}})}]{BKS11a}
---{}---{}---, {On the resilience of {H}amiltonicity and optimal packing of
  {H}amilton cycles in random graphs}. \emph{SIAM J. Discrete Math.} 25.3
  (2011{\natexlab{b}}),  1176--1193, \doi{10.1137/110821299}.

\bibitem[{Bollob\'{a}s and Thomason(1985)}]{BT85}
B.~Bollob\'{a}s and A.~Thomason, {Random graphs of small order}.
  M.~Karo\'{n}ski and A.~Ruci\'{n}ski (eds.), \emph{Random graphs '83
  ({P}ozna\'{n}, 1983)}, \emph{North-Holland Math. Stud.}, vol. 118,  47--97,
  North-Holland, Amsterdam (1985), \doi{10.1016/S0304-0208(08)73612-0}.

\bibitem[{Bondy and Chvatal(1976)}]{BC76}
J.~A. Bondy and V.~Chvatal, {A method in graph theory}. \emph{Discrete Math.}
  15 (1976),  111--135, \doi{10.1016/0012-365X(76)90078-9}.

\bibitem[{Clemens, Ehrenm{\"u}ller and Person(2020)}]{CEP20}
D.~Clemens, J.~Ehrenm{\"u}ller and Y.~Person, {A {Dirac}-type theorem for
  {Berge} cycles in random hypergraphs}. \emph{Electron. J. Comb.} 27.3 (2020),
   paper no.\ p3.39, 23 pages, \doi{10.37236/8611}.

\bibitem[{Condon, Espuny~D{\'{\i}}az, Gir{\~a}o, K{\"u}hn and
  Osthus(2021)}]{CEGKO21}
P.~Condon, A.~Espuny~D{\'{\i}}az, A.~Gir{\~a}o, D.~K{\"u}hn and D.~Osthus,
  {Dirac's theorem for random regular graphs}. \emph{Comb. Probab. Comput.}
  30.1 (2021),  17--36, \doi{10.1017/S0963548320000346}.

\bibitem[{Condon, Espuny~D\'{\i}az, Kim, K\"{u}hn and Osthus(2019)}]{CEKKO19}
P.~Condon, A.~Espuny~D\'{\i}az, J.~Kim, D.~K\"{u}hn and D.~Osthus, {Resilient
  degree sequences with respect to {H}amilton cycles and matchings in random
  graphs}. \emph{Electron. J. Comb.} 26.4 (2019),  paper no.\ 4.54, 22 pages,
  \doi{10.37236/8279}.

\bibitem[{Conlon, Gowers, Samotij and Schacht(2014)}]{CGSS14}
D.~Conlon, W.~T. Gowers, W.~Samotij and M.~Schacht, {On the {K{{\L}}R}
  conjecture in random graphs}. \emph{Isr. J. Math.} 203 (2014),  535--580,
  \doi{10.1007/s11856-014-1120-1}.

\bibitem[{Cooper, Frieze and Reed(2002)}]{CFR02}
C.~Cooper, A.~Frieze and B.~Reed, {Random regular graphs of non-constant
  degree: connectivity and {H}amiltonicity}. \emph{Comb. Probab. Comput.} 11.3
  (2002),  249--261, \doi{10.1017/S0963548301005090}.

\bibitem[{Dirac(1952)}]{Dirac52}
G.~A. Dirac, {Some theorems on abstract graphs}. \emph{Proc. Lond. Math. Soc.
  (3)} 2 (1952),  69--81, \doi{10.1112/plms/s3-2.1.69}.

\bibitem[{Díaz, Mitsche and Pérez(2007)}]{DMP07}
J.~Díaz, D.~Mitsche and X.~Pérez, {Sharp threshold for Hamiltonicity of
  random geometric graphs}. \emph{SIAM J. Discrete Math.} 21.1 (2007),  57--65,
  \doi{10.1137/060665300}.

\bibitem[{Espuny~D{\'{\i}}az(2023)}]{Es23}
A.~Espuny~D{\'{\i}}az, {Hamiltonicity of graphs perturbed by a random geometric
  graph}. \emph{J. Graph Theory} 103.1 (2023),  12--22,
  \doi{10.1002/jgt.22901}.

\bibitem[{Espuny~D{\'\i}az and Hyde(to appear)}]{EH23}
A.~Espuny~D{\'\i}az and J.~Hyde, {Powers of Hamilton cycles in dense graphs
  perturbed by a random geometric graph}. \emph{European Journal of
  Combinatorics}  (to appear),  paper no.~103\,848, 14 pages,
  \doi{10.1016/j.ejc.2023.103848}.

\bibitem[{Ferber, Nenadov, Noever, Peter and {\v{S}}kori{\'c}(2017)}]{FNNPS17}
A.~Ferber, R.~Nenadov, A.~Noever, U.~Peter and N.~{\v{S}}kori{\'c}, {Robust
  {Hamiltonicity} of random directed graphs}. \emph{J. Comb. Theory, Ser. B}
  126 (2017),  1--23, \doi{10.1016/j.jctb.2017.03.006}.

\bibitem[{Fischer, {\v{S}}kori{\'c}, Steger and Truji{\'c}(2022)}]{FSST22}
M.~Fischer, N.~{\v{S}}kori{\'c}, A.~Steger and M.~Truji{\'c}, {Triangle
  resilience of the square of a {Hamilton} cycle in random graphs}. \emph{J.
  Comb. Theory, Ser. B} 152 (2022),  171--220,
  \doi{10.1016/j.jctb.2021.09.005}.

\bibitem[{Fountoulakis, Mitsche, M\"{u}ller and Schepers(2021)}]{FMMS21}
N.~Fountoulakis, D.~Mitsche, T.~M\"{u}ller and M.~Schepers, {Hamilton cycles
  and perfect matchings in the {KPKVB} model}. \emph{Stochastic Process. Appl.}
  131 (2021),  340--358, \doi{10.1016/j.spa.2020.09.012}.

\bibitem[{{Frieze}(2019)}]{FriezeBiblio}
A.~{Frieze}, {{Hamilton cycles in random graphs: a bibliography}}. \emph{arXiv
  e-prints}  (2019). \eprint{1901.07139v25}.

\bibitem[{Frieze and Krivelevich(2008)}]{FK08}
A.~Frieze and M.~Krivelevich, {On two {H}amilton cycle problems in random
  graphs}. \emph{Isr. J. Math.} 166 (2008),  221--234,
  \doi{10.1007/s11856-008-1028-8}.

\bibitem[{Frieze and P{\'e}rez-Gim{\'e}nez(2024)}]{FP-G20}
A.~Frieze and X.~P{\'e}rez-Gim{\'e}nez, {Rainbow {Hamilton} cycles in random
  geometric graphs}. \emph{Random Struct. Algorithms} 64.4 (2024),  878--898,
  \doi{10.1002/rsa.21201}.

\bibitem[{Gilbert(1961)}]{Gil61}
E.~N. Gilbert, {Random plane networks}. \emph{J. Soc. Ind. Appl. Math.} 9
  (1961),  533--543, \doi{10.1137/0109045}.

\bibitem[{Godehardt and Jaworski(1996)}]{GJ96}
E.~Godehardt and J.~Jaworski, {On the connectivity of a random interval graph}.
  \emph{Random Struct. Algorithms} 9.1-2 (1996),  137--161,
  \doi{10.1002/(SICI)1098-2418(199608/09)9:1/2<137::AID-RSA9>3.0.CO;2-Y}.

\bibitem[{Haller and Truji{\'c}(2019)}]{HT19}
L.~Haller and M.~Truji{\'c}, {On resilience of connectivity in the evolution of
  random graphs}. \emph{Electron. J. Comb.} 26.2 (2019),  paper no.\ p2.24, 15
  pages, \doi{10.37236/7885}.

\bibitem[{Hefetz, Steger and Sudakov(2016)}]{HSS16}
D.~Hefetz, A.~Steger and B.~Sudakov, {Random directed graphs are robustly
  {Hamiltonian}}. \emph{Random Struct. Algorithms} 49.2 (2016),  345--362,
  \doi{10.1002/rsa.20631}.

\bibitem[{Huang, Lee and Sudakov(2012)}]{HLS12}
H.~Huang, C.~Lee and B.~Sudakov, {Bandwidth theorem for random graphs}.
  \emph{J. Comb. Theory, Ser. B} 102.1 (2012),  14--37,
  \doi{10.1016/j.jctb.2011.03.002}.

\bibitem[{Jackson(1981)}]{Jac81}
B.~Jackson, {Cycles in bipartite graphs}. \emph{J. Comb. Theory, Ser. B} 30
  (1981),  332--342, \doi{10.1016/0095-8956(81)90050-2}.

\bibitem[{Janson, \L{}uczak and Ruci\'{n}ski(2000)}]{JLR}
S.~Janson, T.~\L{}uczak and A.~Ruci\'{n}ski, \emph{Random graphs}.
  Wiley-Interscience Series in Discrete Mathematics and Optimization,
  Wiley-Interscience, New York (2000), ISBN 0-471-17541-2,
  \doi{10.1002/9781118032718}.

\bibitem[{Krivelevich, Lee and Sudakov(2010)}]{KLS10}
M.~Krivelevich, C.~Lee and B.~Sudakov, {Resilient pancyclicity of random and
  pseudorandom graphs}. \emph{SIAM J. Discrete Math.} 24.1 (2010),  1--16,
  \doi{10.1137/090761148}.

\bibitem[{Krivelevich, Sudakov, Vu and Wormald(2001)}]{KSVW01}
M.~Krivelevich, B.~Sudakov, V.~H. Vu and N.~C. Wormald, {Random regular graphs
  of high degree}. \emph{Random Struct. Algorithms} 18.4 (2001),  346--363,
  \doi{10.1002/rsa.1013}.

\bibitem[{Lee and Sudakov(2012)}]{LS12}
C.~Lee and B.~Sudakov, {Dirac's theorem for random graphs}. \emph{Random
  Struct. Algorithms} 41.3 (2012),  293--305, \doi{10.1002/rsa.20419}.

\bibitem[{Man(2023)}]{Man23}
H.~Y. Man, \emph{Random geometric graphs with applications}. Ph.D. thesis,
  Northeastern University (2023).

\bibitem[{Montgomery(2019)}]{Mont19}
R.~Montgomery, {Hamiltonicity in random graphs is born resilient}. \emph{J.
  Comb. Theory, Ser. B} 139 (2019),  316--341,
  \doi{10.1016/j.jctb.2019.04.002}.

\bibitem[{Montgomery(2020)}]{Mont20}
---{}---{}---, {Hamiltonicity in random directed graphs is born resilient}.
  \emph{Comb. Probab. Comput.} 29.6 (2020),  900--942,
  \doi{10.1017/S0963548320000140}.

\bibitem[{Müller, Pérez-Giménez and Wormald(2011)}]{MPW11}
T.~Müller, X.~Pérez-Giménez and N.~Wormald, {Disjoint Hamilton cycles in the
  random geometric graph}. \emph{J. Graph Theory} 68.4 (2011),  299--322,
  \doi{10.1002/jgt.20560}.

\bibitem[{Nenadov and {\v{S}}kori{\'c}(2020)}]{NS20}
R.~Nenadov and N.~{\v{S}}kori{\'c}, {On {Koml{\'o}s}' tiling theorem in random
  graphs}. \emph{Comb. Probab. Comput.} 29.1 (2020),  113--127,
  \doi{10.1017/S0963548319000129}.

\bibitem[{Nenadov, Steger and Truji\'{c}(2019)}]{NST19}
R.~Nenadov, A.~Steger and M.~Truji\'{c}, {Resilience of perfect matchings and
  {H}amiltonicity in random graph processes}. \emph{Random Struct. Algorithms}
  54.4 (2019),  797--819, \doi{10.1002/rsa.20827}.

\bibitem[{Noever and Steger(2017)}]{NS17}
A.~Noever and A.~Steger, {Local resilience for squares of almost spanning
  cycles in sparse random graphs}. \emph{Electron. J. Comb.} 24.4 (2017),
  paper no.\ p4.8, 15 pages, \doi{10.37236/6281}.

\bibitem[{Ore(1960)}]{Ore60}
{\O}.~Ore, {Note on {Hamilton} circuits}. \emph{Am. Math. Mon.} 67 (1960), ~55,
  \doi{10.2307/2308928}.

\bibitem[{Penrose(1997)}]{Penrose97}
M.~D. Penrose, {The longest edge of the random minimal spanning tree}.
  \emph{Ann. Appl. Probab.} 7.2 (1997),  340--361,
  \doi{10.1214/aoap/1034625335}.

\bibitem[{Penrose(1999)}]{Penrose99}
---{}---{}---, {On {{\(k\)}}-connectivity for a geometric random graph}.
  \emph{Random Struct. Algorithms} 15.2 (1999),  145--164,
  \doi{10.1002/(SICI)1098-2418(199909)15:2<145::AID-RSA2>3.0.CO;2-G}.

\bibitem[{Penrose(2003)}]{Pen03}
---{}---{}---, \emph{{Random geometric graphs}}, \emph{{Oxf. Stud. Probab.}},
  vol.~5. Oxford: Oxford University Press (2003).

\bibitem[{Petit(2001)}]{Petit01}
J.~Petit, \emph{Layout problems}. Ph.D. thesis, Universitat Polit\`{e}cnica de
  Catalunya (2001).

\bibitem[{{Petrova} and {Truji{\'c}}(to appear)}]{PT22}
K.~{Petrova} and M.~{Truji{\'c}}, {{Transference for loose Hamilton cycles in
  random $3$-uniform hypergraphs}}. \emph{Random Struct. Algorithms}  (to
  appear), \doi{10.1002/rsa.21216}.

\bibitem[{P\'osa(1962)}]{Posa62}
L.~P\'osa, {A theorem concerning Hamiltonian lines}. \emph{Magyar Tud. Akad.
  Mat. Kutat\'o Int. K\"ozl.} 7 (1962),  225--226.

\bibitem[{Robinson and Wormald(1992)}]{RW92}
R.~W. Robinson and N.~C. Wormald, {Almost all cubic graphs are {H}amiltonian}.
  \emph{Random Struct. Algorithms} 3.2 (1992),  117--125,
  \doi{/10.1002/rsa.3240030202}.

\bibitem[{Robinson and Wormald(1994)}]{RW94}
---{}---{}---, {Almost all regular graphs are {H}amiltonian}. \emph{Random
  Struct. Algorithms} 5.2 (1994),  363--374, \doi{10.1002/rsa.3240050209}.

\bibitem[{Stiebitz(1996)}]{Sti96}
M.~Stiebitz, {Decomposing graphs under degree constraints}. \emph{J. Graph
  Theory} 23.3 (1996),  321--324,
  \doi{10.1002/(SICI)1097-0118(199611)23:3<321::AID-JGT12>3.0.CO;2-H}.

\bibitem[{Sudakov(2017)}]{Suda17}
B.~Sudakov, {Robustness of graph properties}. A.~Claesson, M.~Dukes, S.~Kitaev,
  D.~Manlove and K.~Meeks (eds.), \emph{Surveys in combinatorics 2017},
  \emph{London Math. Soc. Lecture Note Ser.}, vol. 440,  372--408, Cambridge
  Univ. Press, Cambridge (2017), \doi{10.1017/9781108332699.009}.

\bibitem[{Sudakov and Vu(2008)}]{SV08}
B.~Sudakov and V.~H. Vu, {Local resilience of graphs}. \emph{Random Struct.
  Algorithms} 33.4 (2008),  409--433, \doi{10.1002/rsa.20235}.

\end{thebibliography}

% Use with biblatex, not natbib:
% \printbibliography

\appendix

\section{A simple proof for the local resilience with respect to connectivity}\label{appen:connectivity}

In this section, we provide a simpler proof of \cref{thm:connectivityintro} compared to the one offered in \cref{sect:conn} (in exchange for worse control on the growth of the constants with respect to the dimension).
In fact, we prove the following stronger result, which readily implies \cref{thm:connectivityintro}.

\begin{theorem}\label{thm:connectivity_appen}
For every constant\/ $\eps\in (0,1/2]$ and integer\/ $d\geq1$, there exist constants\/ $c_1 = c_1(d, \eps) > 0$ and\/ $C_1=C_1(d, \eps)>0$ such that, for every\/ $r\in[C_1(\log n/n)^{1/d},\sqrt{d}]$, a.a.s.\ every\/ $(1/2+\eps)$-subgraph of\/ $G_d(n,r)$ is\/ $(c_1r^dn)$-connected.
\end{theorem}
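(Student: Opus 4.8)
\textbf{Proof proposal for Theorem~\ref{thm:connectivity_appen}.}

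The plan is to follow the same skeleton as the proof of \cref{thm:connectivity} in \cref{sect:conn}, but to trade a finer control of the dimension-dependence for a much shorter argument, essentially by working with a single, cruder notion of a good cell and avoiding the delicate comparison of annuli volumes encapsulated in equation~\eqref{eq:delta}. First I would tessellate $[0,1]^d$ with cells of side length $s=\Theta(r/\sqrt{d})$, say $s=1/\lceil 10\sqrt{d}/r\rceil$, chosen so that any two vertices lying in a pair of neighbouring cells are at distance at most $r/2$ (so their $r$-balls overlap substantially), and small enough that each cell has volume $\Theta(r^d/d^{d/2})=\omega(\log n/n)$ once $C_1$ is large. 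Call a cell \emph{good} if it contains $(1\pm o(1))s^d n$ vertices of $G=G_d(n,r)$ \emph{and} every two vertices in this cell together with its $\Gamma$-neighbours have more than a $(1/2+\eps/2)$-proportion of one another's $G$-neighbourhoods in common; call it \emph{bad} otherwise. A routine Chernoff-plus-union-bound computation — identical in spirit to the proof of \cref{lem:close points}\ref{lem:close pointsitem2} and to \cref{lem:basic properties} — shows that, for $C_1$ a sufficiently large absolute-constant multiple of $\sqrt d$ times a function of $\eps$, a.a.s.\ \emph{all} cells are good when $r=\omega((\log n/n)^{1/d})$, and that a.a.s.\ every connected component of the ``bad cell'' graph $\Gamma_{\mathrm{bad}}$ (bad cells joined when within Euclidean distance $4r$) has $O_{d,\eps}(1)$ cells when $r=\Theta((\log n/n)^{1/d})$; here the key geometric input is that for vertices $v,v'$ in neighbouring cells, $|B(v,r)\cap B(v',r)\cap[0,1]^d|\ge (1-o_s(1))|B(v,r)\cap[0,1]^d|$ when $s\le r/100$, so a cell is bad only if some nearby region of volume $\Theta(r^d/d^{d/2})$ has atypically few or atypically many vertices, which is exponentially unlikely.

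Next I would invoke \cref{lem:k-conn} with $k\coloneqq\lfloor \eps s^d n/4\rfloor=\Theta(r^d n/d^{d/2})$ and an appropriate set $V'$. Exactly as in the proof of \cref{thm:connectivity}, for a fixed $(1/2+\eps)$-subgraph $H\subseteq G$ and any two vertices $v,v'$ in the same or neighbouring good cells, deleting edges to pass from $G$ to $H$ can destroy at most a $(1/2-\eps)$-proportion of $N_G(v)$ and a $(1/2-\eps)$-proportion of $N_G(v')$, so they retain $\ge \eps\,|N_G(v)\cap N_G(v')|/1\ge \eps(s^d n/2 - 2)\ge k$ common $H$-neighbours; hence any two vertices lying in cells of a $\Gamma$-connected family $W$ of good cells are connected in $H-U$ for every $|U|=k-1$, giving assumption~\ref{lem:k-connitem1}. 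When $r=\omega((\log n/n)^{1/d})$ there are no bad cells, so I take $V'=V(G)$ and condition~\ref{lem:k-connitem2} is vacuous. When $r=\Theta((\log n/n)^{1/d})$, I would reuse the \textbf{Construction of $W$} from \cref{sect:conn} verbatim: since bad components are of bounded size, there is a $\Gamma$-connected ``wall'' $W$ of good cells sitting inside $\Pi=[1/3,2/3]\times[0,1]^{d-1}$ which separates $[0,1]^d\setminus\bigcup_{q\in W}q$ into a left and a right region more than $r$ apart; set $V'=\bigcup_{q\in W}V(G)\cap q$.

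For condition~\ref{lem:k-connitem2} I would order the remaining vertices by moving outward from the wall — vertices in the left region by decreasing first coordinate, then vertices in the right region by increasing first coordinate — and use \cref{lem:down} (with its constant $C_1'=6400\eps^{-2/d}\sqrt d$, which is compatible with our $C_1$): each such vertex $v$ has at least a $(1/2-\eps/2)$-proportion of its $G$-neighbours on the side already processed, and passing to $H$ leaves at least $\eps|N_G(v)|/2\ge 2k/\eps\cdot\eps/2=k$ of them, using that every vertex has $\ge 2k/\eps$ neighbours in $G$ (another Chernoff/union-bound step, as at the end of the proof of \cref{thm:connectivity}). This verifies~\ref{lem:k-connitem2}, so \cref{lem:k-conn} gives that $H$ is $k$-connected, i.e.\ $(c_1 r^d n)$-connected with $c_1=c_1(d,\eps)=\Theta(\eps/d^{d/2})$. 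I expect the main obstacle to be purely bookkeeping: making the single good/bad dichotomy robust enough that \emph{both} the ``clustering of bad cells'' estimate and the ``common-neighbour'' estimate go through with the same cell size $s$ and the same threshold constant $C_1$, and carefully checking that the wall $W$ obtained from the bounded bad components really is $\Gamma$-connected and really disconnects $[0,1]^d$; all of this is already done in \cref{sect:conn}, so here it is a matter of transcription rather than new ideas, at the cost of the weaker bound $C_1=O_{d,\eps}(\mathrm{poly}(d))$ instead of $O_d(\sqrt d)$.
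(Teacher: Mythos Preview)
Your proposal is correct — it is essentially the proof of \cref{thm:connectivity} from \cref{sect:conn} copied over, just declining to optimise the dependence of $C_1$ on $d$. However, the paper's appendix proof takes a genuinely different and much shorter route, and you have missed the simplification that the appendix is designed to showcase.

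The paper's approach avoids the whole good/bad-cell dichotomy, the graph $\Gamma_{\mathrm{bad}}$, the Construction of $W$, the ordering of leftover vertices, and \cref{lem:down} entirely. Instead it rests on just two easy facts. First (\cref{lem:close points_appen}), for some $\delta=\delta(d,\eps)>0$, a.a.s.\ every pair of vertices at Euclidean distance at most $\delta r$ has at least $c_1' r^d n$ common neighbours in \emph{every} $(1/2+\eps)$-subgraph $H$; this is a direct Chernoff computation using \cref{lem:basic properties}~\ref{lem:basic propertiesitem2.1} and~\ref{lem:basic propertiesitem3}. Second (\cref{lem:highconnectivity_appen}), the smaller random geometric graph $G_d(n,\delta r)$ on the same vertex set is itself a.a.s.\ $c_1'' r^d n$-connected, which is a standard tessellation-plus-clique argument. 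Given these, for any set $U$ of size $<c_1 r^d n$ and any $u,v\notin U$, one finds a $(u,v)$-path in $G_d(n,\delta r)-U$ and replaces each of its edges by a $2$-path in $H-U$ via a surviving common $H$-neighbour. No separation of $[0,1]^d$ into left/right regions, no analysis of clusters of bad cells, and no induction over an ordering are needed.

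What each approach buys: your route reuses the heavy machinery already in place, so it is correct by transcription, but it is not actually simpler than \cref{sect:conn} — you still need the clustering argument of \cref{lem:close points}~\ref{lem:close pointsitem1} and the wall construction. The paper's appendix argument is a genuine two-lemma proof whose only cost is that the connectivity of $G_d(n,\delta r)$ forces $C_1$ to scale like $\delta^{-1}$ times the connectivity threshold, giving no control on the $d$-dependence of $C_1$. If you want a \emph{simple} proof, the common-neighbours-plus-$G_d(n,\delta r)$-connectivity idea is the one to write up.
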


Note that the only difference between \cref{thm:connectivity_appen} and \cref{thm:connectivity} is the growth of the constant $C_1$ as a function of $d$.
The proof of \cref{thm:connectivity_appen} relies mostly on the fact that, for a suitable choice of the parameters and a sufficiently small $\delta = \delta(d, \eps) > 0$, all pairs of vertices at Euclidean distance at most $\delta r$ from each other have many common neighbours in any $(1/2+\eps)$-subgraph~$H$ of~$G_d(n,r)$. 
Thus, the square of $H$ contains $G_d(n,\delta r)$, which is a.a.s.\ connected for all sufficiently large~$r$.
In fact, this same approach yields the higher connectivity.

To be more precise, in order to prove \cref{thm:connectivity_appen}, we first prove the following auxiliary lemma.

\begin{lemma}\label{lem:close points_appen}
For every fixed integer\/ $d\ge 1$ and\/ $\eps\in (0, 1/2]$, there exist positive constants\/ $\delta = \delta(d, \eps)$,\/ $C_1' = C_1'(d, \eps)$ and\/ $c_1' = c_1'(d, \eps)$ such that, for any\/ $r\in [C_1' (\log n/n)^{1/d},\sqrt{d}]$, a.a.s.\ the following holds:
every pair of vertices of\/ $G_d(n,r)$ at Euclidean distance at most\/ $\delta r$ have at least\/ $c_1'r^dn$ common neighbours in each\/ $(1/2+\eps)$-subgraph of\/ $G_d(n,r)$.
\end{lemma}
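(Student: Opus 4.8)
\textbf{Proof plan for \cref{lem:close points_appen}.}
The plan is to show that, for a suitably small constant $\delta = \delta(d,\eps) > 0$, any two vertices $u,v$ at Euclidean distance at most $\delta r$ have a large common $G_d(n,r)$-neighbourhood, and moreover that the fraction of common neighbours destroyed when passing to a $(1/2+\eps)$-subgraph is bounded away from $1$. The geometric input is simply that $B(u,r)\cap B(v,r)$ still contains a ball of radius $(1-\delta)r$ (namely $B(v,(1-\delta)r)\subseteq B(u,r)\cap B(v,r)$ when $\lVert u-v\rVert\leq\delta r$), so the intersection has volume at least $(1-\delta)^d\theta_d r^d$ when the balls are interior, and at least $2^{-d}(1-\delta)^d\theta_d r^d$ in general since intersecting with $[0,1]^d$ loses at most a factor $2^d$ (here I use $r\leq\sqrt d$ so that $(1-\delta)r\leq1$ and the standard fact that a ball of radius at most $1/2$ centred in $[0,1]^d$ keeps at least a $2^{-d}$ fraction of its volume, after possibly shrinking $\delta$ or absorbing the loss). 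Symmetrically $B(v,r)\setminus(B(u,r)\cap B(v,r))$ has volume at most $(1-(1-\delta)^d)\theta_d r^d\leq d\delta\theta_d r^d$, which can be made at most $\tfrac{\eps}{8}$ times the volume of $B(v,r)\cap[0,1]^d$ by choosing $\delta = \delta(d,\eps)$ small enough.

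First I would fix $\delta$ as above and set $C_1'$ large enough that, via Chernoff's bound (\cref{lem:chernoff}) and a union bound over all $n$ vertices, a.a.s.\ every vertex $v$ of $G\sim G_d(n,r)$ satisfies $d_G(v) = (1\pm\tfrac{\eps}{8})\lvert B(v,r)\cap[0,1]^d\rvert n$, and moreover $\lvert V(G)\cap B(v,(1-\delta)r)\cap[0,1]^d\rvert \geq (1-\tfrac{\eps}{4})\lvert B(v,r)\cap[0,1]^d\rvert n$; this requires $\lvert B(v,(1-\delta)r)\cap[0,1]^d\rvert n = \Omega(\log n)$, which holds once $C_1' = C_1'(d,\eps)$ is chosen large enough (depending on $\delta$ and $\theta_d$). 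Condition on this event. Then for any $u,v$ with $\lVert u-v\rVert\leq\delta r$, all but at most $d\delta\theta_d r^d n$ of the $G$-neighbours of $v$ lie in $B(u,r)$ as well, so $v$ has at least $(1-\tfrac{\eps}{4})\lvert B(v,r)\cap[0,1]^d\rvert n - d\delta\theta_d r^d n$ common $G$-neighbours with $u$; choosing $\delta$ small this is at least $(1-\tfrac{\eps}{2})\lvert B(v,r)\cap[0,1]^d\rvert n$, and likewise at least $(1-\tfrac{\eps}{2})\lvert B(u,r)\cap[0,1]^d\rvert n$, hence at least $(1-\tfrac{\eps}{2})\max(d_G(u),d_G(v))/(1+\tfrac{\eps}{8})\geq(1-\eps)\max(d_G(u),d_G(v))$.

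Now let $H$ be any $(1/2+\eps)$-subgraph of $G$. Passing from $G$ to $H$, the vertex $u$ loses at most $(\tfrac12-\eps)d_G(u)$ of its edges and $v$ loses at most $(\tfrac12-\eps)d_G(v)$; each such deletion removes at most one common neighbour of $u$ and $v$. Hence the number of common $H$-neighbours of $u$ and $v$ is at least
\[
(1-\eps)\max(d_G(u),d_G(v)) - \left(\tfrac12-\eps\right)d_G(u) - \left(\tfrac12-\eps\right)d_G(v) \;\geq\; (1-\eps)\cdot\tfrac{d_G(u)+d_G(v)}{2} - \left(\tfrac12-\eps\right)(d_G(u)+d_G(v)) \;=\; \tfrac{\eps}{2}(d_G(u)+d_G(v)),
\]
which, using $d_G(v)\geq(1-\tfrac{\eps}{8})\lvert B(v,r)\cap[0,1]^d\rvert n\geq(1-\tfrac{\eps}{8})2^{-d}\theta_d r^d n$, is at least $c_1' r^d n$ for $c_1' = c_1'(d,\eps) \coloneqq \tfrac{\eps}{4}2^{-d}\theta_d$ (any absolute constant would do). This proves the lemma. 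The only genuinely delicate point — and the one I would be most careful about — is making the three $\delta$-dependent estimates (the volume of the intersection, the volume of the ``annular'' leftover, and the boundary-loss factor $2^{-d}$) compatible simultaneously with a single choice of $\delta$; but since each is of the form ``at most $O_d(\delta)$'' or ``at least $(1-O_d(\delta))$'' times a quantity of order $\theta_d r^d$, a single sufficiently small $\delta = \delta(d,\eps)$ clearly suffices. Everything else is a routine Chernoff-plus-union-bound argument exactly as in the proof of \cref{lem:down}.

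Then \cref{thm:connectivity_appen} follows: conditioning on the a.a.s.\ event of \cref{lem:close points_appen}, any $(1/2+\eps)$-subgraph $H$ of $G_d(n,r)$ satisfies that $H^2$ (the square of $H$) contains $G_d(n,\delta r)$ as a spanning subgraph, since any two geometrically $\delta r$-close vertices share a common $H$-neighbour. As $\delta r \geq \delta C_1'(\log n/n)^{1/d}$, choosing $C_1 \coloneqq C_1'/\delta$ (and recalling that $\delta r\leq\sqrt d$) makes $G_d(n,\delta r)$ a.a.s.\ $(c r^d n)$-connected for some $c = c(d,\eps)>0$ by the classical $k$-connectivity threshold of \citet{Penrose99}, and connectivity of a power only increases under the operation $H\mapsto H^2$; more directly, the common-neighbour bound $c_1' r^d n$ from \cref{lem:close points_appen} lets one run the standard argument (as in the proof of \cref{thm:connectivity}) to promote connectivity of $G_d(n,\delta r)$ to $(c_1 r^d n)$-connectivity of $H$. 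The main obstacle throughout is purely bookkeeping: tracking how the $(1/2+\eps)$-slack survives the ``lose at most $(\tfrac12-\eps)$ of each endpoint's edges'' step, which is the inequality displayed above and is the crux of why the constant $1/2$ is the right threshold.
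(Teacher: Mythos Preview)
Your proof is correct and follows essentially the same approach as the paper: the paper packages the Chernoff-based degree and common-neighbourhood estimates into \cref{lem:basic properties}~\ref{lem:basic propertiesitem2.1} and~\ref{lem:basic propertiesitem3} and then cites them, while you carry out the same estimates inline, but the substance---in particular the key inequality $|N_H(u)\cap N_H(v)|\ge |N_G(u)\cap N_G(v)|-(1/2-\eps)(d_G(u)+d_G(v))\ge \Omega_{d,\eps}(r^d n)$---is identical. Your extra paragraph sketching the deduction of \cref{thm:connectivity_appen} also matches the paper's argument (combining the lemma with the high connectivity of $G_d(n,\delta r)$).
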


\begin{proof}
    Let $G\sim G_d(n,r)$ and condition on the event that $V(G)$ satisfies the properties described in \cref{lem:basic properties}~\ref{lem:basic propertiesitem2.1} and~\ref{lem:basic propertiesitem3}, which holds a.a.s.
    By \cref{lem:basic properties}~\ref{lem:basic propertiesitem2.1}, every vertex $v\in V(G)$ has degree at most $(1+\eps/50)|B(v,r)\cap[0,1]^d|n$.
    By \cref{lem:basic properties}~\ref{lem:basic propertiesitem3}, every pair of vertices $u,v\in V(G)$ such that $\lVert u-v\rVert\leq\delta r$ have at least $(1-\eps/20)|B(v,r)\cap[0,1]^d|n$ common neighbours.
    Therefore, if $H\subseteq G$ is a $(1/2+\eps)$-subgraph of $G$, for any pair of vertices $u,v\in V(G)$ such that $\lVert u-v\rVert\leq\delta r$, we have that\COMMENT{Note that $|N_{G}(u)\cap N_{G}(v)|\geq(1-\eps/20)|B(v,r)\cap[0,1]^d|n$ and $|N_{G}(u)\cap N_{G}(v)|\geq(1-\eps/20)|B(u,r)\cap[0,1]^d|n$, so $|N_{G}(u)\cap N_{G}(v)|\geq(1-\eps/20)(|B(u,r)\cap[0,1]^d|+|B(v,r)\cap[0,1]^d|)n/2$.
    Thus, we have
    \begin{align*}
        |N_{H}(u)\cap N_{H}(v)|&\geq |N_{G}(u)\cap N_{G}(v)|-(1/2-\eps)(d_G(u)+d_G(v))\\
        &\geq(1-\eps/20)(|B(u,r)\cap[0,1]^d|+|B(v,r)\cap[0,1]^d|)n/2 - (1/2-\eps)(1+\eps/50)(|B(u,r)\cap[0,1]^d|+|B(v,r)\cap[0,1]^d|)n\\
        &\geq\eps(|B(u,r)\cap[0,1]^d|+|B(v,r)\cap[0,1]^d|)n/2\\
        &\geq\eps2^{-d}\theta_dr^dn,
    \end{align*}
    where in the third line we use that $(1-\eps/20)-2(1/2-\eps)(1+\eps/50)=\eps(2-1/20-1/25+\eps/25)\geq\eps$.}
    \[|N_{H}(u)\cap N_{H}(v)|\geq |N_{G}(u)\cap N_{G}(v)|-(1/2-\eps)(d_G(u)+d_G(v))\geq\eps2^{-d}\theta_dr^dn.\qedhere\]
\end{proof}

We will also use the fact that sufficiently dense random geometric graphs have high connectivity, which is a particular case of a result of \citet[Theorem~13.2]{Pen03}.
Here, we present a simple proof for the sake of completeness.

\begin{lemma}\label{lem:highconnectivity_appen}
For each $d\geq1$, there exist constants $C_1''=C_1''(d)>0$ and $c_1''=c_1''(d)>0$ such that, if $r\in [C_1''(\log n/n)^{1/d},\sqrt{d}]$, then a.a.s.\ $G_d(n,r)$ is $c_1''r^dn$-connected.
\end{lemma}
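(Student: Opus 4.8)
\textbf{Proof plan for \cref{lem:highconnectivity_appen}.}
The plan is to prove the statement by a union bound over all ``small cuts''. Fix $d\geq 1$ and suppose $r\in[C_1''(\log n/n)^{1/d},\sqrt d]$ for a constant $C_1''$ to be chosen large. Let $G\sim G_d(n,r)$ and set $k\coloneqq c_1''r^dn$ for a small constant $c_1''=c_1''(d)>0$. Recall that $G$ fails to be $k$-connected precisely when there is a set $S\subseteq V(G)$ with $|S|=k-1$ whose removal disconnects $G$; equivalently, there is a partition $V(G)\setminus S = A\cupdot B$ with $A,B\neq\varnothing$ and no edge between $A$ and $B$. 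Since no edge joins $A$ and $B$, every vertex of $A$ has all its neighbours inside $A\cup S$, so in particular the ball of radius $r$ around each vertex of $A$ meets only $A\cup S$, and analogously for $B$. Geometrically this forces $A$ (together with a thin collar) to be separated from $B$ by the ``hole'' carved out by $S$, and the standard way to exploit this is: tessellate $[0,1]^d$ into cells of side length $s=\Theta(r/\sqrt d)$ (so that neighbouring cells form a clique in $G$, as in the proof of \cref{thm:lowerboundgeneral}); show that a.a.s.\ every cell contains $(1\pm o(1))s^dn\gg k$ vertices, by Chernoff (\cref{lem:chernoff}) and a union bound over the $O(s^{-d})$ cells, provided $C_1''$ is large enough that $s^dn=\Omega(\log n)$ with a big constant. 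Then no single cell can be entirely absorbed into $S$, so each cell contains a vertex of $A\cup B$; since two vertices in the same cell, or in cells sharing a $(d-1)$-face, are adjacent in $G$ and hence cannot be split between $A$ and $B$, and since the cell-adjacency graph $\Gamma$ is connected, all cells ``belong'' to the same side — contradicting $A,B\neq\varnothing$.

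More carefully, I would phrase it as follows. Condition on the a.a.s.\ event $\mathcal{E}$ that every cell of the tessellation contains more than $k$ vertices of $G$ (this needs $k<s^dn/2$, say, which holds for $c_1''$ small since $s^d=\Theta(r^d/d^{d/2})$). Suppose for contradiction that $\mathcal{E}$ holds but $G-S$ is disconnected for some $S$ with $|S|=k-1$, with components witnessed by a bipartition $A\cupdot B$ of $V(G)\setminus S$, both parts nonempty and with no edges between them. For each cell $q$, since $|V(G)\cap q|>k>|S|$, the set $(V(G)\cap q)\setminus S$ is nonempty; moreover, because all vertices in a single cell are pairwise adjacent in $G$, $(V(G)\cap q)\setminus S$ lies entirely in $A$ or entirely in $B$. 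Define a 2-colouring of the cells accordingly. If $q,q'$ share a $(d-1)$-dimensional face, then every vertex of $q$ is adjacent in $G$ to every vertex of $q'$ (by the choice of $s$), so the nonempty sets $(V(G)\cap q)\setminus S$ and $(V(G)\cap q')\setminus S$ must receive the same colour. Since $\Gamma$ is connected, all cells get the same colour, so $A$ or $B$ is empty — a contradiction. Hence a.a.s.\ $G$ is $k$-connected.

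The one subtlety is choosing the constants in the right order: first pick $s$ so that cells and face-adjacent cells induce cliques (i.e.\ $s\sqrt d\le r$, so $s\le r/\sqrt d$, and take $s=\lceil\sqrt d/r\rceil^{-1}$), then choose $c_1''$ small enough that $k=c_1''r^dn < s^dn/2$ (any $c_1''<(1/2)(s/r)^d$, e.g.\ $c_1''=1/(3d^{d/2})$ after accounting for the rounding $r=o(1)$ issues, or more carefully using that $s\geq r/(2\sqrt d)$ for $r\le \sqrt d$), and finally choose $C_1''$ large enough (in terms of $d$ and $c_1''$) that $s^dn\ge \Omega(\log n)$ with a large enough implied constant for the Chernoff union bound over cells to give a $1-o(1)$ probability. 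I do not expect a real obstacle here; the only thing to be careful about is that the rounding in $s$ and the constant $\sqrt d$ upper bound on $r$ are handled so that $s^d=\Theta_d(r^d)$ uniformly, which is routine. (Alternatively, one could cite \cite[Theorem~13.2]{Pen03} directly, as noted in the statement, but the argument above is self-contained and elementary.)
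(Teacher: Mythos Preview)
Your proposal is correct and takes essentially the same approach as the paper's proof: tessellate, use Chernoff to ensure each cell has more than $k$ vertices, and then argue connectivity via the cell-adjacency graph $\Gamma$. One small slip: for two face-adjacent cells to induce a clique you need the diameter of their union to be at most $r$, which gives $s\sqrt{d+3}\le r$ (the paper uses $s=\lceil\sqrt{d+3}/r\rceil^{-1}$), not $s\sqrt{d}\le r$; this only changes the constants and does not affect the argument.
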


\begin{proof}
Consider a tessellation $\cQ$ of $[0,1]^d$ with cells of side length $s\coloneqq\lceil\sqrt{d+3}/r\rceil^{-1}$, and recall the auxiliary graph $\Gamma$ where two cells are joined by an edge whenever they share a $(d-1)$-dimensional face.
Observe that, for any cells $q$ and $q'$ with $qq'\in E(\Gamma)$ and any pair of points $p,p'\in q\cup q'$, the choice of $s$ guarantees that $\lVert p-p'\rVert\leq r$.
   In particular, all vertices of the graph $G\sim G_d(n,r)$ contained in two such cells form a complete graph.
   Moreover, note that $r/2\sqrt{d+3}\leq s\leq r/\sqrt{d+3}$\COMMENT{More precisely, using the fact that $\sqrt{d+3}/r\leq\lceil\sqrt{d+3}/r\rceil\leq\sqrt{d+3}/r+1$, the fact that $s\leq r/\sqrt{d+3}$ follows directly.
   For the lower bound, we get that
   \[s\geq\frac{r}{\sqrt{d+3}+r},\]
   so $s/r$ is a decreasing function of $r$. 
   Therefore, it is minimised when $r=\sqrt{d}$, and we have that $s\geq r/(\sqrt{d+3}+\sqrt{d})$.
   Since we do not need the exact constant, we simply use the weaker bound.} and so, in particular, the volume of each cell is $s^d\geq2^{-d}(d+3)^{-d/2}r^d$.
   By letting, for example, $c_1''=2^{-d-1}(d+3)^{-d/2}$ and $C_1''=28\sqrt{d+3}$, it follows from Chernoff's bound (\cref{lem:chernoff}) and a union bound that a.a.s.\ every $q\in\cQ$ contains at least $c_1''r^dn$ vertices of~$G$.\COMMENT{Given the lower bound on $r$, the number of cells is at most linear, so it suffices to obtain the conclusion for each cell with probability $1-o(1/n)$.
   And the probability of failure for a fixed cell $q$, by \cref{lem:chernoff}, is
   \[\mathbb{P}[|V(G)\cap q|<c_1''r^dn]\leq\mathbb{P}[|V(G)\cap q|\leq s^dn/2]\leq2\nume^{-s^dn/12}\leq2\nume^{c_1''r^dn/6}\leq2\nume^{c_1''(C_1'')^d\log n/6},\]
   So it suffices to have that $c_1''(C_1'')^d/6>1$, for which it suffices to have $C_1''\geq(7/c_1'')^{1/d}=2\cdot14^{1/d}\sqrt{d+3}$.}

   Now, conditionally on this event, consider any two vertices $u,v\in V(G)$ and any vertex set $U\subseteq V(G)\setminus\{u,v\}$ of size $|U|\leq c_1''r^dn-1$, and let $G'\coloneqq G-U$.
   If there exist $q,q'\in\cQ$ such that $qq'\in E(\Gamma)$ and $u,v\in q\cup q'$, then $uv\in E(G')$, so they are joined by a path.
   Thus, we may assume that there are cells $q,q'\in\cQ$ with $u\in q$, $v\in q'$ and $qq'\notin E(\Gamma)$.
   Since $\Gamma$ is connected, there is a $(q,q')$-path $P\subseteq\Gamma$.
   Let us label the cells of $P$ as $q=q_0,q_1,\ldots,q_\ell=q'$ following the order in which they appear in $P$, and note that every cell contains at least one vertex of $G'$.
   Let $u_0\coloneqq u$.
   Now we can greedily construct a $(u,v)$-path in $G'$ by arbitrarily choosing a vertex $u_i\in V(G)\cap q_i$ for each $i\in[\ell-1]$, and noting that $u_{i-1}u_i\in E(G')$ and $u_{\ell-1}v\in E(G')$.
\end{proof}

\begin{proof}[Proof of Theorem~\ref{thm:connectivity_appen}]
Let $\delta$, $C_1'$ and $c_1'$ be the constants given by \cref{lem:close points_appen} applied with $d$ and $\eps$, and let $C_1''$ and $c_1''$ be the constants given by \cref{lem:highconnectivity_appen} with $d$ as input.
Fix $C_1\coloneqq\max\{C_1',C_1''/\delta\}$ and $c_1\coloneqq\min\{c_1',c_1''\}$.
Couple $G_d(n,\delta r)$ and $G_d(n,r)$ in a way that their vertices have the same positions in $[0,1]^d$ and fix a $(1/2+\eps)$-subgraph $H$ of $G_d(n,r)$.
Moreover, condition on the events that $G_d(n,\delta r)$ is $c_1r^dn$-connected and that, for every edge $uv\in E(G_d(n,\delta r))$, the vertices $u$ and $v$ have at least $c_1r^dn$ common neighbours in $H$ (note that both events hold a.a.s.\ by \cref{lem:close points_appen,lem:highconnectivity_appen} with our choice of $C_1$ and $c_1$).

Fix two arbitrary vertices $u,v\in V(G)$ and let $U\subseteq V(G)\setminus\{u,v\}$ be an arbitrary set of size $|U|\leq c_1r^dn-1$.
By the conditioning, $G_d(n,\delta r)-U$ is connected, so there exists a $(u,v)$-path $P$ in $G_d(n,\delta r)-U$, and $G_d(n,\delta r)\subseteq H$, so $H-U$ also contains a $(u,v)$-path.
This implies that $H$ is $c_1r^dn$-connected, as desired.
\end{proof}

\end{document}